\setlist[itemize]{itemindent=0ex,itemsep=-0.5ex,leftmargin=3ex,topsep=5pt}
\setlist[enumerate]{label={\arabic*)},itemindent=0ex,itemsep=-0.5ex,leftmargin=3.6ex,topsep=5pt,labelwidth=3.6ex,labelsep=1.5ex}
\tikzset{>=latex}
\newcommand{\cmark}{\ding{51}}%
\newcommand{\xmark}{\ding{55}}%
\newtheorem{theorem}{Theorem}[section]
\newtheorem{fact}{Fact}[section]
\newtheorem{proposition}{Proposition}[section]
\newtheorem{conjecture}{Conjecture}[section]
\newtheorem{lemma}{Lemma}[section]
\newtheorem{corollary}{Corollary}[section]
\newtheorem{assumption}{Assumption}
\theoremstyle{definition}
\newtheorem{definition}{Definition}[section]
\newtheorem{example}{Example}[section]
\newtheorem{remark}{Remark}[section]
\newcommand{\method}[1]{\texttt{#1}}
\crefname{equation}{}{}
\crefname{theorem}{Theorem}{Theorems}
\crefname{corollary}{Corollary}{Corollaries}
\crefname{example}{Example}{Examples}
\crefname{assumption}{Assumption}{Assumptions}
\crefname{lemma}{Lemma}{Lemmas}
\crefname{proposition}{Proposition}{Propositions}
\crefname{figure}{Figure}{Figures}
\crefname{table}{Table}{Tables}
\crefname{fact}{Fact}{Facts}
\crefname{conjecture}{Conjecture}{Conjectures}
\crefname{section}{Section}{Sections}
\crefname{appendix}{Appendix}{Appendices}
\Crefname{equation}{}{}
\Crefname{theorem}{Theorem}{Theorems}
\Crefname{corollary}{Corollary}{Corollaries}
\Crefname{example}{Example}{Examples}
\Crefname{lemma}{Lemma}{Lemma}
\Crefname{proposition}{Proposition}{Proposition}
\Crefname{figure}{Figure}{Figures}
\Crefname{table}{Table}{Tables}
\Crefname{section}{Section}{Sections}
\Crefname{appendix}{Appendix}{Appendices}
\newcommand{\tr}{{{\mathsf T}}}
\newcommand{\her}{{{\mathsf H}}}
\newcommand{\mK}{{\mathsf{K}}}
\newcommand{\mV}{{\mathsf{V}}}
\newcommand{\ECL}{$\mathtt{ECL}$}
\newcommand{\LQG}{\mathtt{LQG}}
\newcommand{\removelatexerror}{\let\@latex@error\@gobble}
\title{\bf Benign Nonconvex Landscapes in Optimal and Robust Control, Part I: Global Optimality
\thanks{The work of Yang Zheng and Chih-fan Pai is supported by NSF ECCS-2154650 and NSF CMMI-2320697. Emails: zhengy@ucsd.edu (Yang Zheng); cpai@ucsd.edu (Chih-fan Pai); yujietang@pku.edu.cn (Yujie Tang).}}
\author[1]{Yang Zheng}
\author[1]{Chih-fan Pai}
\author[2]{Yujie Tang}
\affil[1]{\small Department of Electrical and Computer Engineering, University of California San Diego}
\affil[2]{\small Department of Industrial Engineering \& Management, Peking University}
\date{\vspace{-4mm} \small \today}
\begin{document}

\maketitle

\vspace{-8mm}

\begin{abstract}

Direct policy search has achieved great empirical success in reinforcement learning. Many recent studies have revisited its theoretical foundation for continuous control, which reveals elegant nonconvex geometry in various benchmark problems, especially in fully observable state-feedback cases. This paper considers two fundamental optimal and robust control problems with \textit{partial observability}: the Linear Quadratic Gaussian (LQG) control with stochastic noises, and $\mathcal{H}_\infty$ robust control with adversarial noises. In the policy space, the former problem is~smooth~but nonconvex, while the latter one is nonsmooth and nonconvex. We highlight some interesting and surprising ``discontinuity'' of LQG and $\mathcal{H}_\infty$ cost functions around the boundary of their domains. Despite the lack of convexity~(and possibly smoothness), 
our main results show that for a class of \textit{non-degenerate} policies, all Clarke stationary points are globally optimal and there is no spurious local minimum for both LQG and $\mathcal{H}_\infty$ control. 
Our proof techniques rely on a new and unified framework of \texttt{Extended Convex Lifting (ECL)}, which reconciles the gap between nonconvex policy optimization and convex reformulations. This \ECL{} framework is of independent interest, and we will discuss its details in Part II of this paper.

\end{abstract}

\section{Introduction}

Inspired by the empirical successes of reinforcement learning (RL) in various applications \cite{silver2016mastering,mnih2015human,kober2013reinforcement}, policy optimization techniques have received renewed interest in the control of dynamical systems with continuous action spaces \cite{recht2019tour,hu2023toward}. Significant advances have been established in terms of understanding the~theoretical properties of direct policy search on a range of benchmark control problems, including stabilization~\cite{perdomo2021stabilizing,zhao2022sample}, linear quadratic regulator (LQR) \cite{fazel2018global,malik2019derivative,mohammadi2021convergence,fatkhullin2021optimizing}, linear risk-sensitive control \cite{zhang2021policy},~linear quadratic Gaussian (LQG) control \cite{zheng2021analysis,zheng2022escaping,duan2022optimization,ren2023controller}, dynamic filtering~\cite{umenberger2022globally,zhang2023learning}, and linear distributed control \cite{furieri2020learning,li2022distributed}; see \cite{hu2023toward} for a recent survey.   

All these control problems are known to be nonconvex in the policy space. In classical control theory,  one typical approach to deal with the nonconvexity is to reparameterize the problem (e.g. via a suitable change of variables \cite{scherer1997multiobjective,zheng2021equivalence,zhou1996robust,boyd1991linear,zheng2022system}) into a convex form for which efficient algorithms exist. 
Indeed, since the 1980s, convex reformulations or relaxations become very popular and~almost universally accepted as solutions for optimal and robust control problems.
It is now well-known that many  optimal and robust 
control problems can~be~reformulated as convex programs, using the linear matrix inequalities (LMIs) techniques \cite{scherer1997multiobjective,gahinet1994linear,boyd1994linear}, or relaxed via the sums of squares \cite{parrilo2000structured,anderson2015advances}. These techniques can provide rigorous stability and safety certificates in terms of matrix inequalities representing Lyapunov or dissipativity conditions. To achieve such certificates, these convex approaches typically rely on certain controller reparameterizations \cite{scherer1997multiobjective,zheng2021equivalence,zhou1996robust,boyd1991linear,zheng2022system}, which often require an explicit underlying system model and are thus model-based designs.  

When system models are unknown, complex, or poorly parameterized, direct policy optimization is another viable option for controller synthesis. In fact, this approach can date back to the 1970s \cite{levine1970determination,hyland1984optimal,makila1987computational,bernstein1989lqg}. %
%
In principle, policy optimization is conceptually simpler, computationally more flexible, and also more amenable to learning-based control (as evidenced by the aforementioned successes of RL), but it naturally leads to nonconvex optimization (as pointed out above) for which it is harder to derive theoretical guarantees or certificates. Fortunately, despite the nonconvexity, a series of recent findings highlighted above have revealed favorable optimization landscape properties in many benchmark control problems. For example, global convergence of model-free policy gradient methods has been established for both discrete-time~\cite{fazel2018global} and continuous-time LQR~\cite{mohammadi2021convergence} thanks to the \textit{gradient dominance} property of the cost functions; the LQG cost function has no spurious stationary points that correspond to controllable and observable policies~\cite{zheng2021analysis}. Beyond LQR and LQG, global or local convergence results of direct policy search have also been established for linear risk-sensitive control \cite{zhang2021policy} and distributed control problems \cite{furieri2020learning,li2022distributed}. Furthermore, the work \cite{guo2022global} has established a global convergence result of direct policy search for \emph{state-feedback} $\mathcal{H}_\infty$ control. For \emph{output-feedback} $\mathcal{H}_\infty$ control, it has been revealed in \cite{hu2022connectivity} that there always exists a continuous path connecting any initial stabilizing controller to a globally optimal controller.    

These developments have highlighted elegant nonconvex geometry in various benchmark problems, generating momentous excitements at the interface of control theory, RL, and nonconvex optimization. However, so far, most existing results have focused on control problems with \textit{full-state} observation for which the globally optimal policies are \textit{static}. The theoretical understanding of policy optimization for {\textit{partially observed}} systems, where \textit{dynamic policies} are needed, remains limited. The few notable exceptions \cite{zheng2021analysis,zheng2022escaping,umenberger2022globally,duan2022optimization} only deal with linear quadratic (LQ) control with stochastic noises, where their cost functions are nonconvex but smooth over the feasible region. Another fundamental control paradigm, known as \emph{robust control}, addresses the worst-case performance against uncertainties or adversarial noises~\cite{zhou1996robust}. In this case, the performance measure is the $\mathcal{H}_\infty$ norm of a certain closed-loop transfer function, which is known to be nonconvex and nonsmooth in the policy space~\cite{apkarian2006nonsmooth,lewis2007nonsmooth}. 
In this paper, we focus on the policy optimization perspective of output-feedback systems in the cases of both stochastic noises (i.e., LQG control) and adversarial noises (i.e., $\mathcal{H}_\infty$ robust control). In the spirit of \cite{doyle1988state}, our results provide a unified treatment to these two classical optimal and robust control problems, shedding new light on their benign yet intricate nonconvex optimization landscapes.

\subsection{Our contributions}

In this paper, we study two fundamental control problems with \textit{partial observability}: the LQG control with stochastic noises, and $\mathcal{H}_\infty$ robust control with adversarial noises, from a modern nonconvex optimization perspective. Despite the nonconvexity, our results characterize the \textit{global optimality} of a large class of stationary points for both LQG and $\mathcal{H}_\infty$ optimization over \textit{dynamic} policies. A key concept underpinning our results is the notion of \textit{non-degenerate dynamic policies}, which requires a Lyapunov variable with a \textit{full-rank} off-diagonal block to certify the corresponding $\mathcal{H}_2$ or $\mathcal{H}_\infty$ norm of the closed-loop system; the precise definition is given in \Cref{section:LMIs-h2-hinf-norm}. Our main contributions towards the benign nonconvex landscapes of LQG and $\mathcal{H}_\infty$ control are summarized as follows:

\begin{enumerate}
\item \textbf{Smooth nonconvex landscape of LQG control.} Despite being analytical over its domain, we reveal that the LQG cost has intricate ``discontinuous'' behavior around the boundary~of~its~domain: the LQG costs can converge to a finite value when dynamic policies go to its boundary, while with the same limiting policy, the LQG costs can also diverge to infinity (\Cref{example:discontinuity-LQG-boundary}). We prove that if the limiting policy corresponds to a controllable and observable controller, the corresponding LQG costs will always diverge to infinity irrespective of the sequence of dynamic policies (\Cref{proposition:divergence-minimial-controllers}).  Our main technical result shows that for the class of non-degenerate policies, any stationary points are globally optimal in LQG control, and there exist no spurious stationary points (\Cref{theorem:LQG-main}). Notably, this result includes the existing characterization of minimality in \cite{zheng2021analysis} as a special case (\Cref{theorem:LQG-global-optimality}). Therefore, all potentially spurious stationary points belong to the class of degenerate policies. For this aspect, we further identify a class of degenerate stationary points that are likely to be sub-optimal (\Cref{lemma:LQG-lower-order-stationary-point-to-high-order}).   

\item\textbf{Nonsmooth and nonconvex landscape of $\mathcal{H}_\infty$ robust control.} 
One key difference compared to LQG control is that $\mathcal{H}_\infty$ robust control has a nonsmooth cost function (this fact is known~and also expected due to the max operation over adversarial noises; see \cref{example:Hinf-nonsmooth}). The non-smoothness indeed complicates analysis machinery. However, not surprisingly in the spirit of \cite{doyle1988state} (or surprisingly), we show that many landscape properties of LQG control have nonsmooth counterparts in $\mathcal{H}_\infty$ control. This is mainly because $\mathcal{H}_2$ and $\mathcal{H}_\infty$ norms have similar LMI~characterizations. Similar to the LQG case, the nonsmooth $\mathcal{H}_\infty$ cost function also exhibits intricate ``discontinuous'' behavior around its boundary (\Cref{example:non-coercivity-of-Hinf-cost,proposition:divergence-minimial-controllers-hinf}).    
One good property of $\mathcal{H}_\infty$ cost is that it is locally Lipschitz and also subdifferentially regular in the sense of Clarke, enabling the use of Clarke subdifferential for nonsmooth analysis \cite{clarke1990optimization,clarke2008nonsmooth} (see \cref{appendix:clarke} for a review). Our main result is that all Clarke stationary points in the set of non-degenerate policies are globally optimal for $\mathcal{H}_\infty$ robust control (\cref{theorem:Hinf-main-global-optimality}), and thus the set of non-degenerate policies has no spurious stationary points. 
%
%
Finally, we also highlight a class of Clarke stationary points that is potentially degenerate and sub-optimal (\Cref{theorem:hinf_non_globally_optimal_stationary_point}).  

\end{enumerate}

Our main technical results are established using a new and unified framework of \texttt{Extended Convex Lifting (ECL)}, which reconciles the gap between nonconvex policy optimization and convex reformulations. The remarkable (now classical) results in control theory reveal that many optimal and robust control problems admit ``convex reformulations'' in terms of LMIs, via a suitable change of variables (which often comes from Lyapunov theory) \cite{gahinet1994linear,scherer1997multiobjective,dullerud2013course,boyd1994linear,scherer2000linear}. Under suitable settings, these changes of variables establish a differemorphism that connects nonconvex policy optimization with their convex reformulations, enabling convex analysis for nonconvex problems. The technical details are involved, and we postponed them to Part II of this paper.

\subsection{Related work}

We review some related literature in this section. The review below is not intended to be exhaustive, but rather to outline some of the closely related work. We refer interested to some excellent textbooks \cite{zhou1996robust,dullerud2013course} for classical results on optimal and robust control, the monographs \cite{boyd1994linear,scherer2000linear} for LMIs in control, and excellent surveys \cite{hu2023toward,recht2019tour} for recent advances on policy optimization in controls. The interested reader might also see \cite{zheng2021sample,dean2019sample,tsiamis2022statistical} for statistical learning theory for LQ control. 

\vspace{-2mm}

\paragraph{Policy optimization for smooth LQ control.} As mentioned earlier, policy optimization has a long history dating back to the 1970s and is one of the main workhorses for modern RL. There has been a recent surging interest in applying direct policy search to control complex dynamic systems \cite{recht2019tour,hu2023toward}. One emphasis is to obtain theoretical guarantees in terms of optimality, robustness, and sample complexity. For the classical LQR problem, it has been revealed that the cost function is \textit{coercive} and \textit{smooth} over any sublevel set, and \textit{gradient dominated}~\cite{fazel2018global}. These properties are fundamental~to~establishing global~convergence of direct policy search and their model-free extensions for solving LQR~\cite{malik2019derivative,mohammadi2021convergence,fatkhullin2021optimizing}. 
Similar~favorable nonconvex properties of risk-sensitive state-feedback control are revealed in \cite{zhang2021policy}.
Other recent results include Markovian jump LQR \cite{jansch2022policy}, distributed LQR \cite{li2022distributed,furieri2020learning,fatkhullin2021optimizing}, and finite-horizon LQ control \cite{hambly2021policy}.

All of these studies considered \textit{static} policies with full state observations, i.e., the policy's output only utilizes the current state observation but no historical information. Control problems~with partial output observations typically require \emph{dynamic} policies, e.g, involving a suitable state estimator. The resulting optimization landscape becomes richer and yet much more complicated than~LQR. Our prior work \cite{zheng2021analysis} has revealed favorable landscape properties: 1) the set of stabilizing full-order dynamic policies has at most two path-connected components that are identical in the frequency domain; 2) all stationary points corresponding to controllable and observable controllers in LQG control are globally optimal. However, there exist suboptimal saddle points in the LQG landscape \cite[Examples 5 \& 6]{zheng2021analysis}, which never appear in state feedback LQR. A saddle-escaping algorithm has been proposed in \cite{zheng2022escaping}. A few other recent studies include \cite{duan2022optimization,umenberger2022globally,zhang2023learning,ren2023controller}. In~\cite{umenberger2022globally}, the global convergence of policy search over dynamic filters was proved for a simpler estimation problem.

\vspace{-2mm}

\paragraph{Policy optimization for nonsmooth robust control.}
Direct policy optimization techniques~for $\mathcal{H}_\infty$ robust control have been extensively studied since the early 2000s \cite{lewis2007nonsmooth,apkarian2006nonsmooth,apkarian2006nonsmooth2,apkarian2008mixed,burke2005robust}. It is known that the closed-loop $\mathcal{H}_\infty$ norm is nonsmooth and not always differentiable in the policy space~\cite{apkarian2006nonsmooth}. Indeed, robust control problems were one of the early motivations and applications for nonsmooth optimization \cite{lewis2007nonsmooth}. Some of the early nonsmooth optimization algorithms for robust control have been implemented as software packages --- \texttt{HIFOO} \cite{burke2006hifoo} and \texttt{HINFSTRUCT} \cite{gahinet2011decentralized}, which have seen successful applications in practice. However, none of these early studies address the global optimality~of policy search for robust control. Global optimality guarantees only appear recently for state-feedback $\mathcal{H}_\infty$ control~\cite{guo2022global}: any Clarke stationary points are globally optimum; see \cite{hu2022connectivity,guo2023complexity} for related discussions. 

The recent studies \cite{guo2022global,hu2022connectivity} utilize convex reparameterization in terms LMIs to analyze the nonconvex policy optimization, which is consistent with the strategies in \cite{zheng2021analysis,mohammadi2021convergence,furieri2020learning} for smooth LQ control. This analysis idea via convex reformulations was also studied in \cite{sun2021learning} for static state feedback policies. A related framework, called \texttt{DCL}, was proposed in \cite{umenberger2022globally}, to study the nonconvex properties of Kalman filter. Despite sharing a similar spirit of exploiting LMI-based parameterization, our framework of \ECL{} applies to a wider range of problems than all these existing studies \cite{guo2022global,hu2022connectivity,zheng2021analysis,mohammadi2021convergence,furieri2020learning,umenberger2022globally} in the sense that \ECL{} works for both smooth and nonsmooth control problems, while naturally differentiating degenerate and non-degenerate policies. 


\vspace{-2mm}

\paragraph{Benign nonconvex landscapes in machine learning problems} 

Some exciting nonconvex optimization results have recently emerged in machine learning literature, where the underlying geometrical properties (such as rotational~\cite{sun2018geometric,chi2019nonconvex,li2019symmetry} and discrete~\cite{qu2019analysis,ge2017optimization} symmetries) enable identifying the local curvature of stationary points and thus contribute to global optimality of gradient-based methods \cite{zhang2020symmetry,jin2017escape,lee2019first}. Many of these favorable nonconvex properties have been studied in \cite{levin2022effect} under smooth parametrizations of nonconvex problems into convex forms. However, control problems naturally involve extra Lyapunov variables with a unique feature of similarity transformations, which is very different from machine learning problems. Accordingly, our main technical results are established using a new analysis framework of 
%
\ECL{} accounting for Lyapunov variables and similarity transformations. We believe this \ECL{} framework is of independent interest, and the details will be presented in Part II of this paper.

\subsection{Paper outline}

The rest of this paper is organized as follows. \Cref{section:problem_statement} presents the problem statements of nonconvex policy optimization for Linear Quadratic Gaussian (LQG) control and $\mathcal{H}_\infty$ robust control. In \Cref{section:LMIs-h2-hinf-norm}, we overview a few LMI characterizations for the LQG cost and $\mathcal{H}_\infty$ cost, and we also introduce a class of non-degenerate stabilizing controllers. We present our main results on the global optimality of smooth and nonconvex LQG control in \Cref{section:LQG} and on the global optimality of nonsmooth and nonconvex $\mathcal{H}_\infty$ robust control in \Cref{section:Hinf-control}. 
Some numerical experiments are reported in \Cref{section:experiments}. We finally conclude the paper in \Cref{section:conclusion}. Many auxiliary results, additional discussions, and technical proofs are provided in \cref{appendix:preliminaries,app:nonsmooth-optimization,appendix:auxillary-results,appendix:technical-proofs}.

\vspace{-2mm}

\paragraph{Notations.} 
%
We use lower and upper case letters (\emph{e.g.} $x$ and $A$) to denote vectors and matrices, respectively. Lower and upper case boldface letters (\emph{e.g.} $\mathbf{x}$ and $\mathbf{G}$) are used to denote signals and transfer matrices, respectively.  
The set of  $k\times k$ real symmetric matrices is denoted by $\mathbb{S}^k$, and the determinant of a square matrix $M$ is denoted by $\det M$. We denote the set of $k \times k$ real invertible matrices by $\mathrm{GL}_k=\{T\in\mathbb{R}^{k\times k} \mid \det T\neq 0\}$. Given a matrix $M \in \mathbb{R}^{k_1 \times k_2}$, $M^\tr$ denotes the transpose of $M$, and $\|M\|_F$ denotes the Frobenius norm of $M$. For any $M_1,M_2\in\mathbb{S}^k$, we use $M_1\prec (\preceq) M_2$ and $M_2\succ (\succeq) M_1$ to mean that $M_2-M_1$ is positive (semi)definite. 
We denote the set of real-rational proper stable
transfer matrices as $\mathcal{RH}_{\infty}$.  For simplicity, we omit the dimension of transfer matrices, which shall be clear in the context.  Also, we use $I$ (resp. $0$) to denote the identity matrix (resp. zero matrix) of compatible dimensions. 

\section{Motivations and Problem Statement} \label{section:problem_statement}

In this section, we first overview the formulations of optimal control with stochastic noises and robust control with adversarial disturbances, then introduce their policy optimization forms, 
and finally present the problem statement of our work. Many 
results introduced in this section are standard in classical control theory, 
and we refer to the book \cite{zhou1996robust} if we do not explicitly mention their sources. 

\subsection{System dynamics and disturbances}
We consider a continuous-time\footnote{Within the scope of this paper, we only consider the continuous-time case. Analog results for discrete-time systems also exist.
} 
linear time-invariant (LTI) dynamical system
\begin{equation}\label{eq:Dynamic}
\begin{aligned}
\dot{x}(t) &= Ax(t)+Bu(t)+ B_w w(t), \\
y(t) &= Cx(t)+ D_v v(t),
\end{aligned}
\end{equation}
where $x(t) \in \mathbb{R}^n$ is the vector of state variables, $u(t)\in \mathbb{R}^m$ the vector of control inputs, $y(t) \in \mathbb{R}^p$ the vector of measured outputs available for feedback, and $w(t) \in \mathbb{R}^n, v(t)  \in \mathbb{R}^p$ are the disturbances on the system process and measurement at time $t$. Here we introduce the matrices $B_w\in\mathbb{R}^{n\times n}$ and $D_v\in\mathbb{R}^{p\times p}$ for a unified treatment of LQG and $\mathcal{H}_\infty$ control problems that will be presented later. For notational simplicity, we will drop the argument $t$ when it is clear in the context. We consider the following performance signal 
\begin{equation} \label{eq:performance-signal}
    z(t) = \begin{bmatrix}
        Q^{1/2} \\ 0
    \end{bmatrix}x(t) + \begin{bmatrix}
        0 \\ R^{1/2}
    \end{bmatrix}u(t),
\end{equation}
where $Q \succeq 0$ and $R\succ 0$ are performance weight matrices. Throughout the paper, we also make the following standard assumption (see \Cref{appendix:preliminaries} for a review of related notions). 
\begin{assumption} \label{assumption:stabilizability}
In \cref{eq:Dynamic}, $(A,B)$ is controllable and $(C,A)$ is observable.
\end{assumption}

In this paper, we consider two different settings with regard to the disturbances $w(t)$ and $v(t)$, and they lead to the LQG optimal control problem and the $\mathcal{H}_\infty$ robust control problem.


\paragraph{LQG optimal control with stochastic noises. }
When $w(t)$ and $v(t)$ are white Gaussian noises with identity intensity matrices, i.e., $\mathbb{E}[w(t)w(\tau)]=\delta(t-\tau)I_n$ and $\mathbb{E}[v(t)v(\tau)]=\delta(t-\tau)I_p$, we consider an averaged mean performance
\begin{equation*} 
\mathfrak{J}_{\LQG}
\coloneqq \lim_{T \rightarrow +\infty }\mathbb{E}\!\left[ \frac{1}{T} \int_{0}^T  z(t)^\tr z(t)\, dt\right]
=\lim_{T \rightarrow +\infty }\mathbb{E} \!\left[\frac{1}{T}\int_{0}^T \left(x^\tr Q x + u^\tr R u\right)dt\right].
\end{equation*}
The classical problem of linear quadratic Gaussian (LQG) optimal control is formulated as
\begin{equation} \label{eq:LQG}
    \begin{aligned}
        \min_{u(t)} \quad & \mathfrak{J}_{\LQG} \\
        \text{subject to} \quad & ~\cref{eq:Dynamic},
    \end{aligned}
\end{equation}
where  the input $u(t)$ is allowed to depend on all past observation $y(\tau)$ with $\tau \leq t$.

\vspace{-3mm}

\paragraph{\texorpdfstring{$\mathcal{H}_\infty$}{Lg} robust control with adversarial disturbances.}
When $w(t)$ and $v(t)$ are deterministic disturbances, we consider the worst-case performance in an adversarial setup. Assume that the system starts from a zero initial state $x(0)=0$. Let $\mathcal{L}_2^{k}[0,\infty)$ be the set of square-integrable (bounded energy) signals of dimension $k$, i.e.,
$$
\mathcal{L}_2^{k}[0,\infty): =\left\{u:[0,+\infty)\rightarrow\mathbb{R}^{k}
\left|\,\|u\|_2 \coloneqq \left(\int_0^\infty u(t)^\tr u(t)dt\right)^{1/2} < \infty \right.\right\},
$$
Denote $
d(t) = \begin{bmatrix}
w(t) \\ v(t)
\end{bmatrix} \in \mathbb{R}^{p + n}
$, and consider the worst-case performance when the disturbance signal $d(t)$ has bounded energy less than or equal to $1$ i.e., $d\in \mathcal{L}_2^{p+n}[0,+\infty)$ with $\|d\|_2 \leq 1$:
\begin{equation*} 
    \mathfrak{J}_{\infty} := \sup_{\|d\|_2 \leq 1} \; \int_0^\infty z(t)^\tr z(t)\, dt = \sup_{\|d\|_2 \leq 1} \int_{0}^\infty \left(x^\tr Q x + u^\tr R u\right)dt. 
\end{equation*}
The $\mathcal{H}_\infty$ robust control problem is formulated as
\begin{equation} \label{eq:Hinf}
    \begin{aligned}
        \min_{u(t)} \quad & \mathfrak{J}_{\infty} \\
        \text{subject to} \quad & ~\cref{eq:Dynamic}\text{ and }
        x(0)=0,
    \end{aligned}
\end{equation}
where  the input $u(t)$ is allowed to depend on all past observation $y(\tau)$ with $\tau \leq t$.

It can be shown that for both LQG optimal control and $\mathcal{H}_\infty$ robust control, the cost values $\mathfrak{J}_{\LQG}$ and $\mathfrak{J}_\infty$ depend on $B_w$ and $D_v$ only via $B_wB_w^\tr$ and $D_v D_v^\tr$. We therefore define the weight~matrices
$
W \coloneqq B_w B_w^\tr,\, V\coloneqq D_vD_v^\tr,
$
and assume $B_w=W^{1/2}$ and $D_v=V^{1/2}$ without loss of generality. 
We now state the following standard assumptions, which is the counterpart to \Cref{assumption:stabilizability}. 
\begin{assumption} \label{assumption:performance-weights}
In both LQG optimal control and $\mathcal{H}_\infty$ robust control, the weight matrices satisfy $Q \succeq 0, R \succ 0, W \succeq 0, V \succ 0$. Furthermore, $(A,W^{1/2})$ is controllable, and $(Q^{1/2}, A)$ is observable.
\end{assumption}

\subsection{Dynamic feedback policies}
When the full state $x(t)$ cannot be directly observed, static feedback policies in which $u(t)$ only depends on the measurement $y(t)$ cannot achieve optimal values of the LQG cost or the $\mathcal{H}_\infty$ cost in general. On the other hand, we also do not need to consider general nonlinear policies with memory \cite{khargonekar1986uniformly}.
Indeed, it is known that the optimal values of both \cref{eq:LQG} and \cref{eq:Hinf} can be achieved or approximated by employing linear dynamic feedback policies of the form
\begin{equation}\label{eq:Dynamic_Controller}
    \begin{aligned}
        \dot \xi(t) &= A_{\mK}\xi(t) + B_{\mK}y(t), \\
        u(t) &= C_{\mK}\xi(t) + D_{\mK}y(t),
    \end{aligned}
\end{equation}
where $\xi(t) \in \mathbb{R}^q$ is the internal state, and $A_{\mK},B_{\mK},C_{\mK}$ and $D_{\mK}$ are matrices of proper dimensions that specify the policy dynamics; see \cite[Chaps 14 \& 16]{zhou1996robust} and also \Cref{section:optimal_policies_classical} for a brief summary of relevant results. We parameterize dynamic feedback policies of the form~\cref{eq:Dynamic_Controller} by%
\footnote{For notational simplicity, we lumped the policy parameters into a single matrix, but it should be interpreted as a dynamic policy represented by~\cref{eq:Dynamic_Controller}.
}
\[
\mK =
\begin{bmatrix}
D_\mK & C_\mK \\ B_\mK & A_\mK
\end{bmatrix} \in \mathbb{R}^{(m+q)\times (p+q)}.
\]
Combining~\cref{eq:Dynamic_Controller} with~\cref{eq:Dynamic} via simple algebra~leads to the closed-loop system
\begin{subequations}\label{eq:closed-loop}
\begin{equation}
\begin{aligned}
\frac{d}{dt} \begin{bmatrix} x \\ \xi \end{bmatrix} &=
A_{\mathrm{cl}}(\mK)
\begin{bmatrix} x \\ \xi \end{bmatrix}
+ B_{\mathrm{cl}}(\mK)
\begin{bmatrix} w \\ v \end{bmatrix}, \\
z &= C_{\mathrm{cl}}(\mK)
\begin{bmatrix} x \\ \xi \end{bmatrix}
+ D_{\mathrm{cl}}(\mK)
\begin{bmatrix} w \\ v \end{bmatrix},
\end{aligned}
\end{equation}
where we denote the closed-loop system matrices by
\begin{equation}\label{eq:closed-loop-matrices}
\begin{aligned}
A_{\mathrm{cl}}(\mK)
\coloneqq\  &
\begin{bmatrix}
A + BD_\mK C & BC_\mK \\ B_\mK C & A_\mK
\end{bmatrix}, 
& B_{\mathrm{cl}}(\mK)
\coloneqq\  & \begin{bmatrix} W^{1/2} & BD_{\mK}V^{1/2} \\ 0 & B_{\mK}V^{1/2}  \end{bmatrix}, \\
C_{\mathrm{cl}}(\mK)
\coloneqq\  &
 \begin{bmatrix}
        Q^{1/2} & 0 \\
        R^{1/2}D_{\mK}C & R^{1/2}C_{\mK}
    \end{bmatrix}, &
D_\mathrm{cl}(\mK) \coloneqq\ &
\begin{bmatrix} 0 & 0\\ 0 & R^{1/2}D_{\mK}V^{1/2}\end{bmatrix}.
\end{aligned}
\end{equation}
\end{subequations}
Then, the transfer matrix from the disturbance $d(t)=\begin{bmatrix}
w(t) \\ v(t)
\end{bmatrix}$ to the performance signal $z(t)$ becomes 
\begin{equation} \label{eq:transfer-function-zd}
    \mathbf{T}_{{zd}}(\mK, s) = C_{\mathrm{cl}}(\mK)
\left(sI - A_{\mathrm{cl}}(\mK)
\right)^{-1}
B_{\mathrm{cl}}(\mK)
+D_{\mathrm{cl}}(\mK).
\end{equation}

When the closed-loop system \cref{eq:closed-loop} is internally stable (see \cref{subsection:policy-optimization-internally-stable-systems} for precise definitions), it is a standard result in control theory that the LQG cost $\mathfrak{J}_{\LQG}$ of the closed-loop system~\cref{eq:closed-loop} is related to the $\mathcal{H}_2$ norm of the transfer matrix $\mathbf{T}_{zd}(\mK,s)$ by
\[
\mathfrak{J}_{\LQG}
=\|\mathbf{T}_{zd}(\mK,\cdot)\|_{\mathcal{H}_2}^2
=\frac{1}{2\pi}\int_{-\infty}^{+\infty}
\operatorname{tr}\!
\left(\mathbf{T}_{zd}(\mK,j\omega)^\her\,
\mathbf{T}_{zd}(\mK,j\omega)\right) d\omega,
\]
while the $\mathcal{H}_\infty$ cost of the closed-loop system~\cref{eq:closed-loop} is related to the $\mathcal{H}_\infty$ norm of $\mathbf{T}_{zd}(\mK,s)$ by
\[
\mathfrak{J}_{\infty}
=\|\mathbf{T}_{zd}(\mK,\cdot)\|_{\mathcal{H}_\infty}^2
= \left(\sup_{\omega\in\mathbb{R}}
\sigma_{\max}(\mathbf{T}_{zd}(\mK,j\omega))\right)^{\!2},
\]
where $\sigma_{\max}(\cdot)$ denotes the largest singular value. We note that when $\mathfrak{J}_\infty<+\infty$, the closed-loop LTI system \cref{eq:closed-loop} with $x(0)=0$ can be viewed as a linear mapping that maps any disturbance signal $d(t) \in \mathcal{L}_2^{p+n}[0,+\infty)$ to the performance signal $z(t) \in \mathcal{L}_2^{m+n}[0,+\infty)$, and $\sqrt{\mathfrak{J}_\infty}$ is exactly the $\mathcal{L}_2^{p+n}[0,+\infty) \to \mathcal{L}_2^{m+n}[0,+\infty)$ induced $\mathcal{H}_\infty$ norm of the closed-loop LTI system \cref{eq:closed-loop}, as shown above. 

We refer to the dimension $q \geq 0$ of the internal state variable $\xi$ as the order of the dynamic policy~\cref{eq:Dynamic_Controller}. A dynamic policy is called \emph{full-order} if $q = n$ and is called \emph{reduced-order} if $q < n$. Since the globally optimal LQG or $\mathcal{H}_\infty$ policy can be realized or approximated by a dynamic policy of order at most $n$ \cite[Chaps 14 \& 16]{zhou1996robust} (also see \cref{section:optimal_policies_classical} for a review), it is unnecessary to consider dynamic policies with orders beyond 
$n$. We also note that, any dynamic policy $\mK=\begin{bmatrix}
D_\mK & C_\mK \\ B_\mK & A_\mK
\end{bmatrix}$ of order $q<n$ can be augmented to full-order policies given by
\begin{equation}\label{eq:lifted_controller}
\tilde{\mK}_c =
\left[\begin{array}{c:cc}
    D_\mK & C_{\mK} &  0 \\[2pt]
    \hdashline
    B_{\mK} & A_{\mK} & 0 \\
    \tilde{B} & 0 & \Lambda
    \end{array}\right],
    \qquad
\tilde{\mK}_o =
\left[\begin{array}{c:cc}
    D_\mK & C_{\mK} &  \tilde{C} \\[2pt]
    \hdashline
    B_{\mK} & A_{\mK} & 0 \\
    0 & 0 & \Lambda
    \end{array}\right]
\end{equation}
where $\Lambda\in \mathbb{R}^{(n-q)\times(n-q)}$ is an arbitrary stable matrix, and $\tilde{B}\in \mathbb{R}^{(n-q)\times p}$ and $\tilde{C}\in \mathbb{R}^{m\times(n-q)}$ are arbitrary. It is clear that the augmented policies $\tilde{\mK}_c$ and $\tilde{\mK}_o$ satisfy $\mathbf{T}_{zd}(\tilde{\mK}_c,s)=\mathbf{T}_{zd}(\tilde{\mK}_o,s)=\mathbf{T}_{zd}(\mK,s)$ and thus induce the same LQG cost and $\mathcal{H}_\infty$ cost as $\mK$.

The dynamic policy~\cref{eq:Dynamic_Controller} is said to be \emph{strictly proper}, if there exists no direct feedback term from the measurement $y(t)$, i.e., $D_{\mK} = 0$. The LQG problem requires strictly proper dynamic policies, since the measurement $y(t)$ is corrupted by white Gaussian noise with infinite covariance, and the LQG cost $\mathfrak{J}_{\LQG}$ would be infinite if $D_\mK\neq 0$.

\subsection{Policy optimization over internally stabilizing policies} \label{subsection:policy-optimization-internally-stable-systems}

We now introduce an important notion of \textit{internal stability} that stems from control theory.
A dynamic feedback policy~\cref{eq:Dynamic_Controller} is said to \emph{internally stabilize} the plant~\cref{eq:Dynamic} if the 
the closed-loop matrix $A_{\mathrm{cl}}(\mK)$ 
is (Hurwitz) stable, i.e., the real parts of all its eigenvalues are strictly negative. We denote%
\begin{subequations} \label{eq:internallystabilizing}
\begin{align}
    \mathcal{C}_{q} & \coloneqq \left\{
    \left.\mK=\begin{bmatrix}
    D_{\mK} & C_{\mK} \\
    B_{\mK} & A_{\mK}
    \end{bmatrix}
    \in \mathbb{R}^{(m+q) \times (p+q)} \,\right|
    A_{\mathrm{cl}}(\mK)\text{ is stable}\right\}, \label{eq:internallystabilizing-a} \\
    \mathcal{C}_{q,0} & \coloneqq \left\{
    \left.\mK=\begin{bmatrix}
    D_{\mK} & C_{\mK} \\
    B_{\mK} & A_{\mK}
    \end{bmatrix}
    \in \mathcal{C}_q\,\right| D_{\mK} = 0_{m\times p}\right\}. \label{eq:internallystabilizing-b}
\end{align}
\end{subequations}
In other words, $\mathcal{C}_q$ parameterizes the set of internally stabilizing policies, while $\mathcal{C}_{q,0}$ parameterizes the set of \textit{strictly proper} internally stabilizing policies. It can be shown (see \cite{zheng2021analysis}) that $\mathcal{C}_q$ is an open subset of $\mathbb{R}^{(m+q) \times (p+q)}$, and $\mathcal{C}_{q,0}$ is an open subset of the vector space
\[
\mathcal{V}_q \coloneqq \left\{\left.\mK = \begin{bmatrix} D_\mK & C_\mK \\ B_\mK & A_\mK \end{bmatrix}\,\right| D_\mK = 0\right\}.
\]
Moreover, classical control theory establishes the following properties of the sets $\mathcal{C}_q$ and $\mathcal{C}_{q,0}$:
\begin{enumerate}
\item Any dynamic policy in $\mathcal{C}_{q,0}$ (resp. $\mathcal{C}_q$) induces a finite LQG cost $\mathfrak{J}_{\LQG}$ (resp. a finite $\mathcal{H}_\infty$ cost $\mathfrak{J}_\infty$).
\item Any dynamic policy of order $q\leq n$ with a finite LQG cost $\mathfrak{J}_{\LQG}$ (resp. a finite $\mathcal{H}_\infty$ cost $\mathfrak{J}_\infty$), can be augmented to a policy in $\mathcal{C}_{n,0}$ (resp. a policy in $\mathcal{C}_n$) with the same LQG cost (resp. $\mathcal{H}_\infty$ cost).
\item For any reduced-order policy $\mK$ and the associated augmented policies $\tilde{\mK}_c$ and $\tilde{\mK}_o$ in~\cref{eq:lifted_controller}, $\mK\in\mathcal{C}_q$ implies $\tilde{\mK}_c\in\mathcal{C}_n\text{ and }\tilde{\mK}_o\in\mathcal{C}_n$, and $\mK\in\mathcal{C}_{q,0}$ implies $\tilde{\mK}_c\in\mathcal{C}_{n,0}\text{ and }\tilde{\mK}_o\in\mathcal{C}_{n,0}$.
\end{enumerate}
Therefore, we will mainly consider dynamic policies in $\mathcal{C}_{n,0}$ (resp. $\mathcal{C}_n$) for the LQG optimal control (resp. the $\mathcal{H}_\infty$ robust control). 

We are now ready to view LQG optimal control~\cref{eq:LQG} and $\mathcal{H}_\infty$ robust control~\cref{eq:Hinf} from a modern perspective of policy optimization. Specifically, we will introduce cost functions $J_{\LQG,q}:\mathcal{C}_{q,0}\rightarrow\mathbb{R}$ and $J_{\infty,q}:\mathcal{C}_q\rightarrow\mathbb{R}$, and formulate \cref{eq:LQG} and \cref{eq:Hinf} as minimization of these cost functions.

\vspace{-3mm}

\paragraph{LQG policy optimization.}
Given $q\leq n$ and $\mK\in\mathcal{C}_{q,0}$, we define
\begin{equation}\label{eq:H2-norm}
J_{\LQG,q}(\mK)
= \|\mathbf{T}_{zd}(\mK,\cdot)\|_{\mathcal{H}_2},
\end{equation}
which is also equal to $\sqrt{\mathfrak{J}_{\LQG}}$ for the closed-loop system with the dynamic policy $\mK$.%
\footnote{
Here, unlike \cite{zheng2021analysis}, $J_{\LQG,q}(\mK)$ is defined to be the square root of $\mathfrak{J}_{\LQG}$ rather than $\mathfrak{J}_{\LQG}$ itself; the function $J_{\infty,q}(\mK)$ defined later is treated similarly. Minimizing $J_{\LQG,q}(\mK)$ is clearly equivalent to minimizing $\mathfrak{J}_{\LQG}$. These definitions will facilitate our subsequent analysis based on convex reformulations of classical control problems.
}
It can be seen that $J_{\LQG,q}$ gives a function defined over the domain $\mathcal{C}_{q,0}$ with values in $(0,+\infty)$. As a result, we can formulate the LQG optimal control \cref{eq:LQG} as the following policy optimization problem:
\begin{equation}\label{eq:LQG_policy_optimization}
\begin{aligned}
\min_{\mK}\ \ & J_{\LQG,n}(\mK) \\
\text{subject to}\ \ &  \mK \in \mathcal{C}_{n,0}.
\end{aligned}
\end{equation}
Here we only consider optimizing over full-order dynamic policies in $\mathcal{C}_{n,0}$, since any reduced-order policy in $\mathcal{C}_{q,0}$ can be augmented to a full-order policy in $\mathcal{C}_{n,0}$ by~\eqref{eq:lifted_controller} without changing the LQG cost. 
It can be shown that the set of \emph{full-order} strictly proper internally stabilizing policies $\mathcal{C}_{n,0}$ is nonempty as long as~\cref{assumption:stabilizability,assumption:performance-weights} hold; we refer to \cite{zheng2021analysis} for other geometrical properties such as path-connectedness of $\mathcal{C}_{n,0}$. We also define $J_{\LQG,q}$ for $q\leq n$, which will be used for theoretical~analysis.

It is known in classical control that $J_{\LQG,q}(\mK)$ can be computed by solving a Lyapunov equation, which is summarized in the lemma below \cite{basuthakur1975optimal,hyland1984optimal}. 

\begin{lemma}\label{lemma:LQG_cost_formulation1}
Fix $q\in\mathbb{N}$ such that $\mathcal{C}_{q,0}\neq\varnothing$. Given any $\mK\in\mathcal{C}_{q,0}$, we have
\begin{equation}\label{eq:LQG_cost_formulation1}
\begin{aligned}
J_{\LQG,q}(\mK)
={} &
\sqrt{\operatorname{tr}\!
\left(C_{\mathrm{cl}}(\mK)X_\mK C_{\mathrm{cl}}(\mK)^\tr \right)}
=
\sqrt{
\operatorname{tr}\!
\left(B_{\mathrm{cl}}(\mK)^\tr Y_\mK B_{\mathrm{cl}}(\mK)\right)},
\end{aligned}
\end{equation}
where $X_{\mK}$ and $Y_{\mK}$ are the unique positive semidefinite 
solutions to the following Lyapunov equations
\begin{subequations} \label{eq:Lyapunov-LQG}
\begin{align}
\begin{bmatrix} A &  BC_{\mK} \\ B_{\mK} C & A_{\mK} \end{bmatrix}X_{\mK} + X_{\mK}\begin{bmatrix} A &  BC_{\mK} \\ B_{\mK} C & A_{\mK} \end{bmatrix}^\tr +  \begin{bmatrix} W & 0 \\ 0 & B_{\mK}VB_{\mK}^\tr  \end{bmatrix}
& = 0, \label{eq:LyapunovX}
\\
\begin{bmatrix} A &  BC_{\mK} \\ B_{\mK} C & A_{\mK} \end{bmatrix}^\tr Y_{\mK} +  Y_{\mK}\begin{bmatrix} A &  BC_{\mK} \\ B_{\mK} C & A_{\mK} \end{bmatrix} +   \begin{bmatrix} Q & 0 \\ 0 & C_{\mK}^\tr R C_{\mK} \end{bmatrix}
& = 0. \label{eq:LyapunovY}
\end{align}
\end{subequations}
\end{lemma}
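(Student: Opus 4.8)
The plan is to establish the two stated expressions for $J_{\LQG,q}(\mK)$ by connecting the $\mathcal{H}_2$ norm of $\mathbf{T}_{zd}(\mK,\cdot)$ to the controllability and observability Gramians of the closed-loop system \cref{eq:closed-loop}. First I would recall the standard state-space formula for the $\mathcal{H}_2$ norm of a stable proper transfer matrix: if $\mathbf{G}(s) = C_{\mathrm{cl}}(sI - A_{\mathrm{cl}})^{-1}B_{\mathrm{cl}} + D_{\mathrm{cl}}$ is in $\mathcal{RH}_\infty$, then $\|\mathbf{G}\|_{\mathcal{H}_2}^2 = \operatorname{tr}(C_{\mathrm{cl}} P\, C_{\mathrm{cl}}^\tr) = \operatorname{tr}(B_{\mathrm{cl}}^\tr Q\, B_{\mathrm{cl}})$, where $P$ and $Q$ solve the closed-loop Lyapunov equations $A_{\mathrm{cl}}P + PA_{\mathrm{cl}}^\tr + B_{\mathrm{cl}}B_{\mathrm{cl}}^\tr = 0$ and $A_{\mathrm{cl}}^\tr Q + QA_{\mathrm{cl}} + C_{\mathrm{cl}}^\tr C_{\mathrm{cl}} = 0$; this identity is finite precisely when $D_{\mathrm{cl}} = 0$, which is guaranteed here because $\mK \in \mathcal{C}_{q,0}$ forces $D_\mK = 0$ and hence $D_{\mathrm{cl}}(\mK) = 0$ by \cref{eq:closed-loop-matrices}. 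Since $A_{\mathrm{cl}}(\mK)$ is Hurwitz by definition of $\mathcal{C}_{q,0}$, these Lyapunov equations have unique solutions, and those solutions are positive semidefinite because $B_{\mathrm{cl}}B_{\mathrm{cl}}^\tr \succeq 0$ and $C_{\mathrm{cl}}^\tr C_{\mathrm{cl}} \succeq 0$.

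The second step is to verify that, with $D_\mK = 0$, the data $A_{\mathrm{cl}}(\mK)$, $B_{\mathrm{cl}}(\mK)$, $C_{\mathrm{cl}}(\mK)$ from \cref{eq:closed-loop-matrices} reduce exactly to the matrices appearing in \cref{eq:Lyapunov-LQG}. Setting $D_\mK = 0$ in \cref{eq:closed-loop-matrices} gives $A_{\mathrm{cl}}(\mK) = \begin{bmatrix} A & BC_\mK \\ B_\mK C & A_\mK\end{bmatrix}$, $B_{\mathrm{cl}}(\mK) = \begin{bmatrix} W^{1/2} & 0 \\ 0 & B_\mK V^{1/2}\end{bmatrix}$, and $C_{\mathrm{cl}}(\mK) = \begin{bmatrix} Q^{1/2} & 0 \\ 0 & R^{1/2}C_\mK\end{bmatrix}$. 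Consequently $B_{\mathrm{cl}}(\mK)B_{\mathrm{cl}}(\mK)^\tr = \begin{bmatrix} W & 0 \\ 0 & B_\mK V B_\mK^\tr\end{bmatrix}$ and $C_{\mathrm{cl}}(\mK)^\tr C_{\mathrm{cl}}(\mK) = \begin{bmatrix} Q & 0 \\ 0 & C_\mK^\tr R C_\mK\end{bmatrix}$, so the abstract closed-loop Lyapunov equations become precisely \cref{eq:LyapunovX} and \cref{eq:LyapunovY}; hence $X_\mK = P$ and $Y_\mK = Q$ in the notation above, and the identity $J_{\LQG,q}(\mK) = \sqrt{\operatorname{tr}(C_{\mathrm{cl}}(\mK)X_\mK C_{\mathrm{cl}}(\mK)^\tr)} = \sqrt{\operatorname{tr}(B_{\mathrm{cl}}(\mK)^\tr Y_\mK B_{\mathrm{cl}}(\mK))}$ follows after taking the square root of the $\mathcal{H}_2$-norm formula, using $J_{\LQG,q}(\mK) = \|\mathbf{T}_{zd}(\mK,\cdot)\|_{\mathcal{H}_2}$ from \cref{eq:H2-norm}. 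One should also double-check that $\operatorname{tr}(C_{\mathrm{cl}}X_\mK C_{\mathrm{cl}}^\tr) = \operatorname{tr}(C_{\mathrm{cl}}(\mK)X_\mK C_{\mathrm{cl}}(\mK)^\tr)$ when the closed-loop $D$-term vanishes — which it does — and record uniqueness/positive-semidefiniteness of $X_\mK, Y_\mK$ explicitly, as the statement asserts them.

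For completeness I would include a short derivation of the $\mathcal{H}_2$-norm Gramian formula rather than merely citing it, since it is the analytic heart of the lemma: writing the impulse response $g(t) = C_{\mathrm{cl}}e^{A_{\mathrm{cl}}t}B_{\mathrm{cl}}$ for $t \geq 0$, Parseval's theorem gives $\|\mathbf{T}_{zd}\|_{\mathcal{H}_2}^2 = \int_0^\infty \operatorname{tr}(g(t)^\tr g(t))\,dt = \operatorname{tr}\!\left(B_{\mathrm{cl}}^\tr\!\left(\int_0^\infty e^{A_{\mathrm{cl}}^\tr t}C_{\mathrm{cl}}^\tr C_{\mathrm{cl}}e^{A_{\mathrm{cl}}t}\,dt\right)\!B_{\mathrm{cl}}\right)$, and the inner integral is exactly the observability Gramian $Y_\mK$, which satisfies \cref{eq:LyapunovY} by differentiating $e^{A_{\mathrm{cl}}^\tr t}C_{\mathrm{cl}}^\tr C_{\mathrm{cl}}e^{A_{\mathrm{cl}}t}$ and integrating; the dual manipulation $\operatorname{tr}(g(t)^\tr g(t)) = \operatorname{tr}(g(t)g(t)^\tr)$ yields the controllability-Gramian form with $X_\mK$. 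I do not expect a serious obstacle here — everything is classical — but the one point requiring genuine care is the interplay with strict properness: the $\mathcal{H}_2$ norm is finite if and only if $D_{\mathrm{cl}}(\mK) = 0$, and the formula in the lemma is valid only under that condition, so I would emphasize at the outset that restricting to $\mathcal{C}_{q,0}$ (equivalently $D_\mK = 0$) is exactly what makes all of this well-posed. The mild bookkeeping of matching block structures in step two, while routine, is where a sign or transpose slip could creep in, so it deserves to be written out carefully.
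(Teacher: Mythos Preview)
Your proposal is correct and matches the paper's approach: the paper states that \Cref{lemma:LQG_cost_formulation1} is a direct application of Statement~1 of \Cref{lemma:H2norm} (the Lyapunov-equation characterization of the $\mathcal{H}_2$ norm), whose proof in \Cref{appendix:proof-H2-lemma} proceeds via Parseval's theorem exactly as you outline. Your additional bookkeeping---checking that $D_\mK=0$ forces $D_{\mathrm{cl}}(\mK)=0$ and that the block products $B_{\mathrm{cl}}B_{\mathrm{cl}}^\tr$ and $C_{\mathrm{cl}}^\tr C_{\mathrm{cl}}$ specialize to the constant terms in \cref{eq:LyapunovX,eq:LyapunovY}---fills in details the paper leaves implicit.
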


\vspace{-3mm}

\paragraph{$\mathcal{H}_\infty$ policy optimization.} 
Given $q\leq n$ and $\mK\in\mathcal{C}_{q}$, we define

\begin{equation}\label{eq:Hinf-norm}
J_{\infty,q}(\mK)
= \|\mathbf{T}_{zd}(\mK,\cdot)\|_{\mathcal{H}_\infty}
= \sup_{\omega\in\mathbb{R}}\sigma_{\max}(\mathbf{T}_{zd}(\mK,j\omega)),
\end{equation}
which is also equal to $\sqrt{\mathfrak{J}_{\infty}}$ for the closed-loop system with the dynamic policy $\mK$. We see that $J_{\infty,q}$ gives a function defined over $\mathcal{C}_q$ with values in $(0,+\infty)$. The $\mathcal{H}_\infty$ robust control~\cref{eq:Hinf} can then be reformulated into a policy optimization problem:
\begin{equation} \label{eq:Hinf_policy_optimization}
    \begin{aligned}
        \min_{\mK} \quad &J_{\infty,n}(\mK) \\
        \text{subject to} \quad& \mK \in {\mathcal{C}}_n. 
    \end{aligned}
\end{equation}
where we only consider optimizing over $\mathcal{C}_n$ for similar arguments as in LQG policy optimization.

Instead of using the definition~\cref{eq:Hinf-norm}, there are state-space characterizations (e.g., the bounded real lemma in \cref{lemma:bounded_real}) for computing $J_{\infty,q}(\mK)$ for $\mK\in\mathcal{C}_q$. However, unlike the LQG cost that can be evaluated via solving Lyapunov equations \Cref{eq:Lyapunov-LQG}, evaluating $J_{\infty,q}(\mK)$ typically requires solving LMIs that admit no closed-form solutions.
We also note that the classical $\mathcal{H}_2$ and $\mathcal{H}_\infty$ control often considers a general LTI system \cite{zhou1996robust}; see \Cref{appendix:h2_hinf}. It is straightforward to extend our main results to the general situation; 
see our conference report on $\mathcal{H}_\infty$ control \cite{tang2023global}.

\subsection{Problem statements}

The classical solutions for LQG control \cref{eq:LQG} and $\mathcal{H}_\infty$ robust control \cref{eq:Hinf} explicitly depend on the system model $A, B, C$ in two different ways (see \Cref{theorem:LQG-riccati-solution,theorem:Hinf-riccati}): one in formulating an observer-based controller, 
and another in solving Riccati equations 
to obtain feedback and observer~gains.  
%
%
To avoid the explicit dependence on model parameters, we consider the policy optimization formulations \cref{eq:LQG_policy_optimization} for LQG optimal control and \cref{eq:Hinf_policy_optimization} for $\mathcal{H}_\infty$ robust control, where the feedback policy is parameterized by $\mK =(A_\mK, B_\mK, C_\mK, D_\mK)$ in \cref{eq:Dynamic_Controller}. 
The idea of direct policy search is to start from an initial policy $\mK_0 \in \mathcal{C}_n$ and conduct the iteration $\mK_{t+1} = \mK_t + \alpha_t F_t, t\geq 0$, where $\alpha_t >0$ is a step size and $F_t$ is a search direction, such that the cost $J_{\LQG,n}(\mK) $ or $ J_{\infty,n}(\mK)$ is gradually improved. 
In addition to being more amenable to model-free 
    control, it is observed that policy optimization is also more flexible as evidenced by the advances in modern reinforcement learning, and more scalable as it directly searches policies using computationally cheap iterates.
 
However, both policy optimization  \cref{eq:LQG_policy_optimization} and \cref{eq:Hinf_policy_optimization} are naturally nonconvex problems, making theoretical guarantees for direct policy search challenging.  
In this paper, we highlight some benign nonconvex landscape properties of policy optimization \cref{eq:LQG_policy_optimization} and \cref{eq:Hinf_policy_optimization}, which allow us to certify global optimality for a class of stationary points. In particular, we focus on the following two topics:   


\vspace{-1mm}

\begin{enumerate}
    \item \textit{Smooth and nonconvex LQG control}, which will be studied in \Cref{section:LQG}. It is well-known that many linear quadratic control problems (including the LQG control $J_{\LQG,q}(\mK)$) are smooth and infinitely differentiable over their domains \cite{furieri2020learning,fazel2018global,jansch2022policy,fatkhullin2021optimizing}. Furthermore, many state-feedback cases (such as the standard LQR) have no spurious stationary points, and satisfy favorable coercivity and gradient dominance properties. However, landscape results for partially observed cases such as the LQG control are much fewer, with notable exceptions in \cite{zheng2021analysis,zheng2022escaping,mohammadi2021lack}. As our first main contribution, we will reveal the hidden convexity in LQG policy optimization \cref{eq:LQG_policy_optimization}, and certify the global optimality for a class of stationary points (which we call \textit{non-degenerate} policies). 

    \item \textit{Nonsmooth and nonconvex robust control}, which will be studied in \Cref{section:Hinf-control}. Unlike LQR or LQG,  it is known that the $\mathcal{H}_\infty$ cost is nonsmooth with respect to its argument due to two possible sources of non-smoothness: One from taking the largest singular of complex matrices, and the other from maximization over all the frequencies $\omega \in \mathbb{R}$. 
    Direct policy search for $\mathcal{H}_\infty$ control has been used in earlier studies~\cite{apkarian2006nonsmooth,apkarian2006nonsmooth2,apkarian2008mixed}, but no optimality~guarantees are given. Only very recently, a global optimality guarantee is given for state-feedback $\mathcal{H}_\infty$ control \cite{guo2022global}. As our second main contribution, we will also reveal hidden convexity in nonsmooth $\mathcal{H}_\infty$ policy optimization \cref{eq:Hinf_policy_optimization}, and show that for a class of \textit{non-degenerate} policies, all Clarke stationary points in \Cref{eq:Hinf_policy_optimization} are globally optimal. 

\end{enumerate}

\vspace{-1mm}

To support our results for both LQG and $\mathcal{H}_\infty$ control, we will formally introduce a notion of non-degenerate stabilizing policies in \Cref{section:LMIs-h2-hinf-norm}. The proofs of our main global optimality characterizations rely on a new \ECL{} framework which is the main topic of Part II of this paper.

\section{Strict/Nonstrict LMIs and Non-degenerate Policies} \label{section:LMIs-h2-hinf-norm}

In this section, we give an overview of LMI characterizations for both the LQG cost in \cref{eq:LQG_policy_optimization} and $\mathcal{H}_\infty$ cost in \Cref{eq:Hinf_policy_optimization}, which play a key role in our analysis based on convex reformulations \cite{scherer1997multiobjective,scherer2000linear}. We will also highlight some subtleties between strict and nonstrict LMIs, which motivate our definition of a general class of \textit{non-degenerate} policies.

\subsection{LMIs for smooth LQG cost and nonsmooth $\mathcal{H}_\infty$ cost} \label{subsection:smooth-LQG-cost}
As shown in \cref{eq:H2-norm}, the LQG cost for a strictly proper stabilizing policy $\mK \in {\mathcal{C}}_{q,0}$ can be characterized by the $\mathcal{H}_2$ norm of the transfer function $\mathbf{T}_{zd}(\mK, s)$. Although the $\mathcal{H}_2$ (and $\mathcal{H}_\infty$) norm of a stable transfer function can, in principle, be computed from their definitions, we here summarize some alternative characterizations, involving Lyapunov equations and LMIs. 

\begin{lemma} \label{lemma:H2norm}
    Consider a transfer function $\mathbf{G}(s) = C(sI - A)^{-1}B$, where $A \in \mathbb{R}^{n\times n}$ is stable and $C \in \mathbb{R}^{p \times n}, B \in \mathbb{R}^{n \times m}$. The following statements hold.
    \begin{enumerate}
        \item (Lyapunov equations). We have $\|\mathbf{G}\|_{\mathcal{H}_2}^2 = \mathrm{trace}(B^\tr L_{\mathrm{o}} B) = \mathrm{trace}(C L_{\mathrm{c}} C^\tr) $, where $L_{\mathrm{o}}$ and $L_{\mathrm{c}}$ are observability and controllability Gramians which can be obtained from the Lyapunov equations
        \begin{subequations} \label{eq:Lyapunov-equations-H2norm}
            \begin{align}
                AL_{\mathrm{c}} + L_{\mathrm{c}}A^\tr + BB^\tr &= 0, \label{eq:Lyapunov-equations-H2norm-a}\\
                A^\tr L_{\mathrm{o}} + L_{\mathrm{o}}A + C^\tr C & = 0.  \label{eq:Lyapunov-equations-H2norm-b}
            \end{align}
        \end{subequations}
        \item (Strict LMI). $\|\mathbf{G}\|_{\mathcal{H}_2} < \gamma$ if and only if there exist $P\in \mathbb{S}^n_{++}$ and $\Gamma \in \mathbb{S}^p_{++}$
        such that the following strict LMI is feasible:
            \begin{align} \label{eq:strict-LMI-H2norm}
                \begin{bmatrix} A^\tr P+PA & PB \\ B^\tr P & -\gamma I \end{bmatrix}&\prec 0, \;\; \begin{bmatrix} P & C^\tr \\ C & \Gamma \end{bmatrix}\succ 0,\;\; \mathrm{trace}(\Gamma)<\gamma.
            \end{align}
        \item (Nonstrict LMI). $\|\mathbf{G}\|_{\mathcal{H}_2} \leq \gamma$ if there exist $P \in \mathbb{S}^n_{++}$ and $\Gamma \in \mathbb{S}^p_{+}$ 
        such that the following nonstrict LMI is feasible:
            \begin{align} \label{eq:nonstrict-lmi-h2}
                \begin{bmatrix} A^\tr P+PA & PB \\ B^\tr P & -\gamma I \end{bmatrix}&\preceq 0, \;\; \begin{bmatrix} P & C^\tr \\ C & \Gamma \end{bmatrix}\succeq 0,\;\; \mathrm{trace}(\Gamma)\leq\gamma.
            \end{align}
            The converse holds if $(A, B)$ is controllable\footnote{The controllability of $(A, B)$ cannot be removed (\Cref{lemma:controllability-A-B-H2-norm}), but this nonstrict LMI \cref{eq:nonstrict-lmi-h2} does not require the observability of $(C, A)$. If $(C, A)$ is also observable, the solution to \cref{eq:nonstrict-lmi-h2} when $\gamma = \|\mathbf{G}\|_{\mathcal{H}_2}$ is unique (\Cref{lemma:unique-solution-H2}).}.
    \end{enumerate}
\end{lemma}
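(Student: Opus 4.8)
The plan is to establish part 1 directly from the definition of the $\mathcal{H}_2$ norm, and then to derive parts 2 and 3 together, since the strict and nonstrict LMIs share the same skeleton and differ only in strictness and in one extra controllability hypothesis. For part 1, note that stability of $A$ makes the integrals $L_{\mathrm{c}} \coloneqq \int_0^\infty e^{At}BB^\tr e^{A^\tr t}\,dt$ and $L_{\mathrm{o}} \coloneqq \int_0^\infty e^{A^\tr t}C^\tr Ce^{At}\,dt$ converge to positive semidefinite matrices; differentiating the integrands and integrating over $[0,\infty)$ shows $L_{\mathrm{c}}$ and $L_{\mathrm{o}}$ solve \cref{eq:Lyapunov-equations-H2norm-a,eq:Lyapunov-equations-H2norm-b}, and stability of $A$ also gives uniqueness of these Lyapunov solutions. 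For the trace identities, I would use that $\mathbf{G}$ is strictly proper with impulse response $g(t) = Ce^{At}B$ on $t \ge 0$ (no feedthrough term, since $\mathbf{G}(s) = C(sI-A)^{-1}B$), so Parseval's theorem gives $\|\mathbf{G}\|_{\mathcal{H}_2}^2 = \int_0^\infty \operatorname{tr}(g(t)^\tr g(t))\,dt$; cyclicity of the trace rewrites the integrand as $\operatorname{tr}(B^\tr e^{A^\tr t}C^\tr Ce^{At}B)$ or $\operatorname{tr}(Ce^{At}BB^\tr e^{A^\tr t}C^\tr)$, and pulling the integral inside yields $\operatorname{tr}(B^\tr L_{\mathrm{o}}B) = \operatorname{tr}(CL_{\mathrm{c}}C^\tr)$.

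Parts 2 and 3 rely on two shared tools. The first is a comparison lemma: if $A$ is stable and $X = X^\tr$ satisfies $AX + XA^\tr + BB^\tr \preceq 0$ (resp. $\prec 0$), then $X \succeq L_{\mathrm{c}}$ (resp. $X \succ L_{\mathrm{c}}$), because $X - L_{\mathrm{c}}$ then satisfies the stable Lyapunov inequality $A(X - L_{\mathrm{c}}) + (X - L_{\mathrm{c}})A^\tr \preceq 0$ (resp. $\prec 0$), which forces it to be positive (semi)definite. The second is a Schur-complement reduction: for $P \succ 0$ and $\gamma > 0$, a congruence by $\operatorname{diag}(P^{-1}, I)$ followed by a Schur complement shows the first LMI in \cref{eq:strict-LMI-H2norm} is equivalent to the condition that $\gamma P^{-1}$ satisfies $A(\gamma P^{-1}) + (\gamma P^{-1})A^\tr + BB^\tr \prec 0$, and the second LMI to the condition $\Gamma \succ CP^{-1}C^\tr$, with the analogous equivalences when $\prec, \succ$ are replaced by $\preceq, \succeq$. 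Sufficiency (LMI feasibility implies the norm bound) then follows by chaining: the Schur reduction and comparison lemma give $\gamma P^{-1} \succeq L_{\mathrm{c}}$, hence $\operatorname{tr}(C(\gamma P^{-1})C^\tr) \ge \operatorname{tr}(CL_{\mathrm{c}}C^\tr) = \|\mathbf{G}\|_{\mathcal{H}_2}^2$ by part 1, while $\operatorname{tr}(\Gamma) \ge \operatorname{tr}(CP^{-1}C^\tr)$ and $\operatorname{tr}(\Gamma) \le \gamma$ yield $\|\mathbf{G}\|_{\mathcal{H}_2}^2 \le \gamma\operatorname{tr}(CP^{-1}C^\tr) \le \gamma\operatorname{tr}(\Gamma) \le \gamma^2$, i.e. $\|\mathbf{G}\|_{\mathcal{H}_2} \le \gamma$; in the strict case the comparison and trace inequalities become strict (the degenerate case $C=0$ being trivial), giving $\|\mathbf{G}\|_{\mathcal{H}_2} < \gamma$.

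For necessity in the strict case, given $\|\mathbf{G}\|_{\mathcal{H}_2} < \gamma$ I would let $Z_\delta \succ 0$ solve $AZ_\delta + Z_\delta A^\tr + BB^\tr + \delta I = 0$, so that $Z_\delta \to L_{\mathrm{c}}$ and $\operatorname{tr}(CZ_\delta C^\tr) \to \|\mathbf{G}\|_{\mathcal{H}_2}^2 < \gamma^2$ as $\delta \downarrow 0$, and then set $P \coloneqq \gamma Z_\delta^{-1}$ and $\Gamma \coloneqq \tfrac{1}{\gamma}CZ_\delta C^\tr + \epsilon I$: both strict LMIs in \cref{eq:strict-LMI-H2norm} hold, and $\operatorname{tr}(\Gamma) = \tfrac{1}{\gamma}\operatorname{tr}(CZ_\delta C^\tr) + \epsilon p < \gamma$ once $\delta$ and $\epsilon$ are small enough. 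For necessity in the nonstrict case, controllability makes $L_{\mathrm{c}} \succ 0$, and $P \coloneqq \gamma L_{\mathrm{c}}^{-1}$, $\Gamma \coloneqq \tfrac{1}{\gamma}CL_{\mathrm{c}}C^\tr$ make both Schur complements vanish (so \cref{eq:nonstrict-lmi-h2} holds with $\preceq 0$ and $\succeq 0$), while $\Gamma \succeq 0$ and $\operatorname{tr}(\Gamma) = \tfrac{1}{\gamma}\|\mathbf{G}\|_{\mathcal{H}_2}^2 \le \gamma$; this construction needs $L_{\mathrm{c}} \succ 0$, which is precisely why the converse can fail without controllability, in agreement with \Cref{lemma:controllability-A-B-H2-norm}.

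The routine parts are the Parseval identity and the Schur-complement bookkeeping. The two points that require care are the comparison lemma — this is exactly where stability of $A$ enters, and it drives both directions of the LMI equivalences — and the simultaneous choice of the perturbation $Z_\delta$ and the slack $\epsilon I$ in the strict necessity direction, where one must keep the strict first LMI and the strict bound $\operatorname{tr}(\Gamma) < \gamma$ compatible; this is also where it matters that $\gamma$, rather than $\gamma^2$, appears in both LMI blocks, so that their combination reproduces $\|\mathbf{G}\|_{\mathcal{H}_2}^2 < \gamma^2$.
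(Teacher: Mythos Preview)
Your proposal is correct and follows essentially the same approach as the paper's proof. Part~1 is identical (Parseval plus the integral representation of the Gramians), and your necessity constructions for both the strict and nonstrict cases match the paper's: the paper perturbs $L_{\mathrm{c}}$ by $\epsilon\tilde{X}$ with $A\tilde{X}+\tilde{X}A^\tr\prec 0$ where you use the specific choice $Z_\delta$ solving $AZ_\delta+Z_\delta A^\tr+BB^\tr+\delta I=0$, and both set $P=\gamma X^{-1}$, $\Gamma=\gamma^{-1}CXC^\tr+\epsilon I$. The only stylistic difference is in the sufficiency direction: the paper writes the slack as $B_0B_0^\tr$ and bounds $\|\mathbf{G}\|_{\mathcal{H}_2}$ by the $\mathcal{H}_2$ norm of the augmented system $C(sI-A)^{-1}\begin{bmatrix}B & B_0\end{bmatrix}$, whereas you invoke the comparison lemma $\gamma P^{-1}\succeq L_{\mathrm{c}}$ directly; these are the same argument, since the comparison lemma is exactly what the augmented-system computation yields.
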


It is clear that \Cref{lemma:LQG_cost_formulation1} can be considered as a direct application of the first result in \Cref{lemma:H2norm}. This lemma is classical in control \cite{zhou1996robust,scherer2000linear,dullerud2013course}. To clarify some subtleties between strict and nonstrict LMIs \cref{eq:strict-LMI-H2norm} - \cref{eq:nonstrict-lmi-h2}, we reproduce a short proof of \Cref{lemma:H2norm} in \Cref{app:h2-hinf-norms}.
%
    The proof in \Cref{app:h2-hinf-norms} is constructive, i.e., we shall explicitly construct solutions satisfying \cref{eq:strict-LMI-H2norm,eq:nonstrict-lmi-h2} from the solution to \cref{eq:Lyapunov-equations-H2norm}. While the solution to \cref{eq:Lyapunov-equations-H2norm} is unique, the solutions to \cref{eq:strict-LMI-H2norm} are not unique given any $\gamma > \|\mathbf{G}\|_{\mathcal{H}_2}$. Indeed, the solution set to \cref{eq:strict-LMI-H2norm} is convex and open; any sufficiently small perturbation to a feasible point remains feasible. This perturbation perspective, while somewhat less explicitly stated in the literature, is one crucial step in many synthesis tasks using strict LMIs (see, e.g., \cite{scherer1997multiobjective,dullerud2013course}). Meanwhile, although not obvious, the solution set to the non-strict LMI \cref{eq:nonstrict-lmi-h2} is in general not a singleton even when setting $\gamma = \|\mathbf{G}\|_{\mathcal{H}_2}$ (see \Cref{example:non-unique-certificate}). 
    {If the system $(C, A)$ is also observable, then the solution to \cref{eq:nonstrict-lmi-h2} is unique when setting $\gamma = \|\mathbf{G}\|_{\mathcal{H}_2}$ (see \Cref{lemma:unique-solution-H2}). }  

We next present the celebrated \textit{bounded real lemma} that gives a state-space characterization for the $\mathcal{H}_\infty$ norm of a stable transfer function using LMIs.

\begin{lemma}[Bounded real lemma]
\label{lemma:bounded_real}
 Consider $\mathbf{G}(s) = C(sI - A)^{-1}B+D$, where $A \in \mathbb{R}^{n\times n}$ is stable and $C \in \mathbb{R}^{p \times n}, B \in \mathbb{R}^{n \times m}, D \in \mathbb{R}^{p \times m}$.
Let $\gamma>0$ be arbitrary. The following statements hold.

\begin{enumerate}[leftmargin=12pt,labelwidth=8pt,labelsep=4pt]
\item (Strict version, \cite[Lemma 7.3]{dullerud2013course}) $\|\mathbf{G}\|_{\mathcal{H}_\infty}< \gamma$ if and only if there exists a symmetric matrix $P$
such that
\begin{equation} \label{eq:strict-hinf}
\begin{bmatrix}
A^\tr P + P A & PB & C^\tr \\
B^\tr P & -\gamma I & D^\tr \\
C & D & -\gamma I
\end{bmatrix}\prec 0.
\end{equation}

\item (Nonstrict version,
\cite[Section 2.7.3]{boyd1994linear}) $\|\mathbf{G}\|_{\mathcal{H}_\infty}\leq \gamma$ if there exists a symmetric matrix $P$ 
such that
\begin{equation} \label{eq:non-strict-hinf}
    \begin{bmatrix}
A^\tr P + P A & PB & C^\tr \\
B^\tr P & -\gamma I & D^\tr \\
C & D & -\gamma I
\end{bmatrix}\preceq 0.
\end{equation}
{The converse holds if $(A,B)$ is controllable\footnote{
The controllability of $(A, B)$ cannot be removed; see \Cref{example:hinf-controllability}. This result does not require $(C, A)$ to be observable. If $(C, A)$ is observable and $A$ is stable, any solution $P$ satisfying \cref{eq:strict-hinf,eq:non-strict-hinf} is strictly positive definite. 
}.}
\end{enumerate}
\end{lemma}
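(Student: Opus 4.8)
The two parts run on a single engine: feasibility of the (non)strict LMI produces a quadratic storage function that certifies the (non)strict $\mathcal{H}_\infty$ bound, and conversely a valid bound is turned back into such a certificate through a Riccati / dissipativity argument. I would organize the proof into three blocks — sufficiency of both versions at once, necessity of the strict version, and necessity of the nonstrict version — the last being the delicate one.

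\textbf{Sufficiency (feasibility implies the norm bound).} Let $P=P^\tr$ satisfy \cref{eq:strict-hinf} (resp.\ \cref{eq:non-strict-hinf}). The $(1,1)$-block reads $A^\tr P+PA\prec0$ (resp.\ $\preceq0$), so stability of $A$ together with $P=\int_0^\infty e^{A^\tr t}\bigl(-(A^\tr P+PA)\bigr)e^{At}\,dt$ gives $P\succ0$ (resp.\ $P\succeq0$); moreover, a Schur complement of the nonstrict LMI with respect to its $(3,3)$-block $-\gamma I$ gives $A^\tr P+PA\preceq-\tfrac1\gamma C^\tr C$, hence $P\succeq\tfrac1\gamma L_{\mathrm o}$, which is exactly the footnote's claim $P\succ0$ when $(C,A)$ is observable (the strict case already forces $P\succ0$). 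After that Schur complement the LMI becomes $\big[\begin{smallmatrix}A^\tr P+PA & PB\\ B^\tr P & -\gamma I\end{smallmatrix}\big]+\tfrac1\gamma\big[\begin{smallmatrix}C^\tr\\ D^\tr\end{smallmatrix}\big]\big[\begin{smallmatrix}C & D\end{smallmatrix}\big]\prec0$ (resp.\ $\preceq0$). Fix $\omega\in\mathbb{R}$; since $A$ has no imaginary-axis eigenvalue, $M(\omega):=(j\omega I-A)^{-1}B$ is well defined, and from $(j\omega I-A)M(\omega)=B$ one gets the identity $M(\omega)^\her(A^\tr P+PA)M(\omega)=-M(\omega)^\her PB-B^\her PM(\omega)$. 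Conjugating the displayed inequality by $\big[\begin{smallmatrix}M(\omega)\\ I\end{smallmatrix}\big]$, the first summand collapses to $-\gamma I$ by this identity and the second becomes $\tfrac1\gamma\mathbf{G}(j\omega)^\her\mathbf{G}(j\omega)$ since $\big[\begin{smallmatrix}C & D\end{smallmatrix}\big]\big[\begin{smallmatrix}M(\omega)\\ I\end{smallmatrix}\big]=\mathbf{G}(j\omega)$; hence $\mathbf{G}(j\omega)^\her\mathbf{G}(j\omega)\prec\gamma^2 I$ (resp.\ $\preceq$), i.e.\ $\sigma_{\max}(\mathbf{G}(j\omega))<\gamma$ (resp.\ $\le\gamma$). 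The value at $\omega=\infty$ is controlled by the $(2,2)$-block of the same Schur-complemented matrix, $D^\tr D\prec\gamma^2 I$ (resp.\ $\preceq$). Since $\sigma_{\max}(\mathbf{G}(j\cdot))$ is continuous on $\mathbb{R}$ with limit $\sigma_{\max}(D)$ at $\pm\infty$, taking the supremum over the one-point compactification yields $\|\mathbf{G}\|_{\mathcal{H}_\infty}<\gamma$ (resp.\ $\le\gamma$). Controllability is not used in this block.

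\textbf{Necessity for the strict version.} I would invoke the classical KYP / bounded real lemma \cite[Lemma 7.3]{dullerud2013course} directly. For a self-contained route one uses that $\|\mathbf{G}\|_{\mathcal{H}_\infty}<\gamma$ forces $\sigma_{\max}(D)<\gamma$ and renders the associated $\gamma$-game Hamiltonian matrix free of imaginary-axis eigenvalues, extracts the stabilizing solution $X\succeq0$ of the corresponding algebraic Riccati equation (which satisfies the Schur-complemented LMI with equality), and then sets $P=X+\delta I$ for small $\delta>0$; this is legitimate because, as already remarked for the $\mathcal{H}_2$ case, the solution set of a strict LMI is open.

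\textbf{Necessity for the nonstrict version, assuming $(A,B)$ controllable.} This is the step I expect to be the main obstacle: the natural certificate need not be positive definite, and when $\sigma_{\max}(D)=\gamma$ the Riccati data degenerates, so I would avoid Riccati equations and argue by approximation. For each $\epsilon\in(0,1]$ we have $\|\mathbf{G}\|_{\mathcal{H}_\infty}<\gamma+\epsilon$, so the strict part supplies $P_\epsilon\succ0$ solving \cref{eq:strict-hinf} with $\gamma$ replaced by $\gamma_\epsilon:=\gamma+\epsilon$. The crucial claim is uniform boundedness of $\{P_\epsilon\}$, and here controllability enters: the Schur-complemented strict LMI says precisely that $\tfrac{d}{dt}\bigl(\gamma_\epsilon\,x^\tr P_\epsilon x\bigr)\le\gamma_\epsilon^2\|d\|^2-\|z\|^2$ along trajectories of $\dot x=Ax+Bd$, $z=Cx+Dd$; picking, for each state $\xi$, the minimum-energy input steering the origin to $\xi$ over a fixed horizon $T_0$ — whose energy is $\xi^\tr W_c(T_0)^{-1}\xi$, with $W_c(T_0)\succ0$ the finite-time controllability Gramian — and integrating along that trajectory gives $\gamma_\epsilon\,\xi^\tr P_\epsilon\xi\le\gamma_\epsilon^2\,\xi^\tr W_c(T_0)^{-1}\xi$, hence $0\prec P_\epsilon\preceq(\gamma+1)\,W_c(T_0)^{-1}$ for all $\epsilon\in(0,1]$. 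A subsequence $P_{\epsilon_k}\to P_\star\succeq0$ then exists, and letting $\epsilon_k\downarrow0$ in the strict LMI yields \cref{eq:non-strict-hinf} for $P_\star$. Finally I would note that controllability is genuinely needed (pointing to \Cref{example:hinf-controllability}) and gather the positive-definiteness statement from the sufficiency block, which completes the footnote.
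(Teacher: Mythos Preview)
Your proposal is correct but follows a different route from the paper in the necessity direction. The paper uses Rantzer's convex-duality argument: the squared $\mathcal{H}_\infty$ norm is cast as a primal SDP over Gram-type matrices $Z\succeq0$, the (Schur-complemented) LMI is identified as its Lagrangian dual, strong duality is established via a strictly feasible dual point built from Lyapunov theory for the stable $A$, and dual attainment in the nonstrict case follows from strict \emph{primal} feasibility, which is shown (citing \cite{you2016direct}) to be equivalent to controllability of $(A,B)$. The strict case is then recovered by convexly combining a near-optimal dual feasible point with a strictly feasible one. Your compactness argument --- uniform boundedness $P_\epsilon\preceq(\gamma+1)W_c(T_0)^{-1}$ obtained by integrating the dissipation inequality along the minimum-energy control to an arbitrary state --- is an elegant alternative that makes the role of controllability equally transparent (invertibility of the finite-horizon Gramian) without any SDP duality machinery. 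For sufficiency, the paper integrates the dissipation inequality $\dot V+\|z\|^2\le\gamma^2\|d\|^2$ in the time domain; your direct frequency-domain conjugation by $\bigl[\begin{smallmatrix}M(\omega)\\ I\end{smallmatrix}\bigr]$ is equally standard and slightly cleaner in that it does not need $P\succeq0$ as an intermediate step.

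One small gap: in your self-contained sketch for the strict necessity, the perturbation $P=X+\delta I$ does not work as stated. The stabilizing Riccati solution $X$ lies on the \emph{boundary} of the nonstrict LMI (its Schur complement vanishes), and adding $\delta I$ changes the $(1,1)$-block of the $3\times3$ matrix by $\delta(A^\tr+A)$, which is not sign-definite; so this perturbation does not force strict negativity. Since your primary plan is simply to cite \cite[Lemma 7.3]{dullerud2013course}, this only affects the optional sketch; the paper's convex-combination device sidesteps the issue entirely.
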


A more complete list of the strict version of \Cref{lemma:bounded_real} is summarized in \cite[Corollary 12.3]{zhou1998essentials}. This lemma is also classical in control, and one general version -- the Kalman-Yakubovich-Popov (KYP) lemma is arguably one of the most fundamental tools in systems theory \cite{zhou1996robust,scherer2000linear,dullerud2013course}.
The sufficiency of LMIs \cref{eq:strict-hinf,eq:non-strict-hinf} is relatively easy to prove, which involves a control-theoretic interpretation of the variable $P$ to construct a storage function $V(x)=x^\tr P x$ certifying a dissipativity condition. The necessity of LMIs \cref{eq:strict-hinf,eq:non-strict-hinf} is more involved to prove. In \Cref{app:h2-hinf-norms}, we adopt convex duality techniques in \cite{rantzer1996kalman,balakrishnan2003semidefinite,you2016direct} to highlight some subtleties between strict and non-strict LMIs.

\subsection{Non-degenerate policies}

We can use the LMI characterizations in \Cref{lemma:H2norm,lemma:bounded_real} to formulate nonconvex (bilinear) matrix inequalities when fixing the full-order dynamic policies for $\mathcal{H}_2$ and $\mathcal{H}_\infty$ synthesis. It was realized in the early 1990s that those nonconvex matrix inequalities can be convexified using a sophisticated change of variables (see, for example, \cite{scherer2000linear,gahinet1994linear} and the references therein). It is also possible to derive classical Riccatti inequalities from those matrix inequalities \cite{liu2006simple}. All the previous studies \cite{scherer2000linear,gahinet1994linear} only use the strict LMIs \cref{eq:strict-LMI-H2norm,eq:strict-hinf}. As we will see in Part II, strict LMIs can only characterize \textit{strict epigraphs} of $J_{\LQG,n}(\mK)$ or $J_{\infty,n}(\mK)$, but fail to characterize global optimality~directly.

In this work, we aim to analyze the LQG and $\mathcal{H}_\infty$ cost functions instead of their upper bounds, and thus our subsequent results require non-strict LMIs \cref{eq:nonstrict-lmi-h2,eq:non-strict-hinf}. However, the non-strict LMIs \cref{eq:nonstrict-lmi-h2,eq:non-strict-hinf} do not present sufficient and necessary conditions for $\mathcal{H}_2$ and $\mathcal{H}_\infty$ characterizations. This fact will affect our analysis framework based on \ECL{} that will be formally introduced in Part II of our paper.
This subsection introduces a broad class of \textit{non-degenerate} policies that are closely related to the nonstrict LMIs \cref{eq:nonstrict-lmi-h2,eq:non-strict-hinf}. We also present their physical interpretations.

\paragraph{LQG optimal control}

The nonstrict LMI in \Cref{lemma:H2norm} is in general not equivalent to, but only provides a sufficient condition for $\|\mathbf{G}\|_{\mathcal{H}_2} \leq \gamma$ when $A$ is stable. Consequently, given $\mK\in\mathcal{C}_{n,0}$ and $\gamma>0$, it is only a sufficient condition for $J_{\LQG,n}(\mK)\leq\gamma$ that there exist matrices $P$ and $\Gamma$ satisfying the followng matrix inequalities:
\begin{subequations} \label{eq:LQG-Bilinear}
    \begin{align}
        & \begin{bmatrix} A_{\mathrm{cl}}(\mK)^\tr P+PA_{\mathrm{cl}}(\mK) & PB_{\mathrm{cl}}(\mK) \\ B_{\mathrm{cl}}(\mK)^\tr P & -\gamma I \end{bmatrix} \preceq 0, \label{eq:LQG-Bilinear-a}\\
        &\begin{bmatrix} P & C_{\mathrm{cl}}(\mK)^\tr \\ C_{\mathrm{cl}}(\mK) & \Gamma \end{bmatrix} \succeq 0,\;  {P \succ 0}, \; \mathrm{trace}(\Gamma) \leq \gamma,  \label{eq:LQG-Bilinear-b}
    \end{align}
\end{subequations}
where $A_{\mathrm{cl}}(\mK), B_{\mathrm{cl}}(\mK), C_{\mathrm{cl}}(\mK)$ are the closed-loop matrices of the policy $\mK$, defined in~\Cref{eq:transfer-function-zd}. We are particularly interested in those policies $\mK\in\mathcal{C}_{n,0}$ for which~\eqref{eq:LQG-Bilinear} can be satisfied with $\gamma= J_{\LQG,n}(\mK)$, as these values of $J_{\LQG,n}(\mK)$ can then be directly characterized by~\eqref{eq:LQG-Bilinear}, which would further allow us to utilize convex reformulations of optimal control problems for the analysis of $J_{\LQG,n}(\mK)$.

Note that the Lyapunov variable $P$ in \cref{eq:LQG-Bilinear} is of dimension $2n \times 2n$. Upon partitioning
\begin{equation} \label{eq:P-blocks}
P = \begin{bmatrix} P_{11} & P_{12} \\ P_{12}^\tr & P_{22} \end{bmatrix} \in \mathbb{S}^{2n}_{++},
\end{equation}
we will see later that the invertibility of $P_{12}$ plays an important role in ``convexifying'' the bilinear matrix inequalities \cref{eq:LQG-Bilinear}. Motivated by these observations, we define a class of \textit{non-degenerate policies} that admit a Lyapunov variable $P\in \mathbb{S}^{2n}_{++}$ with an off-diagonal block $P_{12}$ having full rank to certify its LQG cost.

\begin{definition} \label{definition:non-degenerate-controller-LQG}
    A full-order dynamic policy $\mK \in {\mathcal{C}}_{n,0}$ is called \textit{non-degenerate for LQG} if there exists a $P \in \mathbb{S}^{2n}_{++}$ with $P_{12} \in \mathrm{GL}_{n}$ such that \Cref{eq:LQG-Bilinear-b,eq:LQG-Bilinear-a} hold with $\gamma = J_{\LQG,n}(\mK)$.
\end{definition}

Precisely, we define the set of non-degenerate policies for LQG as
\begin{equation} \label{def:LQG-Cnd}
    {\mathcal{C}}_{\mathrm{nd},0} :=
  \left\{\mK =\begin{bmatrix}
    D_{\mK} & C_{\mK} \\
    B_{\mK} & A_{\mK}
    \end{bmatrix} \in {\mathcal{C}}_{n,0}
    \left|
    \begin{aligned} & \;\; \exists P \in \mathbb{S}^{2n}_{++}\; \mathrm{with} \; P_{12} \in \mathrm{GL}_{n},\; \Gamma\; \mathrm{such~that}\\ & \; \Cref{eq:LQG-Bilinear-a}\; \mathrm{and}\; \Cref{eq:LQG-Bilinear-b}\; \textrm{hold with} \; \gamma = J_{\LQG,n}(\mK)  \end{aligned} \right.\right\}.
\end{equation}
%
%
%
Non-degeneracy of dynamic policies for LQG has a nice physical interpretation. Given $\mK \in {\mathcal{C}}_{n,0}$, it can be shown that the system and policy state variables $(x(t), \xi(t))$ follow a Gaussian process with mean and covariance
\begin{equation} \label{eq:co-variance}
\begin{aligned}
    \lim_{t \rightarrow \infty} \mathbb{E}\left(\begin{bmatrix} x(t) \\ \xi(t) \end{bmatrix} \right) = 0, \quad  X_{\mK} = \begin{bmatrix}X_{\mK,11} & X_{\mK,12} \\ X_{\mK,12}^\tr & X_{\mK,22} \end{bmatrix}:= \lim_{t \rightarrow \infty} \mathbb{E}\left(\begin{bmatrix} x(t) \\ \xi(t) \end{bmatrix}\begin{bmatrix} x(t) \\ \xi(t) \end{bmatrix}^\tr \right),
\end{aligned}
\end{equation}
where $X_{\mK}$ is the unique positive semidefinite solution to the Lyapunov equation~\cref{eq:LyapunovX}; see \Cref{appendix:covariance_LQG} for a proof. Following the terminology in \cite{umenberger2022globally} (which only focuses on Kalman filtering), we say a policy $\mK\in\mathcal{C}_{n,0}$ is \emph{informative}
if the steady-state correlation matrix $X_{\mK,12}:=\lim_{t\to\infty}\mathbb{E}\left(x(t)\xi^\tr(t)\right)$
has full rank. Our next result shows that a policy in ${\mathcal{C}}_{n,0}$ is non-degenerate for LQG if and only if it is informative.

\begin{theorem} \label{proposition:informativity-non-degenerate}
Given $\mK \in {\mathcal{C}}_{n,0}$, we have $\mK\in {\mathcal{C}}_{\mathrm{nd},0}$ if and only if $\mK$ is \emph{informative} in the sense that
\[X_{\mK,12}:=\lim_{t\to\infty}\mathbb{E}\left(x(t)\xi^\tr(t)\right)
\]
has full rank.
\end{theorem}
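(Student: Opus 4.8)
The plan is to \emph{dualize} the non-strict certificate defining $\mathcal{C}_{\mathrm{nd},0}$, turning the bilinear matrix inequalities \cref{eq:LQG-Bilinear} into a statement purely about the closed-loop controllability Gramian $X_\mK$, and then to use \Cref{assumption:performance-weights} to pin down the relevant off-diagonal block. Throughout write $\gamma=J_{\LQG,n}(\mK)>0$, and recall from \Cref{lemma:LQG_cost_formulation1} that $A_{\mathrm{cl}}(\mK)X_\mK+X_\mK A_{\mathrm{cl}}(\mK)^\tr+B_{\mathrm{cl}}(\mK)B_{\mathrm{cl}}(\mK)^\tr=0$ and $\operatorname{tr}(C_{\mathrm{cl}}(\mK)X_\mK C_{\mathrm{cl}}(\mK)^\tr)=\gamma^2$.

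First I would establish the reduction. For $P\in\mathbb{S}^{2n}_{++}$ and symmetric $\Gamma$, taking Schur complements in \cref{eq:LQG-Bilinear-a} (the block $-\gamma I$ is negative definite) and in \cref{eq:LQG-Bilinear-b} (the block $P$ is positive definite) shows that $(P,\Gamma)$ satisfies \cref{eq:LQG-Bilinear} with this $\gamma$ if and only if $S:=\gamma P^{-1}$ satisfies $A_{\mathrm{cl}}(\mK)S+SA_{\mathrm{cl}}(\mK)^\tr+B_{\mathrm{cl}}(\mK)B_{\mathrm{cl}}(\mK)^\tr\preceq 0$ and $\operatorname{tr}(C_{\mathrm{cl}}(\mK)SC_{\mathrm{cl}}(\mK)^\tr)\le\gamma^2$. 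Since $A_{\mathrm{cl}}(\mK)$ is Hurwitz, subtracting the Lyapunov equation for $X_\mK$ from the first inequality forces $S\succeq X_\mK$; together with the trace inequality and $\gamma^2=\operatorname{tr}(C_{\mathrm{cl}}(\mK)X_\mK C_{\mathrm{cl}}(\mK)^\tr)$ this collapses to $\operatorname{tr}(C_{\mathrm{cl}}(\mK)(S-X_\mK)C_{\mathrm{cl}}(\mK)^\tr)=0$, i.e. $C_{\mathrm{cl}}(\mK)(S-X_\mK)=0$ (using $S-X_\mK\succeq 0$). Moreover, for a positive definite block matrix $P_{12}\in\mathrm{GL}_n\iff(P^{-1})_{12}\in\mathrm{GL}_n\iff S_{12}\in\mathrm{GL}_n$. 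Hence $\mK\in\mathcal{C}_{\mathrm{nd},0}$ \emph{if and only if} there exists $S\in\mathbb{S}^{2n}_{++}$ with $S_{12}\in\mathrm{GL}_n$, $C_{\mathrm{cl}}(\mK)(S-X_\mK)=0$, and $A_{\mathrm{cl}}(\mK)(S-X_\mK)+(S-X_\mK)A_{\mathrm{cl}}(\mK)^\tr\preceq 0$.

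The crux of the argument—and the step I expect to require the most care—is a pair of dual structural facts, both exploiting \Cref{assumption:performance-weights}. (a) The unobservable subspace $\mathcal{N}$ of $(C_{\mathrm{cl}}(\mK),A_{\mathrm{cl}}(\mK))$ is contained in $\{0\}\times\mathbb{R}^n$: for $v=(v_1,v_2)\in\mathcal{N}$ one has $C_\mK v_2=0$ from $C_{\mathrm{cl}}(\mK)v=0$, which makes the cross-coupling term $BC_\mK(\cdot)$ vanish in $A_{\mathrm{cl}}(\mK)v$, and an induction then gives $C_{\mathrm{cl}}(\mK)A_{\mathrm{cl}}(\mK)^k v=0\Rightarrow Q^{1/2}A^k v_1=0$ for all $k$, so $v_1=0$ by observability of $(Q^{1/2},A)$. (b) Dually, $\ker X_\mK$—which equals the unobservable subspace of $(B_{\mathrm{cl}}(\mK)^\tr,A_{\mathrm{cl}}(\mK)^\tr)$ by the standard Gramian identity—is contained in $\{0\}\times\mathbb{R}^n$: the analogous recursion with $B_\mK^\tr v_2=0$ yields $W^{1/2}(A^\tr)^k v_1=0$ for all $k$, so $v_1=0$ by controllability of $(A,W^{1/2})$. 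Verifying that the coupling terms vanish at every stage of these inductions is the only delicate bookkeeping; the rest is elementary (PBH / Gramian) manipulation.

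With (a)--(b) in hand both implications are short. If $\mK$ is informative, i.e. $X_{\mK,12}\in\mathrm{GL}_n$, then any $v\in\ker X_\mK$ has $v_1=0$ by (b), and $X_\mK v=0$ with $v=(0,v_2)$ gives $X_{\mK,12}v_2=0$, hence $v=0$; thus $X_\mK\succ0$ and $S=X_\mK$ meets the reduced characterization (zero Lyapunov residual, trace exactly $\gamma^2$, $S_{12}=X_{\mK,12}\in\mathrm{GL}_n$), equivalently $P=\gamma X_\mK^{-1}$ and $\Gamma=\gamma^{-1}C_{\mathrm{cl}}(\mK)X_\mK C_{\mathrm{cl}}(\mK)^\tr$ certify non-degeneracy, so $\mK\in\mathcal{C}_{\mathrm{nd},0}$. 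Conversely, if $\mK\in\mathcal{C}_{\mathrm{nd},0}$, take $S$ from the reduced characterization and set $\Delta=S-X_\mK\succeq 0$; then $A_{\mathrm{cl}}(\mK)\Delta+\Delta A_{\mathrm{cl}}(\mK)^\tr\preceq0$ with $A_{\mathrm{cl}}(\mK)$ Hurwitz implies $\Delta=\int_0^\infty e^{A_{\mathrm{cl}}(\mK)t}M e^{A_{\mathrm{cl}}(\mK)^\tr t}\,dt$ for some $M\succeq0$, so $\mathrm{range}\,\Delta$ is $A_{\mathrm{cl}}(\mK)$-invariant, and $C_{\mathrm{cl}}(\mK)\Delta=0$ puts it inside $\ker C_{\mathrm{cl}}(\mK)$, hence inside $\mathcal{N}$; by (a), $\mathrm{range}\,\Delta\subseteq\{0\}\times\mathbb{R}^n$, so $\Delta$ has zero first block row and column and therefore $S_{12}=X_{\mK,12}$. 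As $S_{12}\in\mathrm{GL}_n$, we conclude $X_{\mK,12}\in\mathrm{GL}_n$, i.e. $\mK$ is informative.
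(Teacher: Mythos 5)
Your proposal is correct, and while it follows the same overall skeleton as the paper's argument (passing to $S=\gamma P^{-1}$, using the Lyapunov equation for $X_\mK$ to force $S\succeq X_\mK$, and collapsing the trace inequality to $C_{\mathrm{cl}}(\mK)(S-X_\mK)=0$), it replaces the paper's key technical step with a genuinely different and more elementary one. Where the paper's \Cref{lemma:informativity_intermediate} establishes the block structure of $\tilde X=S-X_\mK$ by an explicit frequency-domain computation — Laplace-transforming $C_{\mathrm{cl}}(\mK)e^{A_{\mathrm{cl}}(\mK)\tau}\tilde B=0$, inverting $sI-A_{\mathrm{cl}}(\mK)$ blockwise via the plant transfer function $\mathbf{P}(s)=C(sI-A)^{-1}B$, and reading off that the top block of $e^{A_{\mathrm{cl}}(\mK)\tau}\tilde B$ vanishes — you obtain the same conclusion from the purely state-space observation that $\operatorname{range}(S-X_\mK)$ is an $A_{\mathrm{cl}}(\mK)$-invariant subspace of $\ker C_{\mathrm{cl}}(\mK)$, hence lies in the unobservable subspace, which your fact (a) confines to $\{0\}\times\mathbb{R}^n$ using observability of $(Q^{1/2},A)$. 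Your sufficiency direction also differs: the paper first deduces controllability of $(A_\mK,B_\mK)$ from the $(2,2)$-block of \cref{eq:LyapunovX} and invokes an external positive-definiteness result for $X_\mK$, whereas you kill $\ker X_\mK$ directly via the dual fact (b) (controllability of $(A,W^{1/2})$ forces $v_1=0$ on the kernel, and invertibility of $X_{\mK,12}$ then forces $v_2=0$). The invariant-subspace route is shorter, avoids the transfer-function bookkeeping, and makes the symmetry between the two structural facts (a) and (b) — and hence between the roles of $(Q^{1/2},A)$ observability and $(A,W^{1/2})$ controllability in \Cref{assumption:performance-weights} — explicit; the paper's computation, on the other hand, yields the intermediate identities (such as $C_\mK(sI-A_\mK-B_\mK\mathbf{P}(s)C_\mK)^{-1}\tilde B_2=0$) that are reused elsewhere. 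The only bookkeeping you flagged as delicate — that the cross-coupling terms vanish at every stage of the two inductions — does go through exactly as you describe, since $D_\mK=0$ makes $C_{\mathrm{cl}}(\mK)$ and $B_{\mathrm{cl}}(\mK)$ block-diagonal and $R\succ0$, $V\succ0$ let you strip the square roots.
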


The sufficiency part of \Cref{proposition:informativity-non-degenerate} can be proved by the following idea: We first show that the full rank of $X_{\mK,12}$ leads to the controllability of $(A_\mK,B_{\mK})$, which further implies that $X_{\mK}$ from~\cref{eq:LyapunovX} is positive definite (cf. \cite[Lemma 4.5]{zheng2021analysis}). As a result, $P = \gamma X_{\mK}^{-1}$ with $\gamma = J_{\LQG,n}(\mK)$ is well-defined and we can show it satisfies \Cref{eq:LQG-Bilinear-a,eq:LQG-Bilinear-b}. Further, $P_{12} =\gamma(X_{\mK}^{-1})_{12}$ is invertible since $X_{\mK,12}$ is, which completes the proof of sufficiency. The necessity part is more technically involved, and we provide all the proof details in \Cref{appendix:informative}.

We can further show that non-degenerate policies are ``generic'' in the sense that the complement set ${\mathcal{C}}_{n,0}\backslash{\mathcal{C}}_{\mathrm{nd},0}$ has measure zero. The proof 
relies on the fact that the zero-set of a nontrivial real-analytic function has measure zero (cf. \cite{mityagin2015zero}). The details are technical, and we postpone them to \cref{appendix:measure-zero-LQG}. 
\begin{theorem} \label{proposition:measure-zero-LQG}
    The set difference ${\mathcal{C}}_{n,0}\backslash{\mathcal{C}}_{\mathrm{nd},0}$ has measure zero. 
\end{theorem}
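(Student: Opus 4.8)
The plan is to reduce the statement to the classical fact that the zero set of a nontrivial real-analytic function has Lebesgue measure zero (e.g.\ \cite{mityagin2015zero}), together with the characterization of non-degeneracy via informativity in \Cref{proposition:informativity-non-degenerate}. By that theorem, $\mK \in {\mathcal{C}}_{\mathrm{nd},0}$ if and only if $X_{\mK,12}$ has full rank, where $X_{\mK}$ solves the Lyapunov equation \cref{eq:LyapunovX}. Hence the complement ${\mathcal{C}}_{n,0}\backslash{\mathcal{C}}_{\mathrm{nd},0}$ is exactly the set of $\mK\in{\mathcal{C}}_{n,0}$ for which $\det(X_{\mK,12}) = 0$. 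The first step is therefore to argue that the map $\mK \mapsto X_{\mK}$ (and thus $\mK\mapsto\det(X_{\mK,12})$) is real-analytic on the open set ${\mathcal{C}}_{n,0}$: for each $\mK\in{\mathcal{C}}_{n,0}$ the closed-loop matrix $A_{\mathrm{cl}}(\mK)$ (restricted to the $(x,\xi)$ dynamics with $D_\mK=0$, i.e.\ the block matrix appearing in \cref{eq:LyapunovX}) is Hurwitz, so the Lyapunov equation has a unique solution given by the convergent integral $X_\mK = \int_0^\infty e^{\hat A_{\mathrm{cl}}t}\,\hat W_\mK\, e^{\hat A_{\mathrm{cl}}^\tr t}\,dt$, and this is a real-analytic function of the entries of $\mK$ (the integrand is analytic in $\mK$ and locally uniformly integrable since eigenvalues of $\hat A_{\mathrm{cl}}$ depend continuously on $\mK$); equivalently, one can use that $\mathrm{vec}(X_\mK)$ is the solution of a linear system whose coefficient matrix is nonsingular and polynomial in $\mK$, hence $X_\mK$ is a rational — in particular real-analytic — function of $\mK$ on ${\mathcal{C}}_{n,0}$. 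Consequently $f(\mK) := \det(X_{\mK,12})$ is real-analytic on ${\mathcal{C}}_{n,0}$.

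The second step is to show that $f$ is not identically zero on (each connected component of) ${\mathcal{C}}_{n,0}$, i.e.\ that there exists at least one informative policy. It suffices to exhibit one $\mK^\star \in {\mathcal{C}}_{n,0}$ with $X_{\mK^\star,12}$ of full rank; the natural candidate is the standard LQG-optimal observer-based controller (the Kalman-filter-plus-LQR controller from \Cref{theorem:LQG-riccati-solution}), whose internal state $\xi$ is the minimum-mean-square-error estimate of $x$, for which one expects $X_{\mK^\star,12}=X_{\mK^\star,22}\succ 0$ under \Cref{assumption:stabilizability,assumption:performance-weights}; alternatively, \cite[Lemma 4.5]{zheng2021analysis} already guarantees that $(A,B)$ controllable and $(C,A)$ observable imply the existence of a controllable-and-observable stabilizing $\mK^\star\in{\mathcal{C}}_{n,0}$, and controllability of $(A_{\mK^\star},B_{\mK^\star})$ forces $X_{\mK^\star}\succ 0$, which in turn forces the off-diagonal block $X_{\mK^\star,12}$ to have full rank $n$ (a positive-definite $2n\times 2n$ matrix cannot have a rank-deficient off-diagonal $n\times n$ block without violating positive definiteness of a suitable Schur complement — this small linear-algebra fact should be checked but is elementary). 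Since ${\mathcal{C}}_{n,0}$ has finitely many (in fact at most two, by \cite{zheng2021analysis}) path-connected components and each is open and connected, and since an informative policy in one component can be transported to the other by the frequency-domain-preserving similarity/sign transformation of \cite{zheng2021analysis} (which preserves the rank of $X_{\mK,12}$ up to congruence), $f$ is nontrivial on every component.

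The third step combines these: on each connected component $\mathcal{U}$ of ${\mathcal{C}}_{n,0}$, $f|_{\mathcal{U}}$ is a nontrivial real-analytic function, so its zero set $\{\mK\in\mathcal{U}: f(\mK)=0\}$ has Lebesgue measure zero by \cite{mityagin2015zero}; taking the finite union over components gives that $\{\mK\in{\mathcal{C}}_{n,0}: \det(X_{\mK,12})=0\} = {\mathcal{C}}_{n,0}\backslash{\mathcal{C}}_{\mathrm{nd},0}$ has measure zero, as claimed. I expect the main obstacle to be the second step — namely, cleanly exhibiting a single informative policy and then arguing nontriviality on \emph{every} connected component rather than just one; the analyticity in step one is routine once the integral (or $\mathrm{vec}$/Kronecker) representation is written down, and step three is a direct citation. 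A secondary technical point worth care is confirming that rank-deficiency of the off-diagonal block is genuinely the $\emph{complement}$ of non-degeneracy (as opposed to merely implying it), but this is precisely the content of \Cref{proposition:informativity-non-degenerate}, so it can be invoked directly.
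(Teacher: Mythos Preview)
Your overall strategy coincides with the paper's: identify ${\mathcal{C}}_{n,0}\backslash{\mathcal{C}}_{\mathrm{nd},0}$ with the zero set of $f(\mK)=\det(X_{\mK,12})$ via \Cref{proposition:informativity-non-degenerate}, observe that $f$ is real-analytic on ${\mathcal{C}}_{n,0}$, check it is not identically zero, and apply the real-analytic zero-set lemma componentwise (using that ${\mathcal{C}}_{n,0}$ has at most two connected components). The paper's proof is terse on exactly the two points you flag---analyticity and nontriviality---so your instinct about where the work lies is right.

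However, your second route to nontriviality contains a genuine error. It is \emph{false} that $X_{\mK^\star}\succ 0$ forces $X_{\mK^\star,12}$ to have full rank: the $2n\times 2n$ identity is positive definite with zero off-diagonal blocks, and more generally any block-diagonal positive definite matrix is a counterexample. Positive definiteness only says the Schur complement $X_{22}-X_{12}^\tr X_{11}^{-1}X_{12}$ is positive definite, which imposes no lower bound on $\operatorname{rank}(X_{12})$. So controllability of $(A_{\mK^\star},B_{\mK^\star})$ alone (which gives $X_{\mK^\star}\succ 0$) does \emph{not} yield informativity, and that branch of your argument fails.

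Your first route---take $\mK^\star$ to be the Riccati-based LQG controller and use the Kalman-filter orthogonality $\mathbb{E}[(x-\xi)\xi^\tr]=0$ to get $X_{\mK^\star,12}=X_{\mK^\star,22}$---is the right idea, but you still owe the argument that $X_{\mK^\star,22}\succ 0$; this amounts to controllability of $(A_{\mK^\star},B_{\mK^\star})=(A-BK-LC,\,L)$, which follows from $L=PC^\tr V^{-1}$ with $P\succ 0$ and $V\succ 0$ together with observability of $(C,A)$ (so $(A-BK,L)$ is controllable, and controllability is invariant under output injection). Once you have one informative $\mK^\star$, your transport-to-the-other-component argument via the similarity/sign map from \cite{zheng2021analysis} is fine, since that map sends $X_{\mK,12}\mapsto T X_{\mK,12}$ (up to sign) and preserves rank.
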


The following example provides numerical evidence illustrating that ${\mathcal{C}}_{n,0}\backslash{\mathcal{C}}_{\mathrm{nd},0}$ has measure zero.

\begin{figure}[t]
    \centering
    \setlength{\abovecaptionskip}{4pt}
    \hspace{4mm} \begin{subfigure}{0.48\textwidth}
        \includegraphics[width=0.9 \textwidth]{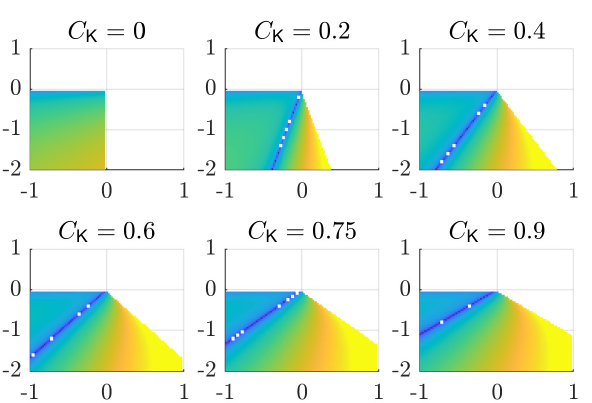}
        \caption{LQG case in \Cref{example:LQG-1}}     \label{fig:LQG-heatmap}
    \end{subfigure}
    \begin{subfigure}{0.48\textwidth}
        \includegraphics[width=0.9 \textwidth]{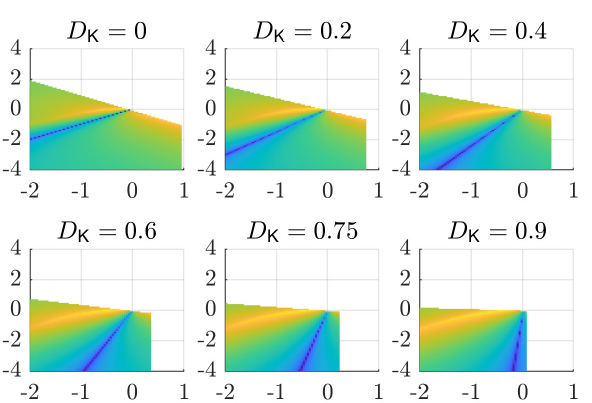}
        \caption{$\mathcal{H}_\infty$ case in \Cref{example:Hinf-1}}     \label{fig:sim_results}
    \end{subfigure}
    \caption{Illustration that the set of degenerate policies has measure zero. (a) The case of LQG control \cref{def:LQG-Cnd}: Heatmaps of $\ln|P_{12}|$ for different values of $C_\mK$. 
    (b) The case of $\mathcal{H}_\infty$ control \cref{def:Hinf-Cnd}: Heatmaps of $\ln|P_{12}|$ for different values of $D_{\mathsf{K}}$. In both subfigures, the $x$-axes and $y$-axes represent $A_{\mK}$ and $B_{\mK}$ respectively.  Points with very low values of $\ln|P_{12}|$ (i.e., points whose corresponding $P_{12}$ are very close to $0$) are colored in dark blue, and they roughly form a line passing through $(0,0)$ in each sub-figure which has measure zero.
    }
\end{figure}

\begin{example}[${\mathcal{C}}_{n,0}\backslash{\mathcal{C}}_{\mathrm{nd},0}$ has measure zero] \label{example:LQG-1} 
Consider an open-loop stable dynamic system~\cref{eq:Dynamic} with
$
A=-1,\;  B=1, \; C=1.
$
We choose $Q = R = 1, W = V = 1$ for the LQG in \cref{eq:LQG_policy_optimization}.
Our task is to numerically search for points in ${\mathcal{C}}_{n,0}\backslash{\mathcal{C}}_{\mathrm{nd},0}$, and inspect whether they form a set of measure zero.
We imposes the constraints $A_\mK \in [-1,1], B_\mK \in [-2,2]$.
We first generate a set of points $\{\mK_j\}_{j=1}^N$ by discretizing the region $[-1,1]\times[-2,2]$ into a spacial grid with $N=101\times101$ points that are equally spaced.
Then for each $j=1,\hdots,N,$ we numerically compute 
$\gamma_j=J_{\LQG,1}(\mK_j)$ and try to construct $P_j \succeq 0$ by solving a Lyapunov equation (cf. \Cref{proposition:informativity-non-degenerate}). 
We then check whether the smallest eigenvalue of $P_j$ is sufficiently bounded away from zero (say greater than or equal to $10^{-5}$), and record the value of $(P_j)_{12}$.
%
\Cref{fig:LQG-heatmap} illustrates several heatmaps of $\ln|P_{12}|$ with fixed $C_\mK$ and varying $(A_\mK,B_\mK)$, generated from the recorded values of $\{\ln|(P_j)_{12}|\}_{j=1}^N$. Points with low values of $\ln|P_{12}|$ (i.e., points whose corresponding $P_{12}$ are close to $0$) are colored in dark blue, and we observe that they roughly form a line passing through $(0,0)$ in each sub-figure, which has measure zero.
\hfill \qed
\end{example}

\paragraph{$\mathcal{H}_\infty$ robust control}

Similar to the LQG case, for the $\mathcal{H}_\infty$ policy optimization problem~\cref{eq:Hinf_policy_optimization}, we focus on the policies $\mK\in\mathcal{C}_{n}$ such that the following set of matrix inequalities are satisfied by some matrix $P$ with $\gamma=J_{\infty,n}(\mK)$:
\begin{subequations} \label{eq:Hinf-Bilinear}
    \begin{align}
&     \begin{bmatrix}
A_{\mathrm{cl}}(\mK)^\tr P + P A_{\mathrm{cl}}(\mK) & PB_{\mathrm{cl}}(\mK) & C_{\mathrm{cl}}(\mK)^\tr \\
B_{\mathrm{cl}}(\mK)^\tr P & -\gamma I & D_{\mathrm{cl}}(\mK)^\tr \\
C_{\mathrm{cl}}(\mK) & D_{\mathrm{cl}}(\mK) & -\gamma I
\end{bmatrix}\preceq 0, \label{eq:Hinf-Bilinear-a}\\
        & {P \in \mathbb{S}^{2n}_{++}},  \label{eq:Hinf-Bilinear-b}
    \end{align}
\end{subequations}
where $A_{\mathrm{cl}}(\mK), B_{\mathrm{cl}}(\mK), C_{\mathrm{cl}}(\mK)$ and $D_\mathrm{cl}(\mK)$ are the closed-loop matrices for policy $\mK$, defined in \Cref{eq:transfer-function-zd}. The matrix $P$ has a dimension of $2n$-by-$2n$ and can be partitioned as \cref{eq:P-blocks}.

Similar to \cref{subsection:smooth-LQG-cost}, the invertibility of $P_{12}$ plays an important role in ``convex reformulations'' of \cref{eq:Hinf-Bilinear}. We thus define a class of \textit{non-degenerate policies} for $\mathcal{H}_\infty$ control, which admits a Lyapunov variable $P\in \mathbb{S}^{2n}_{++}$ with an off-diagonal block $P_{12}$ having full rank to certify its $\mathcal{H}_\infty$ cost. 

\begin{definition} \label{definition:Hinf-Cnd}
    A full-order dynamic policy $\mK \in {\mathcal{C}}_n$ is called \textit{non-degenerate} for  $\mathcal{H}_\infty$ control if there exists a $P \in \mathbb{S}^{2n}_{++}$ with $P_{12} \in \mathrm{GL}_{n}$ such that \Cref{eq:Hinf-Bilinear-a} holds with $\gamma = J_{\infty,n}(\mK)$.
\end{definition}

We thus define the set of non-degenerate policies for $\mathcal{H}_\infty$ control as
\begin{equation} \label{def:Hinf-Cnd}
    {\mathcal{C}}_{\mathrm{nd}} :=
  \left\{\mK =\begin{bmatrix}
    D_{\mK} & C_{\mK} \\
    B_{\mK} & A_{\mK}
    \end{bmatrix}
    \in \mathcal{C}_n
    \left|
    \begin{aligned}  \;\; \exists P &\in \mathbb{S}^{2n}_{++}\; \mathrm{with} \; P_{12} \in \mathrm{GL}_{n}\;  \\ \; \mathrm{such~that}\; & \Cref{eq:Hinf-Bilinear-a} \; \textrm{holds with} \; \gamma = J_{\infty,n}(\mK)  \end{aligned} \right.\right\}.
\end{equation}
%
We conjecture that non-degenerate policies are ``generic'' in the sense that the complement set $\mathcal{C}_n\backslash\mathcal{C}_{\mathrm{nd}}$ has measure zero. Unlike the smooth LQG case, a rigorous proof of this conjecture seems challenging, and we leave it to future work. One main difficulty for the proof lies in the non-smoothness of the $\mathcal{H}_\infty$ cost (we cannot use the argument that the zero-set of a nontrivial real-analytic function has measure zero).

\begin{conjecture} \label{conjecture:measure-zero-hinf}
    ${\mathcal{C}}_n \backslash {\mathcal{C}}_{\mathrm{nd}}$ has measure zero in the space $\mathbb{R}^{(m+n) \times (p+n)}$.
\end{conjecture}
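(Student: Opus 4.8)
The plan is to follow the architecture of the proof of \cref{proposition:measure-zero-LQG}, but to trade the real-analyticity of the cost (unavailable globally in the nonsmooth $\mathcal{H}_\infty$ setting) for \emph{semialgebraicity}, and then to reduce the measure-zero claim to a density statement. First, one checks that $\mathcal{C}_n$ is an open semialgebraic subset of $\mathbb{R}^{(m+n)\times(p+n)}$: Hurwitz stability of $A_{\mathrm{cl}}(\mK)$ amounts to finitely many polynomial inequalities in the entries of $\mK$ (positivity of the principal minors of the Hurwitz/Hermite matrix of the characteristic polynomial of $A_{\mathrm{cl}}(\mK)$, whose coefficients are polynomials in $\mK$). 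Next, one shows that $\mK \mapsto J_{\infty,n}(\mK)$ is a semialgebraic function on $\mathcal{C}_n$: for fixed $(\mK,\omega)$ the quantity $\lambda_{\max}\!\big(\mathbf{T}_{zd}(\mK,j\omega)^\her \mathbf{T}_{zd}(\mK,j\omega)\big)$ is a root of a polynomial whose coefficients are polynomials in $(\mK,\omega)$, so the condition that $t$ dominate this quantity for all $\omega\in\mathbb{R}$ is semialgebraic by the Tarski--Seidenberg theorem (quantifier elimination over $\omega$), and hence so is the graph of $J_{\infty,n}=\sqrt{J_{\infty,n}^2}$ on $\mathcal{C}_n$.

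Second, one deduces that $\mathcal{C}_n\setminus\mathcal{C}_{\mathrm{nd}}$ is semialgebraic. Indeed, $\{(\mK,P) : \mK\in\mathcal{C}_n,\ P\succ 0,\ \det P_{12}\neq 0,\ \cref{eq:Hinf-Bilinear-a}\text{ holds with }\gamma=J_{\infty,n}(\mK)\}$ is semialgebraic (the occurrence of $\gamma=J_{\infty,n}(\mK)$ is handled by adjoining the semialgebraic graph of $J_{\infty,n}$ established above), so its projection onto the $\mK$-coordinates, which is exactly $\mathcal{C}_{\mathrm{nd}}$, is semialgebraic again by Tarski--Seidenberg. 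Now invoke the elementary fact that a semialgebraic subset of $\mathbb{R}^N$ has Lebesgue measure zero if and only if it has empty interior (via cell decomposition: a top-dimensional cell is open). Since $\mathcal{C}_n$ is open, it therefore suffices to prove that $\mathcal{C}_{\mathrm{nd}}$ is \emph{dense} in $\mathcal{C}_n$.

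Third --- the crux --- one proves density of $\mathcal{C}_{\mathrm{nd}}$. The strategy is to restrict to the open dense ``regular'' locus $\mathcal{C}_n' \subseteq \mathcal{C}_n$ on which $J_{\infty,n}$ is real-analytic and the worst-case frequency $\omega^\star(\mK)$ is unique, finite, and non-degenerate; this locus is semialgebraic with a lower-dimensional (hence measure-zero) complement, by the genericity analysis of $\mathcal{H}_\infty$ functions in \cite{apkarian2006nonsmooth,apkarian2006nonsmooth2}. On $\mathcal{C}_n'$ one expects a canonical positive-definite certificate $P(\mK)$ --- the maximal solution of the bounded-real Riccati inequality underlying \cref{lemma:bounded_real} at the level $\gamma = J_{\infty,n}(\mK)$ --- to depend real-analytically on $\mK$. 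Granting this, $\mK\mapsto\det P(\mK)_{12}$ is real-analytic on each connected component of $\mathcal{C}_n'$, so by the theorem that the zero set of a nontrivial real-analytic function has measure zero (cf. \cite{mityagin2015zero}, exactly as used for \cref{proposition:measure-zero-LQG}), it is enough to exhibit \emph{one} non-degenerate policy in each component. Such a witness should be obtainable by starting from a strictly feasible solution of the strict LMI \cref{eq:strict-hinf} for the closed loop at a level $\gamma > J_{\infty,n}(\mK)$ and lowering $\gamma$ to $J_{\infty,n}(\mK)$ while perturbing $(A_\mK, B_\mK)$ so that the $(1,2)$ block of the limiting certificate remains full rank --- paralleling the informativity characterization of \cref{proposition:informativity-non-degenerate}, with the closed-loop $\mathcal{H}_\infty$-Riccati solution playing the role of the LQG covariance $X_\mK$.

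The main obstacle is precisely the real-analytic (even merely continuous and canonical) dependence of the certificate $P(\mK)$ on $\mK$ \emph{at} the optimal level $\gamma = J_{\infty,n}(\mK)$: there the Hamiltonian matrix of the Riccati equation has eigenvalues on the imaginary axis, the stabilizing solution degenerates, and this is the very mechanism behind the non-smoothness noted in \cref{example:Hinf-nonsmooth} and in the discussion preceding \cref{conjecture:measure-zero-hinf}. Two routes seem worth pursuing: (a) work with $\gamma$ strictly above the optimum, where $P_\gamma(\mK)$ is analytic in $(\mK,\gamma)$, and pass to the limit $\gamma \downarrow J_{\infty,n}(\mK)$ with a uniform lower bound on $|\det (P_\gamma(\mK))_{12}|$ over a neighborhood, so the limiting certificate is still non-degenerate; or (b) forgo a globally defined certificate and argue by contradiction at a generic interior point of $\mathcal{C}_n\setminus\mathcal{C}_{\mathrm{nd}}$ --- if this set had nonempty interior, pick a generic $\mK^\star$ inside it (so all relevant functions are locally analytic there), construct an explicit non-degenerate certificate after an arbitrarily small perturbation of $\mK^\star$, and contradict openness. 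Making either route rigorous, together with the bookkeeping needed to ensure the non-degenerate witnesses meet \emph{every} top-dimensional semialgebraic stratum, is what keeps this conjecture open.
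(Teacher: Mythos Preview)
The paper does not prove this statement --- it is explicitly labeled a conjecture, and the authors write that ``a rigorous proof of this conjecture seems challenging, and we leave it to future work,'' citing precisely the non-smoothness obstruction you isolate in your third step. Only numerical evidence (\Cref{example:Hinf-1}) is offered. So there is no paper proof to compare against.

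Your semialgebraic reduction is a genuinely interesting alternative to the real-analytic route used for \Cref{proposition:measure-zero-LQG}. The first two steps --- semialgebraicity of $\mathcal{C}_n$, of the graph of $J_{\infty,n}$, and hence of $\mathcal{C}_{\mathrm{nd}}$ via Tarski--Seidenberg projection --- look sound and would reduce the conjecture to density of $\mathcal{C}_{\mathrm{nd}}$ in $\mathcal{C}_n$. This is a real reduction the paper does not attempt.

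That said, the third step carries exactly the gap the paper flags, and you acknowledge it yourself. The existence of a canonical certificate $P(\mK)$ depending analytically on $\mK$ at the \emph{critical} level $\gamma = J_{\infty,n}(\mK)$ is precisely where the bounded-real Riccati theory degenerates (the Hamiltonian acquires imaginary-axis eigenvalues), so neither your route (a) nor (b) is currently rigorous. Two further soft spots: the claim that the ``regular locus'' $\mathcal{C}_n'$ has lower-dimensional complement is asserted by citation rather than argued, and the $\mathcal{H}_\infty$ norm can be attained on a continuum of frequencies in non-pathological examples, so this needs care; and even granting analyticity of $\det P(\mK)_{12}$ on $\mathcal{C}_n'$, you would need a non-degenerate witness in \emph{every} connected component of $\mathcal{C}_n'$, which may be far more numerous than the components of $\mathcal{C}_n$ itself. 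In short, the semialgebraic framing is a useful partial contribution, but the proposal honestly stops where the paper does.
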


We here provide some numerical evidence supporting \Cref{conjecture:measure-zero-hinf}.


\begin{example}[Measure zero of ${\mathcal{C}}_n \backslash {\mathcal{C}}_{\mathrm{nd}}$] \label{example:Hinf-1}
We consider the same problem data as \Cref{example:LQG-1} for the $\mathcal{H}_\infty$ formulation \cref{eq:Hinf_policy_optimization}.
The dynamic feedback policy is parameterized by $\mK={\small \begin{bmatrix}
    D_\mK & C_\mK \\ B_\mK & A_\mK
\end{bmatrix}\in\mathbb{R}^{2\times 2}}$. Our task is to numerically~search for points in $\mathcal{C}_n\backslash\mathcal{C}_{\mathrm{nd}}$, and inspect whether they form a set of measure zero. 
For visualization purposes, we fix $C_\mK=1$ and only examine the set $\{\mK\in\mathcal{C}_n:C_\mK=1\}$. We also restrict $
A_\mK\in[-2,2]$, $B_{\mK}\in$ $[-4,4]$, $D_{\mK}\in[-1.5,1.5]$ when searching over the set $\{\mK\in\mathcal{C}_n:C_\mK=1\}$.
We first generate a set of points $\{\mK_j\}_{j=1}^N$ by discretizing the region $[-2,2]\times[-4,4]\times[-1.5,1.5]$ into a spatial grid with $N=101\times101\times 61$ points that are equally spaced. Then for each $j=1,\ldots,N$, we numerically compute $\gamma_j=J(\mK_j)$, and try to construct $P_j\succeq 0$ based on an LMI approach. 
We then check whether the minimum eigenvalue of $P_j$ is sufficiently bounded away from zero (say greater than or equal to $10^{-4}$), and record the value of $(P_j)_{12}$.
%
%
\Cref{fig:sim_results} illustrates several typical heatmaps of $\ln|P_{12}|$ with fixed $D_\mK$ and varying $(A_{\mK},B_{\mK})$, generated from the recorded values $\{\ln|(P_{j})_{12}|\}_{j=1}^N$. It can be observed from the heatmaps that for each fixed value of $D_{\mK}$, the points with very low values of $\ln|P_{12}|$ seem to lie near a straight line that passes through $(0,0)$, which has indeed measure zero. More computational details can be found in our conference report \cite[Section V]{tang2023global}.
%
    \hfill $\square$
\end{example}

\begin{remark}[Physical interpretation in terms of storage functions]
Similar to LQG optimal control, there also exists a physical interpretation for non-degenerate policies in $\mathcal{C}_{\mathrm{nd}}$. In particular, the variable $P$ can be used to define a  storage function $V: \mathbb{R}^{2n} \to \mathbb{R}$, defined by
$$
\begin{aligned}
V(x(t),\xi(t)) &=
\frac{1}{J_{\infty,n}(\mK)}\begin{bmatrix}x(t) \\ \xi(t) \end{bmatrix}^\tr P\begin{bmatrix}x(t) \\ \xi(t) \end{bmatrix} \\
&= \frac{1}{J_{\infty,n}(\mK)}\left(x(t)^\tr P_{11}x(t) + 2 x(t)^\tr P_{12}\xi(t) + \xi(t)^\tr P_{22} \xi(t)\right),
\end{aligned}
$$
which satisfies the following dissipation inequality for the closed-loop system \cref{eq:transfer-function-zd} (see \cref{appendix:proof-Hinf-norm})
$$
\frac{dV(t)}{dt}+ z(t)^\tr z(t) \leq {J_{\infty,n}(\mK)}^2 d(t)^\tr d(t), \quad \forall t \geq 0.
$$
Integrating this inequality over $[0,\infty)$ gives a certification of the worst case perfromance $\|z\|_2^2 \leq {J_{\infty,n}(\mK)}^2 \|d\|^2, \forall d(t)\in \mathcal{L}_2[0,\infty).$ A non-degenerate stabilizing policy $\mK \in \mathcal{C}_{\mathrm{nd}}$ indicates that there exists a storage function $V$ with a full interaction between the system state $x(t)$ and policy state $\xi(t)$ to certify the worst-case $\mathcal{H}_\infty$ performance of the closed-loop system $J_{\infty,n}(\mK)$.  \hfill $\square$
\end{remark}

\begin{remark}[Strict LMIs vs. nonstrict LMIs and internal stability] \label{remark:stability-constraint}
    Our analysis techniques will rely on \Cref{lemma:H2norm,lemma:bounded_real}. We note that the results in \Cref{lemma:H2norm,lemma:bounded_real} assume that the matrix $A$ is stable, while the internal stability \Cref{eq:internallystabilizing} is a constraint when designing policies, i.e. $\mK \in {\mathcal{C}}_{n,0}$.
    There are converse results to guarantee that $A$ is stable (\Cref{lemma:converse-Lyaounov}).
    For example, the feasibility of strict LMI $A^\tr P + P A \prec 0, P \succ 0$ guarantees the stability of~$A$. All the previous results~\cite{gahinet1994linear,scherer1997multiobjective,scherer2000linear,masubuchi1998lmi} rely on strict LMIs and any feasible policy directly guarantees internal stability. Furthermore, any sufficiently small perturbation on a feasible point to \Cref{eq:strict-LMI-H2norm} remains feasible. Thus, if a matrix $P$ satisfies the strict version of \Cref{eq:LQG-Bilinear}, we can always add a small perturbation to $P_{12}$ such that it becomes full rank. This perturbation strategy has been heavily used in classical LMI-based controller synthesis \cite{scherer2000linear}.
    As stated above, we need to use the nonstrict versions \cref{eq:LQG-Bilinear,eq:Hinf-Bilinear}. This brings two difficulties: 1) \cref{eq:LQG-Bilinear} can only guarantee that the real parts of all eigenvalues of $A_{\mathrm{cl}}(\mK)$ are negative or zero; 2) $P_{12}$ of a feasible $P$ may not have full rank. The second difficulty leads to our notion of \textit{non-degenerate policies}. The first point seems minor, but will significantly impact our technical analysis in \ECL; further details can be seen later in Part II of our paper.
    \hfill \qed
\end{remark}

\section{Smooth Nonconvex LQG Optimal Control} \label{section:LQG}
In this section, we examine the smooth nonconvex landscape of LQG optimal control and reveal its hidden convexity.
Let us review a known fact that the LQG cost $J_{\LQG,q}(\mK)$ is a real analytical function over its domain ${\mathcal{C}}_{q,0}$, which implies that $J_{\LQG,q}(\mK)$ is infinitely differentiable.
\begin{lemma}[{\cite[Lemma 2.3]{zheng2021analysis}}] \label{lemma:LQG-analytical}
    Fix $q \in \mathbb{N}$ such that $ {\mathcal{C}}_{q,0}\neq\varnothing$. Then, $J_{\LQG,q}(\mK)$ is a real analytic function on ${\mathcal{C}}_{q,0}$.
\end{lemma}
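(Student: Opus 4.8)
The plan is to write $J_{\LQG,q}$ as a composition of real-analytic maps, using the closed-form representation from \Cref{lemma:LQG_cost_formulation1}. For $\mK \in \mathcal{C}_{q,0}$ we have $D_\mK = 0$, so the matrix appearing in the Lyapunov equation \cref{eq:LyapunovX}, namely $\bar{A}(\mK) := \begin{bmatrix} A & BC_\mK \\ B_\mK C & A_\mK \end{bmatrix}$, coincides with $A_{\mathrm{cl}}(\mK)$ and is therefore Hurwitz on $\mathcal{C}_{q,0}$; its entries, as well as those of $C_{\mathrm{cl}}(\mK)$ and of the term $N(\mK) := \operatorname{diag}(W, B_\mK V B_\mK^\tr)$, are polynomials in the entries of $\mK$. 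By \Cref{lemma:LQG_cost_formulation1}, $J_{\LQG,q}(\mK)^2 = \operatorname{tr}\!\big(C_{\mathrm{cl}}(\mK) X_\mK C_{\mathrm{cl}}(\mK)^\tr\big)$, so it suffices to show that $\mK \mapsto X_\mK$ is real analytic and that the resulting expression is positive, after which the outer square root is handled by analyticity of $t \mapsto \sqrt t$ on $(0,+\infty)$.

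The core step is the analyticity of the Lyapunov solution map. Vectorizing \cref{eq:LyapunovX} gives $\mathcal{L}(\mK)\operatorname{vec}(X_\mK) = -\operatorname{vec}(N(\mK))$, where $\mathcal{L}(\mK) = I \otimes \bar A(\mK) + \bar A(\mK) \otimes I$ is the Kronecker-sum form of the Lyapunov operator, whose entries are affine in $\mK$. The eigenvalues of $\mathcal{L}(\mK)$ are the pairwise sums $\lambda_i(\bar A(\mK)) + \lambda_j(\bar A(\mK))$; since $\bar A(\mK)$ is Hurwitz for every $\mK \in \mathcal{C}_{q,0}$, all these sums have negative real part, hence $\det \mathcal{L}(\mK) \neq 0$ on $\mathcal{C}_{q,0}$. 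By Cramer's rule each entry of $\mathcal{L}(\mK)^{-1}$ is a ratio of two polynomials in the entries of $\mK$ whose denominator $\det \mathcal{L}(\mK)$ is nonvanishing on the open set $\mathcal{C}_{q,0}$, and such a rational function is real analytic there. Therefore $\operatorname{vec}(X_\mK) = -\mathcal{L}(\mK)^{-1}\operatorname{vec}(N(\mK))$, being the product of a real-analytic matrix-valued map and a polynomial map, is real analytic on $\mathcal{C}_{q,0}$; equivalently, $\mK \mapsto X_\mK$ is real analytic.

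It then remains to compose. The map $\mK \mapsto \operatorname{tr}\!\big(C_{\mathrm{cl}}(\mK) X_\mK C_{\mathrm{cl}}(\mK)^\tr\big)$ is a polynomial in the (affine-in-$\mK$) entries of $C_{\mathrm{cl}}(\mK)$ and the (real-analytic-in-$\mK$) entries of $X_\mK$, hence real analytic on $\mathcal{C}_{q,0}$. This quantity equals $J_{\LQG,q}(\mK)^2$, which takes values in $(0,+\infty)$ on $\mathcal{C}_{q,0}$ (as already noted when $J_{\LQG,q}$ was introduced). Since $t \mapsto \sqrt t$ is real analytic on $(0,+\infty)$ and compositions of real-analytic maps are real analytic, $J_{\LQG,q}$ is real analytic on $\mathcal{C}_{q,0}$; in particular it is $C^\infty$ there.

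I do not expect a genuine obstacle: every step uses only that polynomials, rational functions with nonvanishing denominators, and $\sqrt{\cdot}$ on $(0,+\infty)$ are real analytic, together with closure of real analyticity under sums, products, and composition. The one point worth stating carefully is why $\det \mathcal{L}(\mK)$ does not vanish on $\mathcal{C}_{q,0}$ --- this is exactly where the defining Hurwitz property of $\mathcal{C}_{q,0}$ enters, via the spectrum of the Kronecker sum --- and, if one wants a fully self-contained argument, a one-line justification that $J_{\LQG,q}(\mK)^2 > 0$ (e.g.\ from $X_\mK \succeq 0$ together with observability of $(Q^{1/2},A)$ under \Cref{assumption:performance-weights}, or simply by invoking the stated range of $J_{\LQG,q}$).
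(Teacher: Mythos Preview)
Your proposal is correct. The paper does not actually supply a proof of this lemma---it simply cites \cite[Lemma 2.3]{zheng2021analysis}---so there is no in-paper argument to compare against; your route (vectorize the Lyapunov equation, use that the Kronecker-sum operator is invertible on $\mathcal{C}_{q,0}$ because $\bar A(\mK)$ is Hurwitz, then compose rational and square-root maps) is the standard one and is precisely how the cited reference establishes the result.
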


We here recall the notion of coercivity, which is widely used in optimization to ensure that the iterates of first-order algorithms are bounded under mild conditions \cite{beck2017first}.
\begin{definition}[Coercivity] \label{definition:Coercivity}
    A real-valued continuous function $J$ defined on an open set $\mathcal{C}\subseteq\mathbb{R}^d$ is said to be coercive if for any sequence $\{\mK_l\}_{l=1}^\infty\subset \mathcal{C}$, we have $J(\mK_l)\to \infty$ if either $\lim_{l\to\infty}\|{\mK_l}\|=\infty$ or $\lim_{l\to\infty}K_l\in\partial \mathcal{C}$.
\end{definition}

It is known that the LQG cost is nonconvex and not coercive. We summarize these facts below. 

\begin{fact} \label{fact:LQG}
The following facts hold.
\begin{enumerate}
    \item The LQG policy optimization problem \Cref{eq:LQG_policy_optimization} is nonconvex;
    \item Its domain ${\mathcal{C}}_{n,0}$ can be disconnected but has at most two path-connected components;
    \item Fix $q \in \mathbb{N}$ such that ${\mathcal{C}}_{q,0}\neq\varnothing$. Then, $J_{\LQG,q}(\mK)$ in \cref{eq:LQG_cost_formulation1} is not coercive.
\end{enumerate}

\end{fact}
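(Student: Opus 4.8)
\textbf{Proof proposal for Fact~\ref{fact:LQG}.}

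The plan is to establish the three assertions separately, relying on the structural results of \cite{zheng2021analysis} for items 2) and 3), while item 1) follows from a small explicit example. For item 1), I would exhibit two internally stabilizing strictly proper policies $\mK_1,\mK_2\in\mathcal{C}_{n,0}$ with $J_{\LQG,n}(\mK_1)=J_{\LQG,n}(\mK_2)$ but such that the midpoint $\tfrac12(\mK_1+\mK_2)$ either fails to be internally stabilizing (so $\mathcal{C}_{n,0}$ is not even convex, making the problem trivially nonconvex in the usual sense) or lies in $\mathcal{C}_{n,0}$ yet has strictly larger cost. A clean route is to recall that two policies related by a similarity transformation $\mK$ and $T\star\mK$ (with $T\in\mathrm{GL}_q$ acting on the controller realization) yield the same transfer matrix $\mathbf{T}_{zd}$, hence the same LQG cost; taking $T$ far from the identity and averaging with the original policy produces a policy whose closed-loop matrix need no longer be Hurwitz, or whose cost strictly exceeds the common value. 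Even more directly, one can simply invoke the existence of suboptimal saddle points documented in \cite[Examples~5~\&~6]{zheng2021analysis}: the existence of stationary points that are not global minimizers is itself incompatible with convexity of $J_{\LQG,n}$ over a convex domain. I would present the similarity-transformation argument as the main one since it is self-contained.

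For item 2), I would simply cite the path-connectedness analysis of $\mathcal{C}_{n,0}$ from \cite{zheng2021analysis}: the set of full-order strictly proper internally stabilizing policies has at most two path-connected components, and when it has two, they are related by an orientation-reversing similarity transformation of the controller state and are identical in the frequency domain. Since the excerpt explicitly grants us the results of \cite{zheng2021analysis} (and has already quoted this very statement in the introduction and the ``Related work'' section), this item requires no new work; I would restate the relevant proposition and note that the disconnectedness is genuine by pointing to the example in \cite{zheng2021analysis} where $\mathcal{C}_{n,0}$ has exactly two components.

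Item 3), non-coercivity, is the substantive one. Fix $q$ with $\mathcal{C}_{q,0}\neq\varnothing$ and pick any $\mK\in\mathcal{C}_{q,0}$. The idea is to construct a sequence $\{\mK_l\}\subset\mathcal{C}_{q,0}$ that either escapes to infinity in norm or approaches $\partial\mathcal{C}_{q,0}$, yet along which $J_{\LQG,q}(\mK_l)$ stays bounded (in fact converges). The natural construction again uses non-minimal realizations: starting from $\mK$, conjugate the controller state by $T_l=\mathrm{diag}(I,\,l)$ (or a similar unbounded similarity), which leaves $\mathbf{T}_{zd}$ and hence $J_{\LQG,q}$ invariant while sending $\|\mK_l\|\to\infty$ through the blocks $C_\mK T_l^{-1}$, $T_l B_\mK$, $T_l A_\mK T_l^{-1}$ — unless the controller realization is so special that conjugation does nothing, which one avoids by first replacing $\mK$ with any controllable-and-observable representative or by adding a harmless stable mode as in \cref{eq:lifted_controller}. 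Alternatively, to hit the boundary $\partial\mathcal{C}_{q,0}$: take a path where a controller eigenvalue moves toward a pole/zero cancellation with the plant, so that in the limit the closed loop loses internal stability (the limiting point leaves $\mathcal{C}_{q,0}$) but the transfer matrix $\mathbf{T}_{zd}$ — and thus the cost — converges to a finite limit. This is exactly the ``discontinuity at the boundary'' phenomenon previewed in the introduction (\Cref{example:discontinuity-LQG-boundary}), so I would either cite that forthcoming example or present a minimal first-order instance ($n=q=1$) worked out explicitly.

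The main obstacle is item 3): one must be careful that the candidate sequence genuinely leaves every compact subset of $\mathcal{C}_{q,0}$ — i.e., that $\|\mK_l\|\to\infty$ \emph{or} $\mK_l$ converges to a point of $\partial\mathcal{C}_{q,0}$ — while simultaneously keeping the LQG cost bounded, and the two requirements pull against each other if one is not careful (pushing an eigenvalue to the boundary can also blow up the cost, as the same forthcoming example shows). The cleanest resolution is the similarity-transformation route, where boundedness of the cost is automatic because the transfer matrix is literally unchanged; the only thing to verify is that the chosen unbounded family $T_l$ actually moves $\mK_l$ to infinity, which holds generically and can always be arranged after an innocuous modification of the starting realization. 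I would therefore lead with that argument and relegate the boundary-approach version to a remark cross-referencing \Cref{example:discontinuity-LQG-boundary}.
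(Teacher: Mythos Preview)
Your proposal is correct and matches the paper's own justification essentially line for line: the paper cites \cite{zheng2021analysis} for items 1) and 2), and for item 3) uses precisely the similarity-transformation argument you lead with (the cost is invariant under $\mathscr{T}_T$, so one obtains a sequence with $\|\mK_l\|_F\to\infty$ while $J_{\LQG,q}(\mK_l)$ stays constant). Your write-up is in fact more detailed than the paper's one-paragraph treatment, and your caution about ensuring $T_l$ genuinely moves $\mK_l$ to infinity is well placed.
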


The nonconvexity of \Cref{eq:LQG_policy_optimization} is well-known, but the set ${\mathcal{C}}_{n,0}$ being potentially disconnected was only identified recently; we refer the interested reader to \cite[Theorems 3.1-3.3]{zheng2021analysis} for further characterizations and examples.
\Cref{fact:LQG}(3) is not difficult to see since the LQG cost is invariant with respect to similarity transformations. Thus, it is easy to find a sequence of policies $\{\mK_l\}\subset{\mathcal{C}}_{n,0}$ where $\lim_{l\to\infty}\|{\mK_l}\|_F=\infty$ such that $J_{\LQG,n}(\mK_l)< \infty$.

Despite being analytical over its domain, this section first highlights that the LQG cost has complicated ``discontinuous'' behavior around its boundary, which is caused by uncontrollable or unobservable policies. We then present one main technical result that any non-degenerate stationary points are global minimum, which includes existing characterizations in \cite{zheng2021analysis} as a special case. 
We also discuss a class of degenerate policies for LQG at the end of this section.

\subsection{``Discontinuous'' behavior of the LQG cost} \label{subsection:basic-properties H2}

\Cref{fact:LQG}(3) reveals that the LQG cost is not coercive. Indeed,  \cite[Example 4]{zheng2021analysis} confirms that the LQG cost
can converge to a finite value even when the policy $\mK$ goes to the boundary of ${\mathcal{C}}_{n,0}$.
We here further illustrate that the LQG cost is ``discontinuous'' on some boundary points in {the sense that the LQG function admits different limit values}.

\begin{example}[``Discontinuous'' behavior of the LQG cost] \label{example:discontinuity-LQG-boundary}
    Consider an open-loop stable dynamic system~\cref{eq:Dynamic} with
$
A=-1,\;  B=1, \; C=1.
$
Using the Routh--Hurwitz stability criterion, it is straightforward to derive that
\begin{equation*}
    \begin{aligned}
    {\mathcal{C}}_{1,0} &= \left\{ \left. \mK = \begin{bmatrix} 0 & C_{\mK} \\
                          B_{\mK} & A_{\mK}\end{bmatrix} \in \mathbb{R}^{2 \times 2} \right| A_{\mK} < 1, B_{\mK}C_{\mK} < -A_{\mK} \right\}. \\
\end{aligned}
\end{equation*}
We choose $Q = R = 1, W = V = 1$ for the LQG problem \cref{eq:LQG_policy_optimization}. Using \cref{eq:LQG_cost_formulation1}, we can directly  obtain
\begin{equation} \label{eq:JmK-example}
     J_{\LQG,1}^2\!\left(
     \begin{bmatrix}
     0 & C_{\mK} \\
     B_{\mK} & A_{\mK}
     \end{bmatrix}
     \right)
     =
     \frac{A_{\mK}^2 - A_{\mK} (1 + B_{\mK}^2 C_{\mK}^2) -
 B_{\mK} C_{\mK} (1 - 3 B_{\mK} C_{\mK} + B_{\mK}^2 C_{\mK}^2)}{2 (-1 + A_{\mK}) (A_{\mK} + B_{\mK} C_{\mK})}, \;\; \forall \mK \in \mathcal{C}_{1,0}.
\end{equation}
Consider the boundary point $\mK_0 \in \partial {C}_{1,0}$ with
$
    A_{\mK_0} = 0, \, B_{\mK_0} = 0, \, C_{\mK_0} = 0,
$
and two sequences of policies in $\mathcal{C}_{1,0}$
\begin{subequations}
    \begin{align}
\mK_\epsilon^{(1)}&=\begin{bmatrix}  D_{\mK_\epsilon^{(1)}} & C_{\mK_\epsilon^{(1)}} \\ B_{\mK_\epsilon^{(1)}} & A_{\mK_\epsilon^{(1)}} \end{bmatrix}=\begin{bmatrix}0 & \epsilon \\ -\epsilon & 0 \end{bmatrix},  \qquad \forall  \epsilon > 0. \label{eq:convergence-controllers} \\
\mK_\epsilon^{(2)} &=\begin{bmatrix}  D_{\mK_\epsilon^{(2)}} & C_{\mK_\epsilon^{(2)}} \\ B_{\mK_\epsilon^{(2)}} & A_{\mK_\epsilon^{(2)}} \end{bmatrix}=\begin{bmatrix}0 & \epsilon+\epsilon^4 \\ -\epsilon & \epsilon^2 \end{bmatrix},  \qquad \forall  \epsilon \in(0,1). \label{eq:divergence-controllers}
\end{align}
\end{subequations}
It is clear that both sequences converge to the same boundary point $\lim_{\epsilon \downarrow 0} \mK_\epsilon^{(1)} = \lim_{\epsilon \downarrow 0} \mK_\epsilon^{(2)} = \mK_0$. From \cref{eq:JmK-example}, we can further compute that
\begin{equation*}
\begin{aligned}
    J_{\LQG,1}^2(\mK_\epsilon^{(1)})=\frac{1-3\epsilon^2 + \epsilon^4}{2}, \;\;
     J_{\LQG,1}^2(\mK_\epsilon^{(2)})=\frac{4 - 3\epsilon + 3\epsilon^2 + 3\epsilon^3 - 3\epsilon^4 + 4\epsilon^5- \epsilon^6 + \epsilon^7  + \epsilon^8 - \epsilon^9 + \epsilon^{10}}{2\epsilon(1-\epsilon)}.
\end{aligned}
\end{equation*}
Then, we have
$$
\lim_{\epsilon\downarrow0} J_{\LQG,1}(\mK_\epsilon^{(1)}) = \frac{\sqrt{2}}{2}, \qquad \lim_{\epsilon\downarrow0} J_{\LQG,1}(\mK_\epsilon^{(2)}) = \infty.
$$
Thus, $J_{\LQG,n}$ cannot be defined or extended to be continuous at the boundary point $\mK_0$. It is clear that $J_{\LQG,1}(\mK)$ in \Cref{eq:JmK-example} depends on the values of $A_\mK$ and $B_\mK C_\mK$, i.e., the individual values of $B_\mK$ and $C_\mK$ affects the LQG cost via their product. With a slight abuse of the notation, we plot  $J_{\LQG,1}(\mK)$ in \Cref{fig:LQG-boundary} where the $x$-axis represents $A_\mK$ and the $y$-axis denotes $B_\mK C_\mK$. We can clearly observe the converged sequence of policies \cref{eq:convergence-controllers} (blue curve) and divergence behavior of \cref{eq:divergence-controllers} (red curve). \hfill \qed
\end{example}

In \Cref{example:discontinuity-LQG-boundary}, the second sequence in \cref{eq:divergence-controllers} also converges to another boundary point with $A_{\mK} = 1, B_{\mK} = -1, C_{\mK} = 2$ when $\epsilon \uparrow 1$, and we have $\lim_{\epsilon\uparrow 1} J_{\LQG,1}(\mK_\epsilon^{(2)}) = \infty$. It is easy to see that this new boundary point corresponds to a minimal policy (i.e., $(A_\mK,B_\mK)$ is controllable and $(C_\mK,A_\mK)$ is observable; see~\Cref{appendix:preliminaries:ctrl_obsr} for the definition of minimality). In fact, we can prove that this divergence behavior is always guaranteed, as shown in the result below.

\begin{theorem} \label{proposition:divergence-minimial-controllers}
    Consider a sequence of policies $\{\mK_t\}_{t=1}^\infty\subset {\mathcal{C}}_{n,0}$ that converges to a boundary point $\mK_\infty=\begin{bmatrix}
    0 & C_{\mK_\infty} \\
    B_{\mK_\infty} & A_{\mK_\infty}
    \end{bmatrix}\in \partial{\mathcal{C}}_{n,0}$. Suppose $\mK_\infty$ corresponds to a minimal policy, i.e., $(A_{\mK_\infty},B_{\mK\infty})$ is controllable and $(C_{\mK_\infty},A_{\mK_\infty})$ is observable. Then $\lim_{t \to \infty} J_{\LQG,n}(\mK_t) = \infty$.
\end{theorem}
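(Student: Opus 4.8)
The plan is to work in the frequency domain. Recall from \cref{eq:H2-norm} and the $\mathcal{H}_2$ integral formula that $J_{\LQG,n}(\mK)^2=\frac{1}{2\pi}\int_{-\infty}^{+\infty}\operatorname{tr}(\mathbf{T}_{zd}(\mK,j\omega)^\her\mathbf{T}_{zd}(\mK,j\omega))\,d\omega$. I will show that the limiting transfer matrix $\mathbf{T}_{zd}(\mK_\infty,\cdot)$ has a genuine (uncancelled) pole on the imaginary axis, and then pass to the limit inside this integral to conclude $J_{\LQG,n}(\mK_t)^2\to\infty$. For Step~1, note that since $\mathcal{C}_{n,0}$ is open we have $\mK_\infty\notin\mathcal{C}_{n,0}$, so $A_{\mathrm{cl}}(\mK_\infty)$ is not Hurwitz; on the other hand $A_{\mathrm{cl}}(\mK_t)\to A_{\mathrm{cl}}(\mK_\infty)$ with each $A_{\mathrm{cl}}(\mK_t)$ Hurwitz, so by continuity of the spectrum every eigenvalue of $A_{\mathrm{cl}}(\mK_\infty)$ has nonpositive real part. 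Combining these, $A_{\mathrm{cl}}(\mK_\infty)$ has some eigenvalue $\lambda=j\omega_0$ on the imaginary axis. Moreover $D_{\mK_\infty}=0$ gives $D_{\mathrm{cl}}(\mK_\infty)=0$, so $\mathbf{T}_{zd}(\mK_\infty,s)=C_{\mathrm{cl}}(\mK_\infty)(sI-A_{\mathrm{cl}}(\mK_\infty))^{-1}B_{\mathrm{cl}}(\mK_\infty)$.

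Step~2 is the heart of the argument: the realization above is minimal. Using the Hautus/PBH test, I claim $(A_{\mathrm{cl}}(\mK_\infty),B_{\mathrm{cl}}(\mK_\infty))$ is controllable and $(C_{\mathrm{cl}}(\mK_\infty),A_{\mathrm{cl}}(\mK_\infty))$ is observable. For controllability, suppose $w=(w_1,w_2)\ne 0$ satisfies $w^\her A_{\mathrm{cl}}(\mK_\infty)=\mu w^\her$ and $w^\her B_{\mathrm{cl}}(\mK_\infty)=0$. The block structure of $B_{\mathrm{cl}}(\mK_\infty)$ together with $V\succ0$ forces $w_1^\her W^{1/2}=0$ and $w_2^\her B_{\mK_\infty}=0$; plugging $w_2^\her B_{\mK_\infty}=0$ into the first block row of the eigenvector relation gives $w_1^\her A=\mu w_1^\her$, so controllability of $(A,W^{1/2})$ (\cref{assumption:performance-weights}) yields $w_1=0$; the second block row then reads $w_2^\her A_{\mK_\infty}=\mu w_2^\her$, and controllability of $(A_{\mK_\infty},B_{\mK_\infty})$ (minimality of $\mK_\infty$) forces $w_2=0$, a contradiction. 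Observability follows by the dual argument, using $(Q^{1/2},A)$ observable and $(C_{\mK_\infty},A_{\mK_\infty})$ observable. Hence the realization $(A_{\mathrm{cl}}(\mK_\infty),B_{\mathrm{cl}}(\mK_\infty),C_{\mathrm{cl}}(\mK_\infty))$ from Step~1 is minimal, so the eigenvalue $\lambda=j\omega_0$ is a true pole of $\mathbf{T}_{zd}(\mK_\infty,\cdot)$; in particular $\operatorname{tr}(\mathbf{T}_{zd}(\mK_\infty,j\omega)^\her\mathbf{T}_{zd}(\mK_\infty,j\omega))\to+\infty$ as $\omega\to\omega_0$, and consequently $\int_{\omega_0-\delta}^{\omega_0+\delta}\operatorname{tr}(\mathbf{T}_{zd}(\mK_\infty,j\omega)^\her\mathbf{T}_{zd}(\mK_\infty,j\omega))\,d\omega=+\infty$ for every $\delta>0$.

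Step~3 passes to the limit. Let $S\subset\mathbb{R}$ be the finite set of $\omega$ with $j\omega\in\operatorname{spec}(A_{\mathrm{cl}}(\mK_\infty))$. On any compact $\Omega\subset\mathbb{R}\setminus S$, the resolvents $(j\omega I-A_{\mathrm{cl}}(\mK_t))^{-1}$ are defined for all large $t$ and converge uniformly on $\Omega$ to $(j\omega I-A_{\mathrm{cl}}(\mK_\infty))^{-1}$, hence $\operatorname{tr}(\mathbf{T}_{zd}(\mK_t,j\omega)^\her\mathbf{T}_{zd}(\mK_t,j\omega))\to\operatorname{tr}(\mathbf{T}_{zd}(\mK_\infty,j\omega)^\her\mathbf{T}_{zd}(\mK_\infty,j\omega))$ uniformly on $\Omega$. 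Taking $\Omega=[\omega_0-\delta_0,\omega_0-\delta]\cup[\omega_0+\delta,\omega_0+\delta_0]$ with $0<\delta<\delta_0$ chosen so small that $\Omega\cap S=\varnothing$, nonnegativity of the integrand gives $\liminf_{t\to\infty}J_{\LQG,n}(\mK_t)^2\ge\frac{1}{2\pi}\int_\Omega\operatorname{tr}(\mathbf{T}_{zd}(\mK_\infty,j\omega)^\her\mathbf{T}_{zd}(\mK_\infty,j\omega))\,d\omega$, and letting $\delta\downarrow0$ the right-hand side tends to $+\infty$ by Step~2. Therefore $J_{\LQG,n}(\mK_t)\to\infty$. (Equivalently one may apply Fatou's lemma directly to $\frac{1}{2\pi}\int_{\mathbb{R}}\operatorname{tr}(\mathbf{T}_{zd}(\mK_t,j\omega)^\her\mathbf{T}_{zd}(\mK_t,j\omega))\,d\omega$, since the integrands converge pointwise on $\mathbb{R}\setminus S$.)

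The main obstacle is Step~2: ruling out a pole--zero cancellation that would hide the imaginary-axis mode of $A_{\mathrm{cl}}(\mK_\infty)$ inside $\mathbf{T}_{zd}(\mK_\infty,\cdot)$. This is exactly where minimality of $\mK_\infty$ is indispensable, together with the controllability/observability of the plant and of the noise/cost weights in \cref{assumption:stabilizability,assumption:performance-weights}; Steps~1 and~3 are routine continuity and limiting arguments.
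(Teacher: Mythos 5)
Your proof is correct and follows essentially the same route as the paper's: the paper reduces the statement to \Cref{lemma:Boundary-minimal-LTI-systems} (an imaginary-axis eigenvalue of the limiting closed-loop matrix is a genuine pole of the limiting transfer matrix because the closed-loop realization is minimal, and then Fatou's lemma applied to the $\mathcal{H}_2$ integral forces divergence) together with \Cref{lemma:minimal-closed-loop-systems} (minimality of $\mK_\infty$ plus \Cref{assumption:stabilizability,assumption:performance-weights} imply minimality of the closed-loop realization), which is exactly your Steps 1--3. The only cosmetic differences are that you verify closed-loop minimality via the PBH test where the paper uses the pole-placement characterization of controllability, and you offer a uniform-convergence/truncation argument as an alternative to the paper's direct appeal to Fatou's lemma.
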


{This result is a consequence of \Cref{lemma:Boundary-minimal-LTI-systems}, since a minimal policy $\mK$ also leads~to~a~minimal closed-loop system \cref{eq:transfer-function-zd} (see \Cref{lemma:minimal-closed-loop-systems}). } 
Indeed, the sequence \cref{eq:divergence-controllers} was motivated from~\cref{proposition:divergence-minimial-controllers}. The policy $A_{\mK} = \epsilon^2$, $B_\mK = -\epsilon$, $C_\mK = \epsilon, \forall \epsilon < 1$ is minimal and on the boundary~$\partial {C}_{1,0}$, thus any sequence converging to this point will diverge to infinity by \Cref{proposition:divergence-minimial-controllers}. While the sequence \cref{eq:divergence-controllers} converges to $(0,0,0)$, they stay close enough to minimal boundary points $A_{\mK} = \epsilon^2$, $B_\mK = -\epsilon$, $C_\mK = \epsilon$, thus we expect that the corresponding LQG cost would diverge, which is confirmed by our calculation. 
Finally, \Cref{example:discontinuity-LQG-boundary} also implies that the sublevel set
$
\{\mK \in {\mathcal{C}}_{n,0}\mid J_{\LQG,n}(\mK) \leq \gamma\}
$
is not bounded and might not be closed in $\mathcal{V}_n$ despite $J_{\LQG,n}$ being continuous over its domain (indicating that $J_{\LQG,n}(\mK)$, viewed as an extended real-valued function, is not lower semicontinuous over ${\mathcal{V}}_{n}$). 

\begin{figure}
\centering

\begin{subfigure}{0.4\textwidth}
\setlength{\abovecaptionskip}{0pt}
    \includegraphics[width=0.85\textwidth]{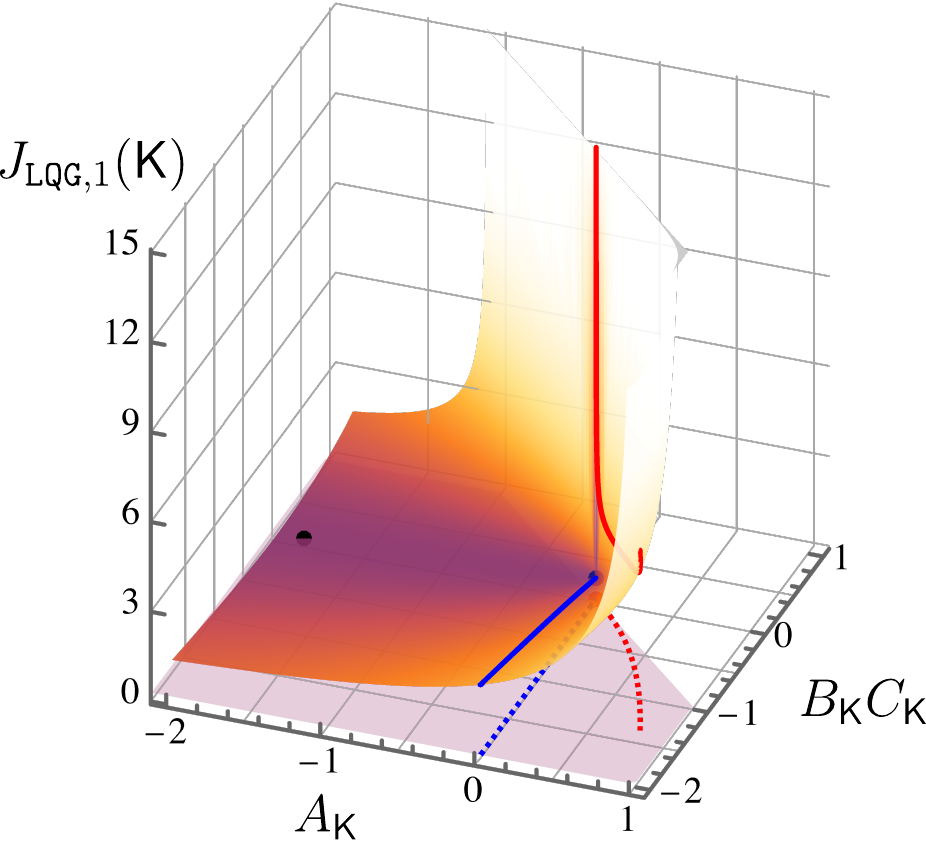}
    \caption{}
    \label{fig:LQG-boundary}
\end{subfigure}
 \hspace{10mm}
\begin{subfigure}{0.4\textwidth}
    \includegraphics[width=0.75\textwidth]{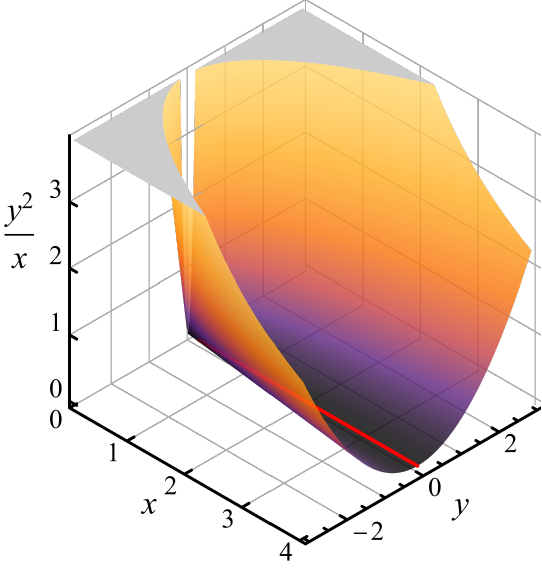}
    \caption{}
    \label{fig:rational-function-example}
\end{subfigure}
\vspace{-1mm}
    \caption{(a) Discontinuous behavior of LQG cost in \Cref{eq:JmK-example}. The blue and red (dotted) curves correspond to the behaviors for polices in \cref{eq:convergence-controllers} and \cref{eq:divergence-controllers}, respectively, as  $\epsilon \downarrow 0$. The blue curve approaches a constant while the red one goes to infinity. The black dot represents the globally optimal policy $A_\mK^\star = 1 -2\sqrt{2}, B_\mK^\star C_\mK^\star = -(1 - \sqrt{2})^2 $ (see \Cref{example:LQG-minimal-non-degenerate}). (b) Shape of $f(x,y)$ in \Cref{eq:rational-function-f}, where the red line marks the globally optimal points $\{(x,y):x>0,y=0\},$ which can be arbitrarily close to the boundary point $(0,0)$.
    }  
    \label{fig:LQG-hidden-convexity}
    \vspace{-1mm}
\end{figure}

\begin{remark}[Hidden convexity of LQG] \label{remark:Hidden-convexity-LQG}
    The discontinuous behavior of the LQG cost around its boundary is somewhat surprising to us at first glance, but in retrospect, it also becomes expected. In principle, the LQG cost $J_{\LQG,n}(\mK)$ behaves similarly to rational functions like 
    \begin{equation} \label{eq:rational-function-f}
    f(x,y) = \frac{y^2}{x},  \qquad \text{dom}(f) = \{(x,y) \in \mathbb{R}^2\mid x> 0\}.
    \end{equation}
    This function is ``discontinuous'' at the boundary point $(0,0)$ and will grow unbounded as it approaches other boundary points $(0,y)$ with $y \neq 0$ (see \Cref{fig:rational-function-example}). We note that the Hessian of this function is
    $$
    \nabla^2 f(x,y) = \frac{2}{x^3}\begin{bmatrix}
        y^2 &  \displaystyle  -xy \\
         \displaystyle  -xy &  \displaystyle  x^2
    \end{bmatrix} \succeq 0, \qquad \forall x > 0, y \in \mathbb{R}.
    $$
    This function is convex over its whole domain, thus all stationary points (i.e., $y = 0, x >0$) are globally optimal. Note that the convexity of $f(x,y) $ can also be seen from the fact that its epigraph
    $$
    \{(x,y,\gamma) \in \mathbb{R}^3 \mid \gamma \geq f(x,y), x > 0\} = \left\{(x,y,\gamma) \in \mathbb{R}^3 \mid \begin{bmatrix}
        \gamma & y \\ y & x
    \end{bmatrix} \succeq 0, x > 0\right\},
    $$
    is convex.
    Indeed, we will show that a certain form of the epigraph of the LQG control \cref{eq:LQG_policy_optimization} admits a convex representation, and thus LQG optimization is ``almost a convex problem'' in disguise. The global optimality characterizations in \Cref{theorem:LQG-main,theorem:LQG-global-optimality} confirm this nice property. Our \texttt{ECL} framework in Part II of this paper will more precisely reveal the hidden convexity of many policy optimization problems in control. \Cref{fig:LQG-hidden-convexity} illustrates the similar landscape for \Cref{eq:JmK-example,eq:rational-function-f}.
    \hfill \qed 
%
\end{remark}

\subsection{Main technical results}  \label{subsection:main-results-H2}

Before presenting our main technical results, we first recall the closed-form expression to compute the gradient of $J_{\LQG,q}(\mK)$. If desired, one can further compute its Hessian (and any arbitrary high-order derivatives) since $J_{\LQG,q}(\mK)$ is real analytical; see \cite[Lemma 4.3 \& Appendix B.4]{zheng2021analysis}.
\begin{lemma}[{\cite[Lemma 4.2]{zheng2021analysis}}] \label{lemma:gradient_LQG_Jn}
    {Fix $q \geq 1$ such that ${\mathcal{C}}_{q,0} \neq \varnothing$.}
    For every $\mK = \begin{bmatrix} 0 & C_{\mK} \\B_{\mK} & A_{\mK} \end{bmatrix} \in {\mathcal{C}}_{q,0}$, the gradient of $J_{\LQG,q}(\mK)$ is given by\footnote{Note that \cite[Lemma 4.2]{zheng2021analysis} defines the LQG cost as $\|\mathbf{T}_{zd}(\mK,\cdot)\|_{\mathcal{H}_2}^2$, while this paper uses the definition $\|\mathbf{T}_{zd}(\mK,\cdot)\|_{\mathcal{H}_2}$. } 
    \begin{subequations} \label{eq:gradient_Jn}
        \begin{align}
         \frac{\partial J_{\LQG,q}(\mK)}{\partial A_{\mK}} &= \frac{1}{J_{\LQG,q}(\mK)}\left(Y_{12}^\tr X_{12} + Y_{22}X_{22}\right), \label{eq:partial_Ak}\\
        \frac{\partial J_{\LQG,q}(\mK)}{\partial B_{\mK}} &= \frac{1}{J_{\LQG,q}(\mK)}\left(Y_{22}B_{\mK}V + Y_{22}X_{12}^\tr C^\tr + Y_{12}^\tr X_{11} C^\tr\right), \label{eq:partial_Bk}\\
        \frac{\partial J_{\LQG,q}(\mK)}{\partial C_{\mK}} &= \frac{1}{J_{\LQG,q}(\mK)}\left(RC_{\mK}X_{22} + B^\tr Y_{11}X_{12} + B^\tr Y_{12} X_{22}\right), \label{eq:partial_Ck}
        \end{align}
    \end{subequations}
    where $X_{\mK}$ and $Y_{\mK}$, partitioned as
    \begin{equation} \label{eq:LyapunovXY_block}
        X_{\mK} = \begin{bmatrix}
        X_{11} & X_{12} \\ X_{12}^\tr & X_{22}
        \end{bmatrix},  \qquad Y_{\mK} = \begin{bmatrix}
        Y_{11} & Y_{12} \\ Y_{12}^\tr & Y_{22}
        \end{bmatrix}
    \end{equation}
    are the unique positive semidefinite
    solutions to~\cref{eq:LyapunovX} and~\cref{eq:LyapunovY}, respectively.
\end{lemma}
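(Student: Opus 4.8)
Because $J_{\LQG,q}$ is real analytic on $\mathcal{C}_{q,0}$ by \Cref{lemma:LQG-analytical}, it is in particular differentiable, so only the closed-form of the gradient is in question. My plan is to pass to the squared cost $f(\mK) := J_{\LQG,q}(\mK)^2 = \operatorname{tr}\!\big(C_{\mathrm{cl}}(\mK)X_\mK C_{\mathrm{cl}}(\mK)^\tr\big)$ from \Cref{eq:LQG_cost_formulation1}, compute $\nabla f$, and recover $\nabla J_{\LQG,q} = \tfrac{1}{2J_{\LQG,q}}\nabla f$ by the chain rule; this is precisely the origin of the factor $1/J_{\LQG,q}(\mK)$ in \Cref{eq:gradient_Jn}, since our convention uses $\|\mathbf{T}_{zd}\|_{\mathcal{H}_2}$ rather than its square, unlike \cite{zheng2021analysis}. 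Throughout I would use strict properness $D_\mK = 0$: the ``$A$-matrix'' of the Lyapunov equations \Cref{eq:Lyapunov-LQG} is then exactly $A_{\mathrm{cl}}(\mK)$, while $C_{\mathrm{cl}}(\mK)^\tr C_{\mathrm{cl}}(\mK) = \begin{bmatrix} Q & 0 \\ 0 & C_\mK^\tr R C_\mK\end{bmatrix}$ and $B_{\mathrm{cl}}(\mK)B_{\mathrm{cl}}(\mK)^\tr = \begin{bmatrix} W & 0 \\ 0 & B_\mK V B_\mK^\tr\end{bmatrix}$.

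The key device is an adjoint (Lagrangian) computation that avoids ever differentiating $X_\mK$ itself. Since $A_{\mathrm{cl}}(\mK)$ is Hurwitz on $\mathcal{C}_{q,0}$, the Lyapunov operator $X \mapsto A_{\mathrm{cl}}(\mK)X + XA_{\mathrm{cl}}(\mK)^\tr$ is invertible, so the implicit function theorem makes $X_\mK$ a smooth function of $\mK$. I would introduce a symmetric multiplier $Y$ for the constraint $G(X,\mK) := A_{\mathrm{cl}}(\mK)X + XA_{\mathrm{cl}}(\mK)^\tr + B_{\mathrm{cl}}(\mK)B_{\mathrm{cl}}(\mK)^\tr = 0$ and set $\mathcal{L}(X,\mK,Y) = \operatorname{tr}(C_{\mathrm{cl}}XC_{\mathrm{cl}}^\tr) + \operatorname{tr}(YG(X,\mK))$, so that $f(\mK) = \mathcal{L}(X_\mK,\mK,Y)$ for \emph{every} $Y$. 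Differentiating in $\mK$ and choosing $Y = Y_\mK$ so that $\partial\mathcal{L}/\partial X = 0$ — which is precisely the adjoint Lyapunov equation $A_{\mathrm{cl}}^\tr Y + YA_{\mathrm{cl}} + C_{\mathrm{cl}}^\tr C_{\mathrm{cl}} = 0$, i.e.\ \Cref{eq:LyapunovY} — cancels the contribution through $\partial X_\mK/\partial\mK$, leaving a derivative in which only $C_{\mathrm{cl}}$, $A_{\mathrm{cl}}$ and $B_{\mathrm{cl}}B_{\mathrm{cl}}^\tr$ are differentiated, with $X_\mK$ and $Y_\mK$ held fixed.

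I would then finish by a direct block computation using the partitions \Cref{eq:LyapunovXY_block} and cyclicity of the trace. Differentiating with respect to entries of $A_\mK$ touches only the $(2,2)$ block of $A_{\mathrm{cl}}$ and gives $\partial f/\partial A_\mK = 2(Y_{12}^\tr X_{12} + Y_{22}X_{22})$. Differentiating with respect to $B_\mK$ touches the $(2,1)$ block of $A_{\mathrm{cl}}$ (the term $B_\mK C$) and the $(2,2)$ block of $B_{\mathrm{cl}}B_{\mathrm{cl}}^\tr$ (the term $B_\mK V B_\mK^\tr$), giving $\partial f/\partial B_\mK = 2(Y_{22}B_\mK V + Y_{22}X_{12}^\tr C^\tr + Y_{12}^\tr X_{11}C^\tr)$. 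Differentiating with respect to $C_\mK$ touches the $(1,2)$ block of $A_{\mathrm{cl}}$ (the term $BC_\mK$) and the $(2,2)$ block of $C_{\mathrm{cl}}$ (the term $R^{1/2}C_\mK$), giving $\partial f/\partial C_\mK = 2(RC_\mK X_{22} + B^\tr Y_{11}X_{12} + B^\tr Y_{12}X_{22})$. Dividing each by $2J_{\LQG,q}(\mK)$ yields \Cref{eq:partial_Ak,eq:partial_Bk,eq:partial_Ck}.

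There is no conceptual obstacle here; the computation is routine modulo careful bookkeeping, and the main points needing attention are: (i) the $B_\mK$-derivative, because $B_{\mathrm{cl}}(\mK)B_{\mathrm{cl}}(\mK)^\tr$ is \emph{quadratic} in $B_\mK$, so its derivative produces two terms that then combine under the trace against the symmetric $Y_\mK$; and (ii) tracking the chain-rule factor $1/(2J_{\LQG,q})$ consistently throughout, since we use the square-root convention for the cost. An alternative route — differentiating the Lyapunov equation \Cref{eq:LyapunovX} to obtain $\partial X_\mK/\partial\theta$ and substituting back — also works, but it is messier; the adjoint identity above is exactly what lets us bypass solving for $\partial X_\mK/\partial\theta$.
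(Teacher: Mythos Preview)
Your proposal is correct and complete. The paper does not supply its own proof of this lemma: it simply cites \cite[Lemma~4.2]{zheng2021analysis} and flags the square-root convention in the footnote, so there is nothing to compare against beyond that reference. Your adjoint/Lagrangian derivation is the standard route and exactly recovers the stated formulas; the block computations you outline for $A_\mK$, $B_\mK$, $C_\mK$ check out, including the quadratic $B_\mK V B_\mK^\tr$ term and the chain-rule factor $1/(2J_{\LQG,q})$.
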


It has been recently revealed in \cite[Theorem 4.2]{zheng2021analysis} that the set of stationary points $\{\mK \in {\mathcal{C}}_{n,0} \mid \nabla J_{\LQG,q}(\mK) = 0\}$ contains strictly suboptimal saddle points for LQG control \cref{eq:LQG_policy_optimization}. A positive result is that if a stationary point corresponds to a minimal policy, then it is globally optimal for LQG \cite[Theorem 4.3]{zheng2021analysis}. However, the analysis in \cite[Theorem 4.3]{zheng2021analysis} strongly depends on the assumption of minimality, which fails to deal with non-minimal globally optimal policies for LQG. Additionally, it may be difficult to enforce minimality during the policy optimization procedure as argued in \cite{umenberger2022globally}. 

We present our next main technical result of this section, which utilizes the notion of non-degenerate policies for LQG defined in \cref{def:LQG-Cnd} without relying on the minimality assumption.
\begin{theorem}\label{theorem:LQG-main}
Let   $\mK^\ast\in {\mathcal{C}}_{n,0}$ be a stationary point of $J_{\LQG,n}$ (i.e., $\nabla J_{\LQG,n}(\mK^\ast)=0$). If $\mK^\ast$ is a non-degenerate policy for LQG, then it is a global minimum of $J_{\LQG,n}$ over ${\mathcal{C}}_{n,0}$.

\end{theorem}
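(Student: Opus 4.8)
The plan is to turn the ``hidden convexity'' anticipated in \Cref{remark:Hidden-convexity-LQG} into a proof: I would lift the nonconvex problem \Cref{eq:LQG_policy_optimization} to a genuinely convex program by means of the classical change of variables underlying the LMI reformulation of $\mathcal{H}_2$ synthesis, and then argue that a \emph{non-degenerate} stationary point of $J_{\LQG,n}$ is sent to a \emph{global} minimizer of that convex program. Making the lift precise — in particular handling the extra Lyapunov variables and their similarity-transformation freedom — is exactly the purpose of the \ECL{} framework developed in Part~II, so here I describe only the skeleton of the argument and the role played by non-degeneracy.

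\emph{Setting up the lift.} By \Cref{lemma:H2norm} and \Cref{eq:LQG-Bilinear}, for a full-order strictly proper $\mK$ a Lyapunov matrix $P=\begin{bmatrix}Y&N\\ N^\tr&\star\end{bmatrix}\succ 0$ together with $\Gamma$ certifies $J_{\LQG,n}(\mK)\le\gamma$. Writing $X=(P^{-1})_{11}$, $M=(P^{-1})_{12}$ and applying the Scherer--Gahinet substitution that expresses new variables $(\hat A,\hat B,\hat C)$ through $(A_\mK,B_\mK,C_\mK)$ and $(M,N,X,Y)$, the bilinear inequalities \Cref{eq:LQG-Bilinear-a}--\Cref{eq:LQG-Bilinear-b} become a system of LMIs that is \emph{convex} in $(X,Y,\hat A,\hat B,\hat C,\Gamma,\gamma)$; the resulting feasible set $\Pi$ and objective $\mathfrak h(X,Y,\hat A,\hat B,\hat C):=\inf\{\gamma:(X,Y,\hat A,\hat B,\hat C,\Gamma,\gamma)\in\Pi\}$ form the convex reformulation of \Cref{eq:LQG_policy_optimization}, with $\mathfrak h$ convex. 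Two structural facts are needed from Part~II: (i) since $P\succ 0$ with $N=P_{12}\in\mathrm{GL}_n$ forces $I-XY$ to be invertible, the substitution is \emph{reversible} — from a point of $\Pi$ one recovers a policy $\Psi(X,Y,\hat A,\hat B,\hat C)$ and a certificate, so \Cref{lemma:H2norm}(3) yields $J_{\LQG,n}(\Psi(\cdot))\le\mathfrak h(\cdot)$; and (ii) the substitution, modulo the $\mathrm{GL}_n$-action corresponding to similarity transformations of the controller state (which leaves $\mathbf{T}_{zd}(\mK,\cdot)$ and hence $J_{\LQG,n}$ unchanged), is a local diffeomorphism near any non-degenerate policy onto a slice of the convex domain, under which $J_{\LQG,n}$ coincides with $\mathfrak h$.

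\emph{From a non-degenerate stationary point to global optimality.} Since $\mK^\ast\in\mathcal{C}_{\mathrm{nd},0}$ (\Cref{def:LQG-Cnd}), choose $P^\ast\succ 0$ with $P^\ast_{12}\in\mathrm{GL}_n$ and $\Gamma^\ast$ satisfying \Cref{eq:LQG-Bilinear} at $\gamma=J_{\LQG,n}(\mK^\ast)$, and let $\pi^\ast\in\Pi$ be its image under the substitution. By construction $\mathfrak h(\pi^\ast)\le J_{\LQG,n}(\mK^\ast)$; conversely $\Psi(\pi^\ast)$ has the same transfer function as $\mK^\ast$, so $J_{\LQG,n}(\mK^\ast)=J_{\LQG,n}(\Psi(\pi^\ast))\le\mathfrak h(\pi^\ast)$ by (i), whence $\mathfrak h(\pi^\ast)=J_{\LQG,n}(\mK^\ast)$. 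By (ii), the stationarity $\nabla J_{\LQG,n}(\mK^\ast)=0$ transfers — via the chain rule through the diffeomorphism, using gauge-invariance of $\mathfrak h$ to kill the similarity-transformation directions — to $\nabla\mathfrak h(\pi^\ast)=0$; as $\mathfrak h$ is convex, $\pi^\ast$ is a \emph{global} minimizer of $\mathfrak h$ over $\Pi$. Finally, for \emph{any} $\mK\in\mathcal{C}_{n,0}$ and any $\gamma'>J_{\LQG,n}(\mK)$, the strict LMI \Cref{eq:strict-LMI-H2norm} for the closed loop is feasible, and — after an arbitrarily small perturbation of the solution making $P_{12}$ full rank, which stays feasible since the strict-LMI solution set is open (cf.\ \Cref{remark:stability-constraint}) — the substitution yields $\pi\in\Pi$ with $\mathfrak h(\pi)\le\gamma'$. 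Therefore $J_{\LQG,n}(\mK^\ast)=\mathfrak h(\pi^\ast)\le\mathfrak h(\pi)\le\gamma'$, and letting $\gamma'\downarrow J_{\LQG,n}(\mK)$ gives $J_{\LQG,n}(\mK^\ast)\le J_{\LQG,n}(\mK)$, i.e.\ $\mK^\ast$ is globally optimal over $\mathcal{C}_{n,0}$.

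\emph{Main obstacle.} The steps that merely invoke \Cref{lemma:H2norm} and \Cref{eq:strict-LMI-H2norm} are routine; the substance — and the reason it is deferred to Part~II — lies in (i)--(ii): constructing the reversible change of variables and proving it is a diffeomorphism onto a slice of the convex domain precisely when $P_{12}\in\mathrm{GL}_n$, showing $\mathfrak h$ is convex, and, most delicately, establishing that first-order stationarity of the nonconvex cost descends through the $\mathrm{GL}_n$-quotient to first-order stationarity of $\mathfrak h$. The root difficulty is that we are forced to work with the \emph{nonstrict} LMI \Cref{eq:nonstrict-lmi-h2} (strict LMIs capture only strict epigraphs and cannot certify optimality directly), and the nonstrict LMI is merely sufficient for $J_{\LQG,n}(\mK)\le\gamma$; non-degeneracy is exactly the hypothesis guaranteeing that at $\mK^\ast$ the certificate is \emph{tight} and the lift is \emph{invertible}, so that the convex picture is valid there. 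Degenerate stationary points — for which every tight certificate has rank-deficient $P_{12}$ — fall outside this picture, consistent with the spurious saddles exhibited in \cite{zheng2021analysis}.
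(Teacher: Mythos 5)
Your proposal follows essentially the same route as the paper: the \ECL{}/convex-lifting strategy via the Scherer--Gahinet change of variables, with non-degeneracy (invertibility of $P_{12}$, hence of $I-XY$) making the lift reversible, stationarity descending through the similarity-transformation quotient to the convex reformulation, and the strict-LMI/perturbation argument closing the comparison against arbitrary $\mK\in\mathcal{C}_{n,0}$. The paper likewise defers the substantive ingredients --- the diffeomorphism property, convexity of the lifted epigraph, and the transfer of first-order stationarity --- to Part~II, so your sketch matches the paper's argument at the level of detail it actually provides here.
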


Our proof is based on a new \texttt{ECL} framework for epigraphs of nonconvex functions, which will be the focus of Part II of this paper. Essentially, we show that the LQG optimization ``almost'' behaves as a convex problem. This proof strategy is much more general than \cite[Theorem 4.3]{zheng2021analysis} that relies on the minimality assumption and Riccati equations. We will use the same proof strategy to establish similar guarantees for nonsmooth and nonconvex $\mathcal{H}_\infty$ control in \Cref{subsection:Hinf-main-results}.
\Cref{theorem:LQG-main} directly implies that there exist no spurious (local minimum/maximum/saddle points) stationary points of $J_{\LQG,n}(\mK)$ in ${\mathcal{C}}_{\mathrm{nd},0}$. Indeed, we show that if a policy $\mK \in {\mathcal{C}}_{\mathrm{nd},0}$ is not globally optimal, then there exists a direction $\mV$ such that the directional derivative is negative, i.e.,
$$
  \lim_{t \downarrow 0} \frac{J_{\LQG,n}(\mK + t\mV) - J_{\LQG,n}(\mK)}{t} < 0,
$$
thus it is always possible to improve over this point via suitable local search algorithms.

Our next result shows that all minimal stationary points must be non-degenerate. In other words, the global optimality in \cite[Theorem 4.3]{zheng2021analysis} is a special case of \cref{theorem:LQG-main}.

\begin{theorem} \label{theorem:LQG-global-optimality}
    Let $\mK^\ast\in {\mathcal{C}}_{n,0}$ be a stationary point of $J_{\LQG,n}$ (i.e., $\nabla J_{\LQG,n}(\mK^\ast)=0$). If $\mK^\ast$ corresponds to a minimal policy,  then it is a non-degenerate policy for LQG, and thus is a global minimum of $J_{\LQG,n}$ over ${\mathcal{C}}_{n,0}$.
\end{theorem}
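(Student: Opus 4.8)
The plan is to deduce the theorem from \Cref{theorem:LQG-main} by showing that a \emph{minimal} stationary point of $J_{\LQG,n}$ is automatically \emph{informative}, hence non-degenerate for LQG by \Cref{proposition:informativity-non-degenerate}. Thus the whole content is the implication ``minimal $+$ stationary $\Rightarrow$ non-degenerate'', after which global optimality is immediate from \Cref{theorem:LQG-main}.

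First I would record the consequences of minimality. Since $\mK^\ast\in\mathcal{C}_{n,0}$ is full-order ($q=n$), the Lyapunov solutions $X_{\mK^\ast},Y_{\mK^\ast}$ of \cref{eq:LyapunovX,eq:LyapunovY} partition into $n\times n$ blocks as in \cref{eq:LyapunovXY_block}. Minimality of $\mK^\ast$ means $(A_{\mK^\ast},B_{\mK^\ast})$ is controllable and $(C_{\mK^\ast},A_{\mK^\ast})$ is observable; under \Cref{assumption:performance-weights} and internal stability, \cite[Lemma 4.5]{zheng2021analysis} (and its observability dual) then forces $X_{\mK^\ast}\succ0$ and $Y_{\mK^\ast}\succ0$, so in particular the diagonal blocks satisfy $X_{22}\succ0$ and $Y_{22}\succ0$.

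Next I would invoke stationarity. Because $J_{\LQG,n}(\mK^\ast)>0$, the condition $\nabla J_{\LQG,n}(\mK^\ast)=0$ together with the gradient formula \cref{eq:partial_Ak} gives the identity $Y_{12}^\tr X_{12}=-Y_{22}X_{22}$. Since $X_{22},Y_{22}\in\mathbb{S}^n_{++}$, we have $\det(Y_{22}X_{22})=\det(Y_{22})\det(X_{22})>0$, so the $n\times n$ matrix $Y_{12}^\tr X_{12}$ is invertible, which forces $X_{\mK^\ast,12}=X_{12}\in\mathrm{GL}_n$. Hence $\mK^\ast$ is informative.

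By \Cref{proposition:informativity-non-degenerate} this gives $\mK^\ast\in\mathcal{C}_{\mathrm{nd},0}$, i.e., $\mK^\ast$ is a non-degenerate policy for LQG, and then \Cref{theorem:LQG-main} yields that $\mK^\ast$ is a global minimum of $J_{\LQG,n}$ over $\mathcal{C}_{n,0}$. I do not expect a real obstacle here: granting \Cref{theorem:LQG-main,proposition:informativity-non-degenerate} and the cited positive-definiteness of the Gramians under minimality, the proof collapses to the one-line rank argument above. (A slightly more self-contained route, mirroring the sufficiency sketch of \Cref{proposition:informativity-non-degenerate}, would instead exhibit the explicit certificate $P=J_{\LQG,n}(\mK^\ast)\,X_{\mK^\ast}^{-1}$, verify that it satisfies \cref{eq:LQG-Bilinear-a,eq:LQG-Bilinear-b} with $\gamma=J_{\LQG,n}(\mK^\ast)$, and note via the block-inversion formula that $P_{12}$ is invertible exactly because $X_{\mK^\ast,12}$ is; but directly invoking \Cref{proposition:informativity-non-degenerate} is cleaner.)
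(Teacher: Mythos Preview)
Your proposal is correct and follows essentially the same route as the paper: both use minimality to get $X_{\mK^\ast},Y_{\mK^\ast}\succ 0$ via \cite[Lemma~4.5]{zheng2021analysis}, then use the $A_\mK$-gradient identity $Y_{12}^\tr X_{12}=-Y_{22}X_{22}$ to force $X_{12}\in\mathrm{GL}_n$. The only cosmetic difference is that the paper carries out the explicit certificate construction $P=J_{\LQG,n}(\mK^\ast)X_{\mK^\ast}^{-1}$ and checks $P_{12}\in\mathrm{GL}_n$ directly (precisely your parenthetical ``more self-contained route''), whereas you package that step by invoking \Cref{proposition:informativity-non-degenerate}; since that result is already available, your shortcut is legitimate and arguably cleaner.
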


\begin{proof}
    Since $\mK^\ast$ is a minimal policy,  the solutions $X_{\mK^\ast}$ and $Y_{\mK^\ast}$ to~\cref{eq:LyapunovX} and~\cref{eq:LyapunovY} are strictly positive definite (cf. \cite[Lemma 4.5]{zheng2021analysis}). We partition the solutions as \cref{eq:LyapunovXY_block}, and we have $X_{22} \succ 0, Y_{22} \succ 0$.
    Further, since $\mK^\ast$ is a stationary point, from \cref{eq:partial_Ak}, we have
    \[
    \frac{\partial{J_{\LQG,n}(\mK^\ast)}}{\partial{A_\mK}} =\frac{1}{J_{\LQG,n}(\mK^\ast)}(Y_{12}^\tr X_{12}+Y_{22}X_{22})=0 \quad \Rightarrow \quad  Y_{12}^\tr X_{12}=-Y_{22}X_{22},
    \]
    which implies $X_{12}, Y_{12}\in \mathrm{GL}_n$.

   We next construct matrices $\Gamma$ and $P \in \mathbb{S}^{2n}_{++}$ with $ P_{12} \in \mathrm{GL}_{n}$  such that  \Cref{eq:LQG-Bilinear-a} {and}  \Cref{eq:LQG-Bilinear-b} {hold with} $ \gamma = J_{\LQG,n}(\mK^\ast)$. This will complete our proof of $\mK^\ast \in {\mathcal{C}}_{\mathrm{nd},0}$ by the definition \cref{def:LQG-Cnd}.
Indeed, we choose
\begin{equation} \label{eq:construction-P-Gamma}
P = {J_{\LQG,n}(\mK^\ast)}X_{\mK^\ast}^{-1}, \qquad \Gamma = C_{\mathrm{cl}}(\mK^\ast)P^{-1}C_{\mathrm{cl}}(\mK^\ast)^\tr.
\end{equation}
It is not difficulty to verify that they satisfy \Cref{eq:LQG-Bilinear-a} {and}  \Cref{eq:LQG-Bilinear-b} {with} $ \gamma = J_{\LQG,n}(\mK^\ast)$ (see the constructions in the proof of \Cref{lemma:H2norm} in \Cref{app:h2-hinf-norms}). It remains to show $P_{12}\in \mathrm{GL}_n$. For this, we infer from $X_{\mK^\ast} X_{\mK^\ast}^{-1}=I$ that
\[
    X_{11}(X_{\mK^\ast}^{-1})_{12}+X_{12}(X_{\mK^\ast}^{-1})_{22}=0,
\]
which implies $(X_{\mK^\ast}^{-1})_{12}\in \mathrm{GL}_n$ since $X_{11},X_{12},(X_{\mK^\ast}^{-1})_{22}\in \mathrm{GL}_n$.
    Thus $P_{12}\in \mathrm{GL}_n$. 
\end{proof}


\begin{table}[t]
\setlength{\abovecaptionskip}{3pt}
    \caption{Cases of stationary points in LQG control \cref{eq:LQG_policy_optimization}.}
    \label{tab:stationary-points-LQG}
    \centering
    \begin{tabular}{ccc}
    \toprule
        & Minimal & Non-minimal  \\
   Non-degenerate    &  \cmark (\Cref{example:LQG-minimal-non-degenerate}) & \cmark (\cref{example:non-minimal-non-degenerate})\\
   Degenerate & \xmark (cannot exist) & \cmark (\Cref{remark:degenerate-LQG-controller}) \\
    \bottomrule
    \end{tabular}

    \vspace{6pt}
    \parbox{3.6in}
    {\footnotesize Any non-degenerate stationary points are globally optimal (\Cref{theorem:LQG-main}), but there may exist degenerate globally optimal policies (\Cref{remark:degenerate-LQG-controller}). }
\end{table}

In the following, we present three examples illustrating \Cref{theorem:LQG-main,theorem:LQG-global-optimality}. \Cref{tab:stationary-points-LQG} summarizes different cases of stationary points in LQG control.  Our first example below is the case where the globally optimal policy for LQG from the Riccati equation in \Cref{theorem:LQG-riccati-solution} is minimal and non-degenerate.

\begin{example}[A globally optimal policy that is minimal and non-degenerate] \label{example:LQG-minimal-non-degenerate}
Consider the same LQG instance in \Cref{example:discontinuity-LQG-boundary}.
It is easy to see that the unique positive semidefinite solution to Riccati equation \Cref{eq:Riccati_P} is
    $
    P = \sqrt{2}-1,
    $
    and the \textit{Kalman gain} is $L = \sqrt{2}-1$. Similarly, the \textit{Feedback gain} is $K = \sqrt{2}-1$. Then, the globally optimal policy for LQG is given by
\begin{equation*} 
A_\mK^\star = 1 -2\sqrt{2}, \qquad B_\mK^\star = -1 + \sqrt{2}, \qquad C_\mK^\star = 1 - \sqrt{2}.
\end{equation*}
This policy is minimal and thus non-degenerate (cf. \Cref{theorem:LQG-global-optimality}), i.e., $\mK^\star \in {\mathcal{C}}_{\mathrm{nd},0}$. Therefore, despite the discontinuous boundary behavior in \Cref{example:discontinuity-LQG-boundary}, this LQG instance is well-conditioned and can be solved easily via Riccati equations or LMIs. 
We can further verify that the globally optimal policies for a class of LQG instances with $
A = a,\,  B = 1,\, C = 1,\, Q = 1,\, R = 1,\, W = 1,\, V =1, $
are all non-degenerate for any $a \in \mathbb{R}$; see \Cref{appendix:globally-optimal-controllers} for computational details. \hfill \qed
\end{example}

The following example shows that the notion of non-degenerate policies indeed characterizes a larger class of stationary points, beyond minimal policies, that are globally optimal to \cref{eq:LQG_policy_optimization}.

\begin{example}[A globally optimal policy for LQG that is non-minimal and non-degenerate] \label{example:non-minimal-non-degenerate}
We consider the same example in \cite[Example 7]{zheng2021analysis}, which originates from \cite{yousuff1984note}. In this example, the policy in ${\mathcal{C}}_{n,0}$ is not minimal, but we show it is indeed non-degenerate.

Specifically, we consider the LTI system \cref{eq:Dynamic} with
$$
A=\begin{bmatrix}
0 & -1 \\ 1 & 0
\end{bmatrix},
\qquad
B = \begin{bmatrix}
1 \\ 0
\end{bmatrix},
\qquad
C = \begin{bmatrix}
1 & -1
\end{bmatrix},
\qquad
W = \begin{bmatrix}
1 & -1 \\ -1 & 16
\end{bmatrix},
\qquad
V = 1,
$$
and let the LQG cost be defined by
$
Q=\begin{bmatrix}
4 & 0 \\ 0 & 0
\end{bmatrix},
\,
R = 1.
$
This LQG problem satisfies \cref{assumption:stabilizability}. The positive definite solutions to the Riccati equations \cref{eq:Riccati} are
$
P=\begin{bmatrix}
1 & 0 \\ 0 & 4
\end{bmatrix},
\,
S
=\begin{bmatrix}
2 & 0 \\ 0 & 2
\end{bmatrix},
$
and the globally optimal policy is given by
\begin{equation} \label{eq:LQGcontroller_Nonminimal_case}
A_{\mK} = \begin{bmatrix}
-3 & 0 \\
5 & -4
\end{bmatrix},
\quad
B_{\mK}=L
=
\begin{bmatrix}
1 \\ -4
\end{bmatrix},
\quad
C_{\mK}=-K =
\begin{bmatrix}
-2 & 0
\end{bmatrix}.
\end{equation}
It is straightforward to verify that $(C_{\mK},A_{\mK})$ is not observable. Therefore, the optimal policy for LQG obtained from the Riccati equations is not minimal in this example. 

This globally optimal policy \cref{eq:LQGcontroller_Nonminimal_case} turns out to be non-degenerate. Indeed, for the closed-loop system \cref{eq:transfer-function-zd} with this optimal policy $\mK$, the solution to the Lyapunov equation \cref{eq:LyapunovX} reads as
\[
X_\mK=\begin{bmatrix} 5.25 & -8 & 4.25 & -8 \\ -8 & 20.25 & -8 & 16.25 \\ 4.25 & -8 & 4.25 & -8 \\ -8 & 16.25 & -8 & 16.25 \end{bmatrix}.
\]
It is easy to verify that $X_\mK \succ 0$ (this is expected since the closed-loop system is controllable thanks to the controllability of $(A_\mK, B_\mK)$) and $(X_\mK^{-1})_{12}\in \mathrm{GL}_n$. Then, 
the construction in \cref{eq:construction-P-Gamma} satisfies \Cref{eq:LQG-Bilinear-a} {and}  \Cref{eq:LQG-Bilinear-b} {with} $ \gamma = J_{\LQG,n}(\mK)$. Thus, the non-minimal policy \cref{eq:LQGcontroller_Nonminimal_case} is non-degenerate.  \hfill \qed
%
%
\end{example}


\begin{example}[A globally optimal policy for LQG that is degenerate] \label{remark:degenerate-LQG-controller}
    Similar to minimal policies, our notion of non-degenerate policies depends on state-space realizations.
    In \Cref{example:non-minimal-non-degenerate}, some globally optimal policies in ${\mathcal{C}}_{n,0}$ are not connected by similarity transformations. The following two non-minimal policies are both globally optimal:
     \begin{equation*}
         \mK_1 = \left[\begin{array}{c:cc}
    0 & -2 &  0 \\[2pt]
    \hdashline
    1 & -3 & 0 \\[-2pt]
    -4 & 5 & -4
    \end{array}\right],
         \qquad \mK_2 = \left[\begin{array}{c:cc}
    0 & -2 &  0 \\[2pt]
    \hdashline
    1 & -3 & 0 \\[-2pt]
    0 & 0 & -1
    \end{array}\right],
     \end{equation*}
      (since they correspond to the same transfer~function), but there exists no similarity transformation between $\mK_1$ and $\mK_2$. We have numerically verified in \Cref{example:non-minimal-non-degenerate} that $\mK_1$ is non-degenerate; on the other hand, we can further prove that $\mK_2$ is degenerate; see \Cref{lemma:LQG-lower-order-stationary-point-to-high-order} below. 
     This observation raises an interesting question: if a globally optimal policy for LQG is minimal, it must be non-degenerate (cf. \Cref{theorem:LQG-global-optimality}); for a globally optimal policy for LQG that is not minimal, itself might be degenerate (see $\mK_2$), but does it admit another state-space realization that is non-degenerate (such as $\mK_1$)? Another related question is whether all policies obtained from Riccati equations (i.e., those from \Cref{theorem:LQG-riccati-solution}) are non-degenerate. Detailed investigations are left for future work.
     \hfill \qed
\end{example}

\subsection{A class of degenerate policies}

In \Cref{remark:degenerate-LQG-controller}, we have identified one degenerate policy that is globally optimal. Due to the existence of saddle points \cite[Theorem 4.2]{zheng2021analysis}, it is expected that there exist sub-optimal degenerate stationary points $\mK \in {\mathcal{C}}_{n,0}\backslash {\mathcal{C}}_{\mathrm{nd},0}$ for LQG control \cref{eq:LQG_policy_optimization}.

We here identify a class of degenerate policies. 
In particular, the following \Cref{lemma:LQG-lower-order-stationary-point-to-high-order} shows that any stationary point of $J_{\LQG,q}$ can be augmented to a stationary point of $J_{\LQG,q+q'}$ for any $q'\geq 1$. Furthermore, any reduced-order minimal policy in ${\mathcal{C}}_{q,0}$ can be augmented to a full-order policy in ${\mathcal{C}}_{n,0}$ that is a degenerate policy. 

\begin{theorem} \label{lemma:LQG-lower-order-stationary-point-to-high-order}
Let $0 \leq q < n$, and consider a reduced-order policy $\mK=\begin{bmatrix}
0 & C_{\mK} \\ B_{\mK} & A_{\mK}
\end{bmatrix}
\in {\mathcal{C}}_{q,0}$. The following statement holds.
\begin{enumerate}
    \item For any $q'\geq 1$ and any stable $\Lambda\in\mathbb{R}^{q'\times q'}$, the following augmented policy
\begin{equation} \label{eq:gradient_nonglobally_K}
        \tilde{\mK}
    =\left[\begin{array}{c:cc}
    0 & C_{\mK} &  0 \\[2pt]
    \hdashline
    B_{\mK} & A_{\mK} & 0 \\[-2pt]
    0 & 0 & \Lambda
    \end{array}\right] \in {\mathcal{C}}_{q+q',0}
\end{equation}
    satisfies $J_{\LQG,q+q'}\big(\tilde{\mK}\big)
=J_{\LQG,q}({\mK})$.
    \item If the policy $\mK$ is a stationary point of $J_{\LQG,q}$, i.e., $\nabla J_{\LQG,q}(\mK)=0$, then the augmented policy \cref{eq:gradient_nonglobally_K} with any $q'\geq 1$ and any stable $\Lambda\in\mathbb{R}^{q'\times q'}$ is a stationary point of $J_{\LQG,q+q'}$ over ${\mathcal{C}}_{q+q',0}$. 
\item Let $q' = n - q$. If $\mK\in {\mathcal{C}}_{q,0}$ is minimal, the augmented policy \cref{eq:gradient_nonglobally_K} with any stable $\Lambda\in\mathbb{R}^{q'\times q'}$ is degenerate, i.e., $ \tilde{\mK} \in {\mathcal{C}}_{n,0} \backslash {\mathcal{C}}_{\mathrm{nd},0}$.
\end{enumerate}
\end{theorem}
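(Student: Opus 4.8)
The plan is to prove the three parts in turn, using the Lyapunov formula for the LQG cost (\Cref{lemma:LQG_cost_formulation1}), the gradient formula in \Cref{lemma:gradient_LQG_Jn}, and the informativity characterization of non-degeneracy in \Cref{proposition:informativity-non-degenerate}. Throughout I write the closed-loop state of the augmented policy $\tilde{\mK}$ in \eqref{eq:gradient_nonglobally_K} as $(x,\xi,\zeta)$ with $\zeta\in\mathbb{R}^{q'}$, and set $A^{(q)}_{\mathrm{cl}}:=\begin{bmatrix} A & BC_{\mK}\\ B_{\mK}C & A_{\mK}\end{bmatrix}$, which is Hurwitz since $\mK\in\mathcal{C}_{q,0}$ (recall $D_{\mK}=0$). \emph{Part 1} is essentially already contained in the discussion following \eqref{eq:lifted_controller}: the policy $\tilde{\mK}$ of \eqref{eq:gradient_nonglobally_K} is the special case $\tilde{C}=0$ of $\tilde{\mK}_o$ (equivalently $\tilde{B}=0$ of $\tilde{\mK}_c$); its added $\Lambda$-mode is uncontrollable from $(w,v)$ and unobservable from $z$, so $\mathbf{T}_{zd}(\tilde{\mK},s)=\mathbf{T}_{zd}(\mK,s)$, and the closed-loop matrix $A_{\mathrm{cl}}(\tilde{\mK})$ is block upper-triangular with diagonal blocks $A^{(q)}_{\mathrm{cl}}$ and $\Lambda$, both Hurwitz, so $\tilde{\mK}\in\mathcal{C}_{q+q',0}$. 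The equality $J_{\LQG,q+q'}(\tilde{\mK})=J_{\LQG,q}(\mK)$ then follows from the definition \eqref{eq:H2-norm}.

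\emph{Part 2.} I would solve the Lyapunov equations \eqref{eq:LyapunovX}--\eqref{eq:LyapunovY} for $\tilde{\mK}$ block-by-block in the coordinates $(x,\xi,\zeta)$. There the relevant system matrix is $\operatorname{diag}(A^{(q)}_{\mathrm{cl}},\Lambda)$, and the free terms $\operatorname{diag}(W,B_{\mK}VB_{\mK}^\tr,0)$ and $\operatorname{diag}(Q,C_{\mK}^\tr R C_{\mK},0)$ have vanishing $\zeta$-rows and $\zeta$-columns. Hence the $\zeta\zeta$-blocks of $X_{\tilde{\mK}}$ and $Y_{\tilde{\mK}}$ obey homogeneous Lyapunov equations driven by the Hurwitz matrix $\Lambda$ (so they vanish), and the $(x,\xi)$-vs-$\zeta$ cross-blocks obey homogeneous Sylvester equations of the form $A^{(q)}_{\mathrm{cl}}Z+Z\Lambda^\tr=0$ and $(A^{(q)}_{\mathrm{cl}})^\tr Z+Z\Lambda=0$; since $A^{(q)}_{\mathrm{cl}}$ and $\Lambda$ are both Hurwitz, the spectrum of $A^{(q)}_{\mathrm{cl}}$ (resp. $(A^{(q)}_{\mathrm{cl}})^\tr$) is disjoint from that of $-\Lambda^\tr$ (resp. $-\Lambda$), so these cross-blocks also vanish. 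The remaining $(x,\xi)$-blocks solve exactly the Lyapunov equations \eqref{eq:LyapunovX}--\eqref{eq:LyapunovY} for $\mK$, giving $X_{\tilde{\mK}}=\operatorname{diag}(X_{\mK},0)$ and $Y_{\tilde{\mK}}=\operatorname{diag}(Y_{\mK},0)$. Substituting these, together with $B_{\tilde{\mK}}=\begin{bmatrix}B_{\mK}\\0\end{bmatrix}$ and $C_{\tilde{\mK}}=\begin{bmatrix}C_{\mK} & 0\end{bmatrix}$, into the gradient formulas \eqref{eq:gradient_Jn}, every summand that touches a $\zeta$-row or $\zeta$-column of $X_{\tilde{\mK}}$ or $Y_{\tilde{\mK}}$ vanishes, so $\partial J_{\LQG,q+q'}(\tilde{\mK})/\partial A_{\tilde{\mK}}$, $\partial/\partial B_{\tilde{\mK}}$ and $\partial/\partial C_{\tilde{\mK}}$ equal the corresponding partials of $J_{\LQG,q}$ at $\mK$ (the scalar prefactors $1/J_{\LQG,\cdot}$ agree by Part 1), padded with zero blocks. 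Since $\nabla J_{\LQG,q}(\mK)=0$ by hypothesis, this yields $\nabla J_{\LQG,q+q'}(\tilde{\mK})=0$.

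\emph{Part 3.} Take $q'=n-q\geq 1$; then Part 1 gives $\tilde{\mK}\in\mathcal{C}_{n,0}$, so \Cref{proposition:informativity-non-degenerate} applies: $\tilde{\mK}\in\mathcal{C}_{\mathrm{nd},0}$ if and only if the $n\times n$ steady-state correlation block $X_{\tilde{\mK},12}$ — the off-diagonal block of $X_{\tilde{\mK}}$ coupling the plant state with the $n$-dimensional controller state $(\xi,\zeta)$ — has full rank. By Part 2, $X_{\tilde{\mK},12}=\begin{bmatrix}X_{\mK,12} & 0\end{bmatrix}$ with a zero block of width $q'\geq 1$, hence $\operatorname{rank}(X_{\tilde{\mK},12})\leq q<n$: it is rank-deficient, so $\tilde{\mK}\notin\mathcal{C}_{\mathrm{nd},0}$, i.e., $\tilde{\mK}$ is degenerate. (When $\mK$ is moreover a minimal stationary point of $J_{\LQG,q}$, Part 2 combined with \Cref{theorem:LQG-global-optimality} applied at order $q$ exhibits $\tilde{\mK}$ as a degenerate full-order stationary point realizing the optimal order-$q$ LQG value, which is in general strictly above the optimal full-order value — the intended upshot of this subsection.)

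\emph{Main obstacle.} Parts 1 and 3 are short given the tools already established; the bulk of the work is the bookkeeping in Part 2 — solving the augmented Lyapunov equations block-by-block (invoking that a homogeneous Sylvester equation with two Hurwitz coefficients has only the zero solution, which requires $\Lambda$ Hurwitz but not symmetric) and then checking carefully that the zero $\zeta$-blocks of $X_{\tilde{\mK}}$ and $Y_{\tilde{\mK}}$ annihilate exactly the extra terms in \eqref{eq:gradient_Jn}, so that the \emph{entire} gradient at $\tilde{\mK}$ vanishes, not merely its restriction to the directions embedded from order $q$.
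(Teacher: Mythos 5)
Your proof is correct. Parts 1 and 2 follow essentially the paper's route: Part 1 is the transfer-function-invariance observation made after \cref{eq:lifted_controller}, and Part 2 is the same direct gradient computation the paper attributes to prior work — your block-diagonal solutions $X_{\tilde{\mK}}=\operatorname{diag}(X_{\mK},0)$, $Y_{\tilde{\mK}}=\operatorname{diag}(Y_{\mK},0)$ (with the cross-blocks killed by homogeneous Sylvester equations with two Hurwitz coefficients) substituted into \cref{eq:gradient_Jn} is exactly what is needed, and your computation checks out.

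For Part 3 you take a genuinely different route from the paper. The paper (\Cref{theorem:degenerate-controllers-LQG}) works directly with the non-strict LMI defining $\mathcal{C}_{\mathrm{nd},0}$: it converts the $P$-LMI to an $X$-LMI, invokes the uniqueness of the reduced-order certificate (\Cref{lemma:unique-solution-H2}, which is where the minimality of $\mK$ and the minimality of the reduced-order closed loop enter) to force the $(1,1)$ block of the augmented LMI to hold with equality, and then concludes that the off-diagonal block of \emph{every} feasible $X$ must vanish. You instead invoke the informativity equivalence of \Cref{proposition:informativity-non-degenerate}, which reduces non-degeneracy to a rank condition on the single Lyapunov-equation solution of \cref{eq:LyapunovX} — a solution you have already computed in Part 2 to be $\operatorname{diag}(X_{\mK},0)$, whose $n\times n$ off-diagonal block $[X_{\mK,12}\ \ 0]$ has rank at most $q<n$. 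Your argument is shorter, and notably it never uses the minimality hypothesis: it shows the augmented policy \cref{eq:gradient_nonglobally_K} is degenerate for \emph{any} reduced-order $\mK\in\mathcal{C}_{q,0}$, which is a strictly stronger conclusion (and consistent with the examples in the paper). The trade-off is that you lean on the necessity direction of \Cref{proposition:informativity-non-degenerate}, whose proof is itself substantial, whereas the paper's argument is self-contained at the level of the LMI certificates.
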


The first statement is straightforward since $\mK$ and $\tilde{\mK}$ correspond to the same transfer function in the frequency domain. The second statement has recently been established in \cite[Theorem 4.1]{zheng2021analysis} which relies on direct gradient computation. 
The third statement requires some computations around Lyapunov equations and inequalities, which guarantees that the off-diagonal block $P_{12}$ satisfying \Cref{eq:LQG-Bilinear-a} {and}  \Cref{eq:LQG-Bilinear-b} {with} $ \gamma = J_{\LQG,n}(\tilde{\mK})$ always has low rank. The details are technically involved. We postpone them to \Cref{appendix:degenerate-controllers} (see \Cref{theorem:degenerate-controllers-LQG}).

We finally present a simple corollary that introduces a class of sub-optimal stationary points that correspond to degenerate policies.

\begin{corollary}
Suppose the plant \cref{eq:Dynamic} is open-loop stable. Let $\Lambda\in\mathbb{R}^{n\times n}$ be stable. Then $
\mK = \left[\begin{array}{c:c}
    0 &   0 \\[2pt]
    \hdashline
     0 & \Lambda
    \end{array}\right] .
$
is a stationary point of $J_{\LQG,n}(\mK)$ over ${\mathcal{C}}_{n,0}$, and it is degenerate, i.e., $\mK\in {\mathcal{C}}_{\mathrm{nd},0}$.
\end{corollary}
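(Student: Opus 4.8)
The plan is to read off the stated $\mK$ as the $q=0$ instance of \Cref{lemma:LQG-lower-order-stationary-point-to-high-order}. First I would take $\mK_0 = 0 \in \mathbb{R}^{m\times p}$, the unique strictly proper policy of order $q=0$. Since the plant is open-loop stable, $A_{\mathrm{cl}}(\mK_0) = A$ is Hurwitz, so $\mK_0 \in {\mathcal{C}}_{0,0}$; in fact the ambient vector space $\mathcal{V}_0$ equals $\{0\}$ and hence $\mathcal{C}_{0,0} = \{0\}$, so the gradient of $J_{\LQG,0}$ at $\mK_0$ vanishes trivially and $\mK_0$ is a stationary point of $J_{\LQG,0}$. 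Moreover $\mK_0$ has internal state dimension $0$, so it is vacuously minimal: controllability of $(A_{\mK_0},B_{\mK_0})$ and observability of $(C_{\mK_0},A_{\mK_0})$ hold automatically on the trivial state space.

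Next I would invoke \Cref{lemma:LQG-lower-order-stationary-point-to-high-order} with $q=0$, $q'=n$, and the given stable $\Lambda\in\mathbb{R}^{n\times n}$. With the middle blocks $C_\mK,B_\mK,A_\mK$ of $\mK_0$ absent, the augmented policy \cref{eq:gradient_nonglobally_K} collapses to exactly
\[
\tilde{\mK}=\left[\begin{array}{c:c} 0 & 0 \\ \hdashline 0 & \Lambda \end{array}\right],
\]
which is precisely the $\mK$ in the statement. Part 2 of \Cref{lemma:LQG-lower-order-stationary-point-to-high-order} then gives that $\mK$ is a stationary point of $J_{\LQG,n}$ over ${\mathcal{C}}_{n,0}$, and Part 3 (which sets $q'=n-q=n$) gives $\mK\in{\mathcal{C}}_{n,0}\backslash{\mathcal{C}}_{\mathrm{nd},0}$, i.e., $\mK$ is degenerate, as claimed.

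If a self-contained argument is preferred instead, stationarity can be checked directly: the block-diagonal structure of $A_{\mathrm{cl}}(\mK)$, $B_{\mathrm{cl}}(\mK)$, $C_{\mathrm{cl}}(\mK)$ forces the solutions of \cref{eq:LyapunovX,eq:LyapunovY} to be $X_\mK=\mathrm{diag}(X_{11},0)$ and $Y_\mK=\mathrm{diag}(Y_{11},0)$ with $X_{12}=X_{22}=Y_{12}=Y_{22}=0$, so every term in the gradient formula \cref{eq:gradient_Jn} vanishes; degeneracy then amounts to reproducing the Lyapunov/LMI estimate behind \Cref{theorem:degenerate-controllers-LQG}, which shows that any $P\succ 0$ feasible for \cref{eq:LQG-Bilinear-a,eq:LQG-Bilinear-b} at $\gamma=J_{\LQG,n}(\mK)$ must have a rank-deficient off-diagonal block $P_{12}$. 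I expect no substantive obstacle here — the only care needed is the $q=0$ bookkeeping (treating the zero-order controller as a genuine stationary point and as vacuously minimal), since the heavy lifting has already been done in \Cref{lemma:LQG-lower-order-stationary-point-to-high-order}.
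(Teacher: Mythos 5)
Your proposal is correct and follows essentially the same route as the paper: the paper likewise treats the given $\mK$ as the augmentation of the order-zero zero policy (stationary for an open-loop stable plant) and invokes \Cref{lemma:LQG-lower-order-stationary-point-to-high-order} for both stationarity and degeneracy. Your attention to the $q=0$ bookkeeping is well placed, since the detailed degeneracy argument in \Cref{theorem:degenerate-controllers-LQG} is stated for $1\leq q<n$ but carries over because the closed-loop system under the static zero policy is minimal by \Cref{assumption:performance-weights}.
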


This corollary is an immediate consequence from \Cref{lemma:LQG-lower-order-stationary-point-to-high-order}, since a zero policy is always a stationary point to LQG control with open-loop stable systems \cite[Theorem 4.2]{zheng2021analysis}. This zero policy is degenerate, and it will be suboptimal if the globally optimal policy is minimal (since the optimal LQG controller is unique in the frequency domain).  
We conclude this section with a degenerate policy that corresponds to a strict saddle point.

\begin{example}[A degenerate policy that corresponds to a strict saddle]\label{example:LQG_strict_saddle}
    Consider the same LQG instance in \Cref{example:discontinuity-LQG-boundary}. It is straightforward to verify that
    $
    \mK = \begin{bmatrix}
        0 & 0 \\ 0 & -1
    \end{bmatrix} \in {\mathcal{C}}_{1,0}
    $
    is a stationary point. It is also not difficult to verify that this policy is degenerate. 
    Furthermore, by \cite[Theorem 4.3]{zheng2021analysis} and \cite[Theorem 2]{zheng2022escaping}, this policy is a strict saddle point and its corresponding Hessian has one negative eigenvalue (indeed, the three eigenvalues are $0$ and $\pm0.25$). We illustrate this strict saddle in \Cref{fig:LQG_strict_saddle}. \hfill \qed
\end{example}

\begin{figure}
\centering

    \includegraphics[width =0.4\textwidth]{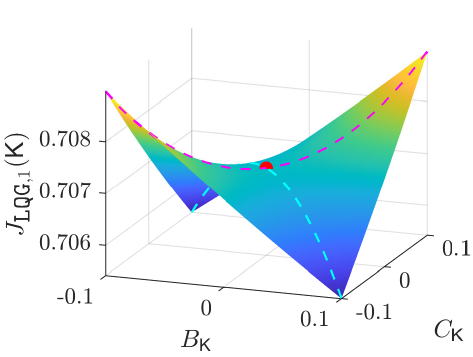}
    \caption{A strict saddle (highlighted as the red point) of LQG optimization landscape in \Cref{example:LQG_strict_saddle}. We fixed $A_\mK = -1$ and change $B_\mK \in (-0.1, 0.1)$ and $C_\mK \in (-0.1, 0.1)$.
    }
\label{fig:LQG_strict_saddle}
\end{figure}

\section{Nonsmooth and Nonconvex \texorpdfstring{$\mathcal{H}_\infty$}{Lg} Robust Control}
\label{section:Hinf-control}

In this section, we turn our focus to the policy optimization for $\mathcal{H}_\infty$ control \cref{eq:Hinf_policy_optimization}. One key difference compared to the LQG control is that the $\mathcal{H}_\infty$ cost function $J_{\infty,n}(\mK)$ is nonsmooth. We thus need to pay special care to the non-smoothness. However, not surprisingly (or surprisingly) in the spirit of \cite{doyle1988state}, we will show many properties in \cref{section:LQG} have nonsmooth counterparts for $\mathcal{H}_\infty$ control \cref{eq:Hinf_policy_optimization}, since both $\mathcal{H}_2$ and $\mathcal{H}_\infty$ norms have similar LMI characterizations (cf. \Cref{lemma:H2norm,lemma:bounded_real}). We will also highlight some difficulties in the analysis brought by the unique feature of the bounded real lemma (\cref{lemma:bounded_real}). 

We first summarize some basic properties of $\mathcal{H}_\infty$ cost function $J_{\infty,n}(\mK)$, and demonstrate its non-smoothness using simple examples. Similar to the LQG case, the $\mathcal{H}_\infty$ cost function $J_{\infty,n}(\mK)$ also exhibits complicated ``discontinuous'' behavior around its boundary.
We then present our main technical result, proving that all Clarke stationary points in the set of non-degenerate policies are globally optimal. The proof also relies on the \texttt{ECL} framework.
At the end of this section, we discuss a class of Clarke stationary points that is potentially degenerate and sub-optimal.

\subsection{Basic properties and ``discontinuous'' behavior of the \texorpdfstring{$\mathcal{H}_\infty$}{Lg} cost} \label{subsection:basic-properties}

The $\mathcal{H}_\infty$ cost function $J_{\infty,q}(\mK)$ in \cref{eq:Hinf-norm} is known to be nonconvex and also nonsmooth with two possible sources of non-smoothness: One from taking the largest singular value of complex matrices, and the other from maximization over all the frequencies $\omega \in \mathbb{R}$. One could also see the non-smoothness from its \texttt{max} operation in the time domain. 
Since this is a unique feature in $\mathcal{H}_\infty$ control, we highlight it as a fact below.

\begin{fact} \label{fact:hinf-nonconvex}
    Fix $q \in \mathbb{N}$ such that ${\mathcal{C}}_q\neq\varnothing$. Then, $J_{\infty,q}(\mK)$ in \cref{eq:Hinf-norm} is continuous, nonconvex and nonsmooth on ${\mathcal{C}}_q$.
\end{fact}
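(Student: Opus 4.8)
The plan is to verify the three assertions separately. Only continuity needs an argument valid for every problem instance; nonconvexity and nonsmoothness are features of the problem \emph{class} (exactly as with \cref{fact:LQG}(1) for LQG), so for those it suffices to exhibit witnessing instances --- and \cref{example:Hinf-nonsmooth} together with the boundary examples of this section already supply them. I would present continuity in detail and reduce the other two to small examples.

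For continuity I would show that $J_{\infty,q}$ is simultaneously lower and upper semicontinuous on $\mathcal{C}_q$. Lower semicontinuity: for each fixed $\omega\in\mathbb{R}$ the map $g_\omega(\mK):=\sigma_{\max}\big(\mathbf{T}_{zd}(\mK,j\omega)\big)$ is continuous on $\mathcal{C}_q$, since the closed-loop matrices in \cref{eq:closed-loop-matrices} depend polynomially on $\mK$, the matrix $j\omega I-A_{\mathrm{cl}}(\mK)$ is invertible for $\mK\in\mathcal{C}_q$ (as $A_{\mathrm{cl}}(\mK)$ is Hurwitz), and $\sigma_{\max}(\cdot)$ is continuous; hence $J_{\infty,q}=\sup_{\omega\in\mathbb{R}}g_\omega$ is a pointwise supremum of continuous real-valued functions and is lower semicontinuous. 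Upper semicontinuity: fix $\mK_0\in\mathcal{C}_q$ and $\gamma>J_{\infty,q}(\mK_0)$; since $A_{\mathrm{cl}}(\mK_0)$ is stable, the necessity direction of the strict bounded real lemma (\cref{lemma:bounded_real}, part~1) gives a symmetric $P_0$ satisfying the strict LMI \cref{eq:strict-hinf} built from the closed-loop data of $\mK_0$. Because the left-hand side of \cref{eq:strict-hinf} is a continuous function of $\mK$ for $P$ fixed and the relation ``$\prec 0$'' is open, the same $P_0$ keeps \cref{eq:strict-hinf} feasible on a whole neighborhood $N$ of $\mK_0$; intersecting $N$ with the open set $\mathcal{C}_q$ and invoking the sufficiency direction of \cref{lemma:bounded_real} yields $J_{\infty,q}(\mK)<\gamma$ for all $\mK$ near $\mK_0$ in $\mathcal{C}_q$, whence $\limsup_{\mK\to\mK_0}J_{\infty,q}(\mK)\le\gamma$; letting $\gamma\downarrow J_{\infty,q}(\mK_0)$ and combining with lower semicontinuity gives continuity. (One can also argue by hand, estimating $|J_{\infty,q}(\mK)-J_{\infty,q}(\mK')|\le\sup_\omega\|\mathbf{T}_{zd}(\mK,j\omega)-\mathbf{T}_{zd}(\mK',j\omega)\|$, splitting into a compact band $|\omega|\le M$ where joint continuity on a compact set controls the difference and a tail $|\omega|>M$ where $\|(j\omega I-A_{\mathrm{cl}})^{-1}\|\le(|\omega|-\|A_{\mathrm{cl}}\|)^{-1}$ pushes both transfer matrices to their feedthrough terms $D_{\mathrm{cl}}$.)

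For nonsmoothness, the two structural sources are visible directly from the two operations in \cref{eq:Hinf-norm}: $\sigma_{\max}(\cdot)$ is not differentiable at a complex matrix whose top singular value has multiplicity larger than one, and $\mK\mapsto\sup_{\omega}g_\omega(\mK)$ is a pointwise maximum that is not differentiable at a $\mK$ whose peak is attained at two distinct frequencies. I would exhibit a (MIMO) plant, a stabilizing $\mK^\ast$, and a direction $\mV$ such that along $t\mapsto\mK^\ast+t\mV$ two competing branches --- two singular-value branches coalescing at the peak frequency, or the values at two separate peak frequencies --- have different first-order rates, so $t\mapsto J_{\infty,q}(\mK^\ast+t\mV)$ has a genuine kink at $t=0$; this is the classical mechanism recorded for $\mathcal{H}_\infty$ design in \cite{apkarian2006nonsmooth,lewis2007nonsmooth}, and it is the content of \cref{example:Hinf-nonsmooth}. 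For nonconvexity, I would take a small (scalar or two-dimensional) instance, produce a line segment $[\mK_0,\mK_1]\subset\mathcal{C}_q$, and check from the bounded real lemma that $J_{\infty,q}\big(\tfrac12(\mK_0+\mK_1)\big)>\tfrac12\big(J_{\infty,q}(\mK_0)+J_{\infty,q}(\mK_1)\big)$; as additional context, the strictly suboptimal (degenerate) Clarke stationary points produced later (\cref{theorem:hinf_non_globally_optimal_stationary_point}) confirm \emph{a posteriori} that no global convexity can hold, and one may also simply note that the domain $\mathcal{C}_q$ itself is not convex in general (cf.\ \cref{fact:LQG}(2)).

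The only genuine obstacle is the continuity proof, and even there the difficulty is purely technical --- handling the frequency variable $\omega$ uniformly --- which the semicontinuity packaging (or the frequency-splitting estimate) dispatches with no new idea. The nonconvexity and nonsmoothness witnesses need nothing beyond \cref{lemma:bounded_real} and the standard non-smoothness picture for $\sigma_{\max}$; the only care required is to keep the chosen examples small enough that the failure of midpoint convexity, respectively the disagreement of one-sided directional derivatives, can be exhibited by a short explicit computation.
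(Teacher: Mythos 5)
Your proposal is correct, and for the nonconvexity and nonsmoothness claims it coincides with what the paper does (the paper treats these as known properties of the problem class and witnesses them with \cref{example:Hinf-nonsmooth} and the boundary examples, exactly as you propose). Where you diverge is the continuity argument. The paper disposes of continuity in one line by viewing $J_{\infty,q}$ as the composition of the norm $\|\cdot\|_{\mathcal{H}_\infty}$ on $\mathcal{RH}_\infty$ (continuous, being convex and finite) with the map $\mK\mapsto\mathbf{T}_{zd}(\mK,\cdot)$, and then immediately upgrades this to local Lipschitz continuity and Clarke regularity in \cref{lemma:H_inf_some_local_Lipschitz}, whose proof (\cref{appendix:Hinf-cost-function}) establishes that $\mK\mapsto\mathbf{T}_{zd}$ is in fact continuously Fr\'echet differentiable into $\mathcal{RH}_\infty$ by expanding $(sI-A_{\mathrm{cl}}-\Delta)^{-1}$ as a Neumann series. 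Your route instead stays in finite dimensions and splits continuity into lower semicontinuity (pointwise supremum of the continuous functions $g_\omega$) plus upper semicontinuity obtained from the strict bounded real lemma: a strict certificate $P_0$ for the level $\gamma>J_{\infty,q}(\mK_0)$ remains feasible on a neighborhood because the LMI data are affine in $\mK$ and ``$\prec 0$'' is open, and the sufficiency direction of \cref{lemma:bounded_real} then caps $J_{\infty,q}$ by $\gamma$ nearby. Both arguments are sound (note that in your USC step the $(1,1)$ block of \cref{eq:strict-hinf} together with stability of $A_{\mathrm{cl}}(\mK_0)$ forces $P_0\succ 0$, which is what the sufficiency proof actually uses). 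The trade-off is that the paper's composition argument buys strictly more --- local Lipschitzness and subdifferential regularity, which are needed later for the Clarke machinery --- at the cost of working in the Banach space $\mathcal{RH}_\infty$, whereas your semicontinuity/LMI argument yields only plain continuity but requires nothing beyond \cref{lemma:bounded_real} and elementary topology, and it foreshadows the certificate-perturbation reasoning used elsewhere in the paper (cf.\ \cref{remark:stability-constraint}).
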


We note that the continuity of $J_{\infty,q}(\mK)$ in \cref{eq:Hinf-norm} can be viewed from the composition of a convex mapping $\|\cdot\|_\infty$ (which is naturally continuous) and the continuous mapping $\mK\mapsto \mathbf{T}_{zd}$. Indeed, this perspective can ensure strong properties as locally Lipschitz continuity (see \Cref{lemma:H_inf_some_local_Lipschitz} below).
%
%
We here illustrate \Cref{fact:hinf-nonconvex} using a simple example. 

\begin{example} \label{example:Hinf-nonsmooth}

Consider a single-input single-output system with the same dynamics in \Cref{example:discontinuity-LQG-boundary}. In particular, the dynamics in \cref{eq:Dynamic} and the performance signal in \cref{eq:performance-signal} are given by
\[
A=-1, \quad  B = 1, \quad C = 1, \quad
Q = 1 \quad R = 1,\quad W = 1,\quad V =1.
\]
We first consider a static output feedback $u(t) = D_\mK y(t)$ where $D_\mK \in \mathbb{R}$. It is easy to see that the set of stabilizing static policies is $\mathcal{C}_0 = \{D_\mK \in \mathbb{R} \mid D_\mK < 1\}$. The corresponding $\mathcal{H}_\infty$ cost $J_{\infty,0}(\mK)$ is shown in \cref{fig:Hinf-SF-nonsmooth-a}, which shows a nonsmooth point at $D_\mK = 1-\sqrt{3}$ (highlighted by a red point). This policy turns out to be globally optimal for all dynamic policies (see \Cref{example:Hinf-optimal}).

We then consider a full-order dynamic output feedback policy. Using the Routh--Hurwitz stability criterion, it is straightforward to derive that
\begin{equation}\label{eq:region_example_hinf}
    \begin{aligned}
    {\mathcal{C}}_1 &= \left\{ \left. \mK = \begin{bmatrix} D_{\mK} & C_{\mK} \\
                          B_{\mK} & A_{\mK}\end{bmatrix} \in \mathbb{R}^{2 \times 2} \right| A_{\mK} + D_{\mK}< 1, \;\; B_{\mK}C_{\mK} < -A_{\mK} + A_{\mK}D_{\mK} \right\}. \\
\end{aligned}
\end{equation}
The corresponding $\mathcal{H}_\infty$ cost around the dynamic policy 
\begingroup
    \setlength\arraycolsep{2pt}
\def\arraystretch{0.9}
$
\mK_1
= \begin{bmatrix}
    1-\sqrt{3} & 0 \\ 0 & -1
    \end{bmatrix}
$
\endgroup
is shown in \cref{fig:Hinf-SF-nonsmooth-b}, which shows a set of nonsmooth points (highlighted by red lines). \hfill \qed  
%
\end{example}

\begin{figure}
\centering
\begin{subfigure}{0.32\textwidth}
    \includegraphics[width =0.85\textwidth]{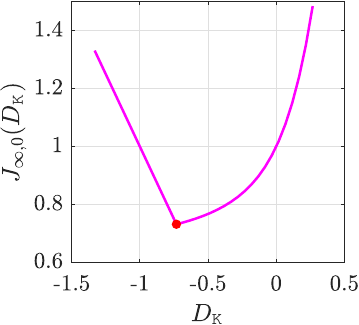}
    \caption{Static output feedback}
    \label{fig:Hinf-SF-nonsmooth-a}
\end{subfigure}
    \hspace{5mm}
\begin{subfigure}{0.32\textwidth}
    \includegraphics[width =\textwidth]{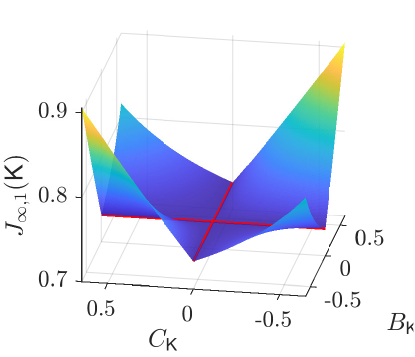}
    \caption{Dynamic output feedback}
    \label{fig:Hinf-SF-nonsmooth-b}
\end{subfigure}

    \caption{Non-smoothness of the $\mathcal{H}_\infty$ function in \Cref{example:Hinf-nonsmooth}. (a) Static output feedback $u(t) = D_\mK y(t)$; (b) dynamic output feedback, where we fix $D_\mK = 1 - \sqrt{3}, A_{\mK} = -1$ and change $B_{\mK} \in (-0.6,0.6)$, $C_{\mK} \in (-0.6,0.6)$. 
    }
    \label{fig:Hinf-SF-nonsmooth}
\end{figure}

Despite the non-smoothness, it is known that the $\mathcal{H}_\infty$ cost function $J_{\infty,q}(\mK)$ is locally Lipschitz and thus differentiable almost everywhere (\cref{fig:Hinf-SF-nonsmooth} also suggests this). We can further show $J_{\infty,q}(\mK)$ is subdifferentially regular in the sense of Clarke (for self-completeness, we review some fundamentals in nonsmooth optimization in \Cref{app:nonsmooth-optimization}). We summarize this property below.

\begin{lemma}[{{\cite[Proposition 3.1]{apkarian2006nonsmooth2}}}]\label{lemma:H_inf_some_local_Lipschitz}
Fix $q \in \mathbb{N}$ such that ${\mathcal{C}}_q\neq\varnothing$. The function $J_{\infty,q}(\mK)$ in \cref{eq:Hinf-norm} is locally Lipschitz and also subdifferentially regular over $\mathcal{C}_q$.
\end{lemma}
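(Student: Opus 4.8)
The plan is to realize $J_{\infty,q}$ as the composition of the $\mathcal{H}_\infty$ norm --- a convex, globally Lipschitz function on the space of closed-loop transfer matrices --- with the smooth parametrization $\mK\mapsto\mathbf{T}_{zd}(\mK,\cdot)$, and then to exploit that (i) convexity implies Clarke regularity and (ii) both local Lipschitzness and Clarke regularity are preserved under composition with a $C^1$ (indeed real-analytic) map. Concretely, write
\[
J_{\infty,q}(\mK)=\sup_{\omega\in\mathbb{R}}f(\mK,\omega),\qquad f(\mK,\omega):=\sigma_{\max}\!\big(h_\omega(\mK)\big),\quad h_\omega(\mK):=\mathbf{T}_{zd}(\mK,j\omega).
\]
For each fixed $\omega$, the entries of $h_\omega(\mK)=C_{\mathrm{cl}}(\mK)(j\omega I-A_{\mathrm{cl}}(\mK))^{-1}B_{\mathrm{cl}}(\mK)+D_{\mathrm{cl}}(\mK)$ are rational functions of the entries of $\mK$ whose denominator $\det(j\omega I-A_{\mathrm{cl}}(\mK))$ never vanishes on $\mathcal{C}_q$ (since $A_{\mathrm{cl}}(\mK)$ is Hurwitz), so $h_\omega$ is $C^\infty$ on $\mathcal{C}_q$, jointly real-analytic in $(\mK,\omega)$; and $\sigma_{\max}(\cdot)=\|\cdot\|_2$ is the spectral norm, hence convex and globally Lipschitz. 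By the Clarke chain rule for a regular function composed with a $C^1$ map \cite{clarke1990optimization}, each $f(\cdot,\omega)$ is locally Lipschitz and Clarke regular on $\mathcal{C}_q$, with $\partial_\mK f(\mK,\omega)=Dh_\omega(\mK)^{*}\,\partial\sigma_{\max}\!\big(h_\omega(\mK)\big)$.

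Second, I would pass to a supremum over a compact index set. Because $A_{\mathrm{cl}}(\mK)$ is Hurwitz, $h_\omega(\mK)\to D_{\mathrm{cl}}(\mK)$ as $|\omega|\to\infty$, so $f(\mK,\cdot)$ extends continuously to the one-point compactification $\overline{\mathbb{R}}$ with $f(\mK,\infty):=\sigma_{\max}(D_{\mathrm{cl}}(\mK))$, and $J_{\infty,q}(\mK)=\max_{\omega\in\overline{\mathbb{R}}}f(\mK,\omega)$. Then I would verify the three hypotheses of the Clarke max-rule over a compact index set \cite{clarke1990optimization,clarke2008nonsmooth}: (a) $(\mK,\omega)\mapsto f(\mK,\omega)$ is continuous on $\mathcal{C}_q\times\overline{\mathbb{R}}$, which uses that the convergence $h_\omega(\mK)\to D_{\mathrm{cl}}(\mK)$ is locally uniform in $\mK$ (itself a consequence of $A_{\mathrm{cl}}$ remaining Hurwitz on a neighborhood of any $\mK_0$, giving uniform resolvent bounds); (b) the family $\{f(\cdot,\omega)\}_{\omega\in\overline{\mathbb{R}}}$ is \emph{uniformly} locally Lipschitz near every $\mK_0\in\mathcal{C}_q$, which follows from a uniform-in-$\omega$ bound on the Jacobians $Dh_\omega(\mK)$ over a compact neighborhood of $\mK_0$ together with the limit at $\omega=\infty$; and (c) the set-valued map $(\mK,\omega)\mapsto\partial_\mK f(\mK,\omega)$ is upper semicontinuous, which follows from the explicit formula $\partial_\mK f(\mK,\omega)=Dh_\omega(\mK)^{*}\partial\sigma_{\max}(h_\omega(\mK))$ by joint continuity of $Dh_\omega(\mK)$ and upper semicontinuity of the subdifferential of the fixed convex function $\sigma_{\max}$. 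Item (b) gives local Lipschitzness of $J_{\infty,q}$, and the max-rule then yields Clarke regularity, together with $\partial J_{\infty,q}(\mK)=\mathrm{conv}\bigcup_{\omega\in\Omega(\mK)}\partial_\mK f(\mK,\omega)$, where $\Omega(\mK)\subseteq\overline{\mathbb{R}}$ is the set of active frequencies.

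I expect the main obstacle to be the interplay of the two independent sources of non-smoothness: each integrand $f(\cdot,\omega)$ is already non-smooth (whenever $h_\omega(\mK)$ has a repeated maximal singular value), so the max-rule must be invoked in its ``regular integrand'' form rather than the easier $C^1$ form, and this is precisely where hypothesis (c) --- upper semicontinuity of $(\mK,\omega)\mapsto\partial_\mK f(\mK,\omega)$ --- has to be established carefully. A secondary technical point is the treatment of the frequency $\omega=\infty$: the continuity and uniform Lipschitz estimates of (a)--(b) must be checked across the compactification, not merely on bounded frequency windows. Both points are carried out in \cite{apkarian2006nonsmooth2}, so for the purposes of this paper the statement may simply be quoted from there.
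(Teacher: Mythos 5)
Your argument is correct, but it executes a genuinely different route from the paper's. Although you announce the composition viewpoint $J_{\infty,q}=\|\cdot\|_\infty\circ(\mK\mapsto\mathbf{T}_{zd})$ at the outset, what you actually carry out is a frequency-wise decomposition $J_{\infty,q}(\mK)=\max_{\omega\in\overline{\mathbb{R}}}\sigma_{\max}(\mathbf{T}_{zd}(\mK,j\omega))$ followed by Clarke's max-rule over a compact index set with \emph{regular but nonsmooth} integrands; as you note, the price is verifying upper semicontinuity of $(\mK,\omega)\mapsto\partial_\mK f(\mK,\omega)$ and uniform local Lipschitz bounds across the compactified frequency axis. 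The paper's proof in \cref{appendix:Hinf-cost-function} instead performs the composition in one shot through the Banach space $\mathcal{RH}_\infty$: the outer map $\|\cdot\|_\infty$ is convex and globally Lipschitz there, so the only nontrivial step is showing that $\mK\mapsto\mathbf{T}_{zd}(\mK,\cdot)$ is continuously Fr\'echet differentiable as a map into $\mathcal{RH}_\infty$, which is done via a Neumann-series estimate on the resolvent $A\mapsto(sI-A)^{-1}$; Clarke's chain rule for a convex function composed with a $C^1$ map then gives both conclusions with no index-set machinery. Your route buys a finite-dimensional, pointwise-in-$\omega$ treatment and yields a subdifferential formula as a byproduct, but it invokes the harder "regular integrand" form of the max-rule; the paper's route is shorter for regularity alone but requires the infinite-dimensional differentiability argument. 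It is worth noting that when the paper later does compute $\partial J_{\infty,q}$ explicitly (\cref{lemma:subdifferential-Hinf}, proved via \cref{lemma:supremum_subdiff}), it sidesteps exactly the difficulty you flag by enlarging the index set to $\mathsf{S}_m(\mathbb{C})\times\mathsf{S}_p(\mathbb{C})\times j\mathbb{R}^\ast$, i.e., taking the supremum over unit vectors as well as frequencies so that each integrand $\operatorname{Re}[v^\her\mathbf{T}_{zd}(\mK,s)u]$ is smooth and only the easy $C^1$ form of the max-rule is needed --- you may find that a cleaner way to discharge your hypothesis (c).
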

 The proof idea in {\cite[Proposition 3.1]{apkarian2006nonsmooth2}} is to view $J_{\infty,q}(\mK)$ as a composition of a convex mapping $\|\cdot\|_\infty$ and the mapping $\mK\mapsto \mathbf{T}_{zd}$ that is continuously differentiable over $\mK$. Then, the subdifferential regularity of $J(\mK)$ follows from \cite{clarke1990optimization}. We provide some missing details in \Cref{appendix:Hinf-cost-function}.
Similar to the smooth LQG case, $J_{\infty,q}(\mK)$ is not coercive due to similarity transformations: it is easy to see that there exists a sequence of policies $\{\mK_l\}\subset\mathcal{C}_q$ such that
$\lim_{l\to\infty}J_{\infty,q}(\mK_l)< \infty$ as $\lim_{l\to\infty}\|{\mK_l}\|_F=\infty$.  We also summarize this as a fact.

\begin{fact}
    Fix $q \in \mathbb{N}$ such that ${\mathcal{C}}_q\neq\varnothing$. The function $J_{\infty,q}(\mK)$ in \cref{eq:Hinf-norm} is not coercive.
\end{fact}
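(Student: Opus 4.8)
The plan is to exploit the invariance of the closed-loop transfer matrix $\mathbf{T}_{zd}(\mK,\cdot)$, and hence of $J_{\infty,q}$, under similarity transformations of the controller state, exactly as in the argument behind \Cref{fact:LQG}(3). First, I would record the elementary observation that for any $T\in\mathrm{GL}_q$ the change of variables $\xi\mapsto T\xi$ in~\cref{eq:Dynamic_Controller} turns a policy $\mK=\begin{bmatrix}D_\mK & C_\mK\\ B_\mK & A_\mK\end{bmatrix}$ into
\[
\mK^{(T)}:=\begin{bmatrix} D_\mK & C_\mK T^{-1}\\ TB_\mK & TA_\mK T^{-1}\end{bmatrix},
\]
and that the associated closed-loop realization~\cref{eq:closed-loop-matrices} is conjugated by the block-diagonal matrix $\operatorname{diag}(I_n,T)$. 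Consequently $A_{\mathrm{cl}}(\mK^{(T)})$ is similar to $A_{\mathrm{cl}}(\mK)$, so that $\mK^{(T)}\in\mathcal{C}_q$ if and only if $\mK\in\mathcal{C}_q$; moreover $\mathbf{T}_{zd}(\mK^{(T)},s)=\mathbf{T}_{zd}(\mK,s)$ for every $s$, and therefore $J_{\infty,q}(\mK^{(T)})=J_{\infty,q}(\mK)$.

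Second, I would single out one policy $\mK_0\in\mathcal{C}_q$ whose block $B_{\mK_0}$ is nonzero: take any $\mK_0\in\mathcal{C}_q$ (nonempty by hypothesis) and, since $\mathcal{C}_q$ is open, replace it if necessary by a sufficiently small perturbation of its $B_\mK$-block so that $B_{\mK_0}\neq 0$. Setting $T_l:=l\,I_q$ and $\mK_l:=\mK_0^{(T_l)}$, i.e.
\[
\mK_l=\begin{bmatrix} D_{\mK_0} & l^{-1}C_{\mK_0}\\ l\,B_{\mK_0} & A_{\mK_0}\end{bmatrix}\in\mathcal{C}_q,\qquad l\in\mathbb{N},
\]
the first step yields $J_{\infty,q}(\mK_l)=J_{\infty,q}(\mK_0)<\infty$ for every $l$, whereas $\|\mK_l\|_F\geq l\,\|B_{\mK_0}\|_F\to\infty$. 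Hence the sequence $\{\mK_l\}$ satisfies $\lim_{l\to\infty}\|\mK_l\|=\infty$ without $J_{\infty,q}(\mK_l)\to\infty$, which by \Cref{definition:Coercivity} certifies that $J_{\infty,q}$ is not coercive.

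There is essentially no obstacle in this argument; it is entirely parallel to the (sketched) proof of \Cref{fact:LQG}(3). The only point deserving a line of care is the existence of a policy with $B_{\mK_0}\neq 0$: if one naively rescaled a policy having $B_{\mK_0}=0$ by $T_l=l\,I_q$, the block $l\,B_{\mK_0}$ would still vanish and only $C_{\mK_0}$ would be rescaled (downward) while $A_{\mK_0}$ stays fixed, so $\|\mK_l\|_F$ need not diverge; this is settled at once by the openness of $\mathcal{C}_q$.
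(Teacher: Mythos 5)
Your proposal is correct and takes essentially the same route as the paper, which also establishes non-coercivity by noting that similarity transformations $\mK\mapsto\mathscr{T}_T(\mK)$ leave $\mathbf{T}_{zd}$ (hence $J_{\infty,q}$) invariant while the Frobenius norm of the policy can be driven to infinity. Your extra care in guaranteeing $B_{\mK_0}\neq 0$ via the openness of $\mathcal{C}_q$ is a sensible refinement of the paper's one-line sketch, not a departure from it.
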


The following example further shows that {even when the policies $\mK$ go to the boundary of $\mathcal{C}_q$, the corresponding $\mathcal{H}_\infty$ cost
might converge to a finite value }.  This example also shows that the $\mathcal{H}_\infty$ cost has ``discontinuous'' behavior around some boundary points. 

\begin{figure}
\centering
\setlength{\abovecaptionskip}{3pt}
\begin{subfigure}{.3\textwidth}
    \includegraphics[width =\textwidth]{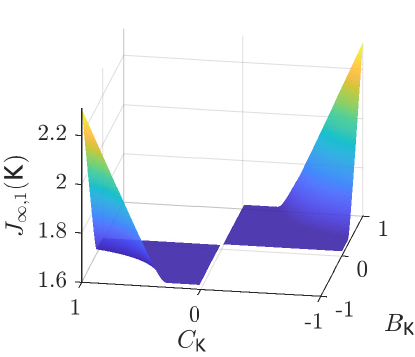}
    \caption{}
\end{subfigure}
    \hspace{10mm}
\begin{subfigure}{.25\textwidth}
    \includegraphics[width=\textwidth]{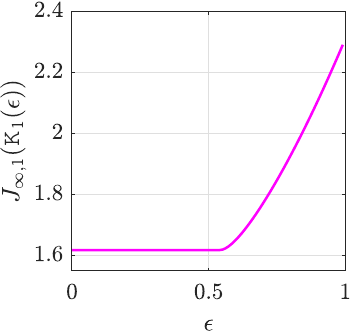}
    \caption{}
\end{subfigure}
        \hspace{10mm}
\begin{subfigure}{.25\textwidth}
    \includegraphics[width=\textwidth]{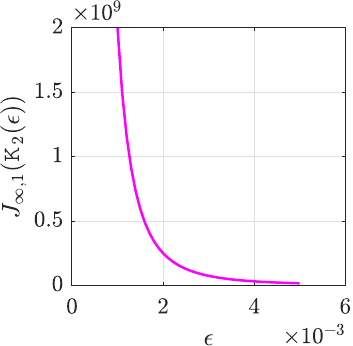}
    \caption{}
\end{subfigure}
    \caption{Boundary behavior of the $\mathcal{H}_\infty$ cost in \Cref{example:non-coercivity-of-Hinf-cost}. (a) Shape of $J_{\infty,1}(\mK)$, which is fairly flat around $(B_\mK,C_\mK)=(0,0)$, where we fix $A_\mK=D_\mK=0$ and change $B_\mK,C_\mK \in(-1,1)$; (b) Shape of $J_{\infty,1}(\mK_1(\epsilon))$ in \cref{eq:hinf_ex_a}, which converges to a finite value as $\epsilon\downarrow0$; (c) Shape of $J_{\infty,1}(\mK_2(\epsilon))$ in \cref{eq:hinf-boundary-2-maintext}, which diverges to infinity as $\epsilon\downarrow0$. 
    }
    \label{figure:hinf-boundary}
\end{figure}

\begin{example}[Non-coercivity and ``discontinuity'' of the $\mathcal{H}_\infty$ cost] \label{example:non-coercivity-of-Hinf-cost}
Consider the same $\mathcal{H}_\infty$ instance in \Cref{example:Hinf-nonsmooth}. Let $\mK_0=(0,0,0,0) \in \partial \mathcal{C}_1$.
From \cref{eq:region_example_hinf}, the following parameterized policies
\begin{subequations}
\begin{equation} \label{eq:hinf_ex_a}
\mK_1(\epsilon) = \begin{bmatrix}0 & \epsilon \\ -\epsilon & 0 \end{bmatrix}, \;\;  \forall \epsilon \neq 0,
\end{equation}
are all stabilizing, which converges to the boundary point $\mK_0$ as $\epsilon\downarrow0$, i.e., $\lim_{\epsilon \downarrow 0}\mK_1(\epsilon) = \mK_0$.
We will show that $J_{\infty,1}(\mK_1(\epsilon))$ converges to a finite value as $\epsilon\downarrow0$ and hence $J_{\infty,1}(\cdot)$ is not coercive; see \Cref{figure:hinf-boundary}.
Indeed, the closed-loop matrices are
\begin{align*}
A_{\mathrm{cl}}(\mK_1(\epsilon))
=  &
\begin{bmatrix}
-1 & \epsilon \\ -\epsilon & 0
\end{bmatrix}, \;\;  B_{\mathrm{cl}}(\mK_1(\epsilon))
= \begin{bmatrix} 1 & 0 \\ 0 & -\epsilon  \end{bmatrix}, \;\;
C_{\mathrm{cl}}(\mK_1(\epsilon))
=
 \begin{bmatrix}
        1 & 0 \\
        0 & \epsilon
    \end{bmatrix}, \;\;
D_\mathrm{cl}(\mK_1(\epsilon)) =0,
\end{align*}
and the closed-loop transfer function is
$$
\mathbf{T}_{zd}(\mK_1(\epsilon)) = \frac{1}{s^2 + s + \epsilon^2}\begin{bmatrix}
    s & -\epsilon^2 \\ -\epsilon^2 & -\epsilon^2 (s + 1)
\end{bmatrix} \quad \Rightarrow \quad \lim_{\epsilon \downarrow 0} \mathbf{T}_{zd}(\mK_1(\epsilon)) = \frac{1}{s+1}\begin{bmatrix}
    1 & 0 \\ 0 & 0
\end{bmatrix}.
$$
However, the convergence of these transfer functions is not uniform. After some tedious computation (see \cref{subsection:hinf-boundary-behavior}), we verify that
\begin{align*}
    \lim_{\epsilon\downarrow 0}J_{\infty,1}(\mK_1(\epsilon)) &=\lim_{\epsilon\downarrow  0} \sup_{\omega\in\mathbb{R}}\sigma_{\mathrm{max}}\left(\mathbf{T}_{zd}(j\omega,\mK_1(\epsilon))\right) = \frac{1 + \sqrt{5}}{2}.
\end{align*}

Consider another parameterized stabilizing policies
\begin{equation} \label{eq:hinf-boundary-2-maintext}
\mK_2(\epsilon) 
=\begin{bmatrix}0 & \epsilon+\epsilon^4 \\ -\epsilon & \epsilon^2 \end{bmatrix} \in \mathcal{C}_1, \qquad \forall  1 > \epsilon > 0.
\end{equation}
which converges to the same boundary point $\mK_0$ as $\epsilon \downarrow 0$. The closed-loop transfer function is
$$
    \mathbf{T}_{zd}(\mK_2(\epsilon)) = \frac{1}{ s^2 + (1- \epsilon^2)s + \epsilon^5}\begin{bmatrix}
    s-\epsilon^2 & -\epsilon^2 -\epsilon^5 \\ -\epsilon^2 -\epsilon^5  &  -(\epsilon^2 +\epsilon^5) (s + 1)
\end{bmatrix} \; \Rightarrow \; \lim_{\epsilon \downarrow 0}    \mathbf{T}_{zd}(\mK_2(\epsilon)) = \frac{1}{s+1}\begin{bmatrix}
    1 & 0 \\ 0 & 0
\end{bmatrix}.
$$
Again, the convergence of these transfer functions is not uniform. We find that {it is tedious} to get an analytical expression for their $\mathcal{H}_\infty$ norms, but numerical simulation suggests that
$$
    J_{\infty,1}(\mK_2(\epsilon)) = \sup_{\omega \in \mathbb{R}} \, \sigma_{\mathrm{max}}(\mathbf{T}_{zd}(j\omega,\mK_2(\epsilon))) \approx \frac{2}{\epsilon^3} \qquad \text{if}\;\; 0 < \epsilon \leq 10^{-1}.
$$
This analytical expression agrees with numerical values very well (see \cref{subsection:hinf-boundary-behavior}). For $\mK_2(\epsilon)$, we have 
$\lim_{\epsilon \downarrow 0}J_{\infty,1}(\mK_2(\epsilon))\to\infty$.
Therefore, $J_{\infty,1}(\cdot)$ cannot be defined or extended to be continuous at the boundary point $\mK_0$. We illustrate these behaviors in \Cref{figure:hinf-boundary}. \hfill \qed
\end{subequations}
\end{example}

Similar to \Cref{proposition:divergence-minimial-controllers}, the $\mathcal{H}_\infty$ cost will diverge to infinity if the sequence of policies converges to a boundary point corresponding to a strictly proper minimal policy. 
The sequence of policies in \cref{eq:hinf-boundary-2-maintext} was indeed motivated by this result.

\begin{theorem} \label{proposition:divergence-minimial-controllers-hinf}
    Consider a sequence of policies $\{\mK_t\}_{t=1}^\infty\subset {\mathcal{C}}_n$ that approaches a boundary point $\mK_\infty \in \partial{\mathcal{C}}_n$. If $\mK_\infty$ is minimal, then $\lim_{t \to \infty} J_{\infty,n}(\mK_t) = \infty$.
\end{theorem}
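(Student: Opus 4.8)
The plan is to reduce the statement, in the same spirit as the LQG result \Cref{proposition:divergence-minimial-controllers}, to the boundary behavior of a \emph{minimal} closed-loop realization, but working on the imaginary axis rather than with Lyapunov Gramians. Two structural facts do the work. First, a minimal policy $\mK_\infty$ induces a minimal realization $\left(A_{\mathrm{cl}}(\mK_\infty),B_{\mathrm{cl}}(\mK_\infty),C_{\mathrm{cl}}(\mK_\infty),D_{\mathrm{cl}}(\mK_\infty)\right)$ of the transfer matrix $\mathbf{T}_{zd}(\mK_\infty,\cdot)$ --- this is \Cref{lemma:minimal-closed-loop-systems}, and it is the place where \Cref{assumption:stabilizability,assumption:performance-weights} are used. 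Second, since $\mathcal{C}_n$ is open and equals the preimage of the open Hurwitz cone under the continuous map $\mK\mapsto A_{\mathrm{cl}}(\mK)$, continuity of the spectral abscissa along the given sequence forces $A_{\mathrm{cl}}(\mK_\infty)$ to have spectral abscissa exactly $0$: all its eigenvalues lie in the closed left half-plane and at least one, say $j\omega_0$ with $\omega_0\in\mathbb{R}$, lies on the imaginary axis. Because a minimal realization admits no pole--zero cancellation, these two facts imply that the proper rational matrix $\mathbf{T}_{zd}(\mK_\infty,\cdot)$ has a genuine pole at $s=j\omega_0$, and hence $\sigma_{\max}\!\left(\mathbf{T}_{zd}(\mK_\infty,j\omega)\right)\to\infty$ as $\omega\to\omega_0$.

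Given this, the limiting argument is elementary. Fix an arbitrary $M>0$. Choose a real $\omega_1$ close to $\omega_0$ such that $j\omega_1$ is not an eigenvalue of $A_{\mathrm{cl}}(\mK_\infty)$ (this excludes only finitely many points) and $\sigma_{\max}\!\left(\mathbf{T}_{zd}(\mK_\infty,j\omega_1)\right)>2M$. For every $t$ the matrix $A_{\mathrm{cl}}(\mK_t)$ is Hurwitz, so $j\omega_1 I-A_{\mathrm{cl}}(\mK_t)$ is invertible; $j\omega_1 I-A_{\mathrm{cl}}(\mK_\infty)$ is invertible by the choice of $\omega_1$. Since $\mK\mapsto\left(A_{\mathrm{cl}}(\mK),B_{\mathrm{cl}}(\mK),C_{\mathrm{cl}}(\mK),D_{\mathrm{cl}}(\mK)\right)$ is continuous and matrix inversion is continuous on invertible matrices, $\mathbf{T}_{zd}(\mK_t,j\omega_1)\to\mathbf{T}_{zd}(\mK_\infty,j\omega_1)$, so $\sigma_{\max}\!\left(\mathbf{T}_{zd}(\mK_t,j\omega_1)\right)>M$ for all large $t$. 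As $\mK_t\in\mathcal{C}_n$ gives $J_{\infty,n}(\mK_t)=\sup_{\omega\in\mathbb{R}}\sigma_{\max}\!\left(\mathbf{T}_{zd}(\mK_t,j\omega)\right)\geq\sigma_{\max}\!\left(\mathbf{T}_{zd}(\mK_t,j\omega_1)\right)$, we conclude $J_{\infty,n}(\mK_t)>M$ for all large $t$; since $M$ was arbitrary, $J_{\infty,n}(\mK_t)\to\infty$. Equivalently, one may package the last three sentences into a standalone $\mathcal{H}_\infty$ boundary lemma for minimal LTI systems --- the analogue of \Cref{lemma:Boundary-minimal-LTI-systems} --- and apply it directly to the closed-loop realization.

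The main obstacle is the first structural fact: verifying that controllability of $(A_{\mK_\infty},B_{\mK_\infty})$ together with observability of $(C_{\mK_\infty},A_{\mK_\infty})$ forces the full closed-loop quadruple in \cref{eq:closed-loop-matrices} to be minimal. This is a Popov--Belevitch--Hautus-type argument in which the plant modes must be shown controllable through $B_{\mathrm{cl}}$ (using controllability of $(A,W^{1/2})$) and observable through $C_{\mathrm{cl}}$ (using observability of $(Q^{1/2},A)$), while the controller modes are handled by the minimality of $\mK_\infty$; extra care is needed because, unlike the LQG case, the $\mathcal{H}_\infty$ setting allows $D_\mK\neq 0$, so the cross terms $BD_\mK C$ and $R^{1/2}D_\mK C$ appear in $A_{\mathrm{cl}}$, $B_{\mathrm{cl}}$ and $C_{\mathrm{cl}}$. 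I would isolate this as \Cref{lemma:minimal-closed-loop-systems}, shared with the LQG development, after which the divergence claim follows from the limiting argument above. A minor point --- not a genuine difficulty once the argument is routed through ``minimal realization $\Rightarrow$ genuine pole'' --- is that $j\omega_0$ may be a repeated eigenvalue or carry nontrivial Jordan structure; this only accelerates the blow-up and requires no eigenvector bookkeeping.
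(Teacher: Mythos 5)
Your proposal is correct and follows essentially the same route as the paper: reduce to minimality of the closed-loop realization via \Cref{lemma:minimal-closed-loop-systems}, observe that $A_{\mathrm{cl}}(\mK_\infty)$ acquires an imaginary-axis eigenvalue which, by minimality of the realization, is a genuine pole of $\mathbf{T}_{zd}(\mK_\infty,\cdot)$, and then run the pointwise-convergence / arbitrary-$M$ argument at a nearby frequency that is not an eigenvalue --- this is precisely the content of \Cref{lemma:Boundary-minimal-LTI-systems} and \Cref{corollary:Boundary-minimal-LTI-systems}. The only cosmetic difference is that the paper's lemma tracks a single scalar entry of the transfer matrix carrying the pole, whereas you work with $\sigma_{\max}$ of the whole matrix; the two are interchangeable since $\sigma_{\max}$ dominates the modulus of every entry.
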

This is a consequence of \Cref{lemma:Boundary-minimal-LTI-systems} since a minimal policy $\mK$ with $D_\mK = 0$ also leads to a minimal closed-loop system \cref{eq:transfer-function-zd} (see \Cref{lemma:minimal-closed-loop-systems}).
Finally, similar to the LQG case, \Cref{example:non-coercivity-of-Hinf-cost} also implies that the sublevel set of the $\mathcal{H}_\infty$ cost function
$
\{\mK \in {\mathcal{C}}_n\mid J_{\infty,n}(\mK) \leq \gamma\}
$
is not bounded and might not even be closed despite $J_{\infty,n}$ being continuous over its domain. This fact becomes obvious in retrospect since the domain is open and the function is not coercive.  

\begin{remark}[Non-smoothness and hidden convexity in $\mathcal{H}_\infty$ control]
Both the smooth LQG cost \cref{eq:H2-norm} and nonsmooth $\mathcal{H}_\infty$ cost \cref{eq:Hinf-norm} are not coercive and exhibit discontinuity around some~non-minimal boundary points. The nonsmooth $J_{\infty,0}(\mK)$ in \Cref{fig:Hinf-SF-nonsmooth-a} appears to be convex (one can indeed verify its convexity), and $J_{\infty,1}(\mK)$ has rich geometric symmetry in \Cref{fig:Hinf-SF-nonsmooth-b}. Indeed, our \texttt{ECL} framework  will confirm that a certain form of the epigraph of the $\mathcal{H}_\infty$ control \cref{eq:Hinf_policy_optimization} admits a convex representation, and it is ``almost a convex problem'' in disguise (see \Cref{theorem:Hinf-main-global-optimality}). This level of similarity between LQG control \cref{eq:LQG_policy_optimization} and $\mathcal{H}_\infty$ control \cref{eq:Hinf_policy_optimization} originates the similarity in the LMI characterizations of $\mathcal{H}_2$ and $\mathcal{H}_\infty$ norms (cf. \cref{lemma:bounded_real,lemma:H2norm}). In classical literature~\cite{doyle1988state,glover2005state,zhou1998essentials,dullerud2013course}, it is known that the (suboptimal) solutions to LQG control \cref{eq:LQG_policy_optimization} and $\mathcal{H}_\infty$ control \cref{eq:Hinf_policy_optimization} share similar structures. 

Despite their similarities, we here emphasize some key (also well-known) differences. The LQG cost or $\mathcal{H}_2$ norm \cref{eq:H2-norm} admits a closed-form algebraic expression (i.e., a rational function from the Lyapunov equation). On the other hand, the $\mathcal{H}_\infty$ norm \cref{eq:Hinf-norm} depends on the largest singular value over the entire frequency domain, which has no closed-form algebraic expression; 
even evaluating the $\mathcal{H}_\infty$ norm is not a simple task and requires an iterative algorithm (e.g., solving an LMI or using the bisection in \cite[Chapter 4.4]{zhou1998essentials}). This is already reflected in our computation for the arguably simplest $\mathcal{H}_\infty$ case in \Cref{example:non-coercivity-of-Hinf-cost}, which is much more involved than the LQG counterpart in \Cref{example:discontinuity-LQG-boundary}. Consequently, our results below are similar to but less complete than the LQG case. \hfill \qed
\end{remark}

\subsection{Main technical results} \label{subsection:Hinf-main-results}


{\Cref{lemma:H_inf_some_local_Lipschitz} justifies that $J(\mK)$ is Clarke subdifferentiable. It is now clear from \Cref{lemma:clarke_stationary} that if a dynamic policy $\mK \in \mathcal{C}_n$ is a local minimum of $J_{\infty,n}(\mK)$, then $\mK$ is a Clarke stationary point. Our main goal of this section is to establish a class of Clarke stationary points that are globally optimal to the $\mathcal{H}_\infty$  robust control~\cref{eq:Hinf_policy_optimization}, which is a nonsmooth counterpart to \Cref{theorem:LQG-main}.}

We can indeed compute the Clarke subdifferential of $J_{\infty,q}(\mK)$ at any feasible point $\mK \in \mathcal{C}_q$. Let $j\mathbb{R}$ denote the imaginary axis in $\mathbb{C}$. Fix $\mK\in\mathcal{C}_q$, and define
\[
\mathcal{Z} = \{s\in j\mathbb{\mathbb{R}}\cup \{\infty\}
\mid \sigma_{\max}(\mathbf{T}_{zd}(\mK,s)) = J_{\infty,q}(\mK)\}.
\]
For each $s\in\mathcal{Z}$, let $Q_s$ be a complex matrix whose columns form an orthonormal basis of the eigenspace of $\mathbf{T}_{zd}(\mK,s)\mathbf{T}_{zd}(\mK,s)^\her$ associated with its maximal eigenvalue $J_{\infty,q}^2(\mK)$. The following lemma characterizes the subdifferential $\partial J_{\infty,q}(\mK)$, whose proof is technically invovled.%
\footnote{
We note that existing literature~\cite{apkarian2006nonsmooth} has also discussed the calculation of the subdifferential of $J_{\infty,q}$. Our proof follows a different approach and provides stronger results than~\cite{apkarian2006nonsmooth}.
} We present the details in \Cref{appendix:clarke-hinf}.

\begin{lemma}\label{lemma:subdifferential-Hinf}
A matrix $\Phi \in \mathbb{R}^{(m+q)\times(p+q)}$ is a member of $\partial J_{\infty,q}(\mK)$ if and only if there exist finitely many $s_1,\ldots,s_K\in \mathcal{Z}$ and associated positive semidefinite Hermitian matrices $Y_1,\ldots,Y_K$ with $\sum_{\kappa=1}^K\operatorname{tr}Y_\kappa=1$ such that
\begin{align*}
\Phi = \frac{1}{J_{\infty,q}(\mK)}
\sum_{\kappa=1}^K
\operatorname{Re}\bigg\{\!
& \left(
\begin{bmatrix}
0 & V^{1/2} \\ 0 & 0
\end{bmatrix}
+
\begin{bmatrix}
C & 0 \\
0 & I
\end{bmatrix}
(s_\kappa I-A_{\mathrm{cl}}(\mK))^{-1}B_{\mathrm{cl}}(\mK)
\right)
\mathbf{T}_{zd}(\mK,s_\kappa)^\her Q_{s_\kappa} Y_\kappa Q_{s_\kappa}^\her \\
&\quad \cdot
\left(
\begin{bmatrix}
0 & 0 \\ R^{1/2} & 0
\end{bmatrix}
+C_{\mathrm{cl}}(\mK)(s_\kappa I-A_{\mathrm{cl}}(\mK))^{-1}
\begin{bmatrix}
B & 0 \\ 0 & I
\end{bmatrix}
\right)
\!\bigg\}^{\!\tr}.
\end{align*}
\end{lemma}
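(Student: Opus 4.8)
The plan is to realize $J_{\infty,q}$ as a pointwise supremum, over the (compactified) frequency axis, of compositions of the largest-singular-value function with a smooth map of $\mK$, and then combine Clarke's chain rule with the subdifferential calculus for suprema. First I would record that the closed-loop data in \cref{eq:closed-loop-matrices} depend \emph{affinely} on $\mK$: writing $\mathsf{B}=\begin{bmatrix}B&0\\0&I\end{bmatrix}$, $\mathsf{C}=\begin{bmatrix}C&0\\0&I\end{bmatrix}$, $\mathsf{D}=\begin{bmatrix}0&V^{1/2}\\0&0\end{bmatrix}$, $\mathsf{E}=\begin{bmatrix}0&0\\R^{1/2}&0\end{bmatrix}$, one checks $A_{\mathrm{cl}}(\mK)=A_0+\mathsf{B}\mK\mathsf{C}$, $B_{\mathrm{cl}}(\mK)=B_0+\mathsf{B}\mK\mathsf{D}$, $C_{\mathrm{cl}}(\mK)=C_0+\mathsf{E}\mK\mathsf{C}$, $D_{\mathrm{cl}}(\mK)=\mathsf{E}\mK\mathsf{D}$ for fixed $A_0,B_0,C_0$. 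Differentiating \cref{eq:transfer-function-zd} and collecting terms then produces the clean factorization $D_E\mathbf{T}_{zd}(\mK,s)=\mathcal{L}(\mK,s)\,E\,\mathcal{R}(\mK,s)$ in any direction $E$, where $\mathcal{L}(\mK,s)=\mathsf{E}+C_{\mathrm{cl}}(\mK)(sI-A_{\mathrm{cl}}(\mK))^{-1}\mathsf{B}$ and $\mathcal{R}(\mK,s)=\mathsf{D}+\mathsf{C}(sI-A_{\mathrm{cl}}(\mK))^{-1}B_{\mathrm{cl}}(\mK)$ --- precisely the two bracketed factors appearing in the claimed formula.

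Second, for each $s\in j\mathbb{R}\cup\{\infty\}$ the function $\phi_s(\mK):=\sigma_{\max}(\mathbf{T}_{zd}(\mK,s))$ is the composition of the globally Lipschitz convex function $\sigma_{\max}$ with a $C^1$ map of $\mK$, hence is locally Lipschitz and subdifferentially regular, and Clarke's chain rule gives $\partial\phi_s(\mK)=\{\operatorname{Re}(\mathcal{L}(\mK,s)^\her Z\,\mathcal{R}(\mK,s)^\her):Z\in\partial\sigma_{\max}(\mathbf{T}_{zd}(\mK,s))\}$. I would then insert the classical description of $\partial\sigma_{\max}$: for a complex matrix $M$ with $\sigma_{\max}(M)=\sigma>0$ and $Q$ an orthonormal basis of the top eigenspace of $MM^\her$, one has $\partial\sigma_{\max}(M)=\{\sigma^{-1}Q\,Y\,Q^\her M:Y\succeq 0~\text{Hermitian},\ \operatorname{tr}Y=1\}$, which follows from Danskin's theorem applied to $\sigma_{\max}(M)=\max_{\|u\|=\|v\|=1}\operatorname{Re}(u^\her Mv)$ together with the description of the maximizing pairs $(Qa,\sigma^{-1}M^\her Qa)$, $\|a\|=1$. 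Substituting $M=\mathbf{T}_{zd}(\mK,s)$ (so $\sigma=J_{\infty,q}(\mK)$ whenever $s\in\mathcal{Z}$), and simplifying the resulting adjoints and transposes (this is the notationally fiddly bit: a real-linear adjoint between real and complex matrix spaces, plus $\operatorname{Re}\{\cdot\}^\tr$ versus conjugate transposes), turns $\partial\phi_s(\mK)$ into exactly the single-term ($K=1$, $s_1=s$) version of the asserted set.

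Third, I would pass from the $\phi_s$ to $J_{\infty,q}=\sup_{s\in j\mathbb{R}\cup\{\infty\}}\phi_s$. The one-point compactification of the frequency axis makes the index set compact, and $\mathbf{T}_{zd}(\mK,\cdot)$ extends continuously to it (with $\mathbf{T}_{zd}(\mK,\infty)=D_{\mathrm{cl}}(\mK)$), so the supremum is attained and $\mathcal{Z}$, being the preimage of $\{J_{\infty,q}(\mK)\}$ under the continuous map $s\mapsto\sigma_{\max}(\mathbf{T}_{zd}(\mK,s))$, is nonempty, closed, and compact. Since the $\phi_s$ are locally equi-Lipschitz near $\mK$, depend continuously on $s$, and are regular, the Ioffe--Tikhomirov/Clarke formula for the subdifferential of a supremum yields $\partial J_{\infty,q}(\mK)=\overline{\operatorname{conv}}\bigcup_{s\in\mathcal{Z}}\partial\phi_s(\mK)$. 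To drop the closure I would argue that $s\mapsto\partial\phi_s(\mK)$ is an upper semicontinuous set-valued map with nonempty compact values on the compact set $\mathcal{Z}$ --- combining upper semicontinuity of $\partial\sigma_{\max}(\cdot)$ with continuity of $s\mapsto\mathbf{T}_{zd}(\mK,s),\mathcal{L}(\mK,s),\mathcal{R}(\mK,s)$ on the compactified line --- so that $\bigcup_{s\in\mathcal{Z}}\partial\phi_s(\mK)$ is already compact; its convex hull is then compact, and Carathéodory's theorem expresses every member as a finite convex combination $\sum_\kappa\lambda_\kappa\Phi_\kappa$ with $\Phi_\kappa\in\partial\phi_{s_\kappa}(\mK)$, $s_\kappa\in\mathcal{Z}$. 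Absorbing the weights $\lambda_\kappa$ into the Hermitian matrices ($Y_\kappa\leftarrow\lambda_\kappa Y_\kappa$, whence $\sum_\kappa\operatorname{tr}Y_\kappa=1$) gives the stated expression, and conversely any such $\Phi$ lies in $\overline{\operatorname{conv}}\bigcup_{s\in\mathcal{Z}}\partial\phi_s(\mK)=\partial J_{\infty,q}(\mK)$, so both inclusions hold.

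The hard part will be the third step: making the supremum rule fully rigorous in the nonsmooth, infinitely-indexed setting --- verifying the equi-Lipschitz and regularity hypotheses that upgrade the generic inclusion ``$\subseteq$'' to an equality, and removing the closure by establishing upper semicontinuity of $s\mapsto\partial\phi_s(\mK)$ uniformly up to $s=\infty$ (where $\mathbf{T}_{zd}\to D_{\mathrm{cl}}(\mK)$ and the resolvent vanishes, so one must check that the formulas for $\mathcal{L}$ and $\mathcal{R}$ extend continuously and that the eigenspace basis $Q_s$, though not canonical, enters only through $Q_sY_\kappa Q_s^\her$ and is therefore harmless). The chain-rule and singular-value computations in the first two steps, while notationally heavy, are otherwise routine, and this division of labor is also what makes the result sharper than the outer estimate in \cite{apkarian2006nonsmooth} referenced in the footnote.
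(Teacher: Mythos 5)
Your proposal is sound and reaches the same formula, but it organizes the argument differently from the paper. You keep the index set of the supremum as the (compactified) frequency axis, so each member function $\phi_s=\sigma_{\max}(\mathbf{T}_{zd}(\cdot,s))$ is itself nonsmooth; you then need the chain rule for a convex outer function composed with a $C^1$ map (which does give equality and regularity), the explicit description of $\partial\sigma_{\max}$, and the full nonsmooth version of Clarke's max rule (Theorem 2.8.2) for families of regular, locally equi-Lipschitz functions with upper semicontinuous subdifferential maps -- followed by a compactness/Carath\'eodory argument to remove the closure. The paper instead enlarges the index set to $\Theta=\mathsf{S}_m(\mathbb{C})\times\mathsf{S}_p(\mathbb{C})\times j\mathbb{R}^\ast$ by writing $\sigma_{\max}(M)=\max_{\|u\|=\|v\|=1}\operatorname{Re}[v^\her Mu]$, so that every $f_\theta$ is continuously differentiable; only the smooth-family corollary of the max rule is needed, the closure is removed directly because the image of the compact active set under the continuous map $\theta\mapsto Df_\theta$ is compact, and the $Q_sYQ_s^\her$ structure emerges afterwards by identifying the active triples via the singular value decomposition. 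The trade-off is that your route is more modular (and closer to the Apkarian--Noll treatment) but shifts the burden onto verifying the hypotheses of the harder nonsmooth supremum rule -- equi-Lipschitzness uniformly in $s$ up to $s=\infty$ and upper semicontinuity of $s\mapsto\partial\phi_s(\mK)$ -- exactly the points you flag as the hard part; the paper's reparametrization makes those verifications essentially trivial at the cost of a larger index set and a slightly more involved identification of the active set. Both arguments are complete in principle, and your outline of the missing verifications (boundedness of the resolvent on $j\mathbb{R}$ for stable $A_{\mathrm{cl}}$, continuity of the factors at $s=\infty$, invariance of $Q_sY Q_s^\her$ under change of orthonormal basis) is the right checklist.
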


Note that the subdifferential computation in \cref{lemma:subdifferential-Hinf} is much more involved than gradient computations in \cref{lemma:gradient_LQG_Jn}. Still, similar to the LQG case, it is likely that the $\mathcal{H}_\infty$ control \cref{eq:Hinf_policy_optimization} has many strictly suboptimal Clarke stationary points. 
\begin{remark}[Suboptimal Clarke stationary points in $\mathcal{H}_\infty$ control] \label{fact:suboptimal-stationary-point-Hinf}
    If the globally optimal $\mathcal{H}_\infty$ policies to \cref{eq:Hinf_policy_optimization} are all controllable and observable in $\mathcal{C}_n$, and the problem 
$
\min_{\mK\in\mathcal{C}_q} J_{\infty,q}(\mK)
$
has a solution for some $0 \leq q < n$, then there are infinitely many {strictly suboptimal Clarke stationary points} of $J_{\infty,n}(\mK)$ over $\mathcal{C}_n$. This result is based on the following characterization: any Clarke stationary point in the space of $\mathcal{C}_q$ can be augmented to a full-order Clarke stationary point in $\mathcal{C}_n$. We will discuss this lifting characterization in \Cref{subsection:Hinf-degenerate-lower-order}.
\end{remark}

Despite the undesirable result in \Cref{fact:suboptimal-stationary-point-Hinf}, our main technical result in this section confirms that if a Clarke stationary point corresponds to a non-degenerate policy in $\mathcal{C}_{\mathrm{nd}}$, then it is globally optimal to \cref{eq:Hinf_policy_optimization}. This is the nonsmooth counterpart to \Cref{theorem:LQG-main}.

\begin{theorem}[Global optimality]\label{theorem:Hinf-main-global-optimality}
    Let $\mK\in\mathcal{C}_{\mathrm{nd}}$ be any non-degenerate policy. If $\mK$ is a Clarke stationary point, i.e., $0 \in \partial J_{\infty,n}(\mK)$, then it is a global minimizer of $J_{\infty,n}(\mK)$ over $\mathcal{C}_n$.
\end{theorem}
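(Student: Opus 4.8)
The plan is to prove \Cref{theorem:Hinf-main-global-optimality} along exactly the same lines as \Cref{theorem:LQG-main}: a non-degenerate policy that is \emph{not} globally optimal will be shown to admit a strict descent direction, which contradicts Clarke stationarity. The engine is the \texttt{ECL} framework of Part~II, which I invoke here as a black box. Concretely, for the $\mathcal{H}_\infty$ problem this framework produces: a convex set $\mathcal{L}$ (essentially the feasible set of the $\mathcal{H}_\infty$-synthesis LMI obtained from the non-strict bounded real lemma \Cref{lemma:bounded_real} after the classical Scherer--Gahinet change of variables), an affine --- hence convex --- objective $c:\mathcal{L}\to\mathbb{R}$ playing the role of $\gamma$, and a smooth controller-reconstruction map $\Phi$ defined on the relevant stratum of $\mathcal{L}$, enjoying: (i) $\Phi(\ell)\in\mathcal{C}_n$ and $J_{\infty,n}(\Phi(\ell))\le c(\ell)$ locally; and (ii) exactness of optimal values, $\inf_{\ell\in\mathcal{L}}c(\ell)=\inf_{\mK\in\mathcal{C}_n}J_{\infty,n}(\mK)$.

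First I would produce the \emph{lift} of the stationary point. Let $\mK^\ast\in\mathcal{C}_{\mathrm{nd}}$ be Clarke stationary. By \cref{def:Hinf-Cnd} there is a certificate $P\in\mathbb{S}^{2n}_{++}$ with $P_{12}\in\mathrm{GL}_n$ solving \cref{eq:Hinf-Bilinear} at $\gamma=J_{\infty,n}(\mK^\ast)$. Feeding $(\mK^\ast,P)$ through the Scherer change of variables yields a point $\ell^\ast\in\mathcal{L}$ with $\Phi(\ell^\ast)=\mK^\ast$ and $c(\ell^\ast)=\gamma=J_{\infty,n}(\mK^\ast)$; crucially, the \emph{inverse} transform (reconstructing the controller from a synthesis-LMI solution) is a rational map whose denominator involves precisely the block $P_{12}$ (equivalently the factor $I-XY$), so $P_{12}\in\mathrm{GL}_n$ guarantees $\Phi$ is $C^1$ on a neighborhood of $\ell^\ast$. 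This is the only place non-degeneracy is used: it is what places $\mK^\ast$ on the ``graph'' $\{(\mK,J_{\infty,n}(\mK))\}$ that is covered by the lifting.

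Then comes the descent argument. Suppose, for contradiction, that $\mK^\ast$ is not a global minimizer of \cref{eq:Hinf_policy_optimization}. By exactness (ii), $\inf_{\mathcal{L}}c<c(\ell^\ast)$, so there is $\ell'\in\mathcal{L}$ with $c(\ell')<c(\ell^\ast)$. Since $\mathcal{L}$ is convex, the segment $\ell(t)=\ell^\ast+t(\ell'-\ell^\ast)$ lies in $\mathcal{L}$ for $t\in[0,1]$, and the curve $\mK(t):=\Phi(\ell(t))$ is $C^1$ for small $t\ge0$ with $\mK(0)=\mK^\ast$ and $\dot{\mK}(0)=\mV:=D\Phi(\ell^\ast)(\ell'-\ell^\ast)$. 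For small $t$, $\mK(t)\in\mathcal{C}_n$ and, by convexity of $c$ and (i),
\[
J_{\infty,n}(\mK(t))\le c(\ell(t))\le c(\ell^\ast)+t\bigl(c(\ell')-c(\ell^\ast)\bigr).
\]
Since $J_{\infty,n}$ is locally Lipschitz (\Cref{lemma:H_inf_some_local_Lipschitz}) and $\mK(t)=\mK^\ast+t\mV+o(t)$, we may replace $\mK(t)$ by $\mK^\ast+t\mV$ up to an $o(t)$ error; since $J_{\infty,n}$ is subdifferentially regular (again \Cref{lemma:H_inf_some_local_Lipschitz}), the one-sided directional derivative $J_{\infty,n}'(\mK^\ast;\mV)$ exists and equals the Clarke directional derivative $J_{\infty,n}^{\circ}(\mK^\ast;\mV)$. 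Dividing the displayed inequality by $t$ and letting $t\downarrow0$ gives
\[
J_{\infty,n}^{\circ}(\mK^\ast;\mV)=J_{\infty,n}'(\mK^\ast;\mV)\le c(\ell')-c(\ell^\ast)<0 .
\]
But $0\in\partial J_{\infty,n}(\mK^\ast)$ forces $J_{\infty,n}^{\circ}(\mK^\ast;\mV)\ge\langle 0,\mV\rangle=0$, a contradiction; hence $\mK^\ast$ is globally optimal. Note that this route needs neither the explicit subdifferential formula of \Cref{lemma:subdifferential-Hinf} nor gradient-type computations --- only local Lipschitzness and regularity of $J_{\infty,n}$.

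The main obstacle lies entirely on the \texttt{ECL} side, i.e.\ in Part~II: rigorously constructing $(\mathcal{L},c,\Phi)$ for the $\mathcal{H}_\infty$ cost and verifying (i)--(ii). The delicate points are that the non-strict bounded real lemma \Cref{lemma:bounded_real} is only \emph{sufficient} for $\|\cdot\|_{\mathcal{H}_\infty}\le\gamma$ unless the closed loop is controllable, so $\Phi$ covers the \emph{exact} value $\gamma=J_{\infty,n}(\mK)$ only over \emph{non-degenerate} policies --- which is precisely why the theorem is confined to $\mathcal{C}_{\mathrm{nd}}$ --- and that, as noted in \Cref{remark:stability-constraint}, internal stability is not automatic from a non-strict LMI, so one must argue separately (by continuity from $\mK^\ast\in\mathcal{C}_n$, which is open) that $\Phi(\ell(t))\in\mathcal{C}_n$ for small $t$. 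A further wrinkle relative to the smooth LQG case is that $c$ and the synthesis LMI carry the extra scalar $\gamma$, and the change of variables must be shown smooth on the non-degenerate stratum with enough regularity to run the $C^1$-curve argument; once that is in place, the nonsmoothness of $J_{\infty,n}$ causes no trouble beyond the harmless replacement of gradients by Clarke directional derivatives.
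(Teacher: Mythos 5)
Your proposal is correct and follows essentially the same route the paper takes: it invokes the \texttt{ECL} lifting (deferred to Part~II) to convert non-global-optimality of a non-degenerate Clarke stationary point into a $C^1$ descent curve whose negative directional derivative (via local Lipschitzness and subdifferential regularity from \Cref{lemma:H_inf_some_local_Lipschitz}) contradicts $0\in\partial J_{\infty,n}(\mK)$. You also correctly isolate the two delicate points that Part~II must handle — the non-strict bounded real lemma being only sufficient without controllability, and internal stability not being automatic from the non-strict LMI — which is exactly where the paper places the remaining technical burden.
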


{Our key proof strategy is based on the same \texttt{ECL} framework for epigraphs of nonconvex functions. Essentially, we show that the $\mathcal{H}_\infty$ optimization ``almost'' behaves as a convex problem.
We note that \Cref{theorem:Hinf-main-global-optimality} directly implies that there exist no spurious stationary points (local minimum/maximum/saddle) of $J_{\infty,n}(\mK)$ in ${\mathcal{C}}_{\mathrm{nd}}$. Indeed, we will show that if a policy $\mK \in {\mathcal{C}}_{\mathrm{nd}}$ is not globally optimal, then there exists a direction $\mV$ such that the directional derivative is negative (since $J_{\infty,n}(\mK)$ is subdifferential regular, its directional derivative always exists), i.e.,
$$
  \lim_{t \downarrow 0} \frac{J_{\infty,n}(\mK + t\mV) - J_{\infty,n}(\mK)}{t} < 0,
$$
thus it is always possible to improve over this point via suitable local search algorithms.
}

In the LQG case (\cref{theorem:LQG-global-optimality}), we have shown that any minimal stationary point is non-degenerate, and thus is globally optimal. Motivated by this, we make the following conjecture.

\begin{conjecture} \label{theorem:Hinf-global-optimality-minimal}
    Let $\mK\in {\mathcal{C}}_{n}$ be a Clarke stationary point  (i.e., $0 \in \partial J_{\infty,n}(\mK)$). If $\mK$ is a minimal policy, then it is non-degenerate, and thus is a global minimum of $J_{\infty,n}(\mK)$ over ${\mathcal{C}}_n$.
\end{conjecture}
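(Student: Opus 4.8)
The plan is to reduce \Cref{theorem:Hinf-global-optimality-minimal} to a non-degeneracy statement and then argue in the spirit of the proof of \Cref{theorem:LQG-global-optimality}. By \Cref{theorem:Hinf-main-global-optimality}, it suffices to show that a minimal Clarke stationary point $\mK\in\mathcal{C}_n$ lies in $\mathcal{C}_{\mathrm{nd}}$, i.e.\ that there exists $P\in\mathbb{S}^{2n}_{++}$ with $P_{12}\in\mathrm{GL}_n$ satisfying \cref{eq:Hinf-Bilinear-a} at $\gamma=J_{\infty,n}(\mK)$. The argument splits into two stages: first produce \emph{some} positive-definite certificate from minimality alone, then use the Clarke stationarity condition $0\in\partial J_{\infty,n}(\mK)$ to arrange that the off-diagonal block is invertible.

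\textbf{Stage 1: a positive-definite certificate.} Since $\mK$ is minimal, the closed-loop realization $(A_{\mathrm{cl}}(\mK),B_{\mathrm{cl}}(\mK),C_{\mathrm{cl}}(\mK),D_{\mathrm{cl}}(\mK))$ in \cref{eq:closed-loop-matrices} is itself minimal: for $D_\mK=0$ this is \Cref{lemma:minimal-closed-loop-systems}, and in the general case one checks controllability of $(A_{\mathrm{cl}},B_{\mathrm{cl}})$ and observability of $(C_{\mathrm{cl}},A_{\mathrm{cl}})$ directly via the PBH test using \Cref{assumption:performance-weights} and the minimality of $\mK$. In particular $(A_{\mathrm{cl}}(\mK),B_{\mathrm{cl}}(\mK))$ is controllable, so the converse part of the nonstrict bounded real lemma (\Cref{lemma:bounded_real}), applied with $\gamma=J_{\infty,n}(\mK)=\|\mathbf{T}_{zd}(\mK,\cdot)\|_{\mathcal{H}_\infty}$, yields a symmetric $P$ solving \cref{eq:Hinf-Bilinear-a}; and since $(C_{\mathrm{cl}}(\mK),A_{\mathrm{cl}}(\mK))$ is observable with $A_{\mathrm{cl}}(\mK)$ stable, the footnote of \Cref{lemma:bounded_real} forces $P\succ0$, i.e.\ \cref{eq:Hinf-Bilinear-b}. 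Thus $\mK$ already admits a certificate; what remains is to select one with $P_{12}\in\mathrm{GL}_n$.

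\textbf{Stage 2: invertibility of $P_{12}$.} Here I would use $0\in\partial J_{\infty,n}(\mK)$ through the subdifferential formula of \Cref{lemma:subdifferential-Hinf}: there are finitely many active frequencies $s_1,\dots,s_K\in\mathcal{Z}$ and positive semidefinite Hermitian weights $Y_1,\dots,Y_K$ with $\sum_\kappa\operatorname{tr}Y_\kappa=1$ making the displayed sum vanish. Projecting this identity onto the $A_\mK$-block (the nonsmooth analogue of setting \cref{eq:partial_Ak} to zero) should produce a weighted, multi-frequency relation of the form $\sum_\kappa\operatorname{Re}\{(\text{observability-type factor at }s_\kappa)\,Q_{s_\kappa}Y_\kappa Q_{s_\kappa}^\her\,(\text{controllability-type factor at }s_\kappa)\}=0$; splitting the closed-loop state into its plant part and its controller part, this is the $\mathcal{H}_\infty$ counterpart of the LQG identity $Y_{12}^\tr X_{12}+Y_{22}X_{22}=0$. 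One would then tie this condition to the minimal (Riccati-type) solution $P_-$ of \cref{eq:Hinf-Bilinear-a} at $\gamma=J_{\infty,n}(\mK)$ --- using complementary slackness between $P_-$ and the above sum on the active subspaces $\operatorname{range}(Q_{s_\kappa})$ --- and deduce $(P_-)_{12}\in\mathrm{GL}_n$ from full rank of the corresponding cross-Gramian block, exactly as the LQG proof extracts $(X_\mK^{-1})_{12}\in\mathrm{GL}_n$ from $X_{11}(X_\mK^{-1})_{12}+X_{12}(X_\mK^{-1})_{22}=0$.

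\textbf{Main obstacle.} Stage 2 is where the genuine difficulty lies, and it is why we state only a conjecture. In LQG the certificate is the explicit $P=\gamma X_\mK^{-1}$ built from the closed-loop controllability Gramian $X_\mK$ (an algebraic Lyapunov solution), and stationarity directly forces $(X_\mK^{-1})_{12}$ invertible. For $\mathcal{H}_\infty$ there is no such closed-form certificate --- $P$ solves a Riccati-type inequality with no algebraic formula --- and the Clarke subdifferential is a convex hull over \emph{finitely many} active frequencies rather than a single gradient, so the clean identity $Y_{12}^\tr X_{12}+Y_{22}X_{22}=0$ becomes a weighted multi-frequency relation whose Gramian-like factors are generically rank-deficient (a single rank-one active mode cannot yield a positive-definite $(2,2)$-block when $n>1$). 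Overcoming this appears to need a more global argument tying the subdifferential condition to the extreme (minimal-Riccati) element of the solution set of \cref{eq:Hinf-Bilinear-a} at the optimal $\gamma$, via the KYP/convex-duality machinery behind \Cref{lemma:bounded_real} together with the \ECL{} correspondence of Part~II; the possible non-uniqueness of that solution set (imaginary-axis Hamiltonian eigenvalues at $\gamma=J_{\infty,n}(\mK)$) is a further hurdle.
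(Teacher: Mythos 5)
The statement you are trying to prove is explicitly labelled a \emph{conjecture} in the paper: the authors provide no proof, and they state in the surrounding text that ``a rigorous proof of this conjecture (or identifying a counterexample) seems challenging,'' citing precisely the non-smoothness of the $\mathcal{H}_\infty$ cost and the fact that the Clarke subdifferential (\Cref{lemma:subdifferential-Hinf}) is far harder to manipulate than the LQG gradient (\Cref{lemma:gradient_LQG_Jn}). Your proposal does not close this gap either: Stage~2 is a program rather than an argument, and you say so yourself. So the honest verdict is that you have not proved the statement, but you have correctly located the reduction (via \Cref{theorem:Hinf-main-global-optimality}, it suffices to show minimal $+$ Clarke stationary $\Rightarrow$ non-degenerate) and the obstruction is exactly the one the authors flag.

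That said, your Stage~1 is sound and is a genuine partial result worth recording: minimality of $\mK$ gives minimality of the closed loop (note that \Cref{lemma:minimal-closed-loop-systems} as stated already covers general $D_\mK$, so no separate PBH argument is needed), controllability of $(A_{\mathrm{cl}},B_{\mathrm{cl}})$ activates the converse direction of the nonstrict bounded real lemma at $\gamma=J_{\infty,n}(\mK)$, and observability of $(C_{\mathrm{cl}},A_{\mathrm{cl}})$ together with stability forces any such $P$ to satisfy $P\succeq \gamma^{-1}L_{\mathrm{o}}\succ 0$. Thus a minimal policy always admits a positive-definite certificate, \emph{without} using stationarity, and the entire conjecture reduces to $P_{12}\in\mathrm{GL}_n$. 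The failure mode of Stage~2 is concrete, and you essentially identify it: in the LQG proof the stationarity condition $Y_{12}^\tr X_{12}+Y_{22}X_{22}=0$ involves two globally defined Gramians with positive-definite $(2,2)$-blocks, from which invertibility of $X_{12}$ falls out; in the $\mathcal{H}_\infty$ case the analogous identity is a convex combination over finitely many active frequencies of products of frequency-local, generically rank-one factors, so when only one rank-one mode is active the identity degenerates to ``a rank-one matrix equals zero'' and yields no rank information about any candidate $P_{12}$. Moreover there is no canonical closed-form certificate (no analogue of $P=\gamma X_\mK^{-1}$) to which such an identity could be attached. Until that link between the subdifferential condition and a distinguished solution of \cref{eq:Hinf-Bilinear-a} is established, the argument cannot be completed, which is consistent with the authors leaving the statement open.
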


Unlike the smooth LQG case, a rigorous proof of this conjecture (or identifying a counterexample) seems challenging, and we leave it to future work. One main difficulty lies in the non-smoothness of the $\mathcal{H}_\infty$ cost and the fact that the computation of the Clarke subdifferential (see \Cref{lemma:subdifferential-Hinf}) is much more involved than computing the gradients for LQG (see \Cref{lemma:gradient_LQG_Jn}). This difficulty also appears in the proof of \Cref{conjecture:measure-zero-hinf}.

We here present an example, adapted from \cite[Example 14.3]{zhou1998essentials}, where the globally optimal $\mathcal{H}_\infty$ policy can be computed analytically\footnote{Unlike the LQG case, it is difficult to get globally optimal $\mathcal{H}_\infty$ policies in an analytical form even for very simple instances. We were struggling to find examples like those in \Cref{tab:stationary-points-LQG}.}. In the example below, the globally optimal $\mathcal{H}_\infty$ policy from \Cref{theorem:Hinf-riccati} turns out to be degenerate.

\begin{example}[Globally optimal $\mathcal{H}_\infty$ policies that are degenerate] \label{example:Hinf-optimal}
Consider the same $\mathcal{H}_\infty$ control instance in \Cref{example:Hinf-nonsmooth}.
%
  Via analyzing the limiting behavior in \Cref{theorem:Hinf-riccati} (see \cite[Example 14.3]{zhou1998essentials}), we can show that a globally optimal $\mathcal{H}_\infty$ policy in this instance is achieved by static output feedback 
  $$
  u(t) = D_\mK^\star y(t), \quad \text{with} \quad D_\mK^\star = 1 - \sqrt{3},
  $$
  and the globally optimal $\mathcal{H}_\infty$ cost is $J^\star_{\infty,1} = \sqrt{3}-1$. Even for this simple example, the computations for global optimality are quite involved, and we put some details in \Cref{appendix:globally-optimal-controllers}.

  It is easy to see that for any $a < 0$, the following policy
  \begin{equation} \label{eq:example-global-Hinf-1}
    A_\mK = a, \quad B_\mK = 0, \quad C_\mK = 0, \quad D_\mK^\star = 1 - \sqrt{3},
  \end{equation}
  is also globally optimal. 
  Via some tedious computations, we can verify that the matrix $P\succ 0$ satisfying the non-strict LMI \Cref{eq:Hinf-Bilinear-a} with $\gamma = \sqrt{3}-1$ must be in the form of
  $$
  P = \begin{bmatrix}
      1 & 0 \\ 0 & p_3
  \end{bmatrix} \succeq 0,
  $$
  where $p_3>0$ is any positive value (computational details are presented in \Cref{appendix:globally-optimal-controllers}). Thus, this class of state-space realizations of the globally optimal policy in \cref{eq:example-global-Hinf-1} is degenerate.
In \Cref{appendix:globally-optimal-controllers}, via some straightforward yet very tedious computations, we have further verified that the other state-space realizations
$$
\begin{aligned}
    A_\mK &= a, \quad B_\mK = b, \quad  C_\mK = 0, \quad  D_\mK = 1 - \sqrt{3} \\
    A_\mK &= a, \quad B_\mK = 0, \quad  C_\mK = c, \quad  D_\mK = 1 - \sqrt{3}
\end{aligned}
$$
with $a < 0, b \neq 0, c\neq 0$ are all degenerate. 
\hfill \qed
\end{example}

In classical control, it is known that the globally optimal $\mathcal{H}_\infty$ controller might not be unique even in the frequency domain \cite[Page 406]{zhou1996robust}, and this is different from the LQG control which has a unique globally optimal solution in the frequency domain \cite[Theorem 14.7]{zhou1996robust}. Here, we use an example from \cite{GLOVER201715} to illustrate this fact.
\begin{example}[Non-uniqueness of globally optimal $\mathcal{H}_\infty$ policies]  Consider the general dynamics~\cref{eq:plant} with problem data
    $$
    \begin{aligned}
    A &= \begin{bmatrix}
        -1 & 0 \\ 0 & -2
    \end{bmatrix}, \; B_1 = \begin{bmatrix}
        1 \\0
    \end{bmatrix}, \; B_2 = \begin{bmatrix}
        0 \\ -2 + a
    \end{bmatrix}, \\
    C_1 &= \begin{bmatrix}
        1 & 1
    \end{bmatrix}, \; C_2 = \begin{bmatrix}
        -2 & 0
    \end{bmatrix}, \; D_{11} = 0, \; D_{12} = 1, \; D_{21} = 1,
    \end{aligned}
    $$
    where $a \in \mathbb{R}$ is a parameter. We note that this system is only stabilizable and detectable (violating \Cref{assumption:stabilizability}). We consider an $\mathcal{H}_\infty$ problem by designing a dynamic controller $\mathbf{u} = \mathbf{K} \mathbf{y}$ such that the $\mathcal{H}_\infty$ norm of the closed-loop transfer function from $w(t)$ to $z(t)$ is minimized. Via simple frequency-domain algebra, this problem reads as
\begin{equation} \label{eq:Hinf-example-non-unique}
    \gamma^* = \inf_{\mathbf{K}(s) \in \mathcal{RH}_\infty}\;\; \left\|\frac{1}{s+1} + \frac{(s-1)(s-a)}{(s+1)(s+2)}\mathbf{K}(s)\right\|_{\mathcal{H}_\infty},
\end{equation}
where $\mathbf{K}(s)$ is a stable dynamic controller.
This formulation \cref{eq:Hinf-example-non-unique} is a standard model-matching problem (one potential solution is via the Nevanlinna–Pick strategy) \cite{doyle2013feedback}. When $a = 0$, it is clear that $\gamma^* \geq \|\frac{1}{0+1}\|_\infty = 1$ no matter which controller is used. Further, it is not difficult to check that the following two controllers
$$
\mathbf{K}_1 = 0, \qquad \mathbf{K}_2 = \frac{1}{s+3}-1
$$
archive the optimal value $\gamma^* = 1$. Thus, both of them are globally optimal $\mathcal{H}_\infty$ controllers to \cref{eq:Hinf-example-non-unique}. Indeed, there is a family of $\mathbf{K}(s)$ achieving $\gamma^\star = 1$; see \cite{GLOVER201715} for more details.
\hfill \qed
\end{example}

\subsection{A class of potentially suboptimal Clarke stationary points} \label{subsection:Hinf-degenerate-lower-order}


As highlighted in \Cref{fact:suboptimal-stationary-point-Hinf}, it is likely that the $\mathcal{H}_\infty$ robust control \cref{eq:Hinf_policy_optimization} has many suboptimal Clarke stationary points. We here establish an interesting result that any Clarke stationary point of $J_{\infty,q}(\mK)$ can be transferred to Clarke stationary points of $J_{\infty, q+q'}$ for any $q'>0$ with the same $\mathcal{H}_\infty$ value. Therefore, it is likely that these new Clarke stationary points are suboptimal over $\mathcal{C}_{q+q'}$.

\begin{theorem} \label{theorem:hinf_non_globally_optimal_stationary_point}
Let $q\geq 0$ be arbitrary. Suppose there exists $\mK=\begin{bmatrix}
D_{\mK} & C_{\mK} \\ B_{\mK} & A_{\mK}
\end{bmatrix}
\in \mathcal{C}_{q}$ such that $0 \in \partial J_{\infty, q}(\mK)$.
Then for any $q'\geq 1$ and any stable $\Lambda\in\mathbb{R}^{q'\times q'}$, the following policy
\begin{equation} \label{eq:gradient_nonglobally_K-hinf}
        \tilde{\mK}
    =\left[\begin{array}{c:cc}
    D_\mK & C_{\mK} &  0 \\[2pt]
    \hdashline
    B_{\mK} & A_{\mK} & 0 \\[-2pt]
    0 & 0 & \Lambda
    \end{array}\right] \in \mathcal{C}_{q+q'}
\end{equation}
is a Clarke stationary point of $J_{\infty, q+q'}$ over $\mathcal{C}_{q+q'}$ satisfying $J_{\infty, q+q'}\big(\tilde{\mK}\big)
=J_{\infty, q}(\tilde{\mK})$.
\end{theorem}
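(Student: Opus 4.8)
The plan is to reduce the claim to the subdifferential formula in \Cref{lemma:subdifferential-Hinf}, exploiting that the appended block $\Lambda$ is completely decoupled from the rest of the closed loop. The argument mirrors, in the nonsmooth setting, the gradient-based lifting used for LQG in \Cref{lemma:LQG-lower-order-stationary-point-to-high-order}(2).

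First I would record the closed-loop data of $\tilde{\mK}$. Reading off \cref{eq:closed-loop-matrices} for the policy in \cref{eq:gradient_nonglobally_K-hinf} gives $A_{\mathrm{cl}}(\tilde{\mK}) = \operatorname{diag}(A_{\mathrm{cl}}(\mK),\Lambda)$, while the $q'$-dimensional $\Lambda$-block consists of zero rows in $B_{\mathrm{cl}}(\tilde{\mK})$ and zero columns in $C_{\mathrm{cl}}(\tilde{\mK})$, and $D_{\mathrm{cl}}(\tilde{\mK}) = D_{\mathrm{cl}}(\mK)$. Since $A_{\mathrm{cl}}(\mK)$ is Hurwitz (because $\mK\in\mathcal{C}_q$) and $\Lambda$ is stable, $A_{\mathrm{cl}}(\tilde{\mK})$ is Hurwitz, so $\tilde{\mK}\in\mathcal{C}_{q+q'}$ and $J_{\infty,q+q'}(\tilde{\mK})$ is well defined. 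The block-diagonal resolvent $(sI-A_{\mathrm{cl}}(\tilde{\mK}))^{-1}$, sandwiched between the zero-padded $B_{\mathrm{cl}}(\tilde{\mK})$ and $C_{\mathrm{cl}}(\tilde{\mK})$, annihilates the $(sI-\Lambda)^{-1}$ piece, so $\mathbf{T}_{zd}(\tilde{\mK},\cdot)=\mathbf{T}_{zd}(\mK,\cdot)$. This already yields $J_{\infty,q+q'}(\tilde{\mK})=J_{\infty,q}(\mK)$ (the stated identity $J_{\infty,q+q'}(\tilde{\mK})=J_{\infty,q}(\tilde{\mK})$ should read $=J_{\infty,q}(\mK)$), and it guarantees that the worst-case frequency set $\mathcal{Z}$ and the eigenvector bases $Q_s$ appearing in \Cref{lemma:subdifferential-Hinf} are identical for $\tilde{\mK}$ and for $\mK$.

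The core step is to compare the two subdifferentials. Writing the formula of \Cref{lemma:subdifferential-Hinf} for $\tilde{\mK}$ with a fixed choice of active frequencies $s_\kappa\in\mathcal{Z}$ and Hermitian weights $Y_\kappa$, I would check that the left factor is zero throughout its $q'$ rows (the constant term $\left[\begin{smallmatrix}0 & V^{1/2}\\ 0 & 0\end{smallmatrix}\right]$ is already zero there, and $(s_\kappa I-A_{\mathrm{cl}}(\tilde{\mK}))^{-1}B_{\mathrm{cl}}(\tilde{\mK})$ is zero there because $B_{\mathrm{cl}}(\tilde{\mK})$ is), and dually the right factor is zero throughout its $q'$ columns. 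Since the middle factor $\mathbf{T}_{zd}(\tilde{\mK},s_\kappa)^{\her}Q_{s_\kappa}Y_\kappa Q_{s_\kappa}^{\her}$ coincides with the one for $\mK$, each summand — hence the whole subgradient — is supported on the upper-left $(m+q)\times(p+q)$ block, and on that block equals exactly the subgradient of $J_{\infty,q}$ at $\mK$ generated by the same data $(s_\kappa,Y_\kappa)$. Thus one obtains $\partial J_{\infty,q+q'}(\tilde{\mK})=\{\operatorname{diag}(\Phi_0,0):\Phi_0\in\partial J_{\infty,q}(\mK)\}$ (the ``only if'' direction of \Cref{lemma:subdifferential-Hinf} gives ``$\subseteq$'', the ``if'' direction gives ``$\supseteq$''); in particular, since $0\in\partial J_{\infty,q}(\mK)$ by hypothesis, taking $\Phi_0=0$ yields $0\in\partial J_{\infty,q+q'}(\tilde{\mK})$, i.e., $\tilde{\mK}$ is a Clarke stationary point.

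I expect the only real work to be the bookkeeping in this core step: one must track the ordering of the $m$-, $q$-, $q'$-row (and $p$-, $q$-, $q'$-column) blocks of $\tilde{\mK}$ and of the $n$-, $q$-, $q'$-blocks of the closed-loop state, and confirm that every occurrence of the resolvent $(s_\kappa I-\Lambda)^{-1}$ is pre- or post-multiplied by a vanishing block so that the $\Lambda$-dynamics leave no trace. The degenerate cases $s_\kappa=\infty$ (only the constant matrices and $D_{\mathrm{cl}}$ survive, and the conclusion is immediate) and $q=0$ (static $\mK$, so $\tilde{\mK}$ has order $q'$) are handled verbatim by the same computation.
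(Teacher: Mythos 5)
Your proposal is correct and follows essentially the same route as the paper's proof in \Cref{appendix:lower-order-stationary-point-to-high-order}: both exploit the block-diagonal structure of $A_{\mathrm{cl}}(\tilde{\mK})$ and the zero-padding of $B_{\mathrm{cl}}(\tilde{\mK})$, $C_{\mathrm{cl}}(\tilde{\mK})$ to show that the certificate $(s_\kappa,Y_\kappa)$ for $0\in\partial J_{\infty,q}(\mK)$ from \Cref{lemma:subdifferential-Hinf} transfers verbatim to $\tilde{\mK}$. Your additional observation that the entire subdifferential $\partial J_{\infty,q+q'}(\tilde{\mK})$ consists of zero-padded elements of $\partial J_{\infty,q}(\mK)$ is a slightly stronger (and correct) statement than the paper needs, and your remark about the typo $J_{\infty,q}(\tilde{\mK})$ versus $J_{\infty,q}(\mK)$ in the theorem statement is well taken.
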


It is easy to see that $J_{\infty, q+q'}\big(\tilde{\mK}\big)
=J_{\infty, q}(\tilde{\mK})$ since they correspond to the same transfer function. For the proof of Clarke stationary points, we can directly use the computations of Clarke subdifferential in \cref{lemma:subdifferential-Hinf}.  
The proof details are postponed in \Cref{appendix:lower-order-stationary-point-to-high-order}.
We conclude this section by highlighting some degenerate and suboptimal Clarke stationary points in $\mathcal{H}_\infty$ control.

\begin{remark}[Degenerate and suboptimal Clarke stationary points in $\mathcal{H}_\infty$ control]
Let $q' = n - q$ in \Cref{theorem:hinf_non_globally_optimal_stationary_point}. The augmented policy \cref{eq:gradient_nonglobally_K-hinf} with any stable $\Lambda\in\mathbb{R}^{q'\times q'}$ is likely to be degenerate, i.e., $ \tilde{\mK} \in {\mathcal{C}}_{n} \backslash {\mathcal{C}}_{\mathrm{nd}}$; otherwise, \Cref{theorem:Hinf-main-global-optimality} ensures that the augmented policy \cref{eq:gradient_nonglobally_K-hinf} will be globally optimal over the entire ${\mathcal{C}}_{n}$ even $\mK$ is just a stationary point of $J_{\infty, q}$ over ${\mathcal{C}}_{q}$ ($0\leq  q < n$). In other words, it is likely that the augmented policy \cref{eq:gradient_nonglobally_K-hinf} is a suboptimal Clarke stationary point. We have indeed proved in \Cref{lemma:LQG-lower-order-stationary-point-to-high-order} that the augmented policy \cref{eq:gradient_nonglobally_K} is degenerate for LQG control under a mild condition. However, a proof for the case of $\mathcal{H}_\infty$ control seems challenging. This difficulty can again be traced back to the non-smoothness of $\mathcal{H}_\infty$ norm: we can use the smooth Lyapunov equation \cref{eq:Lyapunov-equations-H2norm} to compute the $\mathcal{H}_2$ norm,  while the bounded real lemma (\cref{lemma:bounded_real}) for the $\mathcal{H}_\infty$ norm is non-trivial to manipulate analytically. We also expect that strict saddle points exist for $\mathcal{H}_\infty$ cost  $J_{\infty, n}(\mK)$ over $\mathcal{C}_n$, but an explicit example (similar to \Cref{example:LQG_strict_saddle}) is yet to be found.  \hfill \qed
\end{remark}


\section{Numerical Experiments} 

\label{section:experiments}

In this section, we present a few numerical experiments to showcase the effectiveness of various methods to approach the global minimum of LQG control \cref{eq:LQG_policy_optimization} and $\mathcal{H}_\infty$ robust control \cref{eq:Hinf_policy_optimization}.

Among different methods, we particularly illustrate the numerical behavior of an existing first-order policy optimization package -- \method{HIFOO} \cite{arzelier2010h2,burke2006hifoo}. \method{HIFOO} employs a two-stage approach involving stabilization (spectral abscissa minimization) followed by $\mathcal{H}_2$ or $\mathcal{H}_\infty$ performance optimization. Both stages utilize nonsmooth and nonconvex optimization techniques with the following components: an initial quasi-Newton algorithm phase to approach a local minimizer, and a subsequent phase including local bundle and gradient sampling methods to verify local optimality (see  \cite[Section 3]{arzelier2010h2} and \cite[Section 2]{gumussoy2008fixed} for algorithm details).

The code for our experiments (as well as all the figures in this paper) is available at

\begin{center}
\url{https://github.com/soc-ucsd/nonconvex_landscape_in_control}.
\end{center}

\subsection{LQG optimal control}

We here consider four different methods to solve the smooth nonconvex LQG control \cref{eq:LQG_policy_optimization}:
\begin{enumerate}
    \item Analytical solution via solving two Riccati equations (see \Cref{theorem:LQG-riccati-solution}).
    \item An LMI approach via a change of variables (see \cite{scherer2000linear} or Part II of this paper). We solve the resulting LMI via an interior-point conic solver -- \method{Mosek} \cite{aps2019mosek}.
    \item A naive gradient descent policy optimization approach, implemented in \cite{zheng2021analysis} (we ran 500 iterations).
    \item A sophisticate implementation of first-order policy optimization --  \method{HIFOO} \cite{arzelier2010h2,burke2006hifoo}.
\end{enumerate}

We note that the first two methods are model-based, and the last two approaches can in principle be made mode-free via zeroth-order techniques (although we use the model information to compute gradients in our experiments). With known problem data, the globally LQG policy can be very easily computed via two Riccati equations (\Cref{theorem:LQG-riccati-solution}), and we use it to benchmark the performance of the~other three methods. For our numerical experiments, we consider three LQG instances: 1)~a problem with a scalar state variable in \Cref{example:discontinuity-LQG-boundary}; 2) a problem of two-dimensional state with data
 $$A=\begin{bmatrix} 0 & -1 \\ 1 & 0 \end{bmatrix},\ B = \begin{bmatrix} 1 \\ 0 \end{bmatrix},\ C = \begin{bmatrix} 1 & -1 \end{bmatrix},\ W = \begin{bmatrix} 1 & -1 \\ -1 & 16 \end{bmatrix},\ V = 1,\ Q=\begin{bmatrix} 4 & 0 \\ 0 & 0 \end{bmatrix},\ R = 1$$
and 3) another  problem of three-dimensional state with data
    $$A = \begin{bmatrix} 1 & 1 & 1 \\ 0 & 1 & 0 \\ 1 & 0 & 0 \end{bmatrix}, \ B = \begin{bmatrix} 1 & 0 \\ 0 & 1 \\ 0 & 0 \end{bmatrix},\ C = \begin{bmatrix} 0 & 0 & 1 \\ 1 & 0 & 0 \\ 0 & 1 & 1 \end{bmatrix},\ W = I, \ V = I, \ Q = I, \ R=I.$$

\begin{table}[t]
\setlength{\belowcaptionskip}{0pt}
  \begin{center}
    \caption{The best LQG costs of three instances returned by different methods.}
    \label{table:Jopt_LQG_1}
    \begin{tabular}{l*{3}{l}}
    \toprule
     & Instance 1 & Instance 2 & Instance 3  \\
    \hline
    Analytical & $0.6966$ & $6.1644$ & $10.3566$ \\
    LMI (\method{Mosek}) & $0.6966$ & $6.1644$ & $10.3566$ \\
    Gradient descent & $0.6970$ & $6.1644$ & $10.5397$ \\
    \method{HIFOO} & $0.6967$ & $6.1644$ & $10.3566$ \\
    \toprule
    \end{tabular}
  \end{center}
\end{table}

\Cref{table:Jopt_LQG_1} lists the final LQG costs obtained through different methods. We observe that the global minimum of LQG costs (as computed analytically in the first row of \Cref{table:Jopt_LQG_1}) can be easily attained via solving the corresponding LMI (theoretically, Ricatti-based and LMI-based approaches should return solutions with the same performance; but numerically they may return different solutions; as we will see for the $\mathcal{H}_\infty$ case in \Cref{subsection:hinf-experiments}, the corresponding LMIs appear more difficult to solve numerically). The naive gradient descent implementation in \cite{zheng2021analysis} converges for the second LQG instance, but fails to return a high-quality solution for the other two instances within 500 iterations.

Interestingly, the \method{HIFOO} package has remarkable empirical performance for these three instances, and it returns a globally optimal solution for each of the LQG instances within 120 iterations. \Cref{figure:LQG_HIFOO_conv_1} illustrates their convergence performance. We note that while the \method{HIFOO} package can only certify local optimality, our theoretical guarantees in \Cref{theorem:LQG-main,theorem:LQG-global-optimality} allow for the certification of global optimality for the resulting policy.

\begin{figure}
    \centering
        \includegraphics[width=0.55\textwidth]{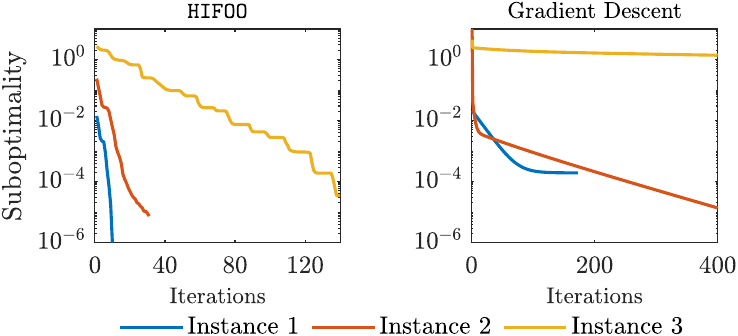}
        \caption{The empirical convergence performance for solving LQG instances. Left: \method{HIFOO} \cite{arzelier2010h2,burke2006hifoo}, where the suboptimality is measured by the LQG cost value gap; Right: Naive gradient descent implementation from \cite{zheng2021analysis}, where the suboptimality is measured by the gradient norms.}
        \label{figure:LQG_HIFOO_conv_1}
\end{figure}

\subsection{$\mathcal{H}_\infty$ robust control} \label{subsection:hinf-experiments}

We next consider the nonsmooth and nonconvex $\mathcal{H}_\infty$ robust control \cref{eq:Hinf_policy_optimization} using four different methods:

\begin{enumerate}
    \item Analytical solution via analyzing the limiting solutions from Riccati equations (see \Cref{theorem:Hinf-riccati}).
    \item An LMI approach via a change of variables (see \cite{scherer2000linear} or Part II of this paper). We solve the resulting LMI via two routines i) a general-purpose interior-point conic solver -- \method{Mosek} \cite{aps2019mosek}; ii) a specialized solver for LMIs -- \method{mincx} \cite{nemirovskii1994projective}.
    \item The MATLAB function \method{hinfsyn}: there are two options in this function using either i) Riccati iterations \cite{doyle1988state} or ii) LMIs \cite{gahinet1994linear}. Our numerical experiments used their default settings in MATLAB.
    \item A sophisticate implementation of first-order policy optimization --  \method{HIFOO} \cite{arzelier2010h2,burke2006hifoo}.
\end{enumerate}

As discussed in the main text, obtaining the global minimum of \cref{eq:Hinf_policy_optimization} via the first method is often impossible. The limiting analysis is very involved even for very simple instances such as \Cref{example:Hinf-optimal} (see our computational details in \Cref{appendix:globally-optimal-controllers}.). Our numerical experience suggests that the LMIs from $\mathcal{H}_\infty$ optimization are often harder to solve than those from $\mathcal{H}_2$ optimization; thus we use two different solvers (\method{Mosek} and \method{mincx}) in the second method. We also report the performance using the built-in function \method{hinfsyn} in MATLAB with its default setting. We will see that this implementation may not give the true globally optimal $\mathcal{H}_\infty$ performance due to the nature of Riccati iterations (although its performance can be improved by properly tuning its setting in MATLAB). Finally, all these methods above are model-based, while the last method via \method{HIFOO} might be adapted to the model-free setting (the details are non-trivial and left for future work).


We test the aforementioned methods to six $\mathcal{H}_\infty$ control instances. Three of them are academic examples: 1) the first instance is the same as \Cref{example:Hinf-optimal}, for which we have derived its global optimal policy analytically; 2) the second instance has problem data
$$A = \begin{bmatrix}
        1 & 1 \\ 0 & 1
    \end{bmatrix},\ B = \begin{bmatrix}
        0 \\ 1
    \end{bmatrix},\ C = \begin{bmatrix}
        1 & 1 \\ 1 & 0
    \end{bmatrix},\ W = I, \ V = I, \ Q = I,\ R=I,$$
and the third instance is
   $$
   A = \begin{bmatrix}
        1 & 1 & 1 \\ 0 & 1 & 0 \\ 1 & 0 & 0
    \end{bmatrix}, \ B = \begin{bmatrix}
        1 & 0 \\ 0 & 1 \\ 0 & 0
    \end{bmatrix},\ C = \begin{bmatrix}
        0 & 0 & 1 \\ 1 & 0 & 0 \\ 0 & 1 & 1
    \end{bmatrix},\ W = I, \ V = I, \ Q = I, \ R=I.
    $$
The remaining three instances are benchmark examples from real applications, taken from  the \textit{COMPl\textsubscript{e}ib} library \cite{leibfritz2004compleib}:
\begin{itemize}
    \item AC8 \cite{gangsaas1986application}: A 9th-order state-space model of the linearized vertical plane dynamics of an aircraft;
    \item HE1 \cite{keel1988robust}: A 4th-order model of the longitudinal motion of a VTOL helicopter for typical loading and flight condition at the speed of 135 knots;
    \item REA2 \cite{hung1982multivariable}: A 4th-order chemical reactor model.
\end{itemize}

\begin{table}[t]
\setlength{\belowcaptionskip}{0pt}
\renewcommand{\arraystretch}{1.1}
      \begin{center}
        \caption{The $\mathcal{H}_\infty$ costs returned by different methods. N/A means the analytical solution is unavailable. The numbers highlighted in \textbf{bold type} denote the best value among all methods up to four significant digits. }
        \label{table:Jopt_Hinf_1}
        \begin{tabular}{ll*{6}{l}}
        \toprule
        Method & & $1$-dim & $2$-dim & $3$-dim & AC8 & HE1 & REA2 \\
        \hline
       Analytical &  & $\sqrt{3}-1$ & N/A & N/A & N/A & N/A & N/A \\ \cline{2-8}
       \multirow{2}{*}{LMI} & \method{mincx} & $\textbf{0.7321}$ & $5.4260$ & $\textbf{5.0829}$ & $\textbf{1.6165} $ & $\textbf{0.0736}$ & $\textbf{1.1341}$ \\
        & \method{Mosek} & $\textbf{0.7321}$ & $5.4267$ & $5.0834$ & $1.6169$ & $0.0738$ & $\textbf{1.1341}$ \\\cline{2-8}
       \multirow{2}{*}{\method{hinfsyn}} & {LMI} & $0.7322$ & $5.4288$ & $5.0844$ & $1.6176$ & $0.0739$ & $\textbf{1.1341}$ \\
        &  {Riccati} & $0.7381$ & $5.4748$ & $5.1117$ & $1.6258$ & $0.0740$ & $1.1362$ \\\cline{2-8}
       \method{HIFOO} &  & $\textbf{0.7321}$ & $\textbf{5.4259}$ & $5.0831$ & $1.6830$ & $0.0827$ & $\textbf{1.1341}$\\
        \toprule
        \end{tabular}
      \end{center}
    \end{table}

\Cref{table:Jopt_Hinf_1} lists the $\mathcal{H}_\infty$ costs returned by the methods we tested. As we can observe in the second row of \Cref{table:Jopt_Hinf_1}, the solutions returned by \method{Mosek} for the LMIs from $\mathcal{H}_\infty$ robust control is often worse than those returned by the specialized solver \method{mincx}. We have used the default setting of high-accuracy as the stopping criteria in \method{Mosek}, and the solver also reported success for the returned solutions. Similar numerical phenomena for the LMIs from $\mathcal{H}_2$ optimization was rare in our experience, indicating that LMIs from $\mathcal{H}_\infty$ robust control seem harder to solve numerically using general-purpose conic solvers. This may be related to smooth versus nonsmooth features between $\mathcal{H}_2$ and $\mathcal{H}_\infty$ optimization, and its detailed analysis is interesting but beyond the scope of this work.

From the third row of \cref{table:Jopt_Hinf_1}, it is interesting to observe that the solutions from the built-in Matlab routine \method{hinfsyn} (especially for the {Riccati} option) are often not as good as the solutions from LMI (\method{mincx}). This is expected since the Riccati option in \method{hinfsyn} is based on a bisection strategy, which can only approach the infimum in its limit but the corresponding Riccati equations may not be well-behaved asymptotically. 
Interestingly again, the \method{HIFOO} package has remarkable empirical performance for these $\mathcal{H}_\infty$ instances, and it returns high-quality solutions that are often as good as (very close to) those from LMIs. \Cref{figure:HIFOO_conv} illustrates the convergence behavior of \method{HIFOO} for these $\mathcal{H}_\infty$ instances.

\begin{figure}
    \centering
    \includegraphics[width=0.45\textwidth]{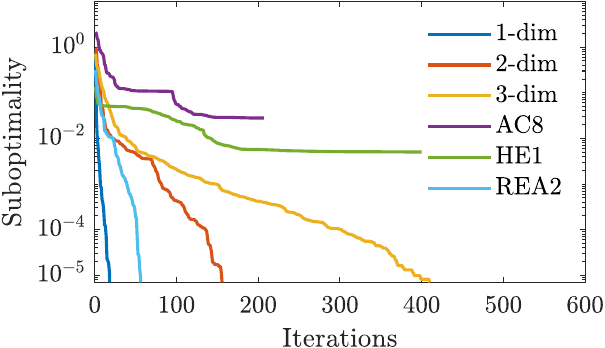}
    \caption{Empirical convergence performance of policy search methods via \method{HIFOO} \cite{arzelier2010h2,burke2006hifoo} for solving $\mathcal{H}_\infty$ instances, where the (local) suboptimality is measured by $\mathcal{H}_\infty$ cost gap compared to the best $\mathcal{H}_\infty$ value obtained from all methods.}.
    \label{figure:HIFOO_conv}
    \vspace{-3mm}
\end{figure}
From our numerical experiments, we observe another advantage of direct policy optimization like \method{HIFOO}: it directly optimizes over a policy class and the resulting policy is often ``well-behaved'' numerically. For example, for the first $\mathcal{H}_\infty$ instance, the solution returned by \method{HIFOO} reads
\begin{subequations}
\begin{align} \label{eq:example1-hifoo}
    J_{\infty,1}(\mK_{\text{\method{HIFOO}}}) = 0.73205, \quad \mK_{\text{\method{HIFOO}}} = \begin{bmatrix}
        -0.73205 & -0.2123\\
        0 & -1.6348
    \end{bmatrix},
\end{align}
while the solution returned by \method{hinfsyn} (via the default Riccati option) 
reads
\begin{align} \label{eq:example1-riccati}
    J_{\infty,1}(\mK_{\text{riccati}}) = 0.73814,\quad \mK_{\text{riccati}} = \begin{bmatrix}
        0 & -0.7111 \\
        9.8676 & -10.2738
    \end{bmatrix},
\end{align}
and the solution returned by \method{hinfsyn} (via the default LMI option) reads
\begin{align} \label{eq:example-lmi}
    J_{\infty,1}(\mK_{\text{lmi}}) = 0.73221,\quad \mK_{\text{lmi}} = \begin{bmatrix}
        0 & -21.65687 \\
        21.6569 & -641.8895
    \end{bmatrix}.
\end{align}
\end{subequations}
The solutions in \cref{eq:example1-riccati,eq:example-lmi} from \method{hinfsyn} correspond to strictly proper dynamic policies, while the solution \cref{eq:example1-hifoo} \method{HIFOO} is in fact a static output feedback $u(t) = -0.73205  y(t) \approx  (1-\sqrt{3})y(t)$ which is almost the same as the analytical solution in \Cref{example:Hinf-optimal}. Theoretically, one can use strictly proper dynamic policies to approach the global infimum of $\mathcal{H}_\infty$ control (guaranteed by \Cref{theorem:Hinf-riccati}), but these strictly proper dynamic policies may have large numerical values in their state-space realizations, as shown in \Cref{eq:example-lmi,eq:example1-riccati}.


\section{Conclusions} \label{section:conclusion}



In this paper, we have examined the nonconvex optimization landscapes of two fundamental control problems: the LQG control with stochastic noises, and $\mathcal{H}_\infty$ robust control with adversarial noises. Despite the nonconvexity, our results have characterized the global optimality of non-degenerate stationary points for both LQG and $\mathcal{H}_\infty$ optimization over {dynamic} policies. These results reveal the hidden convexity in LQG control and $\mathcal{H}_\infty$ robust control. We have postponed our main proof techniques via a new analysis framework \ECL{} to Part II of this paper. 

There are a few research topics that are worth further investigation. First, we are interested in establishing convergence conditions for local search algorithms to stationary points; this seemingly simple question is nontrivial to address especially due to the non-coercivity of LQG and $\mathcal{H}_\infty$ cost functions and their ``discontinuity'' behavior around the boundary, and thus the existing results \cite{absil2005convergence,burke2005robust} cannot be applied immediately. Second, it would be of great interest to design data-driven approaches for checking whether a policy is non-degenerate and avoiding degenerate policies via suitable regularization. Our definition of non-degenerate policies depends on the solution of a matrix inequality. It will be interesting to investigate its further system interpretations and its relationship with classical $\mathcal{H}_2$ and $\mathcal{H}_\infty$ theories. Finally, the classical literature on controls typically focuses on suboptimal $\mathcal{H}_\infty$ policies, and there are good reasons for this (some are practical and some are technical \cite[Section 5.1]{glover2005state}). From a nonsmooth optimization perspective, it is still extremely interesting to further characterize the global optimality of $\mathcal{H}_\infty$ optimization. Following the pioneer work \cite{apkarian2006nonsmooth,burke2005robust}, we believe this nonsmooth optimization perspective for $\mathcal{H}_\infty$ synthesis will continue to produce fruitful results, especially considering the recent context of model-free data-driven control.




\bibliographystyle{unsrt}
\bibliography{ref.bib}

\newpage
\appendix

\numberwithin{equation}{section}
\numberwithin{example}{section}
\numberwithin{remark}{section}
\numberwithin{assumption}{section}
\vspace{10mm}
\noindent\textbf{\Large Appendix}

\vspace{5mm}
This appendix contains many auxiliary results, additional discussions, and technical proofs. We divide it into four parts:
\begin{itemize}
    \item \cref{appendix:preliminaries} presents some preliminaries in control theory; 
 \item \cref{app:nonsmooth-optimization} presents some preliminaries in nonsmooth optimization; %
 \item \Cref{appendix:auxillary-results} presents auxiliary results for LQG and $\mathcal{H}_\infty$ control;
 \item \Cref{appendix:technical-proofs} presents technical proofs that are omitted in the main text. 

\end{itemize}

\section{Fundamentals of Control Theory} \label{appendix:preliminaries}

For self-completeness, we here review some fundamentals on controllability, observability,  Lyapunov equations, and LMI characterizations of $\mathcal{H}_2$/$\mathcal{H}_\infty$ norms.
Most of the results below can be found in standard textbooks in control; see \cite[Chap 3 \& 4 ]{zhou1996robust}, \cite{dullerud2013course,zhou1998essentials,scherer2000linear,boyd1994linear}. Minor pedagogical contributions might be our explicit emphasis on some subtleties between strict and nonstrict LMIs.

\subsection{Controllability and observability}
\label{appendix:preliminaries:ctrl_obsr}
Consider an LTI system parameterized by $(A, B, C, D) \in \mathbb{R}^{n \times n} \times \mathbb{R}^{n \times m} \times \mathbb{R}^{p \times n} \times \mathbb{R}^{p \times m}$,
\begin{equation} \label{eq:App_dynamics}
    \begin{aligned}
        \dot{x} &= A x + Bu,\\
        y &= Cx + Du.
    \end{aligned}
\end{equation}
The pair $(A,B)$ is called \emph{controllable} if the following controllability matrix
$$
\begin{bmatrix} B & AB & \ldots & A^{n-1}B \end{bmatrix}
$$
has full row rank. The pair $(C,A)$ is called \emph{observable} if the following observability matrix
$$
\begin{bmatrix} C \\ CA \\ \vdots \\ CA^{n-1}\end{bmatrix}
$$
has full column rank. It is easy to verify that $(A,B)$ is controllable if and only if $(B^\tr, A^\tr)$ is observable. We also say that the system~\cref{eq:App_dynamics} is controllable (resp. observable) if the associated pair $(A,B)$ is controllable (resp. $(C,A)$ is observable).

The input-output behavior of~\cref{eq:App_dynamics} can also be equivalently described in the frequency domain by the \emph{transfer matrix} $\mathbf{G}(s) = C(sI - A)^{-1}B + D$.
It is easy to verify that the transfer matrix $\mathbf{G}(s)$ is invariant under any \emph{similarity transformation} on the state-space model
\[
(A,B,C,D)\mapsto (TAT^{-1}, TB, CT^{-1}, D),
\]
where $T$ is any invertible $n\times n$ real matrix. The system~\cref{eq:App_dynamics} is called \emph{minimal} if $(A,B)$ is controllable and $(C,A)$ is observable.
This terminology is justified by the following interpretation: if the system~\cref{eq:App_dynamics} is not minimal, then there exists another state-space model with a smaller state dimension $\hat{n} < n$
$$
    \begin{aligned}
        \dot{\hat{x}} &= \hat{A} \hat{x} + \hat{B} u \\
        y &= \hat{C} \hat{x} + D u,
    \end{aligned}
$$
such that the input-output behavior is the same as~\cref{eq:App_dynamics}, \emph{i.e.},
$
    \mathbf{G}(s) = \hat{C} (sI - \hat{A})^{-1}\hat{B} + D.
$

In addition to the (Kalman) rank test, there are other equivalent conditions for controllability. 
\begin{lemma}[{\cite[Theorem 3.1]{zhou1996robust}}] \label{lemma:controllability}
    Given a matrix pair $(A, B) \in \mathbb{R}^{n \times n} \times \mathbb{R}^{n \times m}$, the following statements are equivalent.
    \begin{enumerate}
        \item $(A,B)$ is controllable.
        \item (Pole placement) The eigenvalues of $(A+BK)$ can be arbitrarily assigned\footnote{Complex eigenvalues need to come as conjugate pairs.} by choosing $K \in \mathbb{R}^{m \times n}$.
        \item (Invariance under state feedback) $(A+BK,B)$ is controllable for any $K\in \mathbb{R}^{m \times n}$.
    \end{enumerate}
\end{lemma}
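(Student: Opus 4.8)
The plan is to establish the cycle $(1)\Rightarrow(3)\Rightarrow(1)$ together with $(1)\Rightarrow(2)\Rightarrow(1)$, handling the two genuinely non-trivial directions — invariance under feedback and pole placement — separately, since the reverse implications $(3)\Rightarrow(1)$ and $(2)\Rightarrow(1)$ are either immediate or reduce to a short structural argument. Throughout I would use the standard fact (a consequence of Cayley--Hamilton) that the \emph{controllability subspace} $\mathcal{V}:=\mathrm{range}\,[\,B\ AB\ \cdots\ A^{n-1}B\,]$ is the smallest $A$-invariant subspace containing $\mathrm{range}\,B$, so that $(A,B)$ is controllable iff $\mathcal V=\mathbb{R}^n$.

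First I would prove $(1)\Rightarrow(3)$: for any $v\in\mathcal V$ one has $(A+BK)v=Av+B(Kv)$, with $Av\in\mathcal V$ by $A$-invariance of $\mathcal V$ and $B(Kv)\in\mathrm{range}\,B\subseteq\mathcal V$; hence $\mathcal V$ is $(A+BK)$-invariant and contains $\mathrm{range}\,B$, so the controllability subspace of $(A+BK,B)$ is contained in $\mathcal V=\mathbb{R}^n$, i.e. $(A+BK,B)$ is controllable. (Applying the same argument to $(A+BK,B)$ with feedback $-K$ yields the reverse inclusion, so in fact controllability subspaces coincide, which is slightly stronger.) Taking $K=0$ gives $(3)\Rightarrow(1)$. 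Next, for $(2)\Rightarrow(1)$ I would argue by contraposition: if $(A,B)$ is not controllable, then $\mathcal V$ is a proper $A$-invariant subspace, and a change of basis adapted to $\mathcal V$ brings the pair to the form $\tilde A=\begin{bmatrix}A_{11}&A_{12}\\0&A_{22}\end{bmatrix}$, $\tilde B=\begin{bmatrix}B_1\\0\end{bmatrix}$ with $A_{22}$ of size $n-\dim\mathcal V\ge 1$; for any feedback $[\,K_1\ K_2\,]$ in these coordinates the $(2,1)$-block of $\tilde A+\tilde B[\,K_1\ K_2\,]$ stays zero, so $\mathrm{spec}(A_{22})\subseteq\mathrm{spec}(A+BK)$ for every $K$ and the eigenvalues cannot be placed arbitrarily, contradicting $(2)$.

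The substantive part is $(1)\Rightarrow(2)$, the pole-placement theorem, which I would prove in two steps. First the single-input case: if $(A,b)$ with $b\in\mathbb{R}^n$ is controllable, a similarity transformation puts $(A,b)$ into controllable canonical (companion) form, in which a feedback row $k^\tr$ modifies exactly the coefficients of the characteristic polynomial, so $\det(sI-A-bk^\tr)$ can be made equal to any prescribed monic real polynomial of degree $n$, in particular one whose roots are the desired (conjugation-closed) eigenvalues. Second, the multi-input reduction via Heymann's lemma: if $(A,B)$ is controllable, there exist $v\in\mathbb{R}^m$ and $F\in\mathbb{R}^{m\times n}$ such that the single-input pair $(A+BF,Bv)$ is controllable; then apply the single-input result to obtain a row $k_0^\tr$ placing $\mathrm{spec}\big((A+BF)+(Bv)k_0^\tr\big)$, and set $K=F+vk_0^\tr$.

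The hard part will be Heymann's lemma — showing that a controllable multi-input pair can be collapsed to a controllable single-input pair after a preliminary feedback. The plan there is an inductive construction: starting from any $v$ with $Bv\neq 0$, build a chain $x_1=Bv$, $x_{i+1}=(A+BF)x_i$ while choosing the columns of $F$ one at a time so that each new $x_{i+1}$ extends the span of $\{x_1,\dots,x_i\}$, using controllability of $(A,B)$ to guarantee that such a choice exists until the span is all of $\mathbb{R}^n$; the resulting $x_1$ is then a cyclic vector for $A+BF$. All remaining steps (the companion-form computation, the basis-adapted block decomposition, Cayley--Hamilton) are routine linear algebra. If a fully self-contained treatment is not needed, one can of course simply cite Heymann's lemma, which is standard.
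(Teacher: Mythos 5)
The paper does not prove this lemma at all---it is quoted verbatim from \cite[Theorem 3.1]{zhou1996robust} as a classical fact---so there is no in-paper argument to compare against. Your proposal is the standard textbook proof (controllability-subspace invariance for $(1)\Leftrightarrow(3)$, Kalman-decomposition contrapositive for $(2)\Rightarrow(1)$, companion form plus Heymann's lemma for $(1)\Rightarrow(2)$), and it is essentially correct, including the sketch of Heymann's lemma via the cyclic-chain construction.

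One logical slip is worth fixing in your $(1)\Rightarrow(3)$ step. Writing $\mathcal V_K$ for the controllability subspace of $(A+BK,B)$, your main sentence establishes only $\mathcal V_K\subseteq\mathcal V=\mathbb{R}^n$, and the clause ``i.e.\ $(A+BK,B)$ is controllable'' does not follow from that inclusion---containment in $\mathbb{R}^n$ is vacuous. The implication you actually need is the reverse one, $\mathcal V\subseteq\mathcal V_K$ (so that $\mathcal V_K\supseteq\mathbb{R}^n$), which you obtain by noting that $\mathcal V_K$ is $A$-invariant because $Av=(A+BK)v-B(Kv)$, i.e.\ by applying the same argument to $(A+BK,B)$ with feedback $-K$. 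You do state this, but only in a parenthetical that you label ``slightly stronger,'' when in fact it is the essential half of the argument. Promote it to the main line of the proof and the step is airtight. Everything else (taking $K=0$ for $(3)\Rightarrow(1)$, the block-triangular form showing $\mathrm{spec}(A_{22})$ is feedback-invariant for $(2)\Rightarrow(1)$, and the single-input companion-form computation) is routine and correct as sketched.
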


In addition to controllability and observability, we will also occasionally encounter the notions of stabilizability and detectability. We say that the pair $(A,B)$ is \emph{stabilizable}, if there exists a matrix $F$ such that $A+BF$ is stable. We say that $(C,A)$ is \emph{detectable}, if there exists a matrix $L$ such that $A+LC$ is stable.

\subsection{Lyapunov equations} \label{appendix:Lyapunov-equation}
We here review some classical results on Lyapunov equations.
%
Given a real matrix $A \in \mathbb{R}^{n\times n}$ and a symmetric matrix $Q \in \mathbb{S}^n$, a standard Lyapunov equation is
\begin{equation} \label{eq:Lyapunov_equation}
    AX + X A^\tr + Q = 0,
\end{equation}
This is a linear equation in the variable $X$.
When the matrix $A$ is stable, we have the following lemma that characterizes the solution to the Lyapunov equation.

\begin{lemma}[{\cite[Lemma 3.18(i)(ii)]{zhou1996robust}}]
\label{lemma:Lyapunov}
Let $A\in\mathbb{R}^{n\times n}$ be stable, i.e., all eigenvalues of $A$ have negative real parts. Then the Lyapunov equation~\cref{eq:Lyapunov_equation} has a unique solution given by
\[
X = \int_0^{+\infty} e^{A t} Q e^{A^\tr t}\,dt.
\]
In addition, $X\succeq 0$ if $Q\succeq 0$, and $X\succ 0$ if $Q\succ 0$.
\end{lemma}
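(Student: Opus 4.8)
The plan is to verify directly the integral formula: show the integral defines a matrix, check that this matrix solves the Lyapunov equation, establish uniqueness, and then read off the definiteness properties from the integral representation. First I would establish well-definedness of $X = \int_0^{+\infty} e^{At} Q e^{A^\tr t}\,dt$. Since $A$ is stable, there are constants $C \ge 1$ and $\alpha > 0$ (any $\alpha < \min_i(-\operatorname{Re}\lambda_i(A))$ works, via the Jordan form of $A$) such that $\|e^{At}\| \le C e^{-\alpha t}$ for all $t \ge 0$. Hence $\|e^{At} Q e^{A^\tr t}\| \le C^2\|Q\|\, e^{-2\alpha t}$, which is integrable on $[0,+\infty)$, so the integral converges absolutely and $X\in\mathbb{S}^n$ is well-defined; this same decay bound will also be used below.

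Next I would verify that $X$ solves $AX + XA^\tr + Q = 0$. Using $\frac{d}{dt}\bigl(e^{At} Q e^{A^\tr t}\bigr) = A e^{At} Q e^{A^\tr t} + e^{At} Q e^{A^\tr t} A^\tr$ together with the fundamental theorem of calculus,
\[
AX + XA^\tr = \int_0^{+\infty} \frac{d}{dt}\bigl(e^{At} Q e^{A^\tr t}\bigr)\,dt = \lim_{t\to+\infty} e^{At} Q e^{A^\tr t} - Q = -Q,
\]
where the boundary term at $+\infty$ vanishes because of the exponential decay bound.

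For uniqueness I would argue that any two solutions differ by a $Z$ with $AZ + ZA^\tr = 0$, whence $\frac{d}{dt}\bigl(e^{At} Z e^{A^\tr t}\bigr) = 0$, so $e^{At} Z e^{A^\tr t}$ is constant in $t$; evaluating at $t=0$ gives $Z$ and letting $t\to+\infty$ gives $0$, so $Z = 0$. (Equivalently, the Lyapunov operator $Z\mapsto AZ+ZA^\tr$ has eigenvalues $\lambda_i(A)+\lambda_j(A)$, all with strictly negative real part when $A$ is stable, hence is invertible.) Finally, for the definiteness claims I would use, for $v\in\mathbb{R}^n$,
\[
v^\tr X v = \int_0^{+\infty} \bigl(e^{A^\tr t} v\bigr)^\tr Q \bigl(e^{A^\tr t} v\bigr)\,dt,
\]
which is $\ge 0$ when $Q\succeq 0$; and when $Q\succ 0$ and $v\ne 0$, invertibility of $e^{A^\tr t}$ forces $e^{A^\tr t}v\ne 0$ for all $t$, so the integrand is strictly positive and $v^\tr X v > 0$, i.e.\ $X\succ 0$.

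There is no genuine obstacle here, as the result is classical; the only points requiring mild care are the uniform bound $\|e^{At}\|\le Ce^{-\alpha t}$, which underlies both the convergence of the integral and the vanishing of the boundary term at $+\infty$, and—should one favor the operator-theoretic route to uniqueness—the identification of the spectrum of the Lyapunov operator.
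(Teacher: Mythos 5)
Your proof is correct and complete: the integrability bound, the fundamental-theorem-of-calculus verification, the uniqueness argument via the homogeneous equation, and the quadratic-form argument for definiteness are all sound. The paper itself cites this as a classical result from \cite[Lemma 3.18]{zhou1996robust} without reproducing a proof, and your argument is precisely the standard one given there.
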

Another related Lyapunov equation to \cref{eq:Lyapunov_equation} is
\begin{equation*} 
    A^\tr Y + Y A + Q = 0,
\end{equation*}
which replaces $A$ with $A^\tr$. Thus, similar results to \cref{lemma:Lyapunov} hold. Furthermore, we can also relate the controllability and observability of a system with the solutions to certain Lyapunov equations.
\begin{lemma}[{\cite[Lemma 3.18(iii)]{zhou1996robust}}]
\label{lemma:Lyapunov_controllability_observability}
Let $A\in\mathbb{R}^{n\times n}$ be stable, and let $L_c$ and $L_o$ be the unique solutions to the following Lyapunov equations
\begin{align}
AL_{\mathrm{c}} + L_{\mathrm{c}}A^\tr + BB^\tr & = 0,
\label{eq:controllability_Gramian} \\
A^\tr L_{\mathrm{o}} + L_{\mathrm{o}}A + C^\tr C & = 0.
\label{eq:observability_Gramian}
\end{align}
Then $(A,B)$ is controllable if and only if $L_{\mathrm{c}}\succ 0$. Similarly, $(C,A)$ is observable if and only if $L_{\mathrm{o}}\succ 0$.
\end{lemma}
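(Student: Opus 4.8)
The plan is to prove the controllability equivalence directly and then obtain the observability statement by transpose duality. Since $A$ is stable, \Cref{lemma:Lyapunov} gives the explicit representation $L_{\mathrm{c}} = \int_0^{+\infty} e^{At} BB^\tr e^{A^\tr t}\, dt$, which is manifestly positive semidefinite. Hence $L_{\mathrm{c}} \succ 0$ fails precisely when there is a nonzero $v \in \mathbb{R}^n$ with $v^\tr L_{\mathrm{c}} v = \int_0^{+\infty} \| B^\tr e^{A^\tr t} v \|^2\, dt = 0$, and by continuity of the integrand this is equivalent to $B^\tr e^{A^\tr t} v = 0$ for all $t \geq 0$.

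Next I would translate this analytic condition into the Kalman rank test. Differentiating $B^\tr e^{A^\tr t} v = 0$ repeatedly and evaluating at $t=0$ yields $B^\tr (A^\tr)^k v = 0$ for every $k \geq 0$; the first $n$ of these relations say exactly that $v^\tr \begin{bmatrix} B & AB & \cdots & A^{n-1}B \end{bmatrix} = 0$, so the controllability matrix lacks full row rank. Conversely, if $v \neq 0$ lies in the left null space of the controllability matrix, then $v^\tr A^k B = 0$ for $0 \le k \le n-1$, and the Cayley–Hamilton theorem propagates this to all $k \geq 0$; summing the exponential power series then gives $B^\tr e^{A^\tr t} v = 0$ for all $t$, hence $v^\tr L_{\mathrm{c}} v = 0$. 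Combining the two directions, $(A,B)$ is controllable if and only if no such $v$ exists, i.e. if and only if $L_{\mathrm{c}} \succ 0$.

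For the observability statement, I would invoke the transpose correspondence recorded in \Cref{appendix:preliminaries:ctrl_obsr}. Matching \cref{eq:observability_Gramian} against the controllability Lyapunov equation \cref{eq:controllability_Gramian} under the substitution $(A,B) \mapsto (A^\tr, C^\tr)$ shows that $L_{\mathrm{o}}$ is exactly the controllability Gramian of the stable pair $(A^\tr, C^\tr)$. The part just proved then gives $L_{\mathrm{o}} \succ 0$ if and only if $(A^\tr, C^\tr)$ is controllable, and $(A^\tr, C^\tr)$ is controllable if and only if $(C,A)$ is observable. This finishes the argument.

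I do not expect a serious obstacle here. The only mildly delicate point is the passage between the pointwise condition $B^\tr e^{A^\tr t} v \equiv 0$ and the finite rank test: this rests on the analyticity of $t \mapsto e^{A^\tr t}$ (so that the vanishing of all derivatives at $0$ forces the identity) together with Cayley–Hamilton, which truncates the otherwise infinite family of derivative conditions to the first $n$ powers of $A$.
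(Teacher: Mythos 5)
Your proof is correct and complete; the paper itself gives no proof of this lemma, simply citing it as a classical result from Zhou, Doyle and Glover, and your argument (integral representation of the Gramian from \Cref{lemma:Lyapunov}, reduction of the failure of positive definiteness to $B^\tr e^{A^\tr t}v\equiv 0$, passage to the Kalman rank test via derivatives at $t=0$ and Cayley--Hamilton, and the transpose duality for the observability half) is exactly the standard textbook proof. Nothing to add.
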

In control theory, the matrices $L_{\mathrm{c}}$ and $L_{\mathrm{o}}$ defined in \Cref{lemma:Lyapunov_controllability_observability} above are called \emph{controllability} and \textit{observability} Gramians for the (stable) system \cref{eq:App_dynamics}, respectively.

Note that in \Cref{lemma:Lyapunov,{lemma:Lyapunov_controllability_observability}}, we have assumed that $A$ is stable. Given the solution to~\cref{eq:Lyapunov_equation}, converse results also exist to establish the stability of $A$. We present one version that is used in our context. These converse results are important when we aim to design a controller that stabilizes a plant (see \Cref{remark:stability-constraint}).

\begin{lemma}[{\cite[Lemma 3.19]{zhou1996robust}}] \label{lemma:converse-Lyapunov}
    Let $X$ be a solution to the Lyapunov equation \cref{eq:Lyapunov_equation}.  
    \begin{enumerate}
    \item If $Q\succeq 0$ and $X \succ 0$, then $A$ is at least marginally stable, i.e., all eigenvalues of $A$ have non-positive real parts.
    \item If $Q \succ 0$ and $X \succ 0$, then $A$ is stable.
        \item If $Q = BB^\tr \succeq 0$ with $B \in \mathbb{R}^{n \times m}$, $(A,B)$ is stabilizable and $X \succeq 0$, then $A$ is stable.
    \end{enumerate}
\end{lemma}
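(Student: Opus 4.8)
The plan is to reduce all three claims to a single scalar identity obtained by testing the Lyapunov equation $AX + XA^\tr + Q = 0$ against eigenvectors of $A^\tr$. First I would fix an arbitrary eigenvalue $\lambda \in \mathbb{C}$ of $A^\tr$ (equivalently, of $A$) with eigenvector $v \neq 0$, so that $A^\tr v = \lambda v$ and, since $A$ is real, $v^\her A = \bar\lambda\, v^\her$. Pre- and post-multiplying the Lyapunov equation by $v^\her$ and $v$ and collecting terms yields the real identity
\[
2\,\operatorname{Re}(\lambda)\, v^\her X v + v^\her Q v = 0 ,
\]
which is the common engine for everything that follows (note that $v^\her X v$ and $v^\her Q v$ are real because $X$ and $Q$ are real symmetric).

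For part (1), I would observe that $X \succ 0$ and $v \neq 0$ give $v^\her X v > 0$, while $Q \succeq 0$ gives $v^\her Q v \geq 0$; hence $\operatorname{Re}(\lambda) = -\,v^\her Q v \,/\, (2\,v^\her X v) \leq 0$. Since $\lambda$ was an arbitrary eigenvalue, every eigenvalue of $A$ has nonpositive real part. Part (2) is the same argument with the strengthened hypothesis $Q \succ 0$, which makes $v^\her Q v > 0$ and therefore $\operatorname{Re}(\lambda) < 0$ for every eigenvalue, i.e., $A$ is Hurwitz.

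For part (3) I would argue by contradiction. Suppose $A$ is not stable; then there is an eigenvalue $\lambda$ of $A^\tr$ with $\operatorname{Re}(\lambda) \geq 0$ and eigenvector $v \neq 0$. Substituting $Q = BB^\tr$ into the identity gives $2\,\operatorname{Re}(\lambda)\, v^\her X v + \|B^\tr v\|^2 = 0$; since $X \succeq 0$ and $\operatorname{Re}(\lambda) \geq 0$, both summands are nonnegative and hence both vanish, so in particular $B^\tr v = 0$. Now I would invoke stabilizability as defined in the excerpt: choose $F$ with $A + BF$ stable. Then $(A + BF)^\tr v = A^\tr v + F^\tr B^\tr v = \lambda v$, so $\lambda$ is an eigenvalue of $A + BF$ with $\operatorname{Re}(\lambda) \geq 0$, contradicting the stability of $A + BF$. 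Therefore $A$ has no eigenvalue with nonnegative real part, i.e., $A$ is stable.

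The calculations are elementary and I do not anticipate a substantive obstacle; the only places requiring care are the complex-eigenvector bookkeeping (the identity $v^\her A = \bar\lambda\, v^\her$ and the equality of the spectra of $A$ and $A^\tr$), and, in part (3), the key observation that $B^\tr v = 0$ makes $v$ an eigenvector of $(A + BF)^\tr$ for \emph{every} $F$ --- which is precisely what allows the contradiction with stabilizability without having to introduce a Popov--Belevitch--Hautus-type rank test.
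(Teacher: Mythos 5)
Your proof is correct and follows essentially the same route as the paper's (the paper cites the result to Zhou's textbook, and its own sketch uses exactly this eigenvector test of the Lyapunov equation to obtain $2\operatorname{Re}(\lambda)\,v^\her X v + v^\her Q v = 0$). The only cosmetic difference is in part (3), where the paper concludes via the PBH characterization of uncontrollable unstable modes, whereas you replace that with the equally valid direct observation that $B^\tr v=0$ makes $v$ an eigenvector of $(A+BF)^\tr$ for every $F$.
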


The second statement of \Cref{lemma:converse-Lyapunov} combined with \Cref{lemma:Lyapunov} implies that the system of linear matrix inequalities $A X + X A^\tr \prec 0, X \succ 0$ is feasible if and only if $A$ is stable (which is the famous canonical Lyapunov inequality). Also, \Cref{lemma:converse-Lyapunov} immediately leads to a useful alternative characterization.
\begin{lemma} \label{lemma:converse-Lyaounov}
\begin{enumerate}
    \item If $(A,B)$ is stabilizable, and the following system of LMIs is feasible,
    \begin{equation} \label{eq:reverse-Lyapunov-inequality}
        A X + X A^\tr + BB^\tr \preceq 0, \qquad Y \succeq 0,
    \end{equation}
    then $A$ is stable.
    \item If \cref{eq:reverse-Lyapunov-inequality} is feasible with $X \succ 0$, then $A$ is at least marginally stable, i.e., all eigenvalues of $A$ have non-positive real parts.
\end{enumerate}
\end{lemma}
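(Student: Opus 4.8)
The plan is to deduce both statements directly from \Cref{lemma:converse-Lyapunov} by absorbing the slack of the matrix inequality \cref{eq:reverse-Lyapunov-inequality} into an augmented disturbance matrix, thereby reducing the inequality to an exact Lyapunov \emph{equation} of the form already covered there. (I note in passing that the decision variable in \cref{eq:reverse-Lyapunov-inequality} should read $X$ throughout, the $Y$ being a slip; I would state and prove the result with $X$.)

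For the first statement, since the left-hand side of \cref{eq:reverse-Lyapunov-inequality} is negative semidefinite, I would introduce the slack $S := -\bigl(AX + XA^\tr + BB^\tr\bigr) \succeq 0$ and factor it as $S = S^{1/2}(S^{1/2})^\tr$ via its positive semidefinite square root. Setting $\tilde{B} := \begin{bmatrix} B & S^{1/2}\end{bmatrix}$, we obtain $\tilde{B}\tilde{B}^\tr = BB^\tr + S$, so that $X \succeq 0$ now solves the exact Lyapunov equation $A X + X A^\tr + \tilde{B}\tilde{B}^\tr = 0$. Appending columns to $B$ cannot shrink the stabilizable subspace — any feedback $F$ with $A + BF$ stable extends to $\begin{bmatrix} F \\ 0\end{bmatrix}$ for the pair $(A,\tilde{B})$ — so $(A,\tilde{B})$ is stabilizable. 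Then \Cref{lemma:converse-Lyapunov}(3), applied with $\tilde{B}$ in place of $B$, yields that $A$ is stable.

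For the second statement, I would reuse the same slack and set $Q := -\bigl(AX + XA^\tr\bigr) = BB^\tr + S \succeq 0$. Then $X \succ 0$ solves $A X + X A^\tr + Q = 0$ with $Q \succeq 0$, and \Cref{lemma:converse-Lyapunov}(1) gives at once that every eigenvalue of $A$ has non-positive real part, i.e., $A$ is at least marginally stable.

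I do not anticipate any real obstacle here: this lemma is a convenience repackaging of \Cref{lemma:converse-Lyapunov} tailored to the inequality formulation used in \Cref{remark:stability-constraint}. The only points needing a sentence of justification are the existence and defining property of the square root $S^{1/2}$ (immediate from $S \succeq 0$) and the monotonicity of stabilizability under column augmentation (immediate from the feedback-extension argument above). If one preferred a self-contained route for part 2, an elementary eigenvalue computation also works: multiplying $A X + X A^\tr + Q = 0$ by $X^{-1}$ on both sides rewrites it as $A^\tr P + P A + X^{-1}QX^{-1} = 0$ with $P := X^{-1} \succ 0$, and testing against an eigenvector $v$ of $A$ gives $2\operatorname{Re}(\lambda)\, v^\her P v = -\, v^\her X^{-1}QX^{-1} v \le 0$, hence $\operatorname{Re}(\lambda) \le 0$.
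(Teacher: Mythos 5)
Your proposal is correct and takes essentially the same route as the paper: the paper likewise absorbs the negative-semidefinite slack into an extra factor $B_0B_0^\tr$ so that $X$ solves an exact Lyapunov equation for the augmented pair $(A,\begin{bmatrix} B & B_0\end{bmatrix})$, notes that stabilizability is preserved, and invokes \Cref{lemma:converse-Lyapunov}. You also correctly spot the $Y$/$X$ typo in \cref{eq:reverse-Lyapunov-inequality}, and your optional eigenvector argument for part 2 is a harmless self-contained alternative to citing \Cref{lemma:converse-Lyapunov}(1).
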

\begin{proof}
    It is clear that $X \succeq 0$ is a solution to the Lyapunov equation
    $
    A X + X A^\tr + BB^\tr + B_0B_0^\tr = 0
    $
    for some matrix $B_0$. Since $(A,B)$ is stabilizable, we have that $\left(A,\begin{bmatrix}
        B & B_0
    \end{bmatrix}\right)$ is~also~stabilizable. The stability of $A$ directly follows from \cref{lemma:converse-Lyapunov}. \end{proof}

\subsection{Characterizations of $\mathcal{H}_2$ and $\mathcal{H}_\infty$ norms} \label{app:h2-hinf-norms}

We here discuss the proofs in \Cref{lemma:H2norm,lemma:bounded_real}, and highlight some subtleties between strict and non-strict LMIs.

\subsubsection{Characterizations of $\mathcal{H}_2$ norms} \label{appendix:proof-H2-lemma}


The results in \Cref{lemma:H2norm} are classical in control, which can be found in many textbooks. For example, the equivalence in \cref{eq:Lyapunov-equations-H2norm} can be found on \cite[Page 76]{scherer2000linear}, \cite[Page 203]{dullerud2013course} and in \cite[Lemma 4.6]{zhou1996robust}; more extensive strict LMI characterizations than \cref{eq:strict-LMI-H2norm} is summarized in \cite[Proposition 3.13]{scherer2000linear} (also see \cite[Proposition 6.13]{dullerud2013course}). However, it seems that the nonstrict LMI characterizations in \cref{eq:nonstrict-lmi-h2} have been less emphasized in the literature.

Here, we reproduce their proofs as they help clarify certain subtleties between strict and nonstrict LMIs for characterizing $\mathcal{H}_2$ norms. As previewed in \Cref{subsection:smooth-LQG-cost}, the proofs below are constructive: the computation of $\|\mathbf{G}\|_{\mathcal{H}_2}^2$ via \cref{eq:Lyapunov-equations-H2norm} is a consequence of the Parseval theorem and \cref{lemma:Lyapunov}; we then explicitly construct solutions to \cref{eq:Lyapunov-equations-H2norm} to prove \cref{eq:strict-LMI-H2norm,eq:nonstrict-lmi-h2}. Special care needs to be put in when dealing with the converse direction of \cref{eq:nonstrict-lmi-h2} which requires the controllability of $(A, B)$.




\vspace{3mm}
\noindent \textbf{Proof of Statement 1 in \Cref{lemma:H2norm}.}
    The computation of $\|\mathbf{G}\|_{\mathcal{H}_2}^2$ via \cref{eq:Lyapunov-equations-H2norm} is a consequence of the Parseval theorem and \cref{lemma:Lyapunov}. By definition, we have
    $$
    \|\mathbf{G}\|_{\mathcal{H}_2}^2 =  \frac{1}{2\pi} \int_{-\infty}^{\infty}\operatorname{tr}(\mathbf{G}(j\omega)^\her \mathbf{G}(j\omega))d\omega = \int_{0}^\infty \mathrm{trace}(B^\tr e^{A^\tr t}C^\tr Ce^{At}B)dt
    $$
    where the last equality follows the Parseval theorem. Using linearity of the trace, we have
    $$
    \|\mathbf{G}\|_{\mathcal{H}_2}^2 =  \operatorname{tr}\!\left(B^\tr \int_{0}^\infty e^{A^\tr t}C^\tr Ce^{At}dt B\right) = \mathrm{trace}(B^\tr L_{\mathrm{o}} B ),
    $$
    where $L_{\mathrm{o}}$ is the observability Gramian defined in \cref{lemma:Lyapunov_controllability_observability}. From the cyclic property of the trace, we also have
    $$
    \|\mathbf{G}\|_{\mathcal{H}_2}^2 =  \operatorname{tr}\!\left(C \int_{0}^\infty e^{A t}BB^\tr e^{A^\tr t}dt C^\tr \right) = \mathrm{trace}(C L_{\mathrm{c}} C^\tr ),
    $$
    where $L_{\mathrm{c}}$ is the controllability Gramian defined in \cref{lemma:Lyapunov_controllability_observability}.

\vspace{3mm}

\noindent \textbf{Proof of Statement 2 in \Cref{lemma:H2norm}.} This directly follows from a more complete result. 

\begin{lemma} \label{lemma:H2-norm-strict}
Consider a transfer matrix $\mathbf{G}(s) = C(sI - A)^{-1}B$, where $A \in \mathbb{R}^{n\times n}$ is stable and $C \in \mathbb{R}^{p \times n}, B \in \mathbb{R}^{n \times m}$. The following statements are equivalent.
    \begin{enumerate} 
        \item $\|\mathbf{G}\|_{\mathcal{H}_2}<\gamma$.

        \item There exists a symmetric matrix $X$ such that\footnote{Any $X$ satisfying \cref{eq:strict-LMI-h2-norm} is strictly positive definite, which is from \Cref{lemma:Lyapunov}. It is also obvious that any solutions $P$ and $\Gamma$ satisfying \cref{eq:strict-LMI-1} are strictly positive definite.}
        \begin{equation} \label{eq:strict-LMI-h2-norm}
        A X+XA^\tr+B B^\tr \! \prec0, \quad \operatorname{tr}(C XC^\tr)< \gamma^2.
        \end{equation}
        \item There exist symmetric matrices $P$ and $\Gamma$  such that
            \begin{align} \label{eq:strict-LMI-1}
                \begin{bmatrix} A^\tr P+PA & PB \\ B^\tr P & -\gamma I \end{bmatrix}&\prec 0, \;\; \begin{bmatrix} P & C^\tr \\ C & \Gamma \end{bmatrix}\succ 0,\;\; \operatorname{tr}(\Gamma)<\gamma.
            \end{align}
    \end{enumerate}
\end{lemma}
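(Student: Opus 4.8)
\textbf{Proof proposal for \Cref{lemma:H2-norm-strict}.}

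The plan is to prove the cyclic chain of implications $(1)\Rightarrow(2)\Rightarrow(3)\Rightarrow(1)$, exploiting that $A$ is stable throughout so that Lyapunov equations have unique solutions and converse Lyapunov results (\Cref{lemma:Lyapunov,lemma:converse-Lyapunov}) are available.

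First, for $(1)\Rightarrow(2)$, I would start from the controllability Gramian $L_{\mathrm c}$ solving $AL_{\mathrm c}+L_{\mathrm c}A^\tr+BB^\tr=0$, which by Statement 1 of \Cref{lemma:H2norm} gives $\operatorname{tr}(CL_{\mathrm c}C^\tr)=\|\mathbf G\|_{\mathcal H_2}^2<\gamma^2$. Since $A$ is stable, for any $\delta>0$ the perturbed equation $AX_\delta+X_\delta A^\tr+BB^\tr+\delta I=0$ has a unique solution $X_\delta=L_{\mathrm c}+\delta\int_0^\infty e^{At}e^{A^\tr t}\,dt\succeq L_{\mathrm c}$, and $X_\delta\to L_{\mathrm c}$ as $\delta\downarrow0$. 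For $\delta$ small enough, continuity of the trace gives $\operatorname{tr}(CX_\delta C^\tr)<\gamma^2$, while $AX_\delta+X_\delta A^\tr+BB^\tr=-\delta I\prec 0$; thus $X=X_\delta$ satisfies \cref{eq:strict-LMI-h2-norm}. The footnote claim that any such $X$ is positive definite follows from \Cref{lemma:Lyapunov} applied with $Q=-(AX+XA^\tr)\succ 0$.

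Next, for $(2)\Rightarrow(3)$, given $X$ satisfying \cref{eq:strict-LMI-h2-norm}, set $P=\gamma X^{-1}$ (well-defined since $X\succ0$) and $\Gamma=\frac1\gamma CX^{-1}C^\tr$ — or more robustly, pick $\Gamma\succ CP^{-1}C^\tr$ with $\operatorname{tr}\Gamma<\gamma$, which is possible because $\operatorname{tr}(CP^{-1}C^\tr)=\tfrac1\gamma\operatorname{tr}(CXC^\tr)<\gamma$ by assumption. Congruence of $AX+XA^\tr+BB^\tr\prec 0$ by $\operatorname{diag}(X^{-1},I)$ (after a Schur-complement rewriting that introduces the $-\gamma I$ block) yields the first inequality in \cref{eq:strict-LMI-1}; here I would spell out that $\begin{bmatrix}AX+XA^\tr & B\\ B^\tr & -\gamma I\end{bmatrix}\prec 0$ is equivalent to $AX+XA^\tr+\frac1\gamma BB^\tr\prec 0$ by the Schur complement, so a rescaling of $X$ (replacing $X$ by $\gamma^{-1}X$, harmless since the conditions are homogeneous of appropriate degree after absorbing constants) aligns the two forms; then left/right multiply by $\operatorname{diag}(P,I)=\operatorname{diag}(\gamma X^{-1},I)$. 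The second inequality $\begin{bmatrix}P&C^\tr\\ C&\Gamma\end{bmatrix}\succ0$ is the Schur complement of $\Gamma-CP^{-1}C^\tr\succ0$ together with $P\succ0$, and $\operatorname{tr}\Gamma<\gamma$ holds by construction.

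Finally, for $(3)\Rightarrow(1)$, suppose $P,\Gamma$ satisfy \cref{eq:strict-LMI-1}. The first block inequality gives in particular $A^\tr P+PA\prec0$ with $P\succ0$, so (consistent with our standing assumption) $A$ is stable and the Gramians exist; taking Schur complements, $A^\tr P+PA+\frac1\gamma PBB^\tr P\prec 0$, equivalently $A^\tr P+PA\prec -\frac1\gamma PBB^\tr P\preceq 0$. Let $L_{\mathrm o}$ solve $A^\tr L_{\mathrm o}+L_{\mathrm o}A+C^\tr C=0$; I would then argue $P\succeq$ a suitable scalar multiple of $L_{\mathrm o}$ via a comparison/monotonicity lemma for Lyapunov operators — concretely, from $\begin{bmatrix}P&C^\tr\\ C&\Gamma\end{bmatrix}\succ0$ one gets $P\succ C^\tr\Gamma^{-1}C$, hence $A^\tr P+PA\prec 0$ forces, after integrating $e^{A^\tr t}(\cdot)e^{At}$, the bound $\operatorname{tr}(CL_{\mathrm c}C^\tr)=\operatorname{tr}(B^\tr L_{\mathrm o}B)\le \gamma\cdot\operatorname{tr}(P^{-1}\cdot)$-type estimate; cleanest is: from $\begin{bmatrix}A^\tr P+PA & PB\\ B^\tr P&-\gamma I\end{bmatrix}\prec0$, multiply on both sides by $\begin{bmatrix}I\\ \frac1\gamma B^\tr P\end{bmatrix}$ to get $A^\tr P+PA+\frac1\gamma PBB^\tr P\prec 0$, so $L_{\mathrm c}\preceq \gamma P^{-1}$ by Lyapunov monotonicity, giving $\operatorname{tr}(CL_{\mathrm c}C^\tr)\le \gamma\operatorname{tr}(CP^{-1}C^\tr)<\gamma\cdot\frac{\operatorname{tr}\Gamma}{1}$ wait — rather use $\operatorname{tr}(CP^{-1}C^\tr)<\operatorname{tr}\Gamma/\gamma$ from the second LMI after a Schur complement, so $\|\mathbf G\|_{\mathcal H_2}^2=\operatorname{tr}(CL_{\mathrm c}C^\tr)\le\gamma\operatorname{tr}(CP^{-1}C^\tr)<\operatorname{tr}\Gamma<\gamma$, hence $\|\mathbf G\|_{\mathcal H_2}<\sqrt\gamma$; I would reconcile the $\gamma$ versus $\gamma^2$ bookkeeping by tracking the scaling carefully (the statement uses $-\gamma I$ and $\operatorname{tr}\Gamma<\gamma$, which together are calibrated to give $\|\mathbf G\|_{\mathcal H_2}^2<\gamma^2$, not $<\gamma$; I will fix constants so the final inequality reads $\|\mathbf G\|_{\mathcal H_2}<\gamma$).

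The main obstacle I anticipate is precisely this constant/scaling bookkeeping in the Schur-complement manipulations — the block LMI \cref{eq:strict-LMI-1} mixes $\gamma$ and $\gamma^2$ implicitly through the $-\gamma I$ block versus $\operatorname{tr}\Gamma<\gamma$, so one must be disciplined about whether $P$ pairs with $X^{-1}$ or $\gamma X^{-1}$, and about the Lyapunov monotonicity direction (that $A^\tr P+PA\preceq -Q_1$ and $A^\tr L+LA=-Q_2$ with $Q_1\succeq Q_2\succeq0$ imply $P\succeq L$, which needs $A$ stable). Everything else is a routine combination of the Parseval identity, \Cref{lemma:Lyapunov}, the converse \Cref{lemma:converse-Lyapunov}, and standard Schur complements; the perturbation-to-strictness trick in $(1)\Rightarrow(2)$ is the only genuinely non-algebraic ingredient and is entirely standard.
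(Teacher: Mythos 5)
Your proof follows essentially the same route as the paper's: the perturbed controllability Gramian for $(1)\Rightarrow(2)$, the substitution $P=\gamma X^{-1}$ (equivalently $X=\gamma P^{-1}$) with Schur complements for $(2)\Leftrightarrow(3)$, and a converse that first recovers the strict Lyapunov inequality $AX+XA^\tr+BB^\tr\prec 0$ from \cref{eq:strict-LMI-1}. Your Lyapunov-monotonicity step $L_{\mathrm{c}}\preceq \gamma P^{-1}$ is the same fact the paper packages differently, by completing $AX+XA^\tr+BB^\tr\prec 0$ to an exact equation with an extra $B_0B_0^\tr$ term and comparing $\mathcal{H}_2$ norms of $\mathbf{G}$ and the augmented system; the two arguments are interchangeable. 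The only genuine loose end is the constant bookkeeping you flag but do not resolve at the end of $(3)\Rightarrow(1)$: you dropped a factor of $\gamma$ in front of $\operatorname{tr}(\Gamma)$. The correct chain is
\[
\|\mathbf{G}\|_{\mathcal{H}_2}^2=\operatorname{tr}(CL_{\mathrm{c}}C^\tr)\leq \gamma\operatorname{tr}(CP^{-1}C^\tr)<\gamma\operatorname{tr}(\Gamma)<\gamma^2,
\]
which gives $\|\mathbf{G}\|_{\mathcal{H}_2}<\gamma$ directly. No recalibration of the lemma's constants is needed: the $-\gamma I$ block (whose Schur complement produces the factor $1/\gamma$ on $PBB^\tr P$, hence $L_{\mathrm{c}}\preceq\gamma P^{-1}$) and the condition $\operatorname{tr}(\Gamma)<\gamma$ are already matched so that their product is $\gamma^2$.
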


  \begin{proof}
    \begin{itemize} \setlength{\itemsep}{2pt}
        \item
     We first consider the direction 1) $\Rightarrow$ 2). By Statement 1 in \Cref{lemma:H2norm}, we know that $\|\mathbf{G}\|_{\mathcal{H}_2}^2 = \mathrm{tr}(C L_\mathrm{c}C^\tr) < \gamma^2$, where $L_\mathrm{c} \succeq 0$ is the controllability Gramian. Since $A$ is stable, there exists $\tilde{X}\succ 0$ such that $A\tilde{X}+\tilde{X}A^\tr\prec0$. We can then find some $\epsilon>0$ such that
    \begin{equation} \label{eq:H2-equation-to-LMI}
    X = L_\mathrm{c}+\epsilon \tilde{X}
    \end{equation}
    satisfies $X \succ 0$ and $\mathrm{tr}(C X C^\tr) < \gamma^2$. Furthermore, we have
    \[
    A X+XA^\tr+B B^\tr = AL_\mathrm{c}+L_\mathrm{c}A^\tr +B B^\tr+\epsilon(A\tilde{X}+\tilde{X}A^\tr)\prec0,
    \]
    which completes the proof of 1) $\Rightarrow$ 2).

    The direction 2 $\Rightarrow$ 1) follows a simple fact: \cref{eq:strict-LMI-h2-norm} implies the existence of a nonzero matrix $B_0$ such that
    $$
    A X+XA^\tr+B B^\tr +B_0 B_0^\tr=0, \quad  \mathrm{tr}(CXC^\tr)<\gamma^2.
    $$
    By Statement 1, we have  $\|\mathbf{G}_e\|_{\mathcal{H}_2} < \gamma$ where $\mathbf{G}_e=C(sI-A)^{-1}\begin{bmatrix}
            B & B_0 \end{bmatrix}$. Consequently, we have
            $$\|\mathbf{G}\|_{\mathcal{H}_2} \leq \|\mathbf{G}_e\|_{\mathcal{H}_2}<\gamma.$$

  \item  Now we consider the direction 2) $\Rightarrow$ 3). From \Cref{lemma:Lyapunov}, any feasible $X$ in \cref{eq:strict-LMI-h2-norm} is strictly positive definite. Then, from a feasible $X \succ 0$, we can find some $\epsilon>0$ such that
  \begin{equation} \label{eq:construction-Gamma-P}
   \Gamma =\frac{1}{\gamma}CXC^\tr+\epsilon I
   \qquad\text{and}\qquad
   P=\gamma X^{-1}
  \end{equation}
  satisfy $\Gamma\succ 0$, $P\succ 0$ and $\operatorname{tr}(\Gamma)<\gamma$.
   We further infer that   $\Gamma\succ\frac{1}{\gamma}CXC^\tr=CP^{-1}C^\tr$ and
    \begin{align*}
        &A^\tr P+PA +\frac{1}{\gamma}PBB^\tr P=\gamma X^{-1}\left(AX+XA^\tr+B B^\tr\right) X^{-1} \! \prec0, 
    \end{align*}
    By using Schur complements, these two inequalities are equivalent to those in \cref{eq:strict-LMI-1}.

    The direction 3) $\Rightarrow$ 2) follows directly if we notice that, given $P$ and $\Gamma$ satisfying~\cref{eq:strict-LMI-1}, the matrix $X = \gamma P^{-1}$ satisfies \cref{eq:strict-LMI-h2-norm}. The proof is now complete. \qedhere


        \end{itemize}
\end{proof}

  \vspace{3mm}
\noindent \textbf{Proof of Statement 3 in \Cref{lemma:H2norm}.} For this, we establish the following result. 
   \begin{lemma} \label{lemma:H2-norm-non-strict}
   Consider a transfer matrix $\mathbf{G}(s) = C(sI - A)^{-1}B$, where $A \in \mathbb{R}^{n\times n}$ is stable and $C \in \mathbb{R}^{p \times n}, B \in \mathbb{R}^{n \times m}$. The following two statements are equivalent.
       \begin{enumerate}
        \item There exists $X\succ0$ such that\footnote{Unlike \Cref{eq:strict-LMI-h2-norm}  , the solution $X$ satisfying \cref{eq:nonstrict-LMI-h2-norm-1} is not automatically guaranteed to be positive definite unless $(A,B)$ is controllable. We thus need to explicitly require $X\succ0$ in the statement of our lemma.}
        \begin{equation}\label{eq:nonstrict-LMI-h2-norm-1}
        A X+XA^\tr+B B^\tr \! \preceq 0,\quad  \mathrm{tr}(C XC^\tr)\leq \gamma^2.
        \end{equation}

        \item There exist $P \succ 0$ and $\Gamma \succeq 0$ such that 
            \begin{align} \label{eq:nonstrict-LMI-h2-norm}
                \begin{bmatrix} A^\tr P+PA & PB \\ B^\tr P & -\gamma I \end{bmatrix}&\preceq 0, \;\; \begin{bmatrix} P & C^\tr \\ C & \Gamma \end{bmatrix}\succeq 0,\;\; \mathrm{tr}(\Gamma)\leq\gamma.
            \end{align}
       \end{enumerate}
The two matrices $X$ and $P$ can be related by $X = \gamma P^{-1}$. Furthermore, either one (and thus both) of \Cref{eq:nonstrict-LMI-h2-norm-1} and \cref{eq:nonstrict-LMI-h2-norm} guarantees $\|\mathbf{G}\|_{\mathcal{H}_2}\leq \gamma$; the converse is true when $(A,B)$ is controllable.
   \end{lemma}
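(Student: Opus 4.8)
The plan is to prove the equivalence $1\Leftrightarrow 2$ by Schur complements together with the substitution $X=\gamma P^{-1}$, and then to obtain the norm bound by sandwiching $\|\mathbf{G}\|_{\mathcal{H}_2}^2$ between $\operatorname{tr}(CL_{\mathrm{c}}C^\tr)$ and $\operatorname{tr}(CXC^\tr)$ via a monotonicity property of Lyapunov inequalities. Throughout I assume $\gamma>0$, consistent with \cref{lemma:H2norm,lemma:bounded_real}.

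\textbf{Equivalence.} For $1\Rightarrow2$ I would take $X\succ0$ feasible for \cref{eq:nonstrict-LMI-h2-norm-1} and set $P=\tfrac1{}\gamma X^{-1}\succ0$ (i.e.\ $P=\gamma X^{-1}$) together with $\Gamma=\tfrac1\gamma CXC^\tr=CP^{-1}C^\tr\succeq0$. A congruence of the Lyapunov inequality $AX+XA^\tr+BB^\tr\preceq0$ by $X^{-1}$, followed by rescaling by $\gamma$, produces $A^\tr P+PA+\tfrac1\gamma PBB^\tr P\preceq0$, which is precisely the Schur complement (against $-\gamma I\prec0$) of the first block inequality in \cref{eq:nonstrict-LMI-h2-norm}; the second block inequality is the Schur complement (against $P\succ0$) of $\Gamma\succeq CP^{-1}C^\tr$; and $\operatorname{tr}(\Gamma)=\tfrac1\gamma\operatorname{tr}(CXC^\tr)\le\gamma$. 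The direction $2\Rightarrow1$ runs these steps in reverse, setting $X=\gamma P^{-1}\succ0$: the two Schur complements (legitimate because $P\succ0$) yield $A^\tr P+PA+\tfrac1\gamma PBB^\tr P\preceq0$ and $\Gamma\succeq CP^{-1}C^\tr$, hence $AX+XA^\tr+BB^\tr\preceq0$ after the reverse congruence, and $\operatorname{tr}(CXC^\tr)=\gamma\operatorname{tr}(CP^{-1}C^\tr)\le\gamma\operatorname{tr}(\Gamma)\le\gamma^2$. This argument also records the asserted relation $X=\gamma P^{-1}$.

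\textbf{The norm bound and its converse.} Let $L_{\mathrm{c}}\succeq0$ be the controllability Gramian solving $AL_{\mathrm{c}}+L_{\mathrm{c}}A^\tr+BB^\tr=0$, so that $\|\mathbf{G}\|_{\mathcal{H}_2}^2=\operatorname{tr}(CL_{\mathrm{c}}C^\tr)$ by the first statement of \cref{lemma:H2norm}. Given $X\succ0$ feasible for \cref{eq:nonstrict-LMI-h2-norm-1}, I would subtract the Lyapunov equation from the Lyapunov inequality to get $A(X-L_{\mathrm{c}})+(X-L_{\mathrm{c}})A^\tr\preceq0$ with $A$ stable; then \cref{lemma:Lyapunov}, applied to $-(A(X-L_{\mathrm{c}})+(X-L_{\mathrm{c}})A^\tr)\succeq0$, gives $X-L_{\mathrm{c}}\succeq0$, whence $\operatorname{tr}(CXC^\tr)\ge\operatorname{tr}(CL_{\mathrm{c}}C^\tr)=\|\mathbf{G}\|_{\mathcal{H}_2}^2$, and combining with $\operatorname{tr}(CXC^\tr)\le\gamma^2$ yields $\|\mathbf{G}\|_{\mathcal{H}_2}\le\gamma$. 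For the converse under controllability, I would simply take $X=L_{\mathrm{c}}$: by \cref{lemma:Lyapunov_controllability_observability} controllability of $(A,B)$ forces $L_{\mathrm{c}}\succ0$, and then $X=L_{\mathrm{c}}$ satisfies \cref{eq:nonstrict-LMI-h2-norm-1} with equality in the Lyapunov part and $\operatorname{tr}(CL_{\mathrm{c}}C^\tr)=\|\mathbf{G}\|_{\mathcal{H}_2}^2\le\gamma^2$. This is exactly where controllability enters: without it $L_{\mathrm{c}}$ is only positive semidefinite and no positive definite $X$ need exist (cf.\ \cref{lemma:controllability-A-B-H2-norm}).

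\textbf{Expected obstacle.} I do not expect a genuine difficulty --- every step is a Schur complement, a congruence transformation, or an application of \cref{lemma:Lyapunov}. The one point that needs care is the non-strictness: unlike the strict version \cref{lemma:H2-norm-strict}, positive definiteness of $X$ (equivalently of $P$) is \emph{not} implied by the remaining inequalities and must be carried as a standing hypothesis, and the monotonicity step $X\succeq L_{\mathrm{c}}$ must be routed through the integral representation of \cref{lemma:Lyapunov} rather than through any strict Lyapunov argument.
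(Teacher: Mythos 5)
Your proof is correct and follows essentially the same route as the paper: the equivalence $1\Leftrightarrow 2$ via the substitution $P=\gamma X^{-1}$, congruence by $X^{-1}$, and the two Schur complements is identical to the paper's argument, as is the converse direction via $X=L_{\mathrm{c}}\succ 0$ under controllability. The only (minor) divergence is the step showing that \cref{eq:nonstrict-LMI-h2-norm-1} implies $\|\mathbf{G}\|_{\mathcal{H}_2}\leq\gamma$: you subtract the Gramian equation from the Lyapunov inequality and invoke \cref{lemma:Lyapunov} to get $X\succeq L_{\mathrm{c}}$, whereas the paper writes the slack as $B_0B_0^\tr$ and compares $\|\mathbf{G}\|_{\mathcal{H}_2}$ with the norm of the augmented system $C(sI-A)^{-1}\begin{bmatrix}B & B_0\end{bmatrix}$; these are the same monotonicity fact in two guises, and your version is arguably slightly cleaner. (One typographical slip: ``$P=\tfrac{1}{\gamma}X^{-1}$'' should read $P=\gamma X^{-1}$, as your parenthetical correctly states.)
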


    \begin{proof}

    We use similar constructions as those in \Cref{lemma:H2-norm-strict}.

    \begin{itemize}
        \item  1) $\Rightarrow$ 2): From the solution $X \succ 0$, we use the same construction in \cref{eq:construction-Gamma-P} with $\epsilon = 0$. Then, we have $\mathrm{tr}(\Gamma)\leq\gamma$. We further infer that $\Gamma= CP^{-1}C^\tr \Rightarrow \Gamma-CP^{-1}C^\tr=0\succeq0 $ and
    \begin{align*}
        A^\tr P+PA +\frac{1}{\gamma}PBB^\tr P=\gamma X^{-1}\left(AX+XA^\tr+B B^\tr\right)X^{-1} \! \preceq0.
    \end{align*}
     The Schur complement confirms that $P$ and $\Gamma$ satisfy the nonstrict LMI \cref{eq:nonstrict-LMI-h2-norm}.

    \item The direction 2) $\Rightarrow$ 1) follows the same construction $X =\gamma P^{-1} \succ 0$. Simple calculations confirm that $\mathrm{tr}(CXC^\tr)=\gamma\mathrm{tr}(CP^{-1}C^\tr)\leq\gamma\mathrm{tr}(\Gamma)\leq \gamma^2$ and
    \begin{align*}
        AX+XA^\tr+B B^\tr = \frac{1}{\gamma}X\left(A^\tr P+PA +\frac{1}{\gamma}PBB^\tr P\right)X \preceq 0. 
    \end{align*}
    \end{itemize}

We next show that \Cref{eq:nonstrict-LMI-h2-norm-1} implies  $\|\mathbf{G}\|_{\mathcal{H}_2}\leq \gamma$. From \cref{eq:nonstrict-LMI-h2-norm-1}, we can find a (possibly zero) matrix $B_0$ such~that
\[
A X+XA^\tr+B B^\tr +B_0 B_0^\tr=0, \;\; \operatorname{tr}(CXC^\tr)\leq\gamma^2.
\]
   By Statement 1 in \Cref{lemma:H2norm}, we know that $\|\mathbf{G}_e\|_{\mathcal{H}_2}\leq\gamma$ where $\mathbf{G}_e=C(sI-A)^{-1}\begin{bmatrix}
            B & B_0 \end{bmatrix}$. Consequently, we have $\|\mathbf{G}\|_{\mathcal{H}_2} \leq \|\mathbf{G}_e\|_{\mathcal{H}_2} \leq\gamma.$

    We finally show that if $\|\mathbf{G}\|_{\mathcal{H}_2}\leq \gamma$ and $(A,B)$ is controllable, then \cref{eq:nonstrict-LMI-h2-norm-1} holds with $X \succ 0$.
    By Statement 1 in \Cref{lemma:H2norm}, we know  $\|\mathbf{G}\|_{\mathcal{H}_2}^2 = \mathrm{tr}(C L_\mathrm{c}C^\tr) \leq \gamma^2$, where $L_\mathrm{c}$ satisfies  \cref{eq:controllability_Gramian}. Since $(A,B)$ is controllable, we know $L_\mathrm{c} \succ 0$ by \Cref{lemma:Lyapunov_controllability_observability}.
    The proof is finished by choosing
    \begin{equation} \label{eq:construction-X-L0}
        X=L_\mathrm{c}\succ0
    \end{equation}
which satisfies the nonstrict LMI \cref{eq:nonstrict-LMI-h2-norm-1}, i.e., $A X+XA^\tr+B B^\tr \! =0 \preceq 0,\ \mathrm{tr}(C XC^\tr) \leq \gamma^2.$
\end{proof}


{
\begin{remark}[Strict vs nonstrict LMIs in $\mathcal{H}_2$ norm]
From the constructions in \Cref{lemma:H2-norm-strict,lemma:H2-norm-non-strict}, it is clear that $X\succ 0$ and $P\succ 0$ are very important; otherwise their inverses may not exist. In the strict LMI \cref{eq:strict-LMI-h2-norm}, we know from \cref{lemma:Lyapunov} that all feasible solutions $X$ must be positive definite. Although the solution to \cref{eq:controllability_Gramian} may be only positive semidefinite, the perturbed matrix~\cref{eq:H2-equation-to-LMI} is guaranteed to be strictly positive definite while also satisfying the strict LMI. On the other hand, if $(A, B)$ is controllable, the solution to \cref{eq:controllability_Gramian} (i.e., the controllability Gramian) is guaranteed to be positive definite. In this case, no perturbation is required in \cref{eq:construction-X-L0} to construct the solutions to \Cref{eq:nonstrict-LMI-h2-norm-1,eq:nonstrict-LMI-h2-norm}, which allows us to obtain non-strict LMIs.
\hfill $\square$
\end{remark}
}


Throughout \Cref{lemma:H2-norm-strict,lemma:H2-norm-non-strict}, the observability of $(C,A)$ is not required. If $(C, A)$ is observable, we have stronger results that will be presented in \Cref{lemma:unique-solution-H2,lemma:controllability-A-B-H2-norm}.
\begin{lemma} \label{lemma:controllability-A-B-H2-norm}
    Consider a transfer matrix $\mathbf{G}(s) = C(sI - A)^{-1}B$ where $A$ is stable. {Suppose $(C,A)$ is observable.} If \cref{eq:nonstrict-LMI-h2-norm} holds with $\gamma=\|\mathbf{G}\|_{\mathcal{H}_2}$, $P \succ 0$ and $\Gamma \succeq 0$, then $(A, B)$ is controllable.
\end{lemma}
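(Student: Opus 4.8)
The plan is to pass from the given $P$ to the matrix $X \coloneqq \gamma P^{-1}$, which is well defined and positive definite since $\gamma = \|\mathbf{G}\|_{\mathcal{H}_2} > 0$ and $P \succ 0$, and then to compare $X$ with the true controllability Gramian of $(A,B)$. Applying Schur complements to the two blocks of \cref{eq:nonstrict-LMI-h2-norm} — using $-\gamma I \prec 0$ in the first and $P \succ 0$ in the second — and conjugating the first resulting inequality by $P^{-1}$, I obtain
\begin{equation*}
AX + XA^\tr + BB^\tr \preceq 0, \qquad \Gamma \succeq \tfrac{1}{\gamma}\,CXC^\tr,
\end{equation*}
and hence $\operatorname{tr}(CXC^\tr) \le \gamma\operatorname{tr}(\Gamma) \le \gamma^2$. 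Writing the first relation as $AX + XA^\tr + BB^\tr + B_0 B_0^\tr = 0$, where $B_0 B_0^\tr$ is a square-root factorization of the positive semidefinite residual, \Cref{lemma:Lyapunov} identifies $X$ as the unique solution, namely $X = L_{\mathrm{c}} + M$, where $L_{\mathrm{c}}$ is the controllability Gramian solving \cref{eq:controllability_Gramian} and $M \coloneqq \int_0^\infty e^{At}B_0 B_0^\tr e^{A^\tr t}\,dt \succeq 0$.

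Next I would exploit the hypothesis $\gamma = \|\mathbf{G}\|_{\mathcal{H}_2}$ to force equality. By Statement 1 of \Cref{lemma:H2norm}, $\operatorname{tr}(CL_{\mathrm{c}}C^\tr) = \|\mathbf{G}\|_{\mathcal{H}_2}^2 = \gamma^2$, so
\begin{equation*}
\gamma^2 = \operatorname{tr}(CL_{\mathrm{c}}C^\tr) \le \operatorname{tr}(CXC^\tr) = \operatorname{tr}(CL_{\mathrm{c}}C^\tr) + \operatorname{tr}(CMC^\tr) \le \gamma^2,
\end{equation*}
which yields $\operatorname{tr}(CMC^\tr) = 0$. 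Since $CMC^\tr = \int_0^\infty (Ce^{At}B_0)(Ce^{At}B_0)^\tr\,dt$ is an integral of positive semidefinite matrices whose trace is $\int_0^\infty \|Ce^{At}B_0\|_F^2\,dt$, continuity of the integrand gives $Ce^{At}B_0 = 0$ for all $t \ge 0$.

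The last step is where observability of $(C,A)$ is essential: differentiating $Ce^{At}B_0 = 0$ repeatedly at $t = 0$ shows $CA^k B_0 = 0$ for every $k \ge 0$, so each column of $B_0$ lies in the unobservable subspace of $(C,A)$, which is trivial by hypothesis; hence $B_0 = 0$, $M = 0$, and $X = L_{\mathrm{c}}$. Since $X = \gamma P^{-1} \succ 0$, this forces $L_{\mathrm{c}} \succ 0$, and \Cref{lemma:Lyapunov_controllability_observability} then yields that $(A,B)$ is controllable.

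I do not expect a genuine obstacle here. The only points needing mild care are (i) completing the nonstrict inequality $AX + XA^\tr + BB^\tr \preceq 0$ to a Lyapunov equation via the square-root factorization of its residual, and (ii) tracking that $\gamma > 0$ so that $X = \gamma P^{-1}$ is meaningful; both are routine once $A$ is known to be stable. The conceptual crux — and the reason controllability of $(A,B)$ cannot be removed from the hypotheses of this converse — is that without observability the residual $M$ may be supported on a nontrivial unobservable subspace, so $X$ need not coincide with $L_{\mathrm{c}}$ and the argument breaks down.
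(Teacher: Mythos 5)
Your proposal is correct and follows essentially the same route as the paper: pass to $X=\gamma P^{-1}$, complete the Lyapunov inequality to an equation with residual $B_0B_0^\tr$, use $\operatorname{tr}(CL_{\mathrm{c}}C^\tr)=\gamma^2$ to squeeze the residual's contribution to zero, invoke observability to conclude $B_0=0$, and identify $X$ with the controllability Gramian. The only cosmetic difference is that the paper phrases the squeeze via $\|C(sI-A)^{-1}[B,\,B_0]\|_{\mathcal{H}_2}$ and concludes $C(sI-A)^{-1}B_0=0$ in the frequency domain, whereas you work with the Gramian decomposition $X=L_{\mathrm{c}}+M$ and the time-domain condition $Ce^{At}B_0=0$; these are the same computation.
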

\begin{proof}
    From \Cref{lemma:H2-norm-non-strict}, we see that \cref{eq:nonstrict-LMI-h2-norm-1} holds with $X=\gamma P^{-1}\succ 0$.
    Then, there exists a matrix $B_0$ such that $A X+XA^\tr+B B^\tr + B_0 B_0^\tr = 0$. By Statement 1 in \Cref{lemma:H2norm}, we have
    $$
    \gamma^2=\left\|\mathbf{G}\|_{\mathcal{H}_2}^2  \leq \|C(sI -A)^{-1}\begin{bmatrix}
B, B_0
    \end{bmatrix}\right\|_{\mathcal{H}_2}^2 = \mathrm{tr}(C XC^\tr)\leq \gamma^2.
    $$
    We must have $\|C(sI -A)^{-1}B_0\|_{\mathcal{H}_2}^2 = 0$, i.e., $C(sI -A)^{-1}B_0 = 0$ is a zero transfer matrix. Since $(C, A)$ is observable, we get $B_0 = 0$.  Therefore, both inequalities in \cref{eq:nonstrict-LMI-h2-norm-1} hold with equalities for $X =\gamma P^{-1}\succ 0$. We see that $X$ is just the controllability Gramian $L_{\mathrm{c}}$, and \Cref{lemma:Lyapunov_controllability_observability} ensures that $(A, B)$ is controllable.
\end{proof}

\begin{lemma} \label{lemma:unique-solution-H2}
    Consider a transfer matrix $\mathbf{G}(s) = C(sI - A)^{-1}B$ where $A$ is stable. If $(A,B,C)$ is controllable and observable, then there exist unique $P \succ 0$ and $\Gamma \succeq 0$ satisfying \cref{eq:nonstrict-LMI-h2-norm} with $\gamma = \|\mathbf{G}\|_{\mathcal{H}_2}$.  
\end{lemma}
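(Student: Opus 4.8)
The plan is to exhibit one explicit feasible pair built from the controllability Gramian, and then to show that every feasible pair must equal it. Throughout, write $\gamma=\|\mathbf{G}\|_{\mathcal{H}_2}$ and let $L_{\mathrm c}$ be the unique solution of $AL_{\mathrm c}+L_{\mathrm c}A^\tr+BB^\tr=0$, which exists by \Cref{lemma:Lyapunov} since $A$ is stable; because $(A,B)$ is controllable, \Cref{lemma:Lyapunov_controllability_observability} gives $L_{\mathrm c}\succ 0$.

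For existence, set $X=L_{\mathrm c}$, $P=\gamma L_{\mathrm c}^{-1}$ and $\Gamma=\tfrac1\gamma C L_{\mathrm c} C^\tr$. The construction used in the direction $1)\Rightarrow 2)$ of the proof of \Cref{lemma:H2-norm-non-strict} (with the perturbation parameter set to zero) shows that this $(P,\Gamma)$ satisfies the two matrix inequalities in \cref{eq:nonstrict-lmi-h2}, and by Statement~1 of \Cref{lemma:H2norm} we have $\operatorname{tr}(\Gamma)=\tfrac1\gamma\operatorname{tr}(C L_{\mathrm c} C^\tr)=\tfrac1\gamma\|\mathbf{G}\|_{\mathcal{H}_2}^2=\gamma$, so the constraint $\operatorname{tr}(\Gamma)\le\gamma$ holds (with equality). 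Hence the nonstrict LMI is feasible for $\gamma=\|\mathbf{G}\|_{\mathcal{H}_2}$; note also that $\Gamma\succeq 0$ was not needed as a separate hypothesis, since it follows from the second LMI once $P\succ 0$.

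For uniqueness, let $(P,\Gamma)$ with $P\succ 0$, $\Gamma\succeq 0$ be any feasible pair and put $X=\gamma P^{-1}\succ 0$. Taking Schur complements exactly as in \Cref{lemma:H2-norm-non-strict}, the first inequality of \cref{eq:nonstrict-lmi-h2} gives $AX+XA^\tr+BB^\tr\preceq 0$, and the second gives $\Gamma\succeq CP^{-1}C^\tr=\tfrac1\gamma CXC^\tr$, whence $\operatorname{tr}(CXC^\tr)\le\gamma\operatorname{tr}(\Gamma)\le\gamma^2$. Complete the first relation to an equality $AX+XA^\tr+BB^\tr+B_0B_0^\tr=0$ for some matrix $B_0$; applying Statement~1 of \Cref{lemma:H2norm} to $\mathbf{G}_e=C(sI-A)^{-1}[\,B\ \ B_0\,]$ yields
\[
\gamma^2=\|\mathbf{G}\|_{\mathcal{H}_2}^2\le\|\mathbf{G}_e\|_{\mathcal{H}_2}^2=\operatorname{tr}(CXC^\tr)\le\gamma^2 .
\]
Therefore $\|C(sI-A)^{-1}B_0\|_{\mathcal{H}_2}=0$, and since $(C,A)$ is observable this forces $B_0=0$; thus $X$ solves $AX+XA^\tr+BB^\tr=0$, so $X=L_{\mathrm c}$ by uniqueness, and $P=\gamma L_{\mathrm c}^{-1}$ is uniquely determined. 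Finally, the inequality chain above is now tight, so $\operatorname{tr}(\Gamma)=\operatorname{tr}(CP^{-1}C^\tr)$; combined with $\Gamma-CP^{-1}C^\tr\succeq 0$, the matrix $\Gamma-CP^{-1}C^\tr$ is positive semidefinite with zero trace, hence zero, giving $\Gamma=CP^{-1}C^\tr=\tfrac1\gamma CL_{\mathrm c}C^\tr$.

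The only genuinely delicate step is the one that pins $X$ down: it reuses the observability argument of \Cref{lemma:controllability-A-B-H2-norm} to kill the slack term $B_0$, thereby upgrading the inequality $AX+XA^\tr+BB^\tr\preceq 0$ to the Gramian equation. Everything else is Schur-complement bookkeeping together with the elementary fact that a positive semidefinite matrix of zero trace vanishes.
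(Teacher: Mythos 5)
Your proposal is correct and follows essentially the same route as the paper: the uniqueness of $P$ is obtained by passing to $X=\gamma P^{-1}$, completing $AX+XA^\tr+BB^\tr\preceq 0$ to an equality with a slack term $B_0B_0^\tr$, and using observability of $(C,A)$ to force $B_0=0$ so that $X=L_{\mathrm c}$ (this is exactly the argument of \Cref{lemma:controllability-A-B-H2-norm}, which the paper's proof cites rather than repeats), after which the zero-trace argument pins down $\Gamma$. The only difference is that you also spell out the existence half via the Gramian construction, which the paper leaves implicit in \Cref{lemma:H2-norm-non-strict}.
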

\begin{proof}
Let $P\succ 0$ and $\Gamma\succeq 0$ satisfy~\cref{eq:nonstrict-LMI-h2-norm}, and let $X=\gamma P^{-1}$ so that it satisfies~\cref{eq:nonstrict-LMI-h2-norm-1}.
From the proof of \Cref{lemma:controllability-A-B-H2-norm}, any $X \succ 0$ satisfying \cref{eq:nonstrict-LMI-h2-norm-1} will necessarily be equal to the controllability Gramian $L_{\mathrm{c}}$. Consequently, the matrix $P = \gamma L_{\mathrm{c}}^{-1}$ is unique. To show the uniqueness of $\Gamma$, we note that \cref{eq:nonstrict-LMI-h2-norm} implies $\Gamma-CP^{-1}C^\tr\succeq 0$, which further leads to
\[
\gamma = \gamma^{-1}\operatorname{tr}(CL_{\mathrm{c}}C^\tr)
=\operatorname{tr}(CPC^\tr)\leq \operatorname{tr}(\Gamma)\leq \gamma.
\]
As a result, $\operatorname{tr}(\Gamma-CP^{-1}C^\tr)=0$, which implies $\Gamma$ is equal to $CP^{-1}C^\tr$ and therefore is unique.
\end{proof}

We present an example below showing that if $(C, A)$ is not observable, then the solution set to the nonstrict LMI \cref{eq:nonstrict-LMI-h2-norm} is not a singleton with $\gamma = \|\mathbf{G}\|_{\mathcal{H}_2}$.
\begin{example} \label{example:non-unique-certificate}
    Consider a second-order system with matrices
    $$
    A = \begin{bmatrix}
        -1 & 0 \\
        2 & -2
    \end{bmatrix}, \qquad B = \begin{bmatrix}
        1 \\ 0
    \end{bmatrix}, \qquad C = \begin{bmatrix}
        1 & 0
    \end{bmatrix}.
    $$
    This system is controllable but not observable (only detectable). The unique solution to \Cref{eq:Lyapunov-equations-H2norm-a} is
    $$
    L_{\mathrm{c}} = \frac{1}{6} \begin{bmatrix}
        3 & 2 \\ 2 & 2
    \end{bmatrix},
    $$
    which is positive definite as expected. The corresponding $\mathcal{H}_2$ norm is $\gamma = \|\mathbf{G}\|_{\mathcal{H}_2} = \frac{\sqrt{2}}{2}$. It is straightforward to verify that $X = L_{c} \succ 0$ satisfies \cref{eq:nonstrict-LMI-h2-norm-1} with $\gamma = \frac{\sqrt{2}}{2}$. To see the extra flexibility, we compute
    $$
    A X+XA^\tr = \begin{bmatrix}
        -1 & 0 \\ 0 & 0
    \end{bmatrix}, \quad BB^\tr  = \begin{bmatrix}
        1 & 0 \\ 0 & 0
    \end{bmatrix}, \quad C^\tr C = \begin{bmatrix}
        1 & 0 \\ 0 & 0
    \end{bmatrix}.
    $$
    The above equalities suggest that
    the bottom right element has extra freedom for the inequality in \cref{eq:nonstrict-LMI-h2-norm-1}. In particular, let us choose
    $$
    X_t = X + \frac{1}{6}\begin{bmatrix}
        0 & 0 \\ 0 & t
    \end{bmatrix} = \frac{1}{6} \begin{bmatrix}
        3 & 2 \\ 2 & 2+t
    \end{bmatrix} \succ 0, \quad \forall t \geq 0.
    $$
    It is straightforward to verify that
    $$
    A X_t+X_tA^\tr + BB^\tr = -\frac{2}{3}\begin{bmatrix}
        0 & 0 \\ 0 & t
    \end{bmatrix} \preceq 0, \quad \mathrm{tr}(C X_tC^\tr) = \frac{1}{2}, \quad \forall t \geq 0.
    $$
    Thus, $X_t \succ 0, \forall t \geq 0$ is feasible to \cref{eq:nonstrict-LMI-h2-norm-1}.
    For each $X_t$, the construction \cref{eq:construction-Gamma-P} with $\epsilon = 0$ is feasible to the nonstrict LMI \cref{eq:nonstrict-LMI-h2-norm} with $\gamma = \frac{\sqrt{2}}{2}$. \hfill \qed
 \end{example}


\subsubsection{Characterizations of $\mathcal{H}_\infty$ norm} \label{appendix:proof-Hinf-norm}

The bounded real lemma in \Cref{lemma:bounded_real} is a classical result in control, which establishes equivalence between a frequency-domain inequality (i.e., $\mathcal{H}_\infty$ norm) and
the algebraic feasibility of LMIs in the state-space domain.
This result is covered in many modern control textbooks; see, for example, \cite[Corollary 12.3]{zhou1998essentials} for strict versions and \cite[Lemma 2.11]{scherer2000linear} for nonstrict versions. 

There are different proofs for \Cref{lemma:bounded_real}. The constructive proof in \cite[Lemma 7.3]{dullerud2013course} and \cite[Corollary 12.3]{zhou1998essentials} for strict versions reveals some of the deepest relationships in linear systems theory (such as Riccati operator and Hamiltonian matrix), which is not easily accessible for readers outside control. An alternative elegant proof based on convex duality first appeared in \cite{rantzer1996kalman}, and this duality perspective has been further developed in \cite[Lemma 2.11]{scherer2000linear}, \cite[Section 6]{balakrishnan2003semidefinite} and \cite{you2016direct,you2015primal,Gattami2016Hinf}. We adopt the duality techniques in \cite{rantzer1996kalman,balakrishnan2003semidefinite,you2016direct} to highlight the subtleties between strict and non-strict LMIs.



Before presenting the proof of \Cref{lemma:bounded_real}, we introduce a few equivalent LMI formulations to \cref{eq:strict-hinf} and \cref{eq:non-strict-hinf}. First, via Schur complement,  \cref{eq:strict-hinf}  is equivalent to the following LMI
\begin{subequations}
\begin{equation} \label{eq:HinfLMI-1}
\begin{bmatrix}
A^\tr P + P A & PB\\
B^\tr P & -\gamma I
\end{bmatrix} + \frac{1}{\gamma} \begin{bmatrix}
    C^\tr \\ D^\tr
\end{bmatrix}\begin{bmatrix}
    C & D
\end{bmatrix}\prec 0.
\end{equation}
Via a scaling argument $\gamma P \to P$, \cref{eq:HinfLMI-1} is equivalent to the following LMI
\begin{equation} \label{eq:HinfLMI-2}
\begin{bmatrix}
A^\tr P + P A & PB\\
B^\tr P & -\gamma^2 I
\end{bmatrix} +  \begin{bmatrix}
    C^\tr \\ D^\tr
\end{bmatrix}\begin{bmatrix}
    C & D
\end{bmatrix}\prec 0,
\end{equation}
which is further equivalent to the following Riccati inequality (again via Schur complement)
\begin{equation}
    A^\tr P + P A + C^\tr C - (PB + C^\tr D)(D^\tr D - \gamma^2 I)^{-1}(PB + C^\tr D)^\tr \prec 0, \; D^\tr D \prec \gamma^2 I.
\end{equation}
\end{subequations}
Similarly, the nonstrict LMI \cref{eq:non-strict-hinf} is equivalent to
\begin{equation} \label{eq:HinfLMI-3}
\begin{bmatrix}
A^\tr P + P A & PB\\
B^\tr P & -\gamma^2 I
\end{bmatrix} +  \begin{bmatrix}
    C^\tr \\ D^\tr
\end{bmatrix}\begin{bmatrix}
    C & D
\end{bmatrix}\preceq 0.
\end{equation}

We are now ready to prove \Cref{lemma:bounded_real}.

\vspace{3mm}

\textbf{Proof of the sufficient conditions in \Cref{lemma:bounded_real}}:
    For both the strict LMI \cref{eq:strict-hinf} and nonstrict LMI \cref{eq:non-strict-hinf}, the ``if '' direction is relatively straightforward and involves a control-theoretic interpretation for the variable $P$.

    Consider the LTI system \cref{eq:App_dynamics} with $x(0) = 0$. When $A$ is stable, its $\mathcal{H}_\infty$ norm $\|C(sI - A)^{-1}B + D\|_{\mathcal{H}_\infty} \leq \gamma$ if and only if
    \begin{equation} \label{eq:Hinf-output-bound}
        \|y\|_2^2 \leq \gamma^2 \|u\|_2^2, \qquad \forall u \in \mathcal{L}_2[0,\infty).
    \end{equation}
    We next show \cref{eq:Hinf-output-bound} is true if there exists a $P \succ 0$ such that \cref{eq:HinfLMI-3} holds. Pre-multiplying \cref{eq:HinfLMI-3} by $\begin{bmatrix}
        x(t)^\tr, u(t)^\tr
    \end{bmatrix}$ and postmultiplying \cref{eq:HinfLMI-3} by $\begin{bmatrix}
        x(t)^\tr, u(t)^\tr
    \end{bmatrix}^\tr$ lead to
    \begin{equation} \label{eq:HinfLMI-dissipation-1}
        x^\tr P(Ax + Bu) + (Ax + Bu)^\tr Px - \gamma^2 u^\tr u + y^\tr y \leq 0, \qquad \forall t \geq 0.
    \end{equation}
We introduce the storage function $V: \mathbb{R}^n \to \mathbb{R}$ defined by $V(x) =
x^\tr Px$. Then, \cref{eq:HinfLMI-dissipation-1} implies the so-called dissipation inequality
\begin{equation} \label{eq:HinfLMI-dissipation-2}
\frac{d}{dt}V(x(t)) + y^\tr y \leq \gamma^2 u^\tr u , \qquad \forall t \geq 0.
    \end{equation}
Integrating over an interval $[0,T]$, and recalling that $x(0) = 0$, we have
$$
V(x(T)) + \int_0^T  y^\tr y\,dt \leq  \gamma^2\int_0^T u^\tr u\,dt.
$$
Note that $P\succ 0$ implies $V(x(T))\geq 0$, which further leads to $\int_0^T  y^\tr y\,dt \leq  \gamma^2\int_0^T u^\tr u\,dt$. Letting $T\rightarrow+\infty$ proves \cref{eq:Hinf-output-bound}.

Suppose there exists a $P\succ 0$ such that \cref{eq:HinfLMI-2} holds. Then we can find some $\epsilon>0$ such that
\[
\begin{bmatrix}
A^\tr P + P A & PB\\
B^\tr P & -(\gamma-\epsilon)^2 I
\end{bmatrix} +  \begin{bmatrix}
    C^\tr \\ D^\tr
\end{bmatrix}\begin{bmatrix}
    C & D
\end{bmatrix}\preceq 0.
\]
By the same construction in \Cref{eq:Hinf-output-bound,eq:HinfLMI-dissipation-1}, we get
\[
\|y\|_2^2\leq (\gamma-\epsilon)^2 \|u\|_2^2,
\qquad\forall u\in\mathcal{L}_2[0,+\infty),
\]
which implies $\|C(sI-A)^{-1}B+D\|_{\mathcal{H}_\infty}\leq \gamma-\epsilon<\gamma$.

\vspace{3mm}

\textbf{Proof of the necessary conditions in \Cref{lemma:bounded_real}}:
The ``only if'' direction in \cref{eq:strict-hinf} and \cref{eq:non-strict-hinf} are more involved to prove. We here adopt the convex duality techniques in \cite{rantzer1996kalman,balakrishnan2003semidefinite,you2016direct} to highlight some subtleties between strict and non-strict LMIs.

Let us construct a primal and dual perspective of computing $\mathcal{H}_\infty$ norms \cite{balakrishnan2003semidefinite}.
Consider the LTI system \cref{eq:App_dynamics} with $x(0) = 0$. Let $A$ be stable and $\gamma\geq\|C(sI - A)^{-1}B + D\|_{\mathcal{H}_\infty}$.
For any $u \in \mathcal{L}_2[0,+\infty)$, both the state $x(t)$ and output $y(t)$ have finite energy. Therefore, the following quantities are well-defined
$$
Z_{11} = \int_0^\infty x(t)x(t)^\tr dt, \quad  Z_{12} = \int_0^\infty x(t)u(t)^\tr dt, \quad Z_{22} = \int_0^\infty u(t)u(t)^\tr dt.
$$
It is straightforward to verify that
$$
A Z_{11} + BZ_{12}^\tr + Z_{11}A^\tr + Z_{12}B^\tr = \int_0^\infty \frac{d}{dt}(x(t)x(t)^\tr) = 0
$$
and
$$
\begin{bmatrix}
    Z_{11} & Z_{12} \\ Z_{12}^\tr & Z_{22}
\end{bmatrix} = \int_0^\infty \begin{bmatrix}
    x(t) \\ u(t)
\end{bmatrix}\begin{bmatrix}
    x(t) \\ u(t)
\end{bmatrix}^\tr dt \succeq 0, \qquad \operatorname{tr}\left(\begin{bmatrix}
    C & D
\end{bmatrix} \begin{bmatrix}
    Z_{11} & Z_{12} \\ Z_{12}^\tr & Z_{22}
\end{bmatrix}\begin{bmatrix}
    C^\tr \\ D^\tr
\end{bmatrix}\right) = \int_0^\infty y(t)^\tr y(t)dt.
$$
We now define two sets
$$
\begin{aligned}
\mathcal{V} &= \left\{Z \succeq 0 \mid Z = \int_0^\infty \begin{bmatrix}
    x(t) \\ u(t)
\end{bmatrix}\begin{bmatrix}
    x(t) \\ u(t)
\end{bmatrix}^\tr dt \;\; \text{for some}\;\; u \in \mathcal{L}_2[0,\infty) \; \text{to \cref{eq:App_dynamics} with $x(0) = 0$}  \right\},  \\
\mathcal{V}_{\mathrm{sdp}} &=\left\{Z \succeq 0 \mid A Z_{11} + BZ_{12}^\tr + Z_{11}A^\tr + Z_{12}B^\tr = 0 \right\}.
\end{aligned}
$$
It is clear that $\mathcal{V} \subseteq \mathcal{V}_{\mathrm{sdp}}$. In fact, it is proved in \cite[Lemma 3.4]{you2016direct} that $\mathcal{V}_{\mathrm{sdp}} = \mathrm{cl}\,\mathcal{V}$.

We then consider the following semidefinite optimization problem
\begin{equation} \label{eq:primal-Hinf-norm}
    \begin{aligned}
        p^\star = \sup_{Z} \quad &\operatorname{tr}\left(\begin{bmatrix}
    C & D
\end{bmatrix} \begin{bmatrix}
    Z_{11} & Z_{12} \\ Z_{12}^\tr & Z_{22}
\end{bmatrix}\begin{bmatrix}
    C^\tr \\ D^\tr
\end{bmatrix}\right) \\
        \text{subject to}\quad & A Z_{11} + BZ_{12}^\tr + Z_{11}A^\tr + Z_{12}B^\tr = 0,\\
        & \operatorname{tr}(Z_{22}) = 1, \quad \begin{bmatrix}
    Z_{11} & Z_{12} \\ Z_{12}^\tr & Z_{22}
\end{bmatrix} \succeq 0.
    \end{aligned}
\end{equation}
The fact $\mathcal{V}_{\mathrm{sdp}} = \mathrm{cl}\,\mathcal{V}$ implies that
$
p^\star
$
is equal to the squared $\mathcal{H}_\infty$ norm of the transfer matrix $C(sI-A)^{-1}B + D$.
It can be verified that the Lagrangian dual of \cref{eq:primal-Hinf-norm} is
\begin{equation} \label{eq:dual-Hinf-norm}
    \begin{aligned}
        d^\star = \inf_{\beta,P} \quad &\beta \\
        \text{subject to}\quad &  \begin{bmatrix}
    A^\tr P + PA  & PB\\ B^\tr P & -\beta I
\end{bmatrix} + \begin{bmatrix}
    C^\tr \\ D^\tr
\end{bmatrix}\begin{bmatrix}
    C & D
\end{bmatrix} \preceq 0.
    \end{aligned}
\end{equation}
%
Since $A$ is stable, we have a positive definite solution $P \succ 0$ to $A^\tr P + PA = -t I, \forall t > 0$. Therefore, we can construct a strict feasible solution to the dual SDP \cref{eq:dual-Hinf-norm} by letting $\beta>0$ and $t>0$ be sufficiently large and letting $P$ be the positive definite solution to $A^\tr P + PA = -t I$. 
\cite[Theorem 2.4.1]{ben2001lectures} then justifies that strong duality holds for \cref{eq:dual-Hinf-norm,eq:primal-Hinf-norm} ,
i.e.,
$$
p^\star = d^\star.
$$
To show the ``only if'' direction of the non-strict LMI \cref{eq:non-strict-hinf} in \Cref{lemma:bounded_real}, we employ the result proved in \cite[Proposition 3.3]{you2016direct} that the primal SDP \cref{eq:primal-Hinf-norm} is strictly feasible if and only if $(A, B)$ is controllable. Then, since the strict feasibility of the primal SDP \cref{eq:primal-Hinf-norm} implies the existence of an optimal solution to the dual SDP~\cref{eq:dual-Hinf-norm} \cite[Theorem 2.4.1]{ben2001lectures}, we see that the controllability of $(A, B)$ implies the existence of a matrix $P$ such that \cref{eq:dual-Hinf-norm} is feasible with $\beta=p^\star=d^\star$. Then it is not hard to see that this matrix $P$ satisfies
\[
\begin{bmatrix}
    A^\tr P + PA  & PB\\ B^\tr P & -\gamma^2 I
\end{bmatrix} + \begin{bmatrix}
    C^\tr \\ D^\tr
\end{bmatrix}\begin{bmatrix}
    C & D
\end{bmatrix} \preceq
\begin{bmatrix}
    A^\tr P + PA  & PB\\ B^\tr P & -\beta I
\end{bmatrix} + \begin{bmatrix}
    C^\tr \\ D^\tr
\end{bmatrix}\begin{bmatrix}
    C & D
\end{bmatrix}
\preceq
0,
\]
which proves the ``only if'' direction of the non-strict LMI \cref{eq:non-strict-hinf} in \Cref{lemma:bounded_real}.

We now finalize the proof for the ``only if'' direction of the strict LMI \cref{eq:strict-hinf} in \Cref{lemma:bounded_real}. Let $\gamma > \|C(sI - A)^{-1}B + D\|_{\mathcal{H}_\infty}$, and let $(\tilde{\beta},\tilde{P})$ be an arbitrary strictly feasible solution to the dual SDP~\cref{eq:dual-Hinf-norm}. It is clear that
$$
\gamma^2 > p^\star = d^\star,
$$
and so we can find $\epsilon>0$ such that $\frac{\gamma^2-\epsilon\tilde{\beta}}{1-\epsilon}>d^\star$. By the definition of $d^\star$, there exists a feasible solution $(\hat{\beta},\hat{P})$ to the dual SDP~\cref{eq:dual-Hinf-norm} such that $\hat{\beta}\leq \frac{\gamma^2-\epsilon\tilde{\beta}}{1-\epsilon}$. Uppon defining $P=(1-\epsilon)\hat{P}+\epsilon\tilde{P}$, we see that
\begin{align*}
& \begin{bmatrix}
    A^\tr P + PA  & PB\\ B^\tr P & -\gamma^2 I
\end{bmatrix} + \begin{bmatrix}
    C^\tr \\ D^\tr
\end{bmatrix}\begin{bmatrix}
    C & D
\end{bmatrix}
\preceq
\begin{bmatrix}
    A^\tr P + PA  & PB\\ B^\tr P & -[(1-\epsilon)\hat{\beta}+\epsilon\tilde{\beta}] I
\end{bmatrix} + \begin{bmatrix}
    C^\tr \\ D^\tr
\end{bmatrix}\begin{bmatrix}
    C & D
\end{bmatrix} \\
={} &
(1-\epsilon)
\left(
\begin{bmatrix}
    A^\tr \hat{P} + \hat{P}A  & \hat{P}B\\ B^\tr \hat{P} & -\hat{\beta} I
\end{bmatrix}
+\begin{bmatrix}
    C^\tr \\ D^\tr
\end{bmatrix}\begin{bmatrix}
    C & D
\end{bmatrix}
\right)
+
\epsilon
\left(
\begin{bmatrix}
    A^\tr \tilde{P} + \tilde{P}A  & \tilde{P}B\\ B^\tr \tilde{P} & -\tilde{\beta} I
\end{bmatrix}
+ \begin{bmatrix}
    C^\tr \\ D^\tr
\end{bmatrix}\begin{bmatrix}
    C & D
\end{bmatrix}
\right) \\
\prec{} & 0.
\end{align*}
The proof is now complete.
 \hfill \qed

\vspace{3mm}

From the duality-based proof, it is clear that the assumption for controllability of $(A, B)$ in the nonstrict version \cref{eq:non-strict-hinf} cannot be removed (otherwise the infimum of the dual SDP \cref{eq:dual-Hinf-norm} may not be attainable). We illustrate this unsolvable case of  \cref{eq:dual-Hinf-norm} in the following example.

\begin{example} \label{example:hinf-controllability}
    We here present three simple LTI systems to illustrate the positive (semi)definiteness of the solution $P$ in \Cref{lemma:bounded_real}.

\begin{enumerate}
    \item System 1: $A = -1, \, B = 0, \, C = 1, \, D = 1$, which is not controllable.
    \item System 2: $
    A = -1, \, B = 1, \, C = 0, \, D = 1
    $, which is not observable.
    \item System 3: $
    A = -1, \, B = 1, \, C = 1, \, D = 1
    $, which is controllable and observable (thus minimal).
\end{enumerate}

    For the first system with matrices
    $
    A = -1, \, B = 0, \, C = 1, \, D = 1,
    $
    it is straightforward to check $\|\mathbf{G}(s)\|_\infty = \|1\|_\infty = 1$.  The non-strict LMI \cref{eq:HinfLMI-3} reads as
    $$
    \begin{bmatrix}
        -2p+1 & 1 \\ 1 & -\gamma^2 +1
    \end{bmatrix}\preceq 0,
    $$
    which cannot be feasible if $\gamma =1$ (in this case, the LMI above requires $p \to \infty$). Indeed, the dual SDP \cref{eq:dual-Hinf-norm} is not solvable (i.e., its infimum can not be achieved). 

    For the second system  with matrices
    $
    A = -1, \, B = 1, \, C = 0, \, D = 1,
    $
    the non-strict LMI \cref{eq:HinfLMI-3} reads as
    $$
    \begin{bmatrix}
        -2p & p \\ p & -\gamma^2 +1
    \end{bmatrix}\preceq 0,
    $$
    which has a feasible point $p = 0$ if $\gamma =1$ (this solution is only positive semidefinite since $(C,A)$ is not observable).

    For the third system with matrices
    $
    A = -1, \, B = 1, \, C = 1, \, D = 1,
    $
    it is straightforward to verify that $\|\mathbf{G}(s)\|_\infty = 2$.
    The non-strict LMI \cref{eq:HinfLMI-3} reads as
    $$
    \begin{bmatrix}
        -2p+1 & p+1 \\ p+1 & -\gamma^2 +1
    \end{bmatrix}\preceq 0.
    $$
    With $\gamma = 2$, this non-strict LMI requires
    $$
    \begin{cases}
    -2p+1 - 3 \leq 0, \\
    3(2p-1) - (p+1)^2 \geq 0, 
    \end{cases} \qquad \Rightarrow \qquad \begin{cases}
    p\geq 1, \\
    (p-2)^2 \leq 0, 
    \end{cases}
    $$
    which has a unique positive definite solution $p=2$ (note that System 3 is minimal).
    \hfill \qed.
\end{example}




\subsection{State covariance of an LTI system driven by white Gaussian noise}
\label{appendix:covariance_LQG}
Here, we provide detailed discussions on the statement in \Cref{eq:co-variance}. In particular, let us consider the following LTI system driven by white Gaussian noise, which we model by a stochastic differential equation:
\[
dx(t) = Ax(t)\,dt + B\,dw(t),
\]
where $x(t)\in\mathbb{R}^n$ for each $t\geq 0$, $A\in\mathbb{R}^{n\times n}$, $B\in\mathbb{R}^{n\times m}$, and $w(t)$ is an $m$-dimensional Brownian motion. We also assume that $\mathbb{E}[x(0)]=\mu_0$ and $\mathbb{E}[x(0)x(0)^\tr]=\Sigma_0$ for some $\mu_0\in \mathbb{R}^n$ and  positive semidefinite $\Sigma_0$. For each $t\geq 0$, we let $\mathcal{F}_t$ denote the $\sigma$-algebra generated by $(w(\tau):\tau\leq t)$ and $x(0)$.

We first note that, by It\^{o}'s formula \cite[Theorem 4.2.1]{oksendal2003stochastic},
\begin{align*}
d(e^{-At}x(t)) &= -e^{-At}Ax(t)\,dt
+e^{-At}dx(t) \\
& = -e^{-At}Ax(t)\,dt
+e^{-At}Ax(t)\,dt + e^{-At}B\,dw(t) \\
& = e^{-At}B\,dw(t),
\end{align*}
i.e.,
\[
e^{-At}x(t) - x(0) =
\int_0^t e^{-A\tau}B\,dw(\tau).
\]
By taking the expectation, we see that
\[
\mathbb{E}[x(t)] = e^{At}\,\mathbb{E}[x(0)]
=e^{At}\mu_0,
\]
where we used $\mathbb{E}[\int_0^t Z(\tau)dw(\tau)]=0$ for any $\mathcal{F}_t$-adapted $\mathbb{R}^{n\times m}$-valued process $Z(t)$ with $\mathbb{E}[\int_0^t \|z(\tau)\|_F^2\,d\tau]<+\infty$ (see \cite[Theorem 3.2.1]{oksendal2003stochastic}).
If the matrix $A$ is stable, we then get $\lim_{t\rightarrow\infty}\mathbb{E}[x(t)]=0$.

To analyze the covariance of $x(t)$, we again apply It\^{o}'s formula to obtain
\begin{align*}
d\!\left(e^{-At}x(t)(e^{-At}x(t))^\tr\right)
={} & e^{-At}x(t)
\left(e^{-At}B\,dw(t)\right)^\tr
+ \left(e^{-At}B\,dw(t)\right)(e^{-At}x(t))^\tr \\
& + e^{-At}B(e^{-At}B)^\tr dt,
\end{align*}
By taking the expectation and noting that $x(t)$ is adapted to the filtration $\mathcal{F}_t$, we get
\[
\frac{d}{dt}
\mathbb{E}\!\left[
e^{-At}x(t)x(t)^\tr e^{-A^\tr t}
\right]
=e^{-At}BB^\tr e^{-A^\tr t},
\]
or equivalently,
\[
\mathbb{E}\!\left[e^{-At}x(t)x(t)^\tr e^{-A^\tr t}\right]
-\mathbb{E}\!\left[x(0)x(0)^\tr\right]
=\int_0^t e^{-A\tau}BB^\tr e^{-A^\tr\tau }\,d\tau.
\]
We then get
\begin{align*}
\mathbb{E}\!\left[x(t)x(t)^\tr\right]
& =e^{At}\, \mathbb{E}\!\left[x(0)x(0)^\tr \right] e^{A^\tr t}
+ \int_0^t e^{A(t-\tau)}BB^\tr e^{-A^\tr(t-\tau)}\,d\tau \\
& = e^{At}\Sigma_0 e^{A^\tr t}
+ \int_0^t e^{A\tau}BB^\tr e^{A^\tr\tau}\,d\tau.
\end{align*}
If the matrix $A$ is stable, we can let $t\rightarrow\infty$ and get
\[
\lim_{t\rightarrow\infty}
\mathbb{E}\!\left[x(t)x(t)^\tr\right]
=\int_0^{+\infty} e^{A\tau}BB^\tr e^{A^\tr \tau}\,d\tau.
\]
By \Cref{lemma:Lyapunov}, the right-hand side is just the solution to the Lyapunov equation
\[
AX+XA^\tr + BB^\tr = 0.
\]

\subsection{Controllability and observability of the closed-loop system}

In this work, we heavily rely on Lyapunov equations, their related LMIs, and LMIs from the bounded real lemma to compute $\mathcal{H}_2$ and $\mathcal{H}_\infty$ norms of the closed-loop systems. In many cases, we need to examine the positive definiteness of the solutions from those LMIs, and this property depends on the controllability and observability of the closed-loop system. We here summarize these properties.

Consider a dynamic feedback policy of the form~\cref{eq:Dynamic_Controller} with internal state $\xi(t) \in\mathbb{R}^{q}$. The closed-loop transfer function from the exogenous disturbance $d$ to $z$ is given by \cref{eq:transfer-function-zd}. For convenience, we copy the equations here
\begin{equation*}
    \mathbf{T}_{{zd}}(s) = C_{\mathrm{cl}}(\mK)
\left(sI - A_{\mathrm{cl}}(\mK)
\right)^{-1}
B_{\mathrm{cl}}(\mK)
+D_{\mathrm{cl}}(\mK),
\end{equation*}
where we denote
\begin{align*}
A_{\mathrm{cl}}(\mK)
\coloneqq\  &
\begin{bmatrix}
A + BD_\mK C & BC_\mK \\ B_\mK C & A_\mK
\end{bmatrix},
& B_{\mathrm{cl}}(\mK)
\coloneqq\  & \begin{bmatrix} W^{1/2} & BD_{\mK}V^{1/2} \\ 0 & B_{\mK}V^{1/2}  \end{bmatrix}, \\
C_{\mathrm{cl}}(\mK)
\coloneqq\  &
 \begin{bmatrix}
        Q^{1/2} & 0 \\
        R^{1/2}D_{\mK}C & R^{1/2}C_{\mK}
    \end{bmatrix}, &
D_\mathrm{cl}(\mK) \coloneqq\ &
\begin{bmatrix} 0 & 0\\ 0 & R^{1/2}D_{\mK}V^{1/2}\end{bmatrix}.
\end{align*}
Recall that we make the following assumption, which is the same as \cref{assumption:performance-weights,assumption:stabilizability} in the main text.
\begin{assumption} \label{assumption:dynamics-weights}
    $(A, B)$ is controllable and $(C,A)$ is observable. The weight matrices satisfy $R \succ 0, V \succ 0$, and $(A,W^{1/2})$ is controllable and $(Q^{1/2},A)$ is observable.
\end{assumption}
Then, the closed-loop system is controllable and observable if the feedback policy $\mK \in \mathcal{C}_q$, viewed as a linear dynamical system, is controllable and observable. This result is known when $D_\mK = 0$, and more complete characterizations appeared in \cite[Theorem 1]{zheng2022system}.
\begin{lemma} \label{lemma:minimal-closed-loop-systems}
Under \Cref{assumption:dynamics-weights}, let $\mK \in \mathcal{C}_q$. The following statements hold.
\begin{itemize}
    \item If $(A_\mK, B_\mK)$ is controllable, then $(A_{\mathrm{cl}}(\mK),B_{\mathrm{cl}}(\mK))$ is controllable.
    \item If $(C_\mK, A_\mK)$ is observable, then $(C_{\mathrm{cl}}(\mK),A_{\mathrm{cl}}(\mK))$ is observable.
\end{itemize}
\end{lemma}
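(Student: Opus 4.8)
\textbf{Proof plan for \Cref{lemma:minimal-closed-loop-systems}.}

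The plan is to use the Popov--Belevitch--Hautus (PBH) test for controllability and observability. Recall that $(A_{\mathrm{cl}}(\mK), B_{\mathrm{cl}}(\mK))$ is controllable if and only if the matrix $\begin{bmatrix} sI - A_{\mathrm{cl}}(\mK) & B_{\mathrm{cl}}(\mK) \end{bmatrix}$ has full row rank for every $s \in \mathbb{C}$, and dually $(C_{\mathrm{cl}}(\mK), A_{\mathrm{cl}}(\mK))$ is observable if and only if $\begin{bmatrix} sI - A_{\mathrm{cl}}(\mK) \\ C_{\mathrm{cl}}(\mK) \end{bmatrix}$ has full column rank for every $s$. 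Since controllability of $(A_{\mathrm{cl}}, B_{\mathrm{cl}})$ is equivalent to observability of $(B_{\mathrm{cl}}^\tr, A_{\mathrm{cl}}^\tr)$, and the structural assumptions on the plant and weights are symmetric under transposition (controllability of $(A,B)$ and $(A, W^{1/2})$ dualize to observability statements), it suffices to prove the second bullet in detail; the first then follows by applying the second bullet to the dual data $(A^\tr, C^\tr, B^\tr, Q^{1/2}\leftrightarrow W^{1/2}, \ldots)$ together with the transposed policy.

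So I would focus on observability. Suppose $s \in \mathbb{C}$ and a vector $\begin{bmatrix} \eta_1 \\ \eta_2 \end{bmatrix} \in \mathbb{C}^{n+q}$ satisfy $(sI - A_{\mathrm{cl}}(\mK))\begin{bmatrix} \eta_1 \\ \eta_2 \end{bmatrix} = 0$ and $C_{\mathrm{cl}}(\mK)\begin{bmatrix} \eta_1 \\ \eta_2 \end{bmatrix} = 0$. Writing out the blocks of $A_{\mathrm{cl}}(\mK)$ and $C_{\mathrm{cl}}(\mK)$ from \cref{eq:closed-loop-matrices}, the condition $C_{\mathrm{cl}}(\mK)\begin{bmatrix}\eta_1\\\eta_2\end{bmatrix}=0$ gives $Q^{1/2}\eta_1 = 0$ and $R^{1/2}(D_\mK C \eta_1 + C_\mK \eta_2) = 0$; since $R \succ 0$ this yields $D_\mK C\eta_1 + C_\mK \eta_2 = 0$. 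The first block-row of $(sI - A_{\mathrm{cl}})\eta = 0$ reads $(sI - A - BD_\mK C)\eta_1 - BC_\mK \eta_2 = 0$, which combined with $D_\mK C\eta_1 + C_\mK\eta_2 = 0$ simplifies to $(sI - A)\eta_1 = 0$. Now I invoke the PBH observability test for the plant: $(Q^{1/2}, A)$ is observable by \Cref{assumption:dynamics-weights}, and we have $(sI - A)\eta_1 = 0$ together with $Q^{1/2}\eta_1 = 0$, forcing $\eta_1 = 0$. Plugging $\eta_1 = 0$ back in, the second block-row of $(sI - A_{\mathrm{cl}})\eta = 0$ becomes $(sI - A_\mK)\eta_2 = 0$, and $D_\mK C\eta_1 + C_\mK\eta_2 = 0$ becomes $C_\mK \eta_2 = 0$. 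Applying the PBH observability test to $(C_\mK, A_\mK)$, which is observable by hypothesis, gives $\eta_2 = 0$. Hence the only solution is the zero vector, so the PBH test is satisfied for all $s$ and $(C_{\mathrm{cl}}(\mK), A_{\mathrm{cl}}(\mK))$ is observable.

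For the controllability bullet I would carry out the transposed version of the same computation: assume $\begin{bmatrix}\zeta_1^\tr & \zeta_2^\tr\end{bmatrix}(sI - A_{\mathrm{cl}}(\mK)) = 0$ and $\begin{bmatrix}\zeta_1^\tr & \zeta_2^\tr\end{bmatrix} B_{\mathrm{cl}}(\mK) = 0$, use $V \succ 0$ and the block structure of $B_{\mathrm{cl}}(\mK)$ to peel off $B_\mK$-terms, then use controllability of $(A, W^{1/2})$ via PBH to kill $\zeta_1$, and finally controllability of $(A_\mK, B_\mK)$ to kill $\zeta_2$. I expect the main (though still modest) obstacle to be bookkeeping care in the algebra: making sure that the direct-feedthrough terms $D_\mK$ cancel correctly so that the plant-level PBH test applies cleanly, and correctly handling the block column of $B_{\mathrm{cl}}(\mK)$ which couples $BD_\mK V^{1/2}$ into the first row — one must verify that after using the left-kernel relation with $B_\mK V^{1/2}$, the first-row contribution reduces to $\zeta_1^\tr W^{1/2} = 0$ alone. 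There is no deep difficulty here; the lemma is essentially the observation that feedback interconnection preserves minimality when the plant is minimal and its stochastic/performance weights are non-degenerate, and the PBH test makes this transparent.
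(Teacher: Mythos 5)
Your proposal is correct, but it takes a genuinely different route from the paper's proof. The paper proves the controllability bullet via the pole-placement characterization in \Cref{lemma:controllability}: it applies a static "output injection" $M$ to $A_{\mathrm{cl}}(\mK)+B_{\mathrm{cl}}(\mK)M$ and chooses the block $M_{21}=-V^{-1/2}C$ so that the $(2,1)$ block cancels ($B_\mK C + B_\mK V^{1/2}M_{21}=0$) and the $D_\mK$ terms in the $(1,1)$ block cancel, leaving a block-triangular matrix with diagonal blocks $A+W^{1/2}M_{11}$ and $A_\mK + B_\mK V^{1/2}M_{22}$; eigenvalues of these can be assigned arbitrarily by controllability of $(A,W^{1/2})$ and $(A_\mK,B_\mK)$, which certifies controllability of the interconnection. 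You instead run the PBH rank test directly: you show any common (left or right) null vector of the pencil must vanish by first killing the plant component via the PBH test for $(Q^{1/2},A)$ or $(A,W^{1/2})$ and then the controller component via the hypothesis on $(C_\mK,A_\mK)$ or $(A_\mK,B_\mK)$. I checked your cancellation steps: in the observability case $C_{\mathrm{cl}}\eta=0$ with $R\succ 0$ gives $D_\mK C\eta_1+C_\mK\eta_2=0$, which collapses the first block row to $(sI-A)\eta_1=0$ exactly as you claim; in the dual computation $\zeta^\tr B_{\mathrm{cl}}=0$ with $V\succ 0$ gives $\zeta_1^\tr BD_\mK+\zeta_2^\tr B_\mK=0$, which collapses the first block column to $\zeta_1^\tr(sI-A)=0$ and $\zeta_1^\tr W^{1/2}=0$, so the controllability bullet goes through. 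One caveat: your opening remark that the first bullet follows formally "by applying the second bullet to the dual data" is not quite right as stated, since $B_{\mathrm{cl}}(\mK)$ and $C_{\mathrm{cl}}(\mK)^\tr$ are not images of each other under the obvious dualization (the $D_\mK$ coupling sits in different blocks); but this is harmless because you then carry out the transposed PBH computation explicitly, and that computation is what actually proves the claim. Both approaches are standard and of comparable length; the PBH route is slightly more self-contained (it avoids the union-of-spectra argument for block-triangular matrices), while the paper's route makes the decoupling structure of the interconnection more visible.
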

\begin{proof}
    By \Cref{lemma:controllability}, we only need to prove that the eigenvalues of the following matrix
    $$
    \begin{aligned}
    &\begin{bmatrix}
A + BD_\mK C & BC_\mK \\ B_\mK C & A_\mK
\end{bmatrix} + \begin{bmatrix} W^{1/2} & BD_{\mK}V^{1/2} \\ 0 & B_{\mK}V^{1/2}  \end{bmatrix}\begin{bmatrix} M_{11} & M_{12}  \\ M_{21}  & M_{22}   \end{bmatrix} \\
= &\begin{bmatrix}
A + BD_\mK C +  W^{1/2}M_{11} + BD_{\mK}V^{1/2}M_{21}  & BC_\mK +W^{1/2}M_{21} + BD_{\mK}V^{1/2}M_{22}\\ B_\mK C + B_{\mK}V^{1/2}M_{21} & A_\mK+B_{\mK}V^{1/2}M_{22}
\end{bmatrix}
    \end{aligned}
    $$
    can be arbitrarily assigned by choosing $M_{11}, M_{12}, M_{21}, M_{22}$ properly. This is true by choosing
    $$
    M_{21} = -V^{-1/2}C,
    $$
    and observing that $(A_\mK, B_\mK)$ and $(A, W^{1/2})$ are both controllable.

    The proof for observability is very similar. This completes our proof.
\end{proof}

\subsection{$\mathcal{H}_2$ and $\mathcal{H}_\infty$ control with general plant dynamics} \label{appendix:h2_hinf}
Consider a general LTI system of the form
\begin{equation}\label{eq:plant}
\begin{aligned}
\dot{x}(t) =\ & Ax(t) + B_1w(t) + B_2 u(t), \\
z(t) =\ & C_1 x(t) + D_{11}w(t) + D_{12} u(t), \\
y(t) =\ & C_2x(t) + D_{21} w(t),
\end{aligned}
\end{equation}
where $x(t)\in\mathbb{R}^{n_x}$ is the state of the plant, $w(t)\in\mathbb{R}^{n_w}$ represents exogenous disturbance, $u(t)\in\mathbb{R}^{n_u}$ is the control input, $z(t)\in\mathbb{R}^{n_z}$ represents the regulated performance signal, and $y(t)\in\mathbb{R}^{n_y}$ is the measured output. We make the following standard assumption.
\begin{assumption} \label{assumption:dynamics}
$(A,B_1)$ and $(A,B_2)$ are controllable, and  $(C_1,A)$ and $(C_2,A)$ are observable.
\end{assumption}
 A typical control task is to synthesize a feedback controller (policy) that maps the output $y(t)$ to the control input $u(t)$, which stabilizes the plant and minimizes a certain performance metric. 
Consider a  dynamic feedback policy of the form \cref{eq:Dynamic_Controller} with $\xi(t) \in\mathbb{R}^{n_x}$ being full-order. The closed-loop transfer function from the exogenous disturbance $w$ to the output $z$ is given by
\begin{equation} \label{eq:transfer-function-zw-appendix}
    \mathbf{T}_{{zw}}(s) = C_{\mathrm{cl}}(\mK)
\left(sI - A_{\mathrm{cl}}(\mK)
\right)^{-1}
B_{\mathrm{cl}}(\mK)
+D_{\mathrm{cl}}(\mK),
\end{equation}
where we denote
\begin{align*}
A_{\mathrm{cl}}(\mK)
\coloneqq\  &
\begin{bmatrix}
A + B_2D_\mK C_2 & B_2C_\mK \\ B_\mK C_2 & A_\mK
\end{bmatrix}, &
B_{\mathrm{cl}}(\mK)
\coloneqq\  & \begin{bmatrix}
B_1 + B_2D_\mK D_{21} \\
B_\mK D_{21}
\end{bmatrix}, \\
C_{\mathrm{cl}}(\mK)
\coloneqq\  &
\begin{bmatrix}
C_1 + D_{12}D_\mK C_2 & D_{12}C_\mK
\end{bmatrix}, &
D_\mathrm{cl}(\mK) \coloneqq\ &
D_{11} + D_{12}D_\mK D_{21}.
\end{align*}

The setup of LQG optimal control and $\mathcal{H}_\infty$ robust control in \Cref{section:problem_statement} corresponds to the general dynamics \cref{eq:plant} with
\begin{equation*} 
\begin{aligned}
 B_1=\begin{bmatrix} W^{\frac{1}{2}} & 0 \end{bmatrix}, \; B_2=B,  \; C_1=\begin{bmatrix} Q^{\frac{1}{2}} \\ 0 \end{bmatrix},\;  C_2=C,  \; 
 D_{11}=0, \;D_{12}=\begin{bmatrix} 0 \\ R^{\frac{1}{2}} \end{bmatrix}, \;D_{21}=\begin{bmatrix} 0 & V^{\frac{1}{2}} \end{bmatrix}.
\end{aligned}
\end{equation*}
With these matrices, the LQG optimal control \cref{eq:LQG_policy_optimization} is the same as
\begin{equation*} 
    \begin{aligned}
        \min_{\mK} \quad &J_{\texttt{LQG},n}(\mK):=\|\mathbf{T}_{{zw}}(s)\|_{\mathcal{H}_2} \\
        \text{subject to} \quad& \mK \in {\mathcal{C}}_{n,0}. 
    \end{aligned}
\end{equation*}
and the $\mathcal{H}_\infty$ robust control \cref{eq:Hinf_policy_optimization} is the same as
\begin{equation*} 
    \begin{aligned}
        \min_{\mK} \quad &J_{{\infty},n}(\mK):=\|\mathbf{T}_{{zw}}(s)\|_{\mathcal{H}_\infty} \\
        \text{subject to} \quad& \mK \in {\mathcal{C}}_n. 
    \end{aligned}
\end{equation*}
Finally, it is easy to see that \Cref{assumption:performance-weights,assumption:stabilizability} implies \Cref{assumption:dynamics}.

\subsection{Globally (sub)optimal policies for LQG and $\mathcal{H}_\infty$ control}
\label{section:optimal_policies_classical}

Both LQG optimal control \cref{eq:LQG} and $\mathcal{H}_\infty$ robust control \cref{eq:Hinf} have been extensively studied before. For completeness, we here briefly review their classical solutions.

In particular, the LQG problem~\cref{eq:LQG} admits the celebrated separable principle and has a \textit{closed-form} optimal solution by solving two algebraic Riccati equations. The optimal solution to~\cref{eq:LQG} is in the form of $u(t) = -K \xi(t)$ with a fixed $p \times n$ matrix $K$ and $\xi(t)$ is the state estimation based on the Kalman filter. We summarize the result in the following theorem.

\begin{theorem}[{\cite[Theorem 14.7]{zhou1996robust}}] \label{theorem:LQG-riccati-solution}
    A globally optimal dynamic policy to the LQG optimal control~\cref{eq:LQG} is given by an observer-based controller of the form
\begin{equation} \label{eq:LQGcontroller}
\begin{aligned}
    \dot \xi(t) &= A \xi(t) + Bu(t) + L(y(t) - C\xi(t)), \\
    u(t) &= -K \xi(t),
\end{aligned}
\end{equation}
where the matrix $L$ is called the \textit{Kalman gain}, given by $L = PC^\tr V^{-1}$ with $P$ being the unique {positive semidefinite} solution to
\begin{subequations}\label{eq:Riccati}
\begin{equation} \label{eq:Riccati_P}
    AP + PA^\tr - PC^\tr V^{-1}CP + W = 0,
\end{equation}
and the matrix $K$ is called the \textit{feedback gain}, given by $K = R^{-1}B^\tr S$ with $S$ being the unique {positive semidefinite} solution to
\begin{equation} \label{eq:Riccati_S}
    A^\tr S + SA - SB R^{-1}B^\tr S + Q = 0.
\end{equation}
\end{subequations}
\end{theorem}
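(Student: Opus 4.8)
The plan is to establish the classical \emph{separation principle}: the optimal output-feedback policy for \cref{eq:LQG} decouples into the optimal state estimator (the Kalman--Bucy filter) cascaded with the optimal state-feedback gain of a deterministic LQR problem. First I would reduce the stochastic control problem to a deterministic one by a change of coordinates built from the conditional mean of the state, then solve the resulting deterministic LQR problem, and finally verify internal stability.

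\textbf{Step 1 (Kalman--Bucy filter).} Let $\hat x(t):=\mathbb{E}[x(t)\mid\mathcal{F}_t^y]$, where $\mathcal{F}_t^y$ is the filtration generated by the past observations (with respect to which every admissible control is adapted). Standard linear filtering theory shows $\hat x$ satisfies $\dot{\hat x}=A\hat x+Bu+L(y-C\hat x)$ with $L=PC^\tr V^{-1}$, where $P\succeq0$ solves the filter Riccati equation \cref{eq:Riccati_P}; the estimation error $e:=x-\hat x$ is a zero-mean Gaussian process, independent of the control $u$, with $\lim_{t\to\infty}\mathbb{E}[e(t)e(t)^\tr]=P$, and the innovation $\nu:=y-C\hat x$ is a white-noise process of intensity $V$. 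Existence, uniqueness, and positive semidefiniteness of $P$, together with the stability of $A-LC$, follow from \cref{assumption:performance-weights} (controllability of $(A,W^{1/2})$, observability of $(C,A)$) and the Riccati/Lyapunov theory reviewed in \cref{appendix:preliminaries}.

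\textbf{Step 2 (cost decomposition and reduced LQR).} Writing $x=\hat x+e$ and using the orthogonality $\mathbb{E}[\hat x(t)^\tr Q\,e(t)]=0$, the averaged cost splits as
\[
\mathfrak{J}_{\LQG}=\lim_{T\to\infty}\frac1T\,\mathbb{E}\!\int_0^T\!\big(\hat x^\tr Q\hat x+u^\tr R u\big)\,dt+\operatorname{tr}(QP),
\]
where the last term is a policy-independent constant. Hence minimizing $\mathfrak{J}_{\LQG}$ is equivalent to an auxiliary LQR problem for the system $\dot{\hat x}=A\hat x+Bu+L\nu$ with full state $\hat x$ available to the controller. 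A completion-of-squares (or dynamic-programming) argument, using the stabilizing solution $S\succeq0$ of the control Riccati equation \cref{eq:Riccati_S} (which exists and makes $A-BR^{-1}B^\tr S$ stable by controllability of $(A,B)$ and observability of $(Q^{1/2},A)$), shows the optimal input is the static feedback $u=-K\hat x$ with $K=R^{-1}B^\tr S$, and that this optimum is attained. Substituting $u=-K\hat x$ into the filter equation gives exactly \cref{eq:LQGcontroller}; in $(x,e)$-coordinates the closed-loop matrix is block triangular with diagonal blocks $A-BK$ and $A-LC$, both Hurwitz, so the policy is internally stabilizing and lies in $\mathcal{C}_{n,0}$ with finite LQG cost.

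The main obstacle is the \emph{genuine} separation argument, i.e., proving that no causal (possibly nonlinear) output-feedback law does strictly better than this one. This requires the finite-dimensional sufficiency of the pair $(\hat x(t),P)$ for the conditional law of $x(t)$ (preservation of Gaussianity), the whiteness of the innovation process, and care with the admissibility class and the $\liminf/\limsup$ in the infinite-horizon average-cost criterion; establishing the two stabilizing Riccati solutions and the well-posedness of these limits under only controllability/observability (rather than stability of $A$) is the technical crux. Once these ingredients are in place, Steps 1--3 are routine.
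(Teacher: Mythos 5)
Your proposal is a correct outline of the classical stochastic separation-principle argument, but note that the paper does not prove \Cref{theorem:LQG-riccati-solution} at all: it is quoted verbatim from \cite[Theorem 14.7]{zhou1996robust}, and the paper's \Cref{section:optimal_policies_classical} only restates the result for completeness. So the relevant comparison is with the textbook proof being cited, and that proof takes a genuinely different route. The reference establishes optimality by a deterministic, frequency-domain completion of squares: using the two stabilizing Riccati solutions $S$ and $P$ one decomposes the closed-loop transfer matrix so that $\|\mathbf{T}_{zd}\|_{\mathcal{H}_2}^2$ splits into a policy-independent term plus the squared $\mathcal{H}_2$ norm of a free Youla-type parameter, whence the minimum is read off by orthogonality in $\mathcal{H}_2$. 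That argument never touches conditional expectations, innovations, or Gaussianity; its price is that it directly covers only LTI policies, and one must separately invoke the reduction (which the paper does, citing \cite{khargonekar1986uniformly}) that nonlinear causal policies cannot do better. Your stochastic route — Kalman--Bucy filter, orthogonal cost splitting $\mathbb{E}[x^\tr Qx]=\mathbb{E}[\hat x^\tr Q\hat x]+\operatorname{tr}(QP)$, and a reduced average-cost LQR on $\hat x$ driven by the innovation — proves optimality over the full causal class in one pass, but, as you correctly flag, it carries the measure-theoretic burden of justifying the finite-dimensional sufficiency of $(\hat x,P)$, the whiteness of the innovations, and the well-posedness of the infinite-horizon average-cost limit. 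Both approaches are sound; be aware that the stability claims you use (that the unique PSD solutions of \cref{eq:Riccati_P} and \cref{eq:Riccati_S} are the stabilizing ones, indeed positive definite) do rely on the controllability/observability hypotheses in \Cref{assumption:stabilizability,assumption:performance-weights} exactly as you state, and that your closing reference to ``Steps 1--3'' should read ``Steps 1--2.''
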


It is easy to see that the optimal LQG policy~\cref{eq:LQGcontroller} can be written into the form of~\cref{eq:Dynamic_Controller} with
     $A_{\mK} = A - BK - LC,\,  B_{\mK} = L, \, C_{\mK} = -K$.
     Thus, the solution from Ricatti equations~\cref{eq:Riccati} is always full-order, i.e., $q = n$.
     We note that the optimal LQG policy is unique in the frequency domain~\cite[Theorem 14.7]{zhou1996robust}, but it has non-unique state-space representations~\cref{eq:Dynamic_Controller} due to similarity transformations.

Unlike LQG control, the optimal $\mathcal{H}_\infty$ policy in general does not admit a closed-form solution. The optimal $\mathcal{H}_\infty$ policy might not be unique even in the frequency domain, 
and finding an optimal $\mathcal{H}_\infty$ policy can be theoretically and numerically complicated; see \cite[Section 5.1]{glover2005state}. This is in contrast with the standard LQG theory.
Instead, most existing classical results focus on suboptimal $\mathcal{H}_\infty$ control. One standard result is summarized in the following theorem.

\begin{theorem}[{\cite[Theorem 16.4]{zhou1996robust}, \cite[Theorem 14.1]{zhou1998essentials}}] \label{theorem:Hinf-riccati}
    Consider the $\mathcal{H}_\infty$ robust control problem~\cref{eq:Hinf}. Given $\gamma > 0$, there exists a suboptimal policy such that $\mathfrak{J}_{\infty} < \gamma^2$ if and only if the following two Riccati equations
    \begin{subequations} \label{eq:riccati-hinf}
        \begin{align}
            A^\tr X_{\infty} + X_{\infty}A - X_{\infty}(B R^{-1}B^\tr - \gamma^{-2} W)X_{\infty}+ Q&=0 \label{eq:riccati-hinf-a}\\
           AY_{\infty} + Y_{\infty}A^\tr - Y_{\infty}(C^\tr V^{-1} C - \gamma^{-2} Q)Y_{\infty}+ W&=0 \label{eq:riccati-hinf-b}
        \end{align}
    \end{subequations}
    admit positive definite solutions $X_{\infty} \succ 0$ and $Y_{\infty} \succ 0$ such that {$A - (B R^{-1}B^\tr - \gamma^{-2} W)X_\infty$ and $A - Y_\infty (C^\tr V^{-1} C - \gamma^{-2} Q)$} are stable,
    %
    and they further satisfy $\rho(X_{\infty}Y_{\infty}) < \gamma^2$ (where $\rho(\cdot)$ denotes the spectral radius of a matrix).

    When these conditions hold, a suboptimal $\mathcal{H}_\infty$ policy satisfying $\mathfrak{J}_{\infty} < \gamma^2$ is given by
    \begin{equation} \label{eq:hinf-suboptimal}
        \begin{aligned}
            \dot{\xi}(t) &= (A+\gamma^{-2}WX_{\infty})\xi(t) + Bu(t) + Z_{\infty}L_{\infty}(y(t) - C\xi(t)) \\
            u(t) &= -F_{\infty}\xi(t),
        \end{aligned}
    \end{equation}
    where the feedback gain is $F_{\infty} = R^{-1}B^\tr X_{\infty}$ and the observer gain is given by $L_{\infty} = Y_{\infty}CV^{-1}$ and $Z_{\infty} = (I - \gamma^{-2}Y_{\infty}X_{\infty})^{-1}$.
\end{theorem}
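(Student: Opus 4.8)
This is the classical Doyle--Glover--Khargonekar--Francis state-space solution, and in the paper it is simply invoked; the route I would take to prove it from the ingredients already assembled is as follows. First I would record that the data of \cref{eq:plant} specialized as in \cref{appendix:h2_hinf} automatically satisfies the usual regularity conditions $D_{12}^\tr C_1 = 0$, $D_{12}^\tr D_{12} = R\succ 0$, $B_1 D_{21}^\tr = 0$, $D_{21}D_{21}^\tr = V\succ 0$, $D_{11}=0$, so the diagonal scaling $u\mapsto R^{1/2}u$, $y\mapsto V^{-1/2}y$ puts the plant in the normalized form $R=V=I$ without changing $\mathfrak{J}_\infty$. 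The plan is then to reduce the output-feedback problem to two decoupled one-block problems --- a full-information (state-feedback-type) problem, solvable precisely when \cref{eq:riccati-hinf-a} has a stabilizing solution $X_\infty$, and its dual output-estimation problem, solvable precisely when \cref{eq:riccati-hinf-b} has a stabilizing solution $Y_\infty$ --- and then to re-couple them through a non-degeneracy condition on $I-\gamma^{-2}Y_\infty X_\infty$.

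To make the reduction rigorous I would apply the strict bounded real lemma (\cref{lemma:bounded_real}) to the closed loop $\mathbf{T}_{zw}(\mK,\cdot)$: a full-order $\mK$ achieves $\mathfrak{J}_\infty<\gamma^2$ iff there is a symmetric $\mathcal{P}$ solving the closed-loop version of \cref{eq:strict-hinf}, and since $A_{\mathrm{cl}}(\mK)$ is stable --- and, after a generic perturbation, the closed loop is observable by \cref{lemma:minimal-closed-loop-systems} --- one may take $\mathcal{P}\succ 0$. Partitioning $\mathcal{P}$ and $\mathcal{P}^{-1}$ conformally with the plant/controller state split, I would introduce the standard linearizing change of controller variables together with a congruence transformation, so that for \emph{fixed} leading blocks $X$ of $\mathcal{P}$ and $Y$ of $\mathcal{P}^{-1}$ the bilinear inequality becomes an LMI in the new controller variables; eliminating those variables by the projection (elimination) lemma leaves exactly two decoupled LMIs --- one in $X$ carrying the data of \cref{eq:riccati-hinf-a}, one in $Y$ carrying the data of \cref{eq:riccati-hinf-b} --- together with a coupling of the form $\begin{bmatrix} X & I\\ I & Y\end{bmatrix}\succeq 0$. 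Next I would pass from these Riccati \emph{inequalities} to the stabilizing Riccati \emph{equations}: by the comparison/monotonicity theory for $\mathcal{H}_\infty$-type Riccati equations, the $X$-LMI is feasible iff \cref{eq:riccati-hinf-a} admits a stabilizing solution $X_\infty\succeq 0$, in which case $X\succeq X_\infty$ for every feasible $X$, and dually for $Y_\infty$; substituting the minimal choices $X=X_\infty$, $Y=Y_\infty$ into the coupling reduces it, after accounting for the $\gamma$-scalings absorbed in the change of variables, to the single spectral condition $\rho(X_\infty Y_\infty)<\gamma^2$, which is exactly what makes $Z_\infty=(I-\gamma^{-2}Y_\infty X_\infty)^{-1}$ well defined.

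For the ``when these conditions hold'' direction I would run the construction in reverse: plugging $X_\infty,Y_\infty$ into the controller-variable formulas and simplifying produces the central controller \cref{eq:hinf-suboptimal}, with the $\gamma^{-2}WX_\infty$ shift of the estimator dynamics and the factor $Z_\infty$ appearing as the residue of that substitution; one then certifies $\mathfrak{J}_\infty<\gamma^2$ and internal stability either directly from \cref{lemma:bounded_real} with the explicit $\mathcal{P}$ rebuilt from $X_\infty,Y_\infty$, or from a dissipation/completing-the-squares argument with a quadratic storage function built from $\mathcal{P}$ (in the spirit of \cref{theorem:LQG-riccati-solution}, but with a game-theoretic square rather than a plain quadratic one). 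The hard part will be the \emph{necessity} half: showing that mere feasibility of the $X$- and $Y$-LMIs forces \emph{stabilizing} Riccati solutions, not just Riccati-inequality solutions, and that the minimal solutions may be taken \emph{simultaneously}, so that the LMI coupling genuinely collapses to the scalar condition $\rho(X_\infty Y_\infty)<\gamma^2$. This is precisely where the $\mathcal{H}_\infty$-specific machinery is unavoidable --- a worst-case-disturbance/saddle-point argument for the two one-block problems, or equivalently a Hamiltonian-matrix and $J$-spectral-factorization analysis --- and it is the reason output-feedback $\mathcal{H}_\infty$ synthesis is strictly harder than LQG, where a true separation principle renders the analogous step automatic.
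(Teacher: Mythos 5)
The paper does not prove this statement at all: \Cref{theorem:Hinf-riccati} is imported verbatim from the literature (Theorem~16.4 of Zhou--Doyle--Glover and Theorem~14.1 of Zhou--Doyle), so the only ``paper proof'' to compare against is the classical DGKF argument in those references. Your proposal is a genuinely different route. The textbook proof proceeds by loop-shifting reductions through a chain of special problems (full information $\to$ disturbance feedforward $\to$ output estimation $\to$ output feedback), a parametrization of all suboptimal controllers, and Hamiltonian-matrix/Riccati-operator arguments that deliver the stabilizing solutions $X_\infty,Y_\infty$ and the central controller \cref{eq:hinf-suboptimal} directly. You instead go through the strict bounded real lemma \cref{eq:strict-hinf}, the linearizing change of controller variables, and the projection lemma to obtain two decoupled matrix inequalities in $X$ and $Y$ plus a coupling block, then invoke comparison theory to pass from Riccati inequalities to stabilizing Riccati equations. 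This is essentially the Gahinet--Apkarian LMI derivation, and it is a legitimate and complete path to the theorem; it also meshes better with the paper's own \texttt{ECL}/convex-reformulation philosophy, whereas the DGKF route gives the separation structure and the explicit formula for \cref{eq:hinf-suboptimal} with less machinery. You correctly identify that the entire technical weight sits in the inequality-to-equation step (feasibility of the strict $X$-inequality iff \cref{eq:riccati-hinf-a} has a stabilizing solution with $X\succeq X_\infty$ for every feasible $X$, and dually for $Y$), which is exactly the Hamiltonian/spectral-factorization content you defer; without citing or reproving that lemma the argument is a plan rather than a proof, but the plan is sound.

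Two small corrections. First, no perturbation or appeal to \cref{lemma:minimal-closed-loop-systems} is needed to get $\mathcal{P}\succ 0$: the $(1,1)$ block of the strict closed-loop inequality gives $A_{\mathrm{cl}}^\tr\mathcal{P}+\mathcal{P}A_{\mathrm{cl}}\prec 0$, and stability of $A_{\mathrm{cl}}$ then forces $\mathcal{P}\succ 0$ by the Lyapunov lemma. Second, the theorem as stated asserts $X_\infty\succ 0$ and $Y_\infty\succ 0$, not merely $\succeq 0$; this strict definiteness comes from the observability of $(Q^{1/2},A)$ and controllability of $(A,W^{1/2})$ in \Cref{assumption:stabilizability,assumption:performance-weights}, and your sketch should record that step explicitly since the generic DGKF statement only yields positive semidefinite stabilizing solutions. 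Also note that since the minimal solutions $X_\infty,Y_\infty$ sit on the boundary of the strictly feasible sets, collapsing the coupling LMI to $\rho(X_\infty Y_\infty)<\gamma^2$ requires a limiting/perturbation argument in the necessity direction and a monotonicity argument ($X\succeq X_\infty$, $Y\succeq Y_\infty$ implies $\rho(XY)\geq\rho(X_\infty Y_\infty)$) in the sufficiency direction; both work, but they are part of the ``hard half'' you flagged rather than free.
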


The dynamic policy \cref{eq:hinf-suboptimal} is often called the \textit{central
controller} or \textit{minimum entropy controller}; see \cite[Chapter 16]{zhou1996robust} for more discussions. When $\gamma \to \infty$, the Riccati equations \cref{eq:riccati-hinf} are reduced to those for LQG control \cref{eq:Riccati}, and the suboptimal $\mathcal{H}_\infty$ policy \cref{eq:hinf-suboptimal} is reduced to the LQG optimal policy \cref{eq:LQGcontroller}. \Cref{theorem:Hinf-riccati} does not give an explicit formula to compute the globally optimal $\mathcal{H}_\infty$ policy. In principle, it can be computed as closely as desired via bisection, but the limiting behavior is subtle and involved, as discussed in \cite[Chapter 16.9]{zhou1996robust}, \cite[Theorem 14.6]{zhou1998essentials} and \cite[Section 5.1]{glover2005state}. 

\begin{remark}
    One important difference between the Riccati equations \cref{eq:Riccati} and \cref{eq:riccati-hinf} is that $C^\tr V^{-1}C \succeq 0$ and $B R^{-1}B^\tr  \succeq 0$ in \cref{eq:Riccati} while the corresponding terms in \cref{eq:riccati-hinf} are sign indefinite. Consequently, we can guarantee the solution to \cref{eq:Riccati} exists and is unique and positive semidefinite under \Cref{assumption:performance-weights,assumption:stabilizability} (e.g., \cite[Theorem 12.2]{zhou1998essentials}), while explicit conditions for the solutions to \cref{eq:riccati-hinf} are not easily known in general (as stated in \Cref{theorem:Hinf-riccati}, the existence of solutions to \cref{eq:riccati-hinf} is part of the sufficient and necessary conditions for a suboptimal $\mathcal{H}_\infty$ controller). We also note that the suboptimal policy from \cref{eq:hinf-suboptimal} is of full order but always strictly proper with $D_\mK = 0$. When letting $\gamma \to \gamma^\star \coloneqq \inf_{\mK \in \mathcal{C}_n}\,J_{\infty,n}(\mK)$, the policy from \cref{eq:hinf-suboptimal} can be numerically strange due to $D_\mK = 0$. We discuss some numerical behavior in \Cref{section:experiments}.
    \hfill $\square$
\end{remark}

\subsection{Similarity transformations}

We conclude this section by briefly introducing the notion of similarity transformation that has been widely used in control theory. Similarity transformations bring nice symmetry in the policy optimization of LQG, as recently highlighted in \cite{zheng2021analysis,hu2022connectivity,zheng2022escaping}.

For any fixed $T\in\mathrm{GL}_n$, we let $\mathscr{T}_T:\mathcal{C}_n\rightarrow\mathcal{C}_n$ denote the mapping given by
\begin{equation} \label{eq:similarity-transformation}
\mathscr{T}_T(\mK)
\coloneqq
 \begin{bmatrix}
I_m & 0 \\
0 & T
\end{bmatrix}\begin{bmatrix}
D_{\mK} & C_{\mK} \\
B_{\mK} & A_{\mK}
\end{bmatrix}\begin{bmatrix}
I_p & 0 \\
0 & T
\end{bmatrix}^{-1}
=\begin{bmatrix}
D_{\mK} & C_{\mK}T^{-1} \\
TB_{\mK} & TA_{\mK}T^{-1}
\end{bmatrix}.
\end{equation}
It is easy to verify that for any $T\in\mathrm{GL}_n$ and $\mK\in\mathcal{C}_n$, $\mathscr{T}_T(\mK)$ is an internally stabilizing policy of order $n$ and thus is in $\mathcal{C}_n$. In fact, the mapping $\mathscr{T}_T(\mK)$ is a diffeomorphism from $\mathcal{C}_n$ to itself  \cite[Lemma 3.2]{zheng2021analysis}.
One can also verify that for any $T\in\mathrm{GL}_n$, the similarity transformation is invariant in the set of non-degenerate stabilizing policies for both LQG and $\mathcal{H}_\infty$ control (see \Cref{lemma:invariance-non-degenerate-controllers,lemma:Hinf_invariance-non-degenerate-controllers}), i.e.,
$$
\mathscr{T}_T(\mK) \in {\mathcal{C}}_{\mathrm{nd},0}, \; \forall \mK \in {\mathcal{C}}_{\mathrm{nd},0}, \qquad {\mathscr{T}_T(\mK) \in {\mathcal{C}}_{\mathrm{nd}},\; \forall \mK \in {\mathcal{C}}_{\mathrm{nd}}.}
$$
Finally, it is well-known that similarity transformations do not change the input-output behavior of the dynamic policy \cref{eq:Dynamic_Controller}. They correspond to the same transfer function in the frequency domain. Thus, both LQG cost \cref{eq:H2-norm} and $\mathcal{H}_\infty$ cost \cref{eq:Hinf-norm} are invariant with respect to similarity transformations.

\section{Fundamentals of Nonsmooth optimization}
\label{app:nonsmooth-optimization}

In this section, we review some fundamentals of nonsmooth optimization, especially Clarke subdifferential \cite{clarke1990optimization}, which enables the analysis of a large class of nonsmooth functions.

\subsection{Clarke subdifferential} \label{appendix:clarke}

Let $f(x): C\rightarrow\mathbb{R}$ be a function defined on an open subset $C\subseteq\mathbb{R}^n$. We say that $f$ is \textit{locally Lipschitz near $x\in C$}, if there exists $\epsilon>0$ and $L>0$ such that for any $y_1,y_2\in C$ satisfying $\|y_1-x\|<\epsilon$ and $\|y_2-x\|<\epsilon$, we have
$$|f(y_1) - f(y_2)| \leq L \|y_1 - y_2\|.
$$
The function $f$ is said to be \textit{locally Lipschitz over $C$} if it is locally Lipschitz near any $x\in C$. The Rademacher theorem \cite[Theorem 3.2]{evans2015measure} guarantees that a locally Lipschitz function is differentiable almost everywhere in the domain.

Let $f(x)$ be locally Lipschitz over $C$. We define its Clarke directional derivative at $x\in C$ in the direction $v\in\mathbb{R}^n$ by
\[
f^\circ(x;v)
=\limsup_{x'\rightarrow x,t\downarrow 0}
\frac{f(x'+tv)-f(x')}{t}.
\]
The local Lipschitz continuity of $f$ guarantees that $f^\circ(x;v)$ is finite for all $x\in C$ and $v\in\mathbb{R}^n$. It can be shown that for any fixed $x\in C$, $f^\circ(x;\cdot)$ is a convex function and satisfies $f^\circ(x;\lambda v)=\lambda f^\circ(x;v)$ for any $\lambda>0$. We then define the Clarke subdifferential of $f$ at $x\in C$ as the set
\[
\partial f(x) \coloneqq
\left\{g\in\mathbb{R}^n: f^\circ(x;v)\geq\langle g,v\rangle\text{ for all }v\in\mathbb{R}^n\right\},
\]
which is nonempty for any $x\in C$.
It is shown that $f^\circ(x,\cdot)$ is the support function of $\partial f(x)$  \cite[Proposition 2.1.2]{clarke1990optimization}:
\begin{equation} \label{eq:support-function}
f^\circ(x,v) = \max_{g \in \partial f(x)}\; \langle g,v\rangle.
\end{equation}
Moreover, the following equality holds~\cite[Theorem 2.5.1]{clarke1990optimization}:
\begin{equation*}
    \partial f(x) = \operatorname{conv}\left\{\lim_{ x_k \to x} \nabla f(x_k) \,\Big|\, \nabla f(x_k) \text{ exists}, x_k \in C\right\},
\end{equation*}
where $\mathrm{conv}$ denotes the convex hull of a set.

We call $x \in C$ a \textit{Clarke stationary point} if $0 \in \partial f(x)$. The following result relates local minima and local maxima with Clarke stationary points \cite[Proposition 2.3.2]{clarke1990optimization}.
\begin{lemma}\label{lemma:clarke_stationary}
Let $f(x)$ be locally Lipschitz over $C$. If $x\in C$ is a local minimum or maximum of $f(x)$, then $x$ is a Clarke stationary point, i.e., $0 \in \partial f(x)$.
\end{lemma}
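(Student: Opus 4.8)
The plan is to work directly from the definition of the Clarke subdifferential recalled above: since $\partial f(x)=\{g\in\mathbb{R}^n: f^\circ(x;v)\ge\langle g,v\rangle\text{ for all }v\in\mathbb{R}^n\}$, the inclusion $0\in\partial f(x)$ is \emph{equivalent} to the single scalar condition $f^\circ(x;v)\ge 0$ for every $v\in\mathbb{R}^n$. So the whole proof reduces to establishing nonnegativity of the Clarke directional derivative in every direction. First I would treat the case where $x$ is a local minimum. Fix an arbitrary $v\in\mathbb{R}^n$. By definition $f^\circ(x;v)$ is a $\limsup$ taken over all $x'\to x$ and $t\downarrow 0$; in particular it dominates the $\limsup$ along the restricted approach $x'\equiv x$, i.e.
\[
f^\circ(x;v)\ \ge\ \limsup_{t\downarrow 0}\frac{f(x+tv)-f(x)}{t}.
\]
Since $x$ is a local minimum, for all sufficiently small $t>0$ the point $x+tv$ lies in the neighborhood on which $f(x)\le f(x+tv)$, so every difference quotient on the right-hand side is nonnegative; hence the $\limsup$ is $\ge 0$, and therefore $f^\circ(x;v)\ge 0$. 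As $v$ was arbitrary, $0\in\partial f(x)$.

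For the local maximum case I would reduce to the one just handled by passing to $-f$. If $x$ is a local maximum of $f$, then $x$ is a local minimum of $-f$, and $-f$ is again locally Lipschitz near $x$, so by the previous step $0\in\partial(-f)(x)$. It then remains to invoke the standard duality $\partial(-f)(x)=-\partial f(x)$, which follows from the elementary identity $(-f)^\circ(x;v)=f^\circ(x;-v)$: performing the change of variables $y=x'+tv$ in the $\limsup$ defining $(-f)^\circ(x;v)$ (noting that $x'\to x$, $t\downarrow 0$ forces $y\to x$) turns it into $\limsup_{y\to x,\,t\downarrow 0}\frac{f(y-tv)-f(y)}{t}=f^\circ(x;-v)$. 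Then the defining inequality $\langle g,v\rangle\le(-f)^\circ(x;v)=f^\circ(x;-v)$ holding for all $v$ becomes, after replacing $v$ by $-v$, exactly $-g\in\partial f(x)$. Hence $0\in\partial(-f)(x)=-\partial f(x)$, i.e. $0\in\partial f(x)$.

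There is no serious obstacle here — this is the classical argument of Clarke. The only points needing a line of care are: (i) the monotonicity of $\limsup$ under enlarging the index set, which is what lets us lower-bound $f^\circ(x;v)$ by the ordinary one-sided difference quotients taken at $x$ itself — precisely the place where the extra ``$x'\to x$'' freedom in the Clarke derivative is \emph{harmless} for a lower bound; and (ii) the change-of-variables bookkeeping establishing $(-f)^\circ(x;v)=f^\circ(x;-v)$. Both are routine, and the local Lipschitz hypothesis enters only to guarantee that $f^\circ(x;v)$ is finite, so that all these manipulations are legitimate.
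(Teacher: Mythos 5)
Your proof is correct and is exactly the classical argument: the paper gives no proof of its own but cites \cite[Proposition 2.3.2]{clarke1990optimization}, whose proof proceeds the same way (lower-bounding $f^\circ(x;v)$ by the one-sided difference quotients at $x$ for the minimum case, and reducing the maximum case to it via $(-f)^\circ(x;v)=f^\circ(x;-v)$, i.e.\ $\partial(-f)(x)=-\partial f(x)$). Nothing is missing.
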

Note that the converse of \Cref{lemma:clarke_stationary} does not hold in general.
The function $f$ is called \textit{subdifferntial regular}, if for any $x \in C$, the ordinary directional derivative exists and coincides with the Clarke directional~derivative for all directions, i.e.,
\[
\lim_{t\downarrow 0}
\frac{f(x+tv)-f(x)}{t}
=
f^\circ(x,v),\quad\forall v \in \mathbb{R}^n,x\in C.
\]
We denote the ordinary directional derivative by $f'(x;v)$ whenever it exists.

From~\cref{eq:support-function}, the following result is clear.
\begin{lemma}\label{lemma:regular_Clarke_stationary}
    Let $f(x): \mathbb{R}^n \to \bar{\mathbb{R}}$ be a subdifferential regular function. The point $x \in \mathbb{R}^n$ is a Clarke stationary point if and only if $f'(x,v) \geq 0$ for all $v\in \mathbb{R}^n$.
\end{lemma}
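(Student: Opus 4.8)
The plan is to unwind the two characterizations of the Clarke directional derivative that are already on the table: the support-function identity \cref{eq:support-function}, namely $f^\circ(x,v) = \max_{g \in \partial f(x)} \langle g, v\rangle$, and the subdifferential regularity hypothesis, namely $f'(x;v) = f^\circ(x;v)$ for every direction $v$ (in particular $f'(x;v)$ exists). With these two facts the claim reduces to a one-line manipulation in each direction, so no genuine obstacle is anticipated; the only point requiring a word of care is that regularity is precisely what legitimizes writing $f'(x;v)$ at all and identifying it with $f^\circ(x;v)$.

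For the forward implication, I would assume $0 \in \partial f(x)$. Then for an arbitrary $v \in \mathbb{R}^n$, the support-function identity \cref{eq:support-function} gives
\[
f^\circ(x;v) = \max_{g \in \partial f(x)} \langle g, v\rangle \;\geq\; \langle 0, v\rangle = 0,
\]
since $0$ is one of the competitors in the maximization. Invoking subdifferential regularity, $f'(x;v) = f^\circ(x;v) \geq 0$, and as $v$ was arbitrary this is the desired conclusion.

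For the converse, I would assume $f'(x;v) \geq 0$ for all $v \in \mathbb{R}^n$. By regularity, $f^\circ(x;v) = f'(x;v) \geq 0$ for every $v$. Now recall the very definition of the Clarke subdifferential, $\partial f(x) = \{g \in \mathbb{R}^n : f^\circ(x;v) \geq \langle g, v\rangle \text{ for all } v \in \mathbb{R}^n\}$. Testing membership of the zero vector: the required inequality $f^\circ(x;v) \geq \langle 0, v\rangle = 0$ holds for all $v$ by what we just established, hence $0 \in \partial f(x)$, i.e., $x$ is a Clarke stationary point. This closes the equivalence. If desired, I would add a sentence noting that the hypothesis $f \colon \mathbb{R}^n \to \bar{\mathbb{R}}$ is to be read as "$f$ locally Lipschitz and subdifferentially regular on its (open) domain," consistent with the framework set up in \cref{appendix:clarke}, so that all the objects $f^\circ(x;\cdot)$, $f'(x;\cdot)$, and $\partial f(x)$ are well defined and finite.
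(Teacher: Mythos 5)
Your proposal is correct and is exactly the argument the paper has in mind: the paper simply asserts that the lemma is ``clear from \cref{eq:support-function},'' and your two directions (testing $g=0$ in the max for the forward implication, and checking the membership condition $f^\circ(x;v)\geq\langle 0,v\rangle$ for the converse) spell out precisely that reasoning, with regularity correctly invoked to identify $f'(x;\cdot)$ with $f^\circ(x;\cdot)$.
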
 

\subsection{Clarke subdifferential of the $\mathcal{H}_\infty$ cost function} \label{appendix:clarke-hinf}

Here we provide a detailed proof for \Cref{lemma:subdifferential-Hinf} on the computation of Clarke subdifferential of the $\mathcal{H}_\infty$ cost \cref{eq:Hinf-norm} for each $\mK \in \mathcal{C}_q$.

For convenience, we copy \cref{eq:Hinf-norm} below
\begin{equation} \label{eq:Hinf-norm-appendix}
J_{\infty,q}(\mK) = \|\mathbf{T}_{zd}(\mK, s)\|_{\mathcal{H}_\infty} : = \sup_{\omega\in\mathbb{R}}\sigma_{\max}(\mathbf{T}_{zd}(\mK, j\omega)), \quad \forall \mK \in \mathcal{C}_q,
\end{equation}
where $\mathbf{T}_{zd}(\mK, s)$ denotes the transfer function from $d(t) = \begin{bmatrix} w(t) \\ v(t) \end{bmatrix}$ to $z(t)$, given by
\begin{equation*} 
    \mathbf{T}_{{zd}}(\mK, s) = C_{\mathrm{cl}}(\mK)
\left(sI - A_{\mathrm{cl}}(\mK)
\right)^{-1}
B_{\mathrm{cl}}(\mK)
+D_{\mathrm{cl}}(\mK),
\end{equation*}
with $A_{\mathrm{cl}}(\mK), B_{\mathrm{cl}}(\mK), C_{\mathrm{cl}}(\mK), D_{\mathrm{cl}}(\mK)$ being the closed-loop matrices in \cref{eq:closed-loop-matrices}. Recall that the set of internally stabilizing policies with order $q$ is defined as
\begin{equation} 
    \mathcal{C}_{q} := \left\{
    \left.\mK=\begin{bmatrix}
    D_{\mK} & C_{\mK} \\
    B_{\mK} & A_{\mK}
    \end{bmatrix}
    \in \mathbb{R}^{(m+q) \times (p+q)} \right|\; 
    \begin{bmatrix}
    A +BD_{\mK}C & BC_{\mK} \\
    B_{\mK}C & A_{\mK}
\end{bmatrix}~\text{is stable}\right\}.
\end{equation}
We consider $J_{\infty,q}(\mK)$ as a function from the open set $\mathcal{C}_{q}$ in $\mathbb{R}^{(m+q) \times (p+q)}$ to $\mathbb{R}$.

Our proof heavily relies on the following technical lemma.\footnote{
This lemma is essentially Corollary 2 of \cite[Theorem 2.8.2]{clarke1990optimization}, but we represent the subdifferential in the form of weak${}^\ast$-closed convex hull as in \cite[Section 4.2.2, Theorem 3]{ioffe1979theory} instead of vector-valued integration.
}
\begin{lemma}\label{lemma:supremum_subdiff}
Let $\Theta$ be a compact metrizable space, and let $\{f_\theta:\theta\in\Theta\}$ be a family of continuously (Fr\'{e}chet) differentiable functions defined on some open subset $U$ of a Banach space $X$. Suppose the following conditions hold:
\begin{enumerate}
\item The mapping $\theta\mapsto f_\theta(x)$ is continuous on $\Theta$ for all $x\in U$.
\item The mapping $(\theta,x)\mapsto Df_\theta(x)$ is continuous on $\Theta\times U$.
\end{enumerate}
Then the function $f:U\rightarrow\mathbb{R}$ defined by
\[
f(x) = \sup_{\theta\in\Theta} f_\theta(x)
\]
is subdifferentially regular, and
\[
\partial f(x) = \operatorname{cl}^\ast\operatorname{conv}
\{Df_\theta(x)\mid f_\theta(x)=f(x)\},
\]
where $\operatorname{cl}^\ast$ denotes closure in the weak${}^\ast$-topology of $X^\ast$.
\end{lemma}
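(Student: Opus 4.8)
The plan is to derive the formula directly from the definitions of the Clarke directional derivative and subdifferential, with compactness of $\Theta$ doing the real work; an alternative is to quote the integral representation of $\partial f$ from \cite[Theorem 2.8.2, Corollary 2]{clarke1990optimization} and then identify the set of integrals $\{\int_\Theta Df_\theta(x)\,d\mu(\theta) : \mu \text{ a probability measure supported on } \Theta(x)\}$ with the weak${}^\ast$-closed convex hull of $\{Df_\theta(x) : \theta \in \Theta(x)\}$, but the self-contained route is cleaner to present. Throughout, write $\Theta(x) := \{\theta \in \Theta : f_\theta(x) = f(x)\}$ for the active index set.

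First I would establish that $f$ is locally Lipschitz and that the supremum is attained. Fix a closed ball $\bar{B}(x_0,r) \subset U$; by condition 2 and compactness, $L := \sup\{\|Df_\theta(x)\|_{X^\ast} : \theta \in \Theta, x \in \bar{B}(x_0,r)\} < \infty$, so each $f_\theta$ is $L$-Lipschitz on the ball via $f_\theta(y) - f_\theta(z) = \int_0^1 \langle Df_\theta(z + t(y-z)), y-z\rangle\,dt$, and hence so is $f = \sup_\theta f_\theta$; condition 1 makes $\theta \mapsto f_\theta(x)$ continuous on the compact $\Theta$, so $f(x)$ is finite and $\Theta(x)$ is nonempty and compact. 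Combining the Lipschitz estimate with condition 1 also yields \emph{joint} continuity of $(\theta,x) \mapsto f_\theta(x)$, which is used repeatedly. Next I would compute the one-sided directional derivative $f'(x;v)$. The lower bound $f'(x;v) \geq \max_{\theta \in \Theta(x)} \langle Df_\theta(x), v\rangle$ is immediate from $f(x+tv) \geq f_\theta(x+tv) = f(x) + t\langle Df_\theta(x),v\rangle + o(t)$ for each active $\theta$. For the matching upper bound, take $t_k \downarrow 0$ realizing $\limsup_k t_k^{-1}(f(x+t_kv) - f(x))$, pick $\theta_k \in \Theta(x + t_k v)$, and extract $\theta_k \to \theta^\ast$; then
\[
\frac{f(x+t_kv)-f(x)}{t_k} \leq \frac{1}{t_k}\int_0^{t_k}\langle Df_{\theta_k}(x+sv), v\rangle\,ds \;\longrightarrow\; \langle Df_{\theta^\ast}(x), v\rangle,
\]
using locally uniform continuity of $Df$, while joint continuity forces $\theta^\ast \in \Theta(x)$, so $f'(x;v) = \max_{\theta \in \Theta(x)} \langle Df_\theta(x), v\rangle$.

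I would then run essentially the same argument with a \emph{moving} base point: taking $x_k \to x$, $t_k \downarrow 0$ realizing $f^\circ(x;v)$ and $\theta_k \in \Theta(x_k + t_k v) \to \theta^\ast$, the same inequalities (now invoking joint continuity to get $f_{\theta_k}(x_k) \to f(x)$, hence $\theta^\ast \in \Theta(x)$) give $f^\circ(x;v) \leq \langle Df_{\theta^\ast}(x), v\rangle \leq f'(x;v)$; since always $f^\circ \geq f'$, equality holds for every $v$ and $f$ is subdifferentially regular. Finally, $v \mapsto f^\circ(x;v)$ is the support function of the bounded set $\{Df_\theta(x) : \theta \in \Theta(x)\} \subset X^\ast$, equivalently of $K(x) := \operatorname{cl}^\ast\operatorname{conv}\{Df_\theta(x) : \theta \in \Theta(x)\}$; by the bipolar theorem in the pairing $(X, X^\ast)$ --- a weak${}^\ast$-closed convex set equals the intersection of the weak${}^\ast$-closed half-spaces containing it, and every weak${}^\ast$-continuous functional on $X^\ast$ is evaluation at some $v \in X$ --- one gets $\partial f(x) = \{g \in X^\ast : \langle g,v\rangle \leq f^\circ(x;v)\ \forall v\} = K(x)$, which is the claimed formula.

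I expect the main obstacle to be the step with the moving base point: controlling $f^\circ(x;v)$ requires handling a triple limit (in $x_k$, in $t_k$, and in the active index $\theta_k$, which may jump) and extracting $\theta^\ast$ that is still active at the limit $x$, which is exactly where the joint continuity of $(\theta,x) \mapsto f_\theta(x)$ --- itself derived from conditions 1 and 2 together, not assumed outright --- and the locally uniform continuity of $Df_\theta$ are needed. A secondary subtlety is the last step in infinite dimensions: one must check that $K(x)$ is weak${}^\ast$-closed (true by construction) and bounded (so its support function is real-valued), and must separate in the weak${}^\ast$ topology rather than the norm topology so that the separating functionals are genuinely evaluations at points of $X$.
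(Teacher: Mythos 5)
Your proof is correct, but it is worth noting that the paper does not actually prove this lemma: it is quoted from Clarke's book (Corollary~2 of Theorem~2.8.2 in \cite{clarke1990optimization}), whose route is exactly the first alternative you mention --- represent $\partial f(x)$ as the set of Gelfand integrals $\int Df_\theta(x)\,d\mu(\theta)$ over probability measures supported on the active set, and then identify that set with the weak${}^\ast$-closed convex hull (as in Ioffe--Tikhomirov). Your self-contained Danskin-type argument is a genuinely different and arguably cleaner route for this statement: it avoids vector-valued integration entirely, and the key steps (the directional-derivative formula via extraction of a convergent sequence of active indices, the moving-base-point version giving $f^\circ = f'$ and hence regularity, and the support-function/bipolar identification in the dual pair $(X,X^\ast)$) are all sound; in particular you correctly isolate where joint continuity of $(\theta,x)\mapsto f_\theta(x)$ and of $Df$ are needed, and you correctly insist on separating in the weak${}^\ast$ topology so that the separating functionals are evaluations at points of $X$. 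The one imprecision is at the very start: ``fix a closed ball $\bar{B}(x_0,r)\subset U$; by condition 2 and compactness, $L<\infty$'' reads as if $\Theta\times\bar{B}(x_0,r)$ were compact, which fails when $X$ is infinite-dimensional. The fix is standard but should be stated: for each $\theta_0$ joint continuity of $Df$ gives a neighborhood $V_{\theta_0}\times B(x_0,r_{\theta_0})$ on which $\|Df\|$ is bounded, a finite subcover of the compact $\Theta$ yields a single radius $r>0$ (a tube-lemma argument), and only for such a sufficiently small ball is the uniform Lipschitz constant $L$ guaranteed to be finite. With that adjustment the argument is complete.
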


Applying \cref{lemma:supremum_subdiff} to \cref{eq:Hinf-norm-appendix} requires some preparations. Let $\mathsf{S}_d(\mathbb{C}) = \{u\in\mathbb{C}^d\mid u^\her u = 1\}$ denote the unit sphere in $\mathbb{C}^d$. Let $\mathbb{C}^\ast = \mathbb{C}\cup\{\infty\}$ and equip it with the topology generated by all open sets in $\mathbb{C}$ and all sets of the form $(\mathbb{C}\backslash K)\cup\{\infty\}$ with $K\subset\mathbb{C}$ compact. It can be shown that $\mathbb{C}^\ast$ is then a compact metrizable space, and all transfer matrices in $\mathcal{RH}_\infty$ can be extended to be continuous mappings on
\[
\mathbb{C}^\ast_+ = \{z\in\mathbb{C}^\ast\mid \operatorname{Re}[z]\geq 0\text{ or }z=\infty\}.
\]
We denote $j\mathbb{R}^\ast = \{z\in\mathbb{C}^\ast\mid \operatorname{Re}[z]=0\text{ or }z=\infty\}$, which can be shown to be compact and metrizable. Finally, denote
\[
\mathcal{S} = \left\{
\left.
\begin{bmatrix}
D & C \\
B & A
\end{bmatrix}\in \mathbb{R}^{(p+n)\times(m+n)}
\right|
A\text{ is stable}
\right\}.
\]
It can be seen that $\mathcal{S}$ is an open set in $\mathbb{R}^{(p+n)\times (m+n)}$.

We first consider finding the subdifferential of the function $f: \mathcal{S} \to \mathbb{R}$
\[
f(P) = \|\mathcal{G}_P\|_{\mathcal{H}_\infty},
\quad
\mathcal{G}_P(s) = C(sI-A)^{-1}B+D,
\qquad\forall P=\begin{bmatrix}
D & C \\
B & A
\end{bmatrix}\in\mathcal{S}.
\]
We note that this function $f$ is defined in the finite-dimensional space $\mathbb{R}^{(p+n)\times (m+n)}$, which is different from the treatment in \cite{apkarian2006nonsmooth}. Then, the subdifferential of \cref{eq:Hinf-norm-appendix} follows from the chain rule.

\begin{proposition}
Let $P\in\mathcal{S}\backslash\{0\}$ be arbitrary, and let
\[
\mathcal{Z} = \{s\in j\mathbb{R}^\ast: \sigma_{\max}(\mathcal{G}_P(s)) = f(P)\},
\]
represent the frequencies that achieve its $\mathcal{H}_\infty$ norm.
For each $s\in\mathcal{Z}$, we let $Q_s$ be a matrix whose column vectors form an orthonormal basis of the eigenspace of $\mathcal{G}_P(s)\mathcal{G}_P(s)^\her$ associated with the eigenvalue $\sigma_{\max}(\mathcal{G}_P(s))^2 = f(P)^2$.

Then $\Phi\in\mathbb{R}^{(p+n)\times(m+n)}$ is a subdifferential of $f$ at $P$ if and only if there exist finitely many $s_1,\ldots,s_K \in \mathcal{Z}$ and corresponding positive semidefinite Hermitian matrices $Y_1,\ldots,Y_K$ of appropriate dimensions satisfying $\sum_{\kappa=1}^K \operatorname{tr}Y_\kappa = 1$ such that
\begin{equation}\label{eq:subdifferential_hinf_1}
\Phi = \frac{1}{f(P)}
\sum_{\kappa=1}^K
\operatorname{Re}\!\left\{
\begin{bmatrix}
I \\ (s_\kappa I-A)^{-1}B
\end{bmatrix}
\mathcal{G}_P(s_\kappa)^\her Q_{s_\kappa} Y_\kappa Q_{s_\kappa}^\her
\begin{bmatrix}
I & C(s_\kappa I-A)^{-1}
\end{bmatrix}
\right\}^{\!\tr}.
\end{equation}
\end{proposition}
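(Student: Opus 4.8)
The plan is to apply \Cref{lemma:supremum_subdiff} with a carefully chosen parametrization of the $\mathcal{H}_\infty$ norm as a supremum of smooth functions. The key identity is that for any matrix $M$, its largest singular value satisfies $\sigma_{\max}(M) = \sup_{u,v\in\mathsf{S}(\mathbb{C})} \operatorname{Re}(u^\her M v)$ where the supremum is over unit vectors of appropriate dimension; more conveniently, $\sigma_{\max}(M)^2 = \lambda_{\max}(MM^\her) = \sup_{u\in\mathsf{S}(\mathbb{C})} u^\her MM^\her u$. Since $f(P) = \sup_{s\in j\mathbb{R}^\ast}\sigma_{\max}(\mathcal{G}_P(s))$, I would first establish that $f(P) = \sup_{(s,u)\in\Theta} f_{(s,u)}(P)$ where $\Theta = j\mathbb{R}^\ast\times\mathsf{S}_p(\mathbb{C})$ and $f_{(s,u)}(P) = \operatorname{Re}\big(u^\her \mathcal{G}_P(s)\mathcal{G}_P(s)^\her u\big)^{1/2}$, or better, work with the squared norm $f(P)^2 = \sup u^\her\mathcal{G}_P(s)\mathcal{G}_P(s)^\her u$ and then compose with $\sqrt{\cdot}$ at the end (valid since $f(P)>0$ on $\mathcal{S}\backslash\{0\}$). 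The index set $\Theta$ is compact (product of compact metrizable spaces) and metrizable. I must verify the two continuity hypotheses of \Cref{lemma:supremum_subdiff}: continuity of $(s,u)\mapsto f_{(s,u)}(P)$ for fixed $P$, which follows because $\mathcal{G}_P$ extends continuously to $j\mathbb{R}^\ast$ (as $A$ is stable, $\mathcal{G}_P\in\mathcal{RH}_\infty$, with $\mathcal{G}_P(\infty)=D$); and joint continuity of the derivative $(s,u,P)\mapsto D_P f_{(s,u)}(P)$, which follows from the explicit resolvent formula and the fact that $(sI-A)^{-1}$ depends continuously on $(s,A)$ away from the spectrum of $A$.

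Next I would compute $D_P f_{(s,u)}(P)$ explicitly. Differentiating $\mathcal{G}_P(s) = C(sI-A)^{-1}B + D$ with respect to the blocks of $P$ using $\frac{d}{dt}(sI-A-tE)^{-1} = (sI-A)^{-1}E(sI-A)^{-1}$, one obtains, writing $R(s)=(sI-A)^{-1}$, that the differential of $\mathcal{G}_P(s)$ in direction $\delta P = \begin{bmatrix}\delta D & \delta C\\ \delta B & \delta A\end{bmatrix}$ is $\delta C\, R(s) B + C R(s)\,\delta A\, R(s) B + C R(s)\,\delta B + \delta D$. This can be packaged as $\begin{bmatrix} I & CR(s)\end{bmatrix}\delta P \begin{bmatrix} I\\ R(s)B\end{bmatrix}$. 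The gradient of $u^\her\mathcal{G}_P(s)\mathcal{G}_P(s)^\her u$ then follows by the product rule, and converting the complex trace pairing into the real inner product on $\mathbb{R}^{(p+n)\times(m+n)}$ produces exactly the rank-one term $\operatorname{Re}\big\{\begin{bmatrix} I\\ R(s_\kappa)B\end{bmatrix}\mathcal{G}_P(s_\kappa)^\her (uu^\her)\begin{bmatrix} I & CR(s_\kappa)\end{bmatrix}\big\}^\tr$, divided by $f(P)$ from the chain rule through $\sqrt{\cdot}$. Taking the convex hull over active indices $\theta=(s,u)$ with $f_\theta(P)=f(P)$ — i.e. over $s\in\mathcal{Z}$ and $u$ in the top eigenspace, so $uu^\her$ ranges over the base of that cone and convex combinations give $Q_sY_sQ_s^\her$ with $Y_s\succeq 0$ — and using Carath\'eodory's theorem to reduce to finitely many terms with $\sum_\kappa\operatorname{tr}Y_\kappa=1$, yields \cref{eq:subdifferential_hinf_1}. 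The weak${}^\ast$-closure in \Cref{lemma:supremum_subdiff} is automatic in finite dimensions once we argue the set on the right is already closed (it is the continuous image of a compact set parametrizing the finitely-many-term convex combinations, or one invokes that $\mathcal{Z}$ is compact so the set of active gradients is compact and its convex hull is closed).

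The main obstacle I anticipate is twofold. First, identifying the \emph{active} index set precisely: the supremum defining $\sigma_{\max}(\mathcal{G}_P(s))$ over $u$ is attained exactly on the unit sphere of the maximal eigenspace of $\mathcal{G}_P(s)\mathcal{G}_P(s)^\her$, and one must be careful that $f_\theta(P)=f(P)$ forces \emph{both} $s\in\mathcal{Z}$ \emph{and} $u$ in that eigenspace; handling multiplicity $>1$ is what forces the matrices $Y_\kappa$ rather than scalars, and writing the convex hull of $\{uu^\her : u\in\mathsf{S}(\mathbb{C}),\, u\in\operatorname{range}Q_s\}$ as $\{Q_sYQ_s^\her: Y\succeq 0,\operatorname{tr}Y=1\}$ needs a short spectral argument. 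Second, the measure-theoretic/topological bookkeeping at $s=\infty$: one must confirm that $j\mathbb{R}^\ast$ is compact metrizable, that $\mathcal{G}_P$ and its $P$-derivative extend continuously there (at $\infty$ the resolvent term vanishes and only $\delta D$ survives, consistent with the formula), so that $\infty\in\mathcal{Z}$ is treated on equal footing. Neither is deep, but both require care to make the application of \Cref{lemma:supremum_subdiff} rigorous; the rest is the routine differentiation and repackaging sketched above.
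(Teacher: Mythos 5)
Your proposal is correct and follows essentially the same route as the paper: both proofs hinge on \Cref{lemma:supremum_subdiff}, the resolvent differentiation $\delta\mathcal{G}_P(s)=\begin{bmatrix} I & C(sI-A)^{-1}\end{bmatrix}\delta P\begin{bmatrix} I\\ (sI-A)^{-1}B\end{bmatrix}$, compactness/metrizability of the index set including the point $s=\infty$, and the final repackaging of the convex hull of rank-one active gradients into the matrices $Q_sYQ_s^\her$ with $Y\succeq 0$ and total trace one. The only real divergence is the parametrization of the supremum: you write $f(P)^2=\sup_{(s,u)}u^\her\mathcal{G}_P(s)\mathcal{G}_P(s)^\her u$ over a single sphere and then pass through $\sqrt{\cdot}$ via the smooth chain rule (legitimate since $f(P)>0$ on $\mathcal{S}\setminus\{0\}$), whereas the paper uses the bilinear form $f_{(u,v,s)}(P)=\operatorname{Re}[v^\her\mathcal{G}_P(s)u]$ over two spheres, which represents $f(P)$ itself and so avoids the square root and the extra chain-rule step; the paper then converts $uv^\her$ into the stated form via the SVD identity $\mathcal{G}_P(s)^\her Q_s=f(P)O_s$, which is the analogue of your observation that the active $u$'s span the top eigenspace of $\mathcal{G}_P\mathcal{G}_P^\her$. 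Your version trades one sphere for one composition; both are sound, and your closing remarks about the closedness of the convex hull (compact image of a compact active set, Carath\'eodory) and the treatment of $s=\infty$ match exactly what the paper does.
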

\begin{proof}
Denote
\[
\Theta = \mathsf{S}_m(\mathbb{C})\times\mathsf{S}_p(\mathbb{C})\times j\mathbb{R}^\ast.
\]
It is not hard to see that $\Theta$ is compact and metrizable. Now for each $\theta=(u,v,s)\in\Theta$, define
\[
f_\theta(P)
=\operatorname{Re}\!\left[v^\her (C(sI-A)^{-1}B+D)u\right],
\qquad \forall P = \begin{bmatrix}
D & C \\
B & A
\end{bmatrix}\in \mathcal{S}.
\]
We can check that $f_\theta$ is continuously differentiable over $\mathcal{S}$ for any fixed $\theta\in\Theta$. Moreover, by definition, we have
\[
\sup_{\theta\in\Theta}
f_\theta(P)
= f(P),\qquad\forall P\in\mathcal{S}.
\]
To find the derivative of $f_\theta$ at $P=\begin{bmatrix}
D & C \\ B & A
\end{bmatrix}\in\mathcal{S}$, we let $\Delta=\begin{bmatrix}
\Delta_D & \Delta_C \\
\Delta_B & \Delta_A
\end{bmatrix}\in \mathbb{R}^{(p+n)\times(m+n)}$ be arbitrary and $t>0$ be sufficiently small. Then
\begin{align*}
& f_\theta(P+t\Delta) \\
={} &
\operatorname{Re}\!\left[
v^\her ((C+t\Delta_C)(sI-(A+t\Delta_A))^{-1}(B+t\Delta_B)+D+t\Delta_D)u
\right]\\
={} &
\operatorname{Re}\!\left[
v^\her ((C+t\Delta_C)(I-t(sI-A)^{-1}\Delta_A)^{-1}(sI-A)^{-1}(B+t\Delta_B)+D+t\Delta_D)u\right] \\
={} &
\operatorname{Re}\!\left[
v^\her \left((C+t\Delta_C)
\left(I + t(sI-A)^{-1}\Delta_A + o(t)\right)(sI-A)^{-1}(B+t\Delta_B)+D+t\Delta_D\right) u\right] \\
={} & f_\theta(P)
+t \operatorname{Re}\!\big[v^\her\big(
\Delta_D
+\Delta_C(sI-A)^{-1}B
+C(sI-A)^{-1}\Delta_B \\
&
+C(sI-A)^{-1}\Delta_A(sI-A)^{-1}B
\big)u\big] + o(t) \\
={} & f_\theta(P)
+ t\cdot\operatorname{tr}
\!\left(
\operatorname{Re}\!\left\{
\begin{bmatrix}
I \\ (sI-A)^{-1}B
\end{bmatrix}
uv^\her
\begin{bmatrix}
I & C(sI-A)^{-1}
\end{bmatrix}
\right\}
\begin{bmatrix}
\Delta_D & \Delta_C \\
\Delta_B & \Delta_A
\end{bmatrix}
\right) + o(t),
\end{align*}
which shows that
\[
Df_\theta(P)
=\operatorname{Re}\!\left\{
\begin{bmatrix}
I \\ (sI-A)^{-1}B
\end{bmatrix}
uv^\her
\begin{bmatrix}
I & C(sI-A)^{-1}
\end{bmatrix}
\right\}^{\!\tr}
\]
It is not hard to see that the mapping $(\theta,P)\mapsto Df_\theta(P)$ is continuous over $\Theta\times\mathcal{S}$.

We can now apply \Cref{lemma:supremum_subdiff} and obtain
\[
\partial f(P)
=\operatorname{cl}\operatorname{conv}
\{Df_\theta(P)\mid f_\theta(P)= f(P)\},
\]
where we replace $\operatorname{cl}^\ast$ by $\operatorname{cl}$ since the weak${}^\ast$-topology coincides with the standard topology in finite-dimensional vector spaces. We also note that the set
\[
\Theta_0(P) = \{\theta\in\Theta\mid f_\theta(P) = f(P) \}
\]
is a nonempty compact set since $\Theta$ is compact and the mapping $\theta\mapsto f_\theta(P)$ is continuous for a given $P\in \mathcal{S}$. As a result, the set $\{Df_\theta(P)\mid f_\theta(P)=f(P)\}$, which is the image of $\Theta_0(P)$ under the continuous mapping $\theta\mapsto Df_\theta(P)$, is compact. Since the convex hull of a compact set in a finite-dimensional space is compact (see \cite[Theorem 17.2]{rockafellar1970convex}), we get
\[
\partial f(P)
= \operatorname{conv}
\{
Df_\theta(P) \mid f_\theta(P) = f(P)
\}.
\]

To derive~\cref{eq:subdifferential_hinf_1}, for each $s\in\mathcal{Z}$, we let $d_s$ be the number of columns of $Q_s$, and let $Q_{s,\perp}$, $O_{s}$, $O_{s,\perp}$ be matrices such that
\[
\mathcal{G}_P(s) = \begin{bmatrix}
Q_{s} & Q_{s,\perp}
\end{bmatrix}
\begin{bmatrix}
f(P) I_{d_s} \\
& *
\end{bmatrix}
\begin{bmatrix}
O_{s}^\her \\ O_{s,\perp}^\her
\end{bmatrix}
\]
gives a singular value decomposition of $\mathcal{G}_P(s)$. We then have
\begin{align*}
\Theta_0(P) ={}  &
\{(u,v,s)\in \mathsf{S}_m(\mathbb{C})\times\mathsf{S}_p(\mathbb{C})\times j\mathbb{R}^\ast \mid
f_{(u,v,s)}(P) = f(P)
\} \\
={} &
\{(u,v,s)\in \mathsf{S}_m(\mathbb{C})\times\mathsf{S}_p(\mathbb{C})\times \mathcal{Z} \mid
\operatorname{Re}[v^\her \mathcal{G}_P(s)u] = \sigma_{\max}(\mathcal{G}_P(s))\} \\
={} &
\!\left\{
(O_s\xi,Q_s\xi, s)\mid
s\in \mathcal{Z}, \xi \in \mathsf{S}_{d_s}(\mathbb{C})
\right\}.
\end{align*}
Consequently,
\begin{align*}
& \partial f(P) \\
={} &
\operatorname{conv}\left\{
\left.\operatorname{Re}\!\left\{
\begin{bmatrix}
I \\ (sI-A)^{-1}B
\end{bmatrix}
uv^\her
\begin{bmatrix}
I & C(sI-A)^{-1}
\end{bmatrix}
\right\}^{\!\tr}
\right| (u,v,s)\in\Theta_0(P)
\right\} \\
={} &
\operatorname{conv}\left\{
\left.\operatorname{Re}\!\left\{
\begin{bmatrix}
I \\ (sI-A)^{-1}B
\end{bmatrix}
O_s\xi\xi^\her Q_s
\begin{bmatrix}
I & C(sI-A)^{-1}
\end{bmatrix}
\right\}^{\!\tr}
\right| s\in\mathcal{Z},\xi\in \mathsf{S}_{d_s}(\mathbb{C})
\right\} \\
={} &
\operatorname{conv}\left\{
\frac{1}{f(P)}\left.\operatorname{Re}\!\left\{
\begin{bmatrix}
I \\ (sI-A)^{-1}B
\end{bmatrix}
\mathcal{G}_P(s)^\her Q_s\xi\xi^\her Q_s^\her
\begin{bmatrix}
I & C(sI-A)^{-1}
\end{bmatrix}
\right\}^{\!\tr}
\right| s\in\mathcal{Z},\xi\in \mathsf{S}_{d_s}(\mathbb{C})
\right\} \\
={} &
\operatorname{conv}\left\{
\left.
\frac{1}{f(P)}
\operatorname{Re}
\!\left\{
\begin{bmatrix}
I \\ (sI-A)^{-1}B
\end{bmatrix}
\mathcal{G}_P(s)^\her Q_s Y Q_s^\her
\begin{bmatrix}
I & C(sI-A)^{-1}
\end{bmatrix}
\right\}^{\!\tr}
\right|
\begin{aligned}
& s\in\mathcal{Z}, Y\in\mathbb{C}^{d_s\times d_s}, \\
& Y=Y^\her,
\operatorname{tr}Y = 1
\end{aligned}
\right\},
\end{align*}
where in the third step we used $\mathcal{G}_P(s)^\her Q_s = f(P) O_s$ that follows from the singular value decomposition. The proof is now complete.
\end{proof}

Finally, let us consider the closed-loop system $\begin{bmatrix}
D_{\mathrm{cl}}(\mK) & C_{\mathrm{cl}}(\mK) \\
B_{\mathrm{cl}}(\mK) & A_{\mathrm{cl}}(\mK)
\end{bmatrix}$ and its associated $\mathcal{H}_\infty$ cost $J_{\infty,q}(\mK)$. We have
\[
\begin{bmatrix}
D_{\mathrm{cl}}(\mK) & C_{\mathrm{cl}}(\mK) \\
B_{\mathrm{cl}}(\mK) & A_{\mathrm{cl}}(\mK)
\end{bmatrix}
=\begin{bmatrix}
0 & 0 & Q^{1/2} & 0 \\
0 & 0 & 0 & 0 \\
W^{1/2} & 0 & A & 0 \\
0 & 0 & 0 & 0
\end{bmatrix}
+
\begin{bmatrix}
0 & 0 \\
R^{1/2} & 0 \\
B & 0 \\
0 & I
\end{bmatrix}
\begin{bmatrix}
D_\mK & C_\mK \\
B_\mK & A_\mK
\end{bmatrix}
\begin{bmatrix}
0 & V^{1/2} & C & 0 \\
0 & 0 & 0 & I
\end{bmatrix}.
\]
This shows that $\begin{bmatrix}
D_{\mathrm{cl}}(\mK) & C_{\mathrm{cl}}(\mK) \\
B_{\mathrm{cl}}(\mK) & A_{\mathrm{cl}}(\mK)
\end{bmatrix}$ is an affine function of $\mK\in \mathcal{C}_q$. By employing the chain rule for Clarke subdifferential~\cite[Theorem 2.3.10]{clarke1990optimization}, we see that $\partial J_{\infty, q}(\mK)$ is the convex hull of all matrices of the form
\begin{align*}
\frac{1}{J_{\infty,q}(\mK)}
\operatorname{Re}\bigg\{\!
\left[\!\begin{array}{cc:cc}
0 & V^{1/2} & C & 0 \\
0 & 0 & 0 & I
\end{array}\!\right]
& \begin{bmatrix}
I \\ (sI-A_{\mathrm{cl}}(\mK))^{-1}B_{\mathrm{cl}}(\mK)
\end{bmatrix}
\mathbf{T}_{zd}(\mK,s)^\her Q_s Y Q_s^\her \\
& \cdot \begin{bmatrix}
I & C_{\mathrm{cl}}(\mK)(sI-A_{\mathrm{cl}}(\mK))^{-1}
\end{bmatrix}
\left[\!\begin{array}{cc}
0 & 0 \\[-2pt]
R^{1/2} & 0 \\[-2pt]
\hdashline
B & 0 \\[-2pt]
0 & I
\end{array}\!\right]
\!\bigg\}^{\!\tr}
\end{align*}
or
\begin{align*}
\frac{1}{J_{\infty,q}(\mK)}
\operatorname{Re}\bigg\{\!
& \left(
\begin{bmatrix}
0 & V^{1/2} \\ 0 & 0
\end{bmatrix}
+
\begin{bmatrix}
C & 0 \\
0 & I
\end{bmatrix}
(sI-A_{\mathrm{cl}}(\mK))^{-1}B_{\mathrm{cl}}(\mK)
\right)
\mathbf{T}_{zd}(\mK,s)^\her Q_s Y Q_s^\her \\
& \cdot
\left(
\begin{bmatrix}
0 & 0 \\ R^{1/2} & 0
\end{bmatrix}
+C_{\mathrm{cl}}(\mK)(sI-A_{\mathrm{cl}}(\mK))^{-1}
\begin{bmatrix}
B & 0 \\ 0 & I
\end{bmatrix}
\right)
\!\bigg\}^{\!\tr},
\end{align*}
where $s\in j\mathbb{R}^\ast$ satisfies $\sigma_{\max}(\mathbf{T}_{zd}(\mK,s)) = J_{\infty,q}(\mK)$, the columns of $Q_s$ form an orthonormal basis of the eigenspace of $\mathbf{T}_{zd}(\mK,s)\mathbf{T}_{zd}(\mK,s)^\her$ with the eigenvalue $J_{\infty,q}(\mK)^2$, and $Y$ is a positive semidefinite Hermitian matrix with $\operatorname{tr}Y=1$. Note that in the above expression, the dimension of $\begin{bmatrix}
0 & V^{1/2} \\ 0 & 0
\end{bmatrix}$ is $(p+q)\times (n+p)$, the dimension of $\begin{bmatrix}
C & 0 \\ 0 & I
\end{bmatrix}$ is $(p+q)\times (n+q)$, the dimension of $\begin{bmatrix}
0 & 0 \\ R^{1/2} & 0
\end{bmatrix}$ is $(n+m)\times (m+q)$, and the dimension of $\begin{bmatrix}
B & 0 \\ 0 & I
\end{bmatrix}$ is $(n+q)\times (m+q)$. The proof of \Cref{lemma:subdifferential-Hinf} is now complete.

\begin{remark}
Existing literature~\cite{apkarian2006nonsmooth} has also discussed how to calculate the subdifferential of $J_{\infty,q}$. However, our proof idea is different from \cite{apkarian2006nonsmooth}: While both studies use the chain rule for Clarke subdifferential, the calculation in \cite{apkarian2006nonsmooth} is carried out via the subdifferential of $\|\cdot\|_{\mathcal{H}_\infty}:\mathcal{RH}_\infty\rightarrow\mathbb{R}$ that is defined over the infinite-dimensional space $\mathcal{RH}_\infty$, while our proof stays in finite-dimensional spaces (we view $J_{\infty, n}(\mK)$ as a composition: $\mathcal{C}_n \to \mathcal{S} \to \mathbb{R}$, while \cite{apkarian2006nonsmooth} views $J_{\infty, n}(\mK)$ as a composition: $\mathcal{C}_n \to \mathcal{RH}_\infty \to \mathbb{R}$). Staying in finite-dimensional spaces simplify many analysis details: it reduces the weak${}^\ast$-topology to the standard topology and further allows us to use \cite[Theorem 17.2]{rockafellar1970convex}. As a result, we can obtain stronger results: \cref{lemma:subdifferential-Hinf} provides sufficient and necessary conditions for the subdifferential even when the $\mathcal{H}_\infty$ norm is attained at infinitely many frequencies, while the results in \cite{apkarian2006nonsmooth} only consider the situation where the $\mathcal{H}_\infty$ norm is attained at finitely many frequencies. \hfill \qed
\end{remark}

\section{Auxiliary results for LQG and $\mathcal{H}_\infty$ control} \label{appendix:auxillary-results}

In this section, we provide some auxiliary results for LQG and $\mathcal{H}_\infty$ control, including further discussions on minimal/informative/non-degenerate policies in the LQG case, examples of global optimal policies, and boundary behaviors of $\mathcal{H}_\infty$ control.

\subsection{Examples on minimal/informative/non-degenerate policies} \label{appendix:minimal-informative-non-degenerate}

In the main text, \Cref{proposition:informativity-non-degenerate} reveals that the set of non-degenerate policies is the same as the set of informative policies for LQG control. We here give an example to show the relationship among minimal, informative, and non-degenerate policies for LQG control.

\begin{example}[Venn diagram of different sets of policies]
\label{example:non-degenerate-minimal-informative}
Consider the LQG problem with the system dynamics and the performance signal as in \Cref{example:discontinuity-LQG-boundary}, i.e.,
$$
A=-1,\;  B=1, \; C=1, \; Q =R =1, \; V= W =1.
$$
Consider a full-order policy $$\mK = \begin{bmatrix} 0 & C_{\mK} \\
                          B_{\mK} & A_{\mK}\end{bmatrix} \in \mathbb{R}^{2 \times 2}. $$
Following the definitions, we define 1) stabilizing policies ${\mathcal{C}}_{1,0}$, 2) minimal policies ${\mathcal{C}}_{\mathrm{min},0}$, and 3) informative policies ${\mathcal{C}}_{\mathrm{info}}$:
\[
\begin{aligned}
{\mathcal{C}}_{1,0} &= \{ \mK \in \mathbb{R}^{2 \times 2}: A_{\mK} < 1, B_{\mK}C_{\mK}<-A_{\mK}, D_\mK=0 \},\\
{\mathcal{C}}_{\mathrm{min},0}&=\{\mK\in \mathbb{R}^{2 \times 2}:B_\mK\neq0, C_\mK\neq0,D_\mK=0\},\\
{\mathcal{C}}_{\mathrm{info}}&=\{\mK\in{\mathcal{C}}_{1,0}: X_{\mK,12} \neq 0, D_\mK=0\},
\end{aligned}
\]
where
\[
X_\mK=\frac{1}{2(A_\mK + B_\mK C_\mK)(A_\mK - 1)}\begin{bmatrix}
    A_\mK^2 - A_\mK + B_\mK^2 C_\mK^2 - B_\mK C_\mK
 & -B_\mK (A_\mK - B_\mK C_\mK) \\ -B_\mK (A_\mK - B_\mK C_\mK) & -B_\mK^2 (A_\mK + B\mK C_\mK - 2)
\end{bmatrix}
\]
is the solution to the Lyapunov equation
\[
\begin{bmatrix}
    -1 & C_\mK \\ B_\mK & A_\mK
\end{bmatrix}X_\mK+X_\mK\begin{bmatrix}
    -1 & C_\mK \\ B_\mK & A_\mK
\end{bmatrix}^\tr+\begin{bmatrix}
    1 & 0 \\ 0 & B_\mK^2
\end{bmatrix}=0, \ \ \ X_\mK=\begin{bmatrix}
    X_{\mK,11} & X_{\mK,12} \\ X_{\mK,12}^\tr & X_{\mK,22}
\end{bmatrix}.
\]
Finally, the set of non-degenerate policies ${\mathcal{C}}_{\mathrm{nd},0}$ is given in \Cref{def:LQG-Cnd}.

Consider four dynamic policies:
\begin{equation} \label{eq:policies-appendix}
\mK_1=\begin{bmatrix}0 & 2 \\ 2 & -1\end{bmatrix}, \ \ \mK_2=\begin{bmatrix}0 & 1 \\ -1 & -5\end{bmatrix},\ \ \mK_3=\begin{bmatrix}0 & 0 \\ 1 & -1\end{bmatrix},\ \ \mK_4=\begin{bmatrix}0 & -1 \\ 0 & -1\end{bmatrix}.
\end{equation}
We can easily verify $\mK_1\notin{\mathcal{C}}_{1,0}, \mK_2\in{\mathcal{C}}_{1,0},  \mK_3\in{\mathcal{C}}_{1,0},
\mK_4\in{\mathcal{C}}_{1,0},$
and
$\mK_1\in{\mathcal{C}}_{\mathrm{min},0}, \mK_2\in{\mathcal{C}}_{\mathrm{min},0}, \mK_3\notin{\mathcal{C}}_{\mathrm{min},0}, \mK_4\notin{\mathcal{C}}_{\mathrm{min},0}.$

For $\mK_2$, the solution to the Lyapunov equation \Cref{eq:LyapunovX}
is
\[
X_{\mK_2}=\begin{bmatrix} \frac{4}{9} & -\frac{1}{18} \\ -\frac{1}{18} & \frac{1}{9} \end{bmatrix}\succ0 \ \ \ \text{with } \ \  X_{\mK_2,12}\neq0,
\]
and thus $\mK_2\in {\mathcal{C}}_{\mathrm{info}}$. Since $X_{\mK_2}$ is invertible, we can choose
\[
P=J_{\texttt{LQG},1}(\mK_2)X_{\mK_2}^{-1}\succ0, \ \ \ \Gamma=\frac{1}{J_{\texttt{LQG},1}(\mK_2)}C_{\mathrm{cl}}(\mK_2)X_{\mK_2}C_{\mathrm{cl}}(\mK_2)^\tr\geq0
\]
to certify \Cref{eq:LQG-Bilinear-a} {and} \Cref{eq:LQG-Bilinear-b} with $\gamma = J_{\LQG,1}(\mK_2)$ and hence $\mK_2\in {\mathcal{C}}_{\mathrm{nd},0}$.

For $\mK_3$, the solution to the Lyapunov equation \Cref{eq:LyapunovX} is
\[
X_{\mK_3}=\begin{bmatrix} 0.5 & 0.25 \\ 0.25 & 0.75 \end{bmatrix}\succ0 \ \ \ \text{with } \ \  X_{\mK_3,12}\neq0,
\] and thus $\mK_3\in {\mathcal{C}}_{\mathrm{info}}$. Similarly, one can verify that  $\mK_3\in {\mathcal{C}}_{\mathrm{nd},0}$.

For $\mK_4$, the solution to \Cref{eq:LyapunovX} is
\[
X_{\mK_4}=\begin{bmatrix} 0.5 & 0 \\ 0 & 0 \end{bmatrix}\ \ \text{with}\ \ X_{\mK_3,12}=0,
\]
and thus $\mK_4 \notin {\mathcal{C}}_{\mathrm{info}}$. 
\Cref{fig:Venn_diagram} gives an illustration of the Venn diagram for these instances.

\end{example}

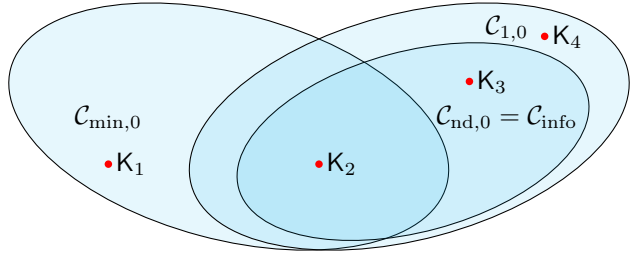
\begin{figure}
    \centering
    \setlength{\abovecaptionskip}{0pt}
    \setlength{\belowcaptionskip}{0pt}
    \begin{tikzpicture}[set/.style={fill=cyan,fill opacity=0.1}]
\draw[set,
     xshift=-1.2cm,
    yshift=1.5cm,
    rotate =-15] (0,0) ellipse (3.cm and 1.5cm);

\draw[set,
     xshift=1.25cm,
    yshift=1.3cm,
    rotate =15] (0,0) ellipse (2.4cm and 1.2cm);

\draw[set,
     xshift=1.2cm,
    yshift=1.5cm,
    rotate =15] (0,0) ellipse (3.cm and 1.5cm);

\node at (-2.8,1) [circle,fill,red,inner sep=1pt]{};
\node at (-2.5,1) {\small $\mK_1$};

\node at (0,1.) [circle,fill,red,inner sep=1pt]{};
\node at (0.3,1.) {\small $\mK_2$};

\node at (2,2.1) [circle,fill,red,inner sep=1pt]{};
\node at (2.3,2.1) {\small $\mK_3$};

\node at (3,2.7) [circle,fill,red,inner sep=1pt]{};
\node at (3.3,2.7) {\small $\mK_4$};

 \node at (-2.8,1.6) {\small ${{\mathcal{C}}_{\text{min},0}}$};

 \node at (2.5,1.6) {\small ${{\mathcal{C}}_{\text{nd},0}} = \mathcal{C}_{\mathrm{info}}$};
 \node at (2.5,2.8) {\small ${{\mathcal{C}}_{1,0}}$};

\end{tikzpicture}

 \vspace{3mm}
    \caption{Venn diagram of \Cref{example:non-degenerate-minimal-informative} for the policies in \cref{eq:policies-appendix}. }
    \label{fig:Venn_diagram}
\end{figure}

\subsection{Examples on globally optimal LQG and $\mathcal{H}_\infty$ policies} \label{appendix:globally-optimal-controllers}

We here discuss a class of examples where we find the globally optimal LQG and $\mathcal{H}_\infty$ policies analytically from \Cref{theorem:LQG-riccati-solution,theorem:Hinf-riccati}. As we can see from the examples below, the computation in \Cref{theorem:LQG-riccati-solution} (LQG control) is much simpler than that in \Cref{theorem:Hinf-riccati} ($\mathcal{H}_\infty$ robust control).

In this section, we consider a single input and single output system \cref{eq:Dynamic} with
\begin{equation} \label{eq:appendix_example}
A = a, \quad B = 1, \quad C = 1,
\end{equation}
which is parameterized by a scalar $a \in \mathbb{R}$.
The performance measures are $Q = 1, R = 1$, and disturbance characterizations are $W = 1, V = 1$.


\subsubsection{Globally optimal LQG policies}
    For the LQG problem \cref{eq:LQG_policy_optimization}, the Riccati equation \Cref{eq:Riccati_P} reads as
    $$
    P^2 - 2aP - 1 = 0,
    $$
    which has a unique positive semidefinite solution
    $
    P = a + \sqrt{a^2 + 1}.
    $
    Then, the \textit{Kalman gain} is $L = a + \sqrt{a^2 + 1}$. Similarly, the \textit{Feedback gain} is $K = a + \sqrt{a^2 + 1}$.

    Thus, a globally optimal LQG policy is in the form of
    \begin{equation*}
\begin{aligned}
    \dot \xi(t) &= a \xi(t) + u(t) + (a + \sqrt{a^2 + 1})(y(t) - \xi(t)), \\
    u(t) &= -(a + \sqrt{a^2 + 1}) \xi(t),
\end{aligned}
\end{equation*}
which can also be written as
\begin{equation} \label{eq:LQG-example-controller}
A_\mK^\star = -a -2\sqrt{a^2 + 1}, \qquad B_\mK^\star = a + \sqrt{a^2 + 1}, \qquad C_\mK^\star = -a - \sqrt{a^2 + 1}.
\end{equation}

The globally optimal LQG cost is
$$
J^2_{\texttt{LQG},1}(\mK^\star) = \frac{4a(a^2 + 1)^{3/2} + 6a^2 + 4a^4 + 2}{(a^2 + 1)^{1/2}}.
$$
Note that \cref{eq:LQG-example-controller} is stationary (it is also easy to verify its gradient to be zero). It is clear that the optimal LQG policy \cref{eq:LQG-example-controller} is minimal $\forall a \in \mathbb{R}$, and thus non-degenerate (cf. \Cref{theorem:LQG-global-optimality}), i.e., $\mK^\star \in {\mathcal{C}}_{\mathrm{nd},0}$. Therefore, the LQG optimization for this instance is solvable and the solution to its equivalent LMI will return a globally optimal LQG policy.  

\subsubsection{Globally optimal $\mathcal{H}_\infty$ policies}

Here, we discuss the globally optimal $\mathcal{H}_\infty$ policies for a class of $\mathcal{H}_\infty$ instances with data \cref{eq:appendix_example}. These globally optimal $\mathcal{H}_\infty$ policies turn out to be all degenerate.

\begin{example}[Globally optimal $\mathcal{H}_\infty$ policies] \label{example:optimal-Hinf-policy}
    For the $\mathcal{H}_\infty$ robust control \cref{eq:Hinf_policy_optimization} with data \cref{eq:appendix_example}, given $\gamma > 0$, the Riccati equation \cref{eq:riccati-hinf-a} reads as
    $$
    X_{\infty}^2(1-\gamma^{-2})- 2aX_{\infty} - 1 = 0,
    $$
    which has two real solutions 
    (when $\gamma \neq 1$)
    \begin{subequations} \label{eq:example-Hinf-Riccati}
    \begin{align}
    X_{\infty,1} &= \frac{a + \sqrt{a^2 + 1 - \gamma^{-2}}}{1- \gamma^{-2}}, \label{eq:example-Hinf-Riccati-a}\\ 
    X_{\infty,2} &= \frac{a - \sqrt{a^2 + 1 - \gamma^{-2}}}{1-\gamma^{-2}}. \label{eq:example-Hinf-Riccati-b}
    \end{align}
    \end{subequations}
    We need to ensure
    $$
    \gamma \geq \frac{1}{\sqrt{a^2 + 1}},
    $$
    otherwise, \Cref{eq:example-Hinf-Riccati} are not real numbers.
    Via simple calculations, we verify that \cref{eq:example-Hinf-Riccati-b} makes the following matrix (scalar) stable.
$$
A - (B R^{-1}B^\tr - \gamma^{-2} W) X_{\infty,1} = a - (1 - \gamma^{-2})X_{\infty,1} = - \sqrt{a^2 +1 - \gamma^{-2}} < 0.
$$
Similarly, the stabilizing solution to \cref{eq:riccati-hinf-b} is
\begin{equation} \label{eq:example-Hinf-Riccati-Y}
Y_{\infty} = \frac{a + \sqrt{a^2 + 1 - \gamma^{-2}}}{1- \gamma^{-2}}.
\end{equation}
Next, we can verify that
$$
\rho(X_{\infty}Y_\infty) = \left(\frac{a + \sqrt{a^2 + 1 - \gamma^{-2}}}{1- \gamma^{-2}}\right)^2 < \gamma^2 \quad \Leftrightarrow \quad \gamma > \sqrt{a^2 + 2} + a \;\left(\geq \frac{1}{\sqrt{a^2 + 1}}\right).
$$

\begin{figure}
\centering
\begin{subfigure}{0.32\textwidth}
    \includegraphics[width =0.85\textwidth]{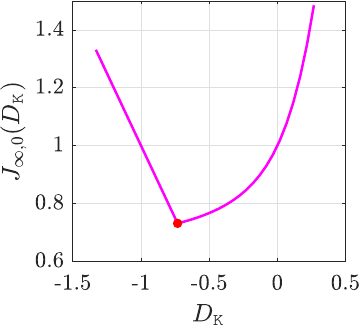}
    \caption{$a=-1, D_\mK^\star=1-\sqrt{3}$}
\end{subfigure}
\hspace{1mm}
\begin{subfigure}{0.32\textwidth}
    \includegraphics[width =0.82\textwidth]{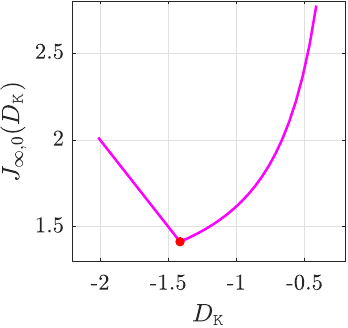}
    \caption{$a=0,D_\mK^\star=-\sqrt{2}$}
\end{subfigure}
\hspace{1mm}
\begin{subfigure}{0.32\textwidth}
    \includegraphics[width =0.85\textwidth]{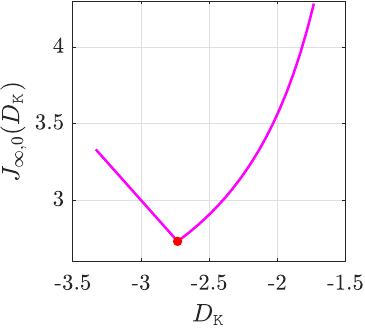}
    \caption{$a=1,D_\mK^\star=-1-\sqrt{3}$}
\end{subfigure}

    \caption{Landscape of ${J}_{\infty,0}(D_\mK)$ for static output feedback $u(t)=D_\mK y(t)$ in \Cref{example:optimal-Hinf-policy}, where the red points highlight the globally optimal policy $D_\mK^\star$ that is a nonsmooth stationary point.}
    \label{fig:Hinf-3plot-SF-non-smooth}
\end{figure}

To summarize, for any $\gamma > \sqrt{a^2 + 2} + a$, the Riccati equation \Cref{eq:riccati-hinf} has stabilizing solutions \Cref{eq:example-Hinf-Riccati-a,eq:example-Hinf-Riccati-Y} that satisfies $\rho(X_{\infty}Y_\infty) < \gamma^2$. Therefore, a suboptimal $\mathcal{H}_\infty$ policy that ensure $J_{\infty,n}(\mK)$ is in the form of
\begin{equation} \label{eq:hinf-suboptimal-example}
        \begin{aligned}
             \dot{\xi}(t) &= (a+\gamma^{-2}X_{\infty})\xi(t) +  u(t) + (1 - \gamma^{-2}Y_{\infty}X_{\infty})^{-1}Y_{\infty}(y(t) - \xi(t)) \\
            u(t) &= -X_{\infty}\xi(t).
        \end{aligned}
    \end{equation}
Let $\gamma \to \sqrt{a^2 + 2} + a$, we have the globally optimal $\mathcal{H}_\infty$ cost is
$
\gamma_{\mathrm{opt}} = \sqrt{a^2 + 2} + a,
$
for which the optimal controller becomes a static output feedback $D_\mK^\star=-X_{\infty}$ (see \cite[Example 16.1]{zhou1996robust}):
$$
u(t) = -X_{\infty}y(t) = -\frac{a + \sqrt{a^2 + 1 - \gamma_{\mathrm{opt}}^{-2}}}{1- \gamma_{\mathrm{opt}}^{-2}} y(t)=-\gamma_{\mathrm{opt}}y(t).
$$
Thus, an optimal policy of the $\mathcal{H}_\infty$ robust control \cref{eq:Hinf_policy_optimization} with data \cref{eq:appendix_example} is static, for all $a \in \mathbb{R}$.
 \hfill $\square$
\end{example}

Let us consider three different values $a =0, a = -1$ and $a = 1$ in \Cref{example:optimal-Hinf-policy}. The corresponding optimal cost $\gamma_{\mathrm{opt}}$ and the optimal policies $D_\mK^\star$ are listed as follows:
\begin{itemize}
    \item If $a = 0$, we have $\gamma_{\mathrm{opt}}=\sqrt{2}$ and $D_\mK^\star=-\sqrt{2}$.
    \item If $a = -1$, we have $\gamma_{\mathrm{opt}}=\sqrt{3}-1$ and $D_\mK^\star=1-\sqrt{3}$.
    \item If $a = 1$, we have $\gamma_{\mathrm{opt}}=\sqrt{3}+1$ and $D_\mK^\star=-1-\sqrt{3}$.
\end{itemize}
\Cref{fig:Hinf-3plot-SF-non-smooth} and \Cref{fig:Hinf-3plot-OF-non-smooth} illustrate the landscape of $\mathcal{H}_\infty$ cost for the cases of static output feedback and dynamic output feedback, i.e., $J_{\infty,0}(D_\mK)$ and $J_{\infty,1}(\mK)$, respectively.
In \Cref{fig:Hinf-3plot-SF-non-smooth}, $J_{\infty,0}(D_\mK)$ is plotted as a function of $D_\mK$ around the globally optimal policy $D_\mK^\star$; in \Cref{fig:Hinf-3plot-OF-non-smooth}, $J_{\infty,1}(\mK)$ is plotted as a function of $(B_\mK,C_\mK)$ around $(0,0)$ by fixing $D_\mK=D_\mK^\star$ and $A_\mK=-1$. It is clear that these $\mathcal{H}_\infty$ cost functions are all non-smooth (the nonsmooth points are highlighted in red).


\begin{figure}
\centering
\begin{subfigure}{0.32\textwidth}
    \includegraphics[width =\textwidth]{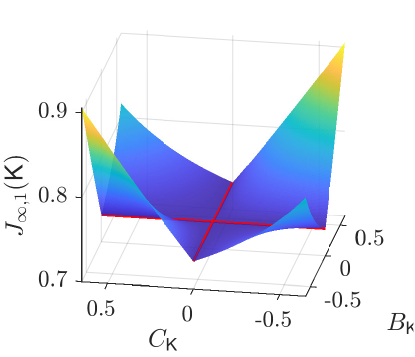}
    \caption{$a=-1,\gamma_\mathrm{opt}=\sqrt{3}-1$}
\end{subfigure}
    \hspace{1mm}
\begin{subfigure}{0.32\textwidth}
    \includegraphics[width =\textwidth]{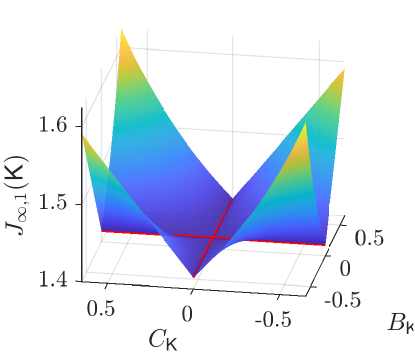}
    \caption{$a=0,\gamma_\mathrm{opt}=\sqrt{2}$}
\end{subfigure}
\hspace{1mm}
\begin{subfigure}{0.32\textwidth}
    \includegraphics[width =\textwidth]{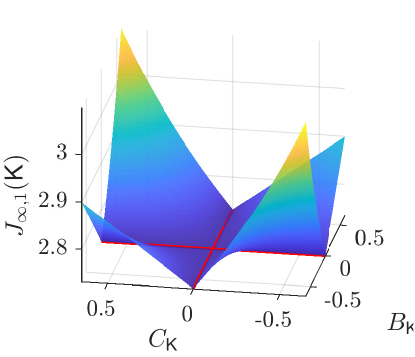}
    \caption{$a=1,\gamma_\mathrm{opt}=\sqrt{3}+1$}
\end{subfigure}

    \caption{Landscape of ${J}_{\infty,1}(\mK)$ for dynamic output feedback in \Cref{example:optimal-Hinf-policy}, where we fix $D_\mK=D_\mK^\star$ and $A_\mK=-1$ and change $B_{\mK} \in (-0.6,0.6)$, $C_{\mK} \in (-0.6,0.6)$. The red line highlights the nonsmooth points. }
    \label{fig:Hinf-3plot-OF-non-smooth}
\end{figure}

\begin{example}[Globally optimal $\mathcal{H}_\infty$ policies that are degenerate] \label{example:Hinf_degenerate_global_opt}
Consider the same $\mathcal{H}_\infty$ control instance in \Cref{example:Hinf-nonsmooth}.
%
  As discussed in \Cref{example:optimal-Hinf-policy}, via analyzing the limiting behavior in \Cref{theorem:Hinf-riccati}, we can show that a globally optimal $\mathcal{H}_\infty$ controller in this instance is achieved by a static output feedback policy
  \begin{equation} \label{eq:example-static-hinf}
  u(t) = D_\mK^\star y(t), \quad \text{with} \quad D_\mK^\star = 1 - \sqrt{3},
  \end{equation}
  and the globally optimal $\mathcal{H}_\infty$ cost is $J^\star_{\infty,1} = \sqrt{3}-1$.
  {One can also verify zero is an element of its Clarke subdifferential at $D_\mK^\star$, i.e., $0\in\partial J(D_\mK^\star)$. Indeed, the closed-loop transfer function reads as
  \[
  \mathbf{T}_{zd}(D_\mK^\star,j\omega)=\frac{1}{\sqrt{3} + j\omega}\begin{bmatrix}
      1 & 1-\sqrt{3}\\
      1-\sqrt{3} & (1 - \sqrt{3})(1+j\omega)
  \end{bmatrix}
  \]
  with its singular values given by
  \[
  \sqrt{3}-1, \quad \text{and} \quad \sqrt{\frac{1}{\omega^2 + 3}}.
  \]
  Thus we have $\sigma_\mathrm{max}(\mathbf{T}_{zd}(D_\mK^\star,j\omega))=\sqrt{3}-1, \forall \omega \in \mathbb{R}.$
  Using the subdifferential formula given in \Cref{appendix:clarke-hinf}, one can verify that $\partial J(D_\mK^\star)$ contains a positive subgradient $\approx 0.0359$ for $\omega=0.3$, and a negative subgradient $\approx -0.0838$ for $\omega=0.5$.
  The Clarke stationarity of $D_\mK^\star$ follows from the convexity of the subdifferential.}

  We now discuss different state-space realizations of this globally optimal controller \cref{eq:example-static-hinf}.

  \begin{itemize}
      \item \textbf{Uncontrollable and unobservable cases:}
For any $a < 0$, the following uncontrollable and unobservable policy
  \begin{equation} \label{eq:example-global-Hinf}
    A_\mK = a, \quad B_\mK = 0, \quad C_\mK = 0, \quad D_\mK^\star = 1 - \sqrt{3},
  \end{equation}
  is also globally optimal. 
%
The closed-loop system matrices with \cref{eq:example-global-Hinf} read as
\begin{equation*}
    A_{\mathrm{cl}} = \begin{bmatrix}
        -\sqrt{3} & 0 \\ 0 & a
    \end{bmatrix}, \quad
    B_{\mathrm{cl}} = \begin{bmatrix}
        1 & 1 - \sqrt{3} \\ 0 & 0
    \end{bmatrix}, \quad
    C_{\mathrm{cl}} = \begin{bmatrix}
        1 & 0 \\ 1 - \sqrt{3} & 0
    \end{bmatrix}, \quad
    D_{\mathrm{cl}} = \begin{bmatrix}
        0 & 0 \\ 0 & 1 - \sqrt{3}
    \end{bmatrix}.
\end{equation*}
Then, \Cref{eq:Hinf-Bilinear-a} with $\gamma = \sqrt{3}-1$ is equivalent to
\begingroup
    \setlength\arraycolsep{2pt}
\def\arraystretch{0.95}
$$
\begin{bmatrix}
    2\begin{bmatrix}
        -\sqrt{3} & 0 \\ 0 & a
    \end{bmatrix}\begin{bmatrix}
        p_1 & p_2 \\ p_2 & p_3
    \end{bmatrix} & \begin{bmatrix}
        p_1 & p_2 \\ p_2 & p_3
    \end{bmatrix}\begin{bmatrix}
        1 & 1 - \sqrt{3} \\ 0 & 0
    \end{bmatrix} \\ * & -(\sqrt{3}-1)^2 \begin{bmatrix}
        1 & 0 \\ 0 & 1
    \end{bmatrix}
\end{bmatrix} + \begin{bmatrix}
        \begin{bmatrix}
        1 + (1 - \sqrt{3})^2 & 0 \\ 0 & 0
    \end{bmatrix} & \begin{bmatrix}
        0 & (1 - \sqrt{3})^2 \\ 0 & 0
    \end{bmatrix} \\ * & \begin{bmatrix}
        0 & 0 \\ 0 & (1 - \sqrt{3})^2
    \end{bmatrix}
    \end{bmatrix} \preceq 0,
$$
    \endgroup
which is the same as
\begingroup
    \setlength\arraycolsep{2pt}
\def\arraystretch{0.95}
$$
\begin{bmatrix}
  1+(1 - \sqrt{3})^2 - 2\sqrt{3}p_1  & ap_2 - \sqrt{3}p_2 & p_1  & p_1(1 - \sqrt{3}) + (1 - \sqrt{3})^2 \\
   * & 2ap_3 & p_2 & p_2(1 - \sqrt{3})  \\
   * & *& -(\sqrt{3}-1)^2 &  0 \\
   * & * & * & 0
    \end{bmatrix} \preceq 0.
$$
    \endgroup
Since the (4,4) element is zero, the entire column must be zero, i.e.,
$
p_2 = 0, \, p_1 = \sqrt{3}-1.
$
Then the LMI is simplified as
$$
\begin{bmatrix}
  -1  & 0 & \sqrt{3}-1 & 0 \\
   * & 2ap_3 & 0 & 0  \\
   * & *& -(\sqrt{3}-1)^2 &  0 \\
   * & * & * & 0
    \end{bmatrix} \preceq 0,
$$
which is satisfied for any $p_3 \geq 0$ since $a <0$.

Since we always have $p_2 = 0$, this policy \cref{eq:example-global-Hinf} is degenerate.

\item  \textbf{Controllable but unobservable cases:} For any $a < 0, b \neq 0$, the following controllable but unobservable policy
  \begin{equation} \label{eq:example-global-Hinf-con-unobs}
    A_\mK = a, \quad B_\mK = b, \quad C_\mK = 0, \quad D_\mK^\star = 1 - \sqrt{3},
  \end{equation}
  is also globally optimal. The closed-loop system matrices with \cref{eq:example-global-Hinf} read as
\begin{equation*}
    A_{\mathrm{cl}} = \begin{bmatrix}
        -\sqrt{3} & 0 \\ b & a
    \end{bmatrix}, \quad
    B_{\mathrm{cl}} = \begin{bmatrix}
        1 & 1 - \sqrt{3} \\ 0 & b
    \end{bmatrix}, \quad
    C_{\mathrm{cl}} = \begin{bmatrix}
        1 & 0 \\ 1 - \sqrt{3} & 0
    \end{bmatrix}, \quad
    D_{\mathrm{cl}} = \begin{bmatrix}
        0 & 0 \\ 0 & 1 - \sqrt{3}
    \end{bmatrix}.
\end{equation*}
Then, \Cref{eq:Hinf-Bilinear-a} with $\gamma = \sqrt{3}-1$ is equivalent to
$$
\begin{bmatrix}
    \begin{bmatrix}
        -\sqrt{3} & 0 \\ b & a
    \end{bmatrix}\begin{bmatrix}
        p_1 & p_2 \\ p_2 & p_3
    \end{bmatrix} + * & \begin{bmatrix}
        p_1 & p_2 \\ p_2 & p_3
    \end{bmatrix}\begin{bmatrix}
        1 & 1 - \sqrt{3} \\ 0 & b
    \end{bmatrix} \\ * & -(\sqrt{3}-1)^2 \begin{bmatrix}
        1 & 0 \\ 0 & 1
    \end{bmatrix}
\end{bmatrix} + \begin{bmatrix}
        \begin{bmatrix}
        1 + (1 - \sqrt{3})^2 & 0 \\ 0 & 0
    \end{bmatrix} & \begin{bmatrix}
        0 & (1 - \sqrt{3})^2 \\ 0 & 0
    \end{bmatrix} \\ * & \begin{bmatrix}
        0 & 0 \\ 0 & (1 - \sqrt{3})^2
    \end{bmatrix}
    \end{bmatrix} \preceq 0,
$$
which is the same as
\begingroup
    \setlength\arraycolsep{2pt}
\def\arraystretch{0.95}
$$
\begin{bmatrix}
  1+(1 - \sqrt{3})^2 - 2\sqrt{3}p_1 +2bp_2  & ap_2 - \sqrt{3}p_2+bp_3 & p_1 & p_1(1 - \sqrt{3}) + bp_2+ (1 - \sqrt{3})^2 \\
   * & 2ap_3 & p_2 & p_2(1 - \sqrt{3})+bp_3  \\
   * & *& -(\sqrt{3}-1)^2 &  0 \\
   * & * & * & 0
    \end{bmatrix} \preceq 0.
$$
\endgroup
Since the (4,4) element is zero, we must have
$$
\begin{cases}
    p_2(1 - \sqrt{3})+bp_3 &= 0, \\
p_1(1 - \sqrt{3}) + bp_2+ (1 - \sqrt{3})^2 &= 0,
\end{cases} \quad \Rightarrow \quad \begin{cases}
     p_3 &= \displaystyle \frac{\sqrt{3}-1}{b}p_2, \\
p_1 &= \displaystyle  \frac{b}{\sqrt{3}-1}p_2 + \sqrt{3}-1.
\end{cases}
$$
Then, the LMI is simplified as
$$
\begin{bmatrix}
  - 1 + \displaystyle \frac{2bp_2}{1-\sqrt{3}}   & (a-1)p_2 & \displaystyle  \frac{b}{\sqrt{3}-1}p_2 + \sqrt{3}-1 & 0 \\
   * & \displaystyle  \frac{2a(\sqrt{3}-1)}{b}p_2 & p_2 & 0  \\
   * & *& -(\sqrt{3}-1)^2 &  0 \\
   * & * & * & 0
    \end{bmatrix} \preceq 0.
$$
The determinant of the 3-by-3 non-zero block is
$$
\begin{aligned}
  & p_2^2\times \big((4- 2\sqrt{3})a^2 + (10 - 6\sqrt{3})a +7 - 4\sqrt{3}\big) \\
= \; & p_2^2\times (4- 2\sqrt{3})\left(a-\frac{\sqrt{3}-1}{2}\right)^2  \\
\leq \; &0.
\end{aligned}
$$
Therefore, we must have $p_2 = 0$ since $a < 0$.

In summary, for the controllable but unobservable policy \cref{eq:example-global-Hinf-con-unobs}, the LMI \Cref{eq:Hinf-Bilinear-a} with $\gamma = \sqrt{3}-1$ admits a unique positive semidefinite solution
$$
P = \begin{bmatrix}
    \sqrt{3}-1&0\\ 0&0
\end{bmatrix} \succeq 0.
$$
Thus, this policy \cref{eq:example-global-Hinf-con-unobs} is degenerate.

\item  \textbf{Uncontrollable but observable cases:} For any $a < 0, c \neq 0$, the following uncontrollable but observable policy
  \begin{equation} \label{eq:example-global-Hinf-uncon-obs}
    A_\mK = a, \quad B_\mK = 0, \quad C_\mK = c, \quad D_\mK^\star = 1 - \sqrt{3},
  \end{equation}
  is also globally optimal. The closed-loop system matrices with \cref{eq:example-global-Hinf-uncon-obs} read as
\begin{equation*}
    A_{\mathrm{cl}} = \begin{bmatrix}
        -\sqrt{3} & c \\ 0 & a
    \end{bmatrix}, \quad
    B_{\mathrm{cl}} = \begin{bmatrix}
        1 & 1 - \sqrt{3} \\ 0 & 0
    \end{bmatrix}, \quad
    C_{\mathrm{cl}} = \begin{bmatrix}
        1 & 0 \\ 1 - \sqrt{3} & c
    \end{bmatrix}, \quad
    D_{\mathrm{cl}} = \begin{bmatrix}
        0 & 0 \\ 0 & 1 - \sqrt{3}
    \end{bmatrix}.
\end{equation*}
Then, \Cref{eq:Hinf-Bilinear-a} with $\gamma = \sqrt{3}-1$ is equivalent to
\begingroup
    \setlength\arraycolsep{2pt}
\def\arraystretch{0.95}
$$
\begin{bmatrix}
    \begin{bmatrix}
        -\sqrt{3} & c \\ 0 & a
    \end{bmatrix}\begin{bmatrix}
        p_1 & p_2 \\ p_2 & p_3
    \end{bmatrix} + * & \begin{bmatrix}
        p_1 & p_2 \\ p_2 & p_3
    \end{bmatrix}\begin{bmatrix}
        1 & 1 - \sqrt{3} \\ 0 & 0
    \end{bmatrix} \\ * & -(\sqrt{3}-1)^2 \begin{bmatrix}
        1 & 0 \\ 0 & 1
    \end{bmatrix}
\end{bmatrix} + \begin{bmatrix}
        \begin{bmatrix}
        1 + (1 - \sqrt{3})^2 & c(1-\sqrt{3}) \\ * & c^2
    \end{bmatrix} & \begin{bmatrix}
        0 & (1 - \sqrt{3})^2 \\ 0 & c(1-\sqrt{3}
    \end{bmatrix} \\ * & \begin{bmatrix}
        0 & 0 \\ 0 & (1 - \sqrt{3})^2
    \end{bmatrix}
    \end{bmatrix} \preceq 0,
$$
\endgroup
which is the same as
\begingroup
    \setlength\arraycolsep{2pt}
\def\arraystretch{0.95}
$$
\begin{bmatrix}
  1+(1 - \sqrt{3})^2 - 2\sqrt{3}p_1  & (a - \sqrt{3})p_2 + c(p_1 + 1- \sqrt{3}) & p_1 & (\sqrt{3} - 1)^2 - p_1(\sqrt{3} - 1) \\
   * & c^2 + 2(ap_3+cp_2) & p_2 & (p_2+c)(1 - \sqrt{3})  \\
   * & *& -(\sqrt{3}-1)^2 &  0 \\
   * & * & * & 0
    \end{bmatrix} \preceq 0.
$$
\endgroup
Similarly, we must have
$
p_2 = -c, \quad p_1 = \sqrt{3}-1.
$
The LMI is simplified as
$$
\begin{bmatrix}
  -1  & (\sqrt{3}-a)c  & \sqrt{3}-1 & 0 \\
   * & 2ap_3-c^2 & -c & 0  \\
   * & *& -(\sqrt{3}-1)^2 &  0 \\
   * & * & * & 0
    \end{bmatrix} \preceq 0.
$$
The determinant of the 3-by-3 nonzero block is (recall that $a < 0$)
$$
\begin{aligned}
&c^2((4- 2\sqrt{3})a^2 + (10 - 6\sqrt{3})a  + 7 - 4\sqrt{3}) \\
= \; & c^2\times (4- 2\sqrt{3})\left(a-\frac{\sqrt{3}-1}{2}\right)^2 \leq 0
\end{aligned}
$$
which can not be feasible unless $c = 0$.
%
In summary, for the uncontrollable but observable policy \cref{eq:example-global-Hinf-uncon-obs}, the LMI \Cref{eq:Hinf-Bilinear-a} with $\gamma = \sqrt{3}-1$ is infeasible. Thus, this policy \cref{eq:example-global-Hinf-uncon-obs} is also degenerate. 


  \end{itemize}
  To conclude this example, we have verified that all the following  state-space realizations of the static optimal policy \cref{eq:example-static-hinf}
$$
\begin{aligned}
    A_\mK &= a, \quad B_\mK = b, \quad  C_\mK = 0, \quad  D_\mK = 1 - \sqrt{3} \\
    A_\mK &= a, \quad B_\mK = 0, \quad  C_\mK = c, \quad  D_\mK = 1 - \sqrt{3}
\end{aligned}
$$
with $a < 0, b \neq 0, c\neq 0$ are all globally optimal and degenerate for the $\mathcal{H}_\infty$ instance \Cref{example:Hinf-nonsmooth}.\hfill \qed
\end{example}
%
%
%


\subsection{Boundary behavior of $\mathcal{H}_\infty$ cost} 
\label{subsection:hinf-boundary-behavior}

We here give a few more details regarding the boundary behavior of the $\mathcal{H}_\infty$ case in \Cref{example:non-coercivity-of-Hinf-cost}. In particular, we consider two cases: 1) case 1: we have a sequence of stabilizing policies that approach to a boundary point, but the corresponding $\mathcal{H}_\infty$ costs converge to a finite value; 2) case 2: we have another sequence of stabilizing policies that approach to the same boundary point, but the corresponding $\mathcal{H}_\infty$ values diverge to infinity. 

\begin{figure}
\centering
\setlength{\abovecaptionskip}{0pt}
    \includegraphics[width =0.5\textwidth]{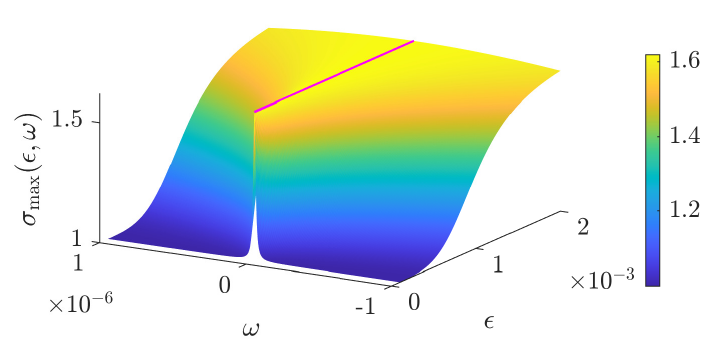}
    \caption{The shape of $\sigma_\mathrm{max}(\mathbf{T}_{zd}(\mK_\epsilon^{(1)},j\omega))$ in \Cref{eq:hinf-ex-singular-value}. The red line denotes the maximum for small $\epsilon$, which is achieved in $\omega = 0$.
    }
\label{fig:Hinf_boundary_ex1}
\end{figure}

\textbf{Case 1}: Let us first consider a sequence of parameterized stabilizing policies of the form
\begin{equation} \label{eq:appendix_ex_hinf_a}
\mK_\epsilon^{(1)} = \begin{bmatrix}0 & \epsilon \\ -\epsilon & 0 \end{bmatrix}\in \mathcal{C}_1, \quad  \forall \epsilon \neq 0,
\end{equation}
which is the same as \cref{eq:hinf_ex_a} in the main text. The corresponding closed-loop transfer function from $d(t)$ to $z(t)$ become
$$
\mathbf{T}_{zd}(\mK_\epsilon^{(1)},s) = \frac{1}{s^2+s+\epsilon}\begin{bmatrix}
    s & -\epsilon^2 \\ -\epsilon^2 & -\epsilon^2 (s + 1)
\end{bmatrix}.
$$
Using the following closed-form formula for the maximum singular value of a $2\times2$ matrix $\mathbf{T}_{zd}$:
\begin{equation}\label{eq:sigma_max}
    \sigma_\mathrm{max}(\mathbf{T}_{zd})
    =\left(\frac{\mathrm{tr}(\mathbf{T}_{zd}^\her\mathbf{T}_{zd})+\sqrt{\mathrm{tr}(\mathbf{T}_{zd}^\her\mathbf{T}_{zd})^2-4\mathrm{det}(\mathbf{T}_{zd}^\her\mathbf{T}_{zd})}}{2}\right)^{1/2},
\end{equation}
the maximum singular value of $\mathbf{T}_{zd}(\mK_\epsilon^{(1)},j\omega)$ reads as
\begin{equation} \label{eq:hinf-ex-singular-value}
\sigma^2_\mathrm{max}(\mathbf{T}_{zd}(\mK_\epsilon^{(1)},j\omega))
=
\frac{(\epsilon^4+1)\omega^2+3\epsilon^4+\sqrt{(\epsilon^8-2\epsilon^4+1)\omega^4+(6\epsilon^8+8\epsilon^6+2\epsilon^4)\omega^2+5\epsilon^8}}{2\omega^4+(2-4\epsilon^2)\omega^2+2\epsilon^4}. 
\end{equation}

For this function $\sigma_\mathrm{max}(\mathbf{T}_{zd}(\mK_\epsilon^{(1)},j\omega))$, its behavior for small $\epsilon$ is is shown in \Cref{fig:Hinf_boundary_ex1}. Indeed, we claim the result in \Cref{fact:hinf-example}.  Therefore, we have
\begin{align*}
    \lim_{\epsilon\downarrow 0}J_{\infty,1}(\mK_\epsilon^{(1)}) &=
    \lim_{\epsilon\downarrow  0, \epsilon\leq\epsilon_0} \left(\frac{3\epsilon^4+\sqrt{5\epsilon^8}}{2\epsilon^4}\right)^{1/2} = \left(\frac{3+\sqrt{5}}{2}\right)^{1/2}= \frac{1 + \sqrt{5}}{2},
\end{align*}
which confirms the statement of finite value in  \Cref{example:non-coercivity-of-Hinf-cost}.

\begin{fact} \label{fact:hinf-example}
    Consider the sequence of stabilizing policies \cref{eq:appendix_ex_hinf_a}. The $\mathcal{H}_\infty$ cost of $\mK_\epsilon^{(1)}$, defined as
$$J_{\infty,1} (\mK_\epsilon^{(1)})=\sup_{\omega\in\mathbb{R}}\sigma_{\mathrm{max}}(\mathbf{T}_{zd}(\mK_\epsilon^{(1)},j\omega))$$
is achieved at $\omega=0$ when $0<\epsilon<\epsilon_0$ for sufficiently small $\epsilon_0>0$. 
\end{fact}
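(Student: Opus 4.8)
\textbf{Proof proposal for Fact~\ref{fact:hinf-example}.}

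The plan is to analyze the function $g(\omega,\epsilon) := \sigma_{\mathrm{max}}^2(\mathbf{T}_{zd}(\mK_\epsilon^{(1)},j\omega))$ given in closed form by~\cref{eq:hinf-ex-singular-value}, and show that for all sufficiently small $\epsilon>0$ the supremum over $\omega \in \mathbb{R}$ is attained at $\omega = 0$. First I would substitute $\omega=0$ into~\cref{eq:hinf-ex-singular-value} to get $g(0,\epsilon) = \frac{3\epsilon^4 + \sqrt{5\epsilon^8}}{2\epsilon^4} = \frac{3+\sqrt5}{2}$, a constant independent of $\epsilon$; this is the candidate maximum value. The task then reduces to proving $g(\omega,\epsilon) \le \frac{3+\sqrt5}{2}$ for all $\omega$ once $\epsilon$ is small. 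A convenient change of variables is $u = \omega^2 \ge 0$; then $g$ becomes a ratio of (an affine function of $u$ plus a square root of a quartic-in-$u$-looking expression) over the quadratic $2u^2 + (2-4\epsilon^2)u + 2\epsilon^4$. I would clear the denominator and isolate the square root, reducing the desired inequality $g(u,\epsilon) \le \frac{3+\sqrt5}{2}$ to a polynomial inequality in $u$ and $\epsilon$ after squaring (taking care that both sides are nonnegative, which holds here since the denominator is positive for $\epsilon$ small and $u \ge 0$, being a quadratic in $u$ with positive leading coefficient whose discriminant $(2-4\epsilon^2)^2 - 16\epsilon^4 = 4 - 16\epsilon^2$ is negative-free... actually positive-definite for $\epsilon^2 < 1/4$, wait: discriminant $= 4 - 16\epsilon^2 > 0$ for small $\epsilon$, but the two roots are $u = \frac{-(2-4\epsilon^2)\pm\sqrt{4-16\epsilon^2}}{4}$, both of which are $\le 0$ for small $\epsilon$ since $2 - 4\epsilon^2 > 0$ and $\sqrt{4-16\epsilon^2} < 2 - 4\epsilon^2 \iff 4 - 16\epsilon^2 < 4 - 16\epsilon^2 + 16\epsilon^4$, true; hence the quadratic is positive for all $u \ge 0$).

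The cleanest route, rather than a brute-force global polynomial argument, is a two-regime split on $u$. For $u$ bounded away from $0$ (say $u \ge \delta$ for a fixed $\delta > 0$), the numerator of $g$ is $O(\epsilon^4)$ while the denominator is bounded below by a positive constant depending on $\delta$; hence $g(u,\epsilon) \to 0$ uniformly on $\{u \ge \delta\}$ as $\epsilon \downarrow 0$, so certainly $g < \frac{3+\sqrt5}{2}$ there for small $\epsilon$. For the remaining regime $0 \le u \le \delta$, I would rescale $u = \epsilon^2 t$ with $t \in [0, \delta/\epsilon^2]$ — but this interval grows, so instead a better scaling is $u = \epsilon^{4/3} t$ or simply track the leading behavior: near $u = 0$ the numerator is $3\epsilon^4 + O(u)$ and $\sqrt{5\epsilon^8 + (6\epsilon^8+8\epsilon^6+2\epsilon^4)u + \cdots}$, and the denominator is $2\epsilon^4 + (2-4\epsilon^2)u + 2u^2$. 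The key observation is that as $u$ increases from $0$, the denominator grows linearly with slope $\approx 2$ while the "extra" terms in the numerator from increasing $u$ grow more slowly (the $\sqrt{\cdot}$ contributes $O(\sqrt{\epsilon^6 u}) = O(\epsilon^3\sqrt u)$ and the affine part contributes $O(u)$ with a small coefficient $\epsilon^4$), so $g$ is actually decreasing in $u$ at $u=0^+$ and stays below its value at $u=0$ throughout the small-$u$ window. Making this rigorous: compute $\partial_u g(0,\epsilon)$ (or equivalently show $g(u,\epsilon) \cdot [\text{denominator}] - g(0,\epsilon)\cdot[\text{denominator at }u=0]$ has the right sign) and combine with the far-regime estimate via compactness of $[0,\delta]$ and continuity.

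The main obstacle I anticipate is the bookkeeping in the near-$u=0$ regime: after squaring to remove the radical, the polynomial inequality in $(u,\epsilon)$ will have several terms of different orders in $\epsilon$, and one must verify the sign does not flip for some intermediate scaling of $u$ relative to $\epsilon$ (e.g. $u \sim \epsilon^{4/3}$, $u \sim \epsilon$, $u \sim \epsilon^{2}$). A clean way to dodge most of this is: since $g(u,\epsilon) = \|\mathbf{T}_{zd}(\mK_\epsilon^{(1)}, j\sqrt u)\|$-squared and the far regime gives $g \to 0$ uniformly outside any fixed neighborhood of $0$, while on a fixed neighborhood $[0,\delta]$ one can factor out $\epsilon^4$ from both numerator-building blocks appropriately and pass to the limit to see the limiting profile has its max at $u=0$ equal to $\frac{3+\sqrt5}{2}$; a uniform-convergence / $\epsilon$-perturbation argument then upgrades this to: for $\epsilon$ small the max over $[0,\delta]$ is attained near $0$ and equals $\frac{3+\sqrt5}{2} + o(1)$, but since $g(0,\epsilon)$ is \emph{exactly} $\frac{3+\sqrt5}{2}$ for every $\epsilon$, and the limiting profile is strictly less than $\frac{3+\sqrt5}{2}$ for $u>0$, one concludes the sup is attained exactly at $\omega = 0$. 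I would present the far-regime bound and the limiting-profile computation explicitly and relegate the elementary monotonicity check near $0$ to a short direct estimate on $\partial_u g(0,\epsilon)$.
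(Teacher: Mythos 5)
There is a genuine gap, concentrated in exactly the regime where the difficulty of this fact lives. First, your far-field estimate rests on a false claim: for $u=\omega^2\geq\delta$ the numerator of $g(u,\epsilon)$ in \cref{eq:hinf-ex-singular-value} is \emph{not} $O(\epsilon^4)$ — it contains $(\epsilon^4+1)u$ and $\sqrt{(\epsilon^8-2\epsilon^4+1)u^2+\cdots}$, so as $\epsilon\downarrow 0$ the pointwise limit of $g(u,\epsilon)$ is $\tfrac{2u}{2u^2+2u}=\tfrac{1}{u+1}$, not $0$ (this is just $\sigma_{\max}^2$ of the limiting transfer function $\tfrac{1}{s+1}\operatorname{diag}(1,0)$). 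The conclusion of that step is salvageable since $\tfrac{1}{u+1}\leq 1<\tfrac{3+\sqrt5}{2}$, but the stated reason is wrong. Second, and more seriously, the near-zero regime cannot be closed by the tools you propose. The convergence $g(\cdot,\epsilon)\to\tfrac{1}{u+1}$ is \emph{not} uniform on any neighborhood of $u=0$ — the whole point of \Cref{example:non-coercivity-of-Hinf-cost} is that $g(0,\epsilon)\equiv\tfrac{3+\sqrt5}{2}$ while the pointwise limit at $u=0^+$ is $1$ — so the ``uniform-convergence / $\epsilon$-perturbation argument'' you invoke to upgrade the limiting profile is unavailable. Likewise, a sign check on $\partial_u g(0,\epsilon)$ controls only an $\epsilon$-dependent (and here, shrinking) neighborhood of $u=0$; it does not exclude $g$ exceeding $g(0,\epsilon)$ at an intermediate scale such as $u\sim\epsilon^{4/3}$ or $u\sim\epsilon^2$, which you correctly identify as the obstacle but then defer rather than resolve. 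A two-regime split with a fixed $\delta$ simply does not see these scales.

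The paper avoids all of this by proving a single global statement: writing $f(x)=g(x,\epsilon)$ with $x=\omega^2$ and splitting $f=f_1+f_2$ into the rational part and the radical part, it shows each quotient-rule numerator $p_i'q_i-p_iq_i'$ is $\leq 0$ for \emph{all} $x\geq 0$ once $\epsilon$ is small (e.g.\ $p_1'q_1-p_1q_1'=2\epsilon^8+12\epsilon^6-2\epsilon^4(x^2+6x+2)-x^2<0$), so $f$ is nonincreasing on $[0,\infty)$ and the supremum is attained at $x=0$. That monotonicity argument handles every intermediate scaling of $x$ against $\epsilon$ at once; to repair your proof you would need to establish something of comparable global strength on the window $[0,\delta]$, at which point the regime split buys you nothing.
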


\noindent \textbf{Proof}:
From \cref{eq:hinf-ex-singular-value}, for notational simplicity, let us choose $x = \omega^2$ and define
$$
\begin{aligned}
f(x)=&\frac{(\epsilon^4+1)x+3\epsilon^4+\sqrt{(\epsilon^8-2\epsilon^4+1)x^2+2\epsilon^4(3\epsilon^4+4\epsilon^2+1)x+5\epsilon^8}}{2x^2+(2-4\epsilon^2)x+2\epsilon^4}=\sigma_\mathrm{max}^2\left(\mathbf{T}_{zd}(\mK_\epsilon^{(1)},j\omega)\right).
\end{aligned}
$$
We can prove that when $\epsilon>0$ is small enough, $f(x)$ is a monotonically non-increasing function, i.e.
\begin{equation} \label{eq:decreasing-function}
f'(x) \leq 0, \qquad \forall x \geq 0.
\end{equation}
Indeed, to verify \cref{eq:decreasing-function}, let us write
$
f(x) = f_1(x) + f_2(x)
$ with
$$
\begin{aligned}
    f_1(x) &= \frac{(\epsilon^4+1)x+3\epsilon^4}{2x^2+(2-4\epsilon^2)x+2\epsilon^4} = \frac{p_1(x)}{q_1(x)}, \\
    f_2(x)& = \frac{\sqrt{(\epsilon^8-2\epsilon^4+1)x^2+2\epsilon^4(3\epsilon^4+4\epsilon^2+1)x+5\epsilon^8}}{2x^2+(2-4\epsilon^2)x+2\epsilon^4}  = \frac{p_2(x)}{q_2(x)}.
\end{aligned}
$$
Then, there exists an $\epsilon_0>0$ such that  $f_1(x)$ is a monotonically decreasing function, i.e.,
$$
\begin{aligned}
p_1'(x)q_1(x) - p_1(x)q_1'(x) &= 2\epsilon^8 + 12\epsilon^6 - 2\epsilon^4(x^2 + 6x +2) - x^2 \\
&\leq 2\epsilon^8 + 12\epsilon^6 - 4\epsilon^4 - x^2 \\
& < 0, \qquad  \forall x \geq 0, \epsilon \leq \epsilon_0.
\end{aligned}
$$
Similarly, we can verify that $f_2(x)$ is also monotonically decreasing when $\epsilon$ is sufficiently small.
%
Thus, the maximum of $f(x)$ is achieved at $x = 0$: there exists a sufficiently small $\epsilon_0  >0$ such that
$$
J_{\infty,1}^2 (\mK_\epsilon^{(1)}) =\max_x\; f(x) = f(0) = \frac{3\epsilon^4+\sqrt{5\epsilon^8}}{2\epsilon^4} = \left(\frac{1 + \sqrt{5}}{2}\right)^2, \qquad \text{if }\; \epsilon \leq \epsilon_0.
$$


\vspace{2mm}

\textbf{Case 2}: We next consider another sequence of parameterized dynamic policies of the form
\begin{equation} \label{eq:appendix_ex_hinf_b}
\mK_\epsilon^{(2)} = \begin{bmatrix}0 & \epsilon+\epsilon^d \\ -\epsilon & \epsilon^2 \end{bmatrix} \in \mathcal{C}_1, \quad \forall  1 > \epsilon > 0
\end{equation}
where $d>0$ is a positive integer. When $d = 4$, this policy \cref{eq:appendix_ex_hinf_b} is the same as \cref{eq:hinf-boundary-2-maintext} in the main text. The corresponding closed-loop transfer function from $d(t)$ to $z(t)$ is
$$
    \mathbf{T}_{zd}(\mK_\epsilon^{(2)},s) = \frac{1}{s^2 + (1- \epsilon^2)s + \epsilon^{d+1}} \begin{bmatrix} s-\epsilon^2 & -\epsilon^2 -\epsilon^{d+1} \\ -\epsilon^2 -\epsilon^{d+1}  &  -(\epsilon^2 +\epsilon^{d+1}) (s + 1) \end{bmatrix}.
$$
Using \Cref{eq:sigma_max} we obtain its maximum singular value
$$
\sigma_\mathrm{max}(\mathbf{T}_{zd}(\mK_\epsilon^{(2)},j\omega))
=\left(\frac{\mathrm{tr}(\mathbf{T}_{zd}^\her\mathbf{T}_{zd})+\sqrt{\mathrm{tr}(\mathbf{T}_{zd}^\her\mathbf{T}_{zd})^2-4\mathrm{det}(\mathbf{T}_{zd}^\her\mathbf{T}_{zd})}}{2}\right)^{1/2},
$$
where
\begin{align*}
    &\mathrm{tr}(\mathbf{T}_{zd}^\her\mathbf{T}_{zd})=\frac{\epsilon^{2d+2}\omega^2+6\epsilon^{d+3}+2\epsilon^{d+3}\omega^2+4\epsilon^4+\omega^2+3\epsilon^{2d+2}+\epsilon^4\omega^2}{\omega^2-2\epsilon^{d+1}\omega^2+\omega^4+\epsilon^{2d+2}-2\epsilon^2\omega^2+\epsilon^4\omega^2},\\
    &\mathrm{det}(\mathbf{T}_{zd}^\her\mathbf{T}_{zd})=\frac{\epsilon^2(\epsilon+\epsilon^d)^2}{\omega^2-2\epsilon^{d+1}\omega^2+\omega^4+\epsilon^{2d+2}-2\epsilon^2\omega^2+\epsilon^4\omega^2}.
\end{align*}
We find that it is tedious to evaluate the $\mathcal{H}_\infty$ norms $J_{\infty,1}(\mK_2(\epsilon)) =
\sup_{\omega\in\mathbb{R}}\, \sigma_\mathrm{max}(\mathbf{T}_{zd}(\mK_\epsilon^{(2)},j\omega))
$ analytically. Instead, our numerical simulation in \Cref{fig:Hinf_boundary_2_fitting} suggests that for $d \geq 3$, we have
\begin{equation} \label{eq:Hinf_boundary_2}
J_{\infty,1}(\mK_2(\epsilon)) = \sup_{\omega \in \mathbb{R}} \, \sigma_{\mathrm{max}}(\mathbf{T}_{zd}(j\omega,\mK_2(\epsilon))) \approx \frac{2}{\epsilon^{d-1}} \qquad \text{if}\;\; 0 < \epsilon \leq 10^{-1}.
\end{equation}
Thus, the corresponding $\mathcal{H}_\infty$ performance for \cref{eq:appendix_ex_hinf_b} will go to infinity as $\epsilon \to 0$ (i.e., the sequence of the policies goes to the boundary). Note that the dynamic policies in \cref{eq:appendix_ex_hinf_a} and \cref{eq:appendix_ex_hinf_b} go to the same boundary point when $\epsilon \to 0$.
\begin{figure}
    \centering
    \setlength{\abovecaptionskip}{3pt}
    \includegraphics[width = 0.46\textwidth]{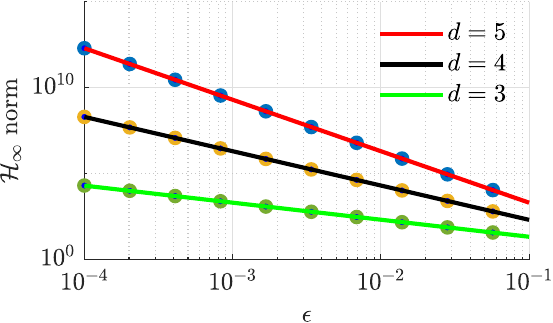}
    \caption{Numerical $\mathcal{H}_\infty$ norms for the policies \cref{eq:appendix_ex_hinf_b}: the dots are numerical values of $\mathcal{H}_\infty$ norms for each point $\epsilon$, and each line represents the least square fitting from those data points, which agree with \cref{eq:Hinf_boundary_2}.}
    \label{fig:Hinf_boundary_2_fitting}
\end{figure}

\section{Technical Proofs} \label{appendix:technical-proofs}

In this section, we provide some technical proofs that are missing in the main text.

\subsection{Auxiliary technical lemmas}
{
Define the set of stable matrices $\mathcal{S}_n = \{A \in \mathbb{R}^{n \times n}\mid \Re(\lambda_i(A)) < 0, i =1, \ldots, n\}$. It is not hard to see that $\partial\mathcal{S}_n$ consists of $n\times n$ real matrices whose set of eigenvalues has a nonempty intersection with the imaginary axis.

\begin{lemma} \label{lemma:Boundary-minimal-LTI-systems}
    Consider a sequence of stable LTI systems $\mathbf{G}_t = C_t(sI - A_t)^{-1}B_t + D_t$ with $A_t \in \mathcal{S}_n$, $B_t \in \mathbb{R}^{n \times m}$, $C_t \in \mathbb{R}^{p \times n}$ and $D_t\in\mathbb{R}^{p\times m}$. Suppose $\lim_{t \to \infty}(A_t, B_t,C_t,D_t) = (\hat{A}, \hat{B},\hat{C},\hat{D})$ with $\hat{A} \in \partial \mathcal{S}_n$, and
    the transfer matrix
    \[
    \hat{\mathbf{G}}(s) = \hat{C}(sI-\hat{A})^{-1}\hat{B} + \hat{D}
    \]
    has at least one pole that lies on the imaginary axis. Then
    \[
    \lim_{t\rightarrow\infty} \|\mathbf{G}_t\|_{\mathcal{H}_\infty} = +\infty.
    \]
    In addition, if $D_t=0$ for all $t=1,2,\ldots$, then
    \[
    \lim_{t\rightarrow\infty}
    \|\mathbf{G}_t\|_{\mathcal{H}_2}=+\infty.
    \]
\end{lemma}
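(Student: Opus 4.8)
The plan is to exploit the fact that a pole of the limiting transfer matrix on the imaginary axis forces the frequency response to blow up nearby, and that the $\mathcal{H}_\infty$ norm dominates the peak of the frequency response. First I would fix $j\omega_0$ (or $\omega_0=\infty$, handled by a change of variable or by noting $D_t\to\hat D$ bounded rules out the pole at infinity for the strictly proper part) to be a pole of $\hat{\mathbf{G}}$ lying on the imaginary axis. Since $\hat{\mathbf G}(s)=\hat C(sI-\hat A)^{-1}\hat B+\hat D$ has a pole at $j\omega_0$, there is a vector $u$ with $\|u\|=1$ such that $\|\hat{\mathbf G}(s)u\|\to\infty$ as $s\to j\omega_0$ along the imaginary axis; equivalently $\liminf_{\omega\to\omega_0}\sigma_{\max}(\hat{\mathbf G}(j\omega))=+\infty$. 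Concretely, pick a sequence $\omega_k\to\omega_0$ with $\sigma_{\max}(\hat{\mathbf G}(j\omega_k))\to\infty$, so that for each $k$ we have $\sigma_{\max}(\hat{\mathbf G}(j\omega_k))\ge k+1$, say.

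The key step is a uniform-continuity / diagonal argument. For a fixed $\omega_k$ with $j\omega_k$ not an eigenvalue of $\hat A$, the map $(A,B,C,D)\mapsto C(j\omega_k I - A)^{-1}B+D$ is continuous at $(\hat A,\hat B,\hat C,\hat D)$, hence $\mathbf{G}_t(j\omega_k)\to\hat{\mathbf G}(j\omega_k)$ as $t\to\infty$, and so $\sigma_{\max}(\mathbf G_t(j\omega_k))\to\sigma_{\max}(\hat{\mathbf G}(j\omega_k))\ge k+1$. Therefore for each $k$ there is $T_k$ such that $\sigma_{\max}(\mathbf G_t(j\omega_k))\ge k$ for all $t\ge T_k$; without loss of generality $T_k$ is increasing. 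Then for $t\ge T_k$,
\[
\|\mathbf G_t\|_{\mathcal H_\infty}=\sup_{\omega}\sigma_{\max}(\mathbf G_t(j\omega))\ge\sigma_{\max}(\mathbf G_t(j\omega_k))\ge k,
\]
which shows $\|\mathbf G_t\|_{\mathcal H_\infty}\to\infty$. A minor edge case is $\omega_0=\infty$: since $\hat D$ is finite, the pole at infinity cannot occur, so $\omega_0$ is a finite real number and $j\omega_k$ eventually avoids the (finitely many) eigenvalues of $\hat A$, keeping the resolvent well-defined at the sampled frequencies.

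For the $\mathcal{H}_2$ statement, assume $D_t=0$ for all $t$, so $\hat D=0$ as well and $\hat{\mathbf G}$ is strictly proper with a pole at $j\omega_0$ for some finite $\omega_0$. I would use the standard lower bound relating the $\mathcal{H}_2$ norm to a local $L^2$ mass of the frequency response: since $\hat{\mathbf G}$ has a pole at $j\omega_0$, the function $\omega\mapsto\|\hat{\mathbf G}(j\omega)\|_F^2$ is not integrable near $\omega_0$, i.e. $\int_{\omega_0-\delta}^{\omega_0+\delta}\|\hat{\mathbf G}(j\omega)\|_F^2\,d\omega=+\infty$ for every $\delta>0$. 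Fix a large constant $M$; by this non-integrability there is a compact interval $I\subset\mathbb R\setminus\{\omega_0\}$ (avoiding $\omega_0$, so that $j\omega$ stays off $\sigma(\hat A)$ throughout $I$) with $\frac{1}{2\pi}\int_I\|\hat{\mathbf G}(j\omega)\|_F^2\,d\omega\ge 2M$. On the compact set $I$ the convergence $\mathbf G_t(j\omega)\to\hat{\mathbf G}(j\omega)$ is uniform (continuous dependence of the resolvent, uniform on compacts), so $\frac{1}{2\pi}\int_I\|\mathbf G_t(j\omega)\|_F^2\,d\omega\ge M$ for all large $t$; hence $\|\mathbf G_t\|_{\mathcal H_2}^2\ge\frac{1}{2\pi}\int_I\|\mathbf G_t(j\omega)\|_F^2\,d\omega\ge M$, and letting $M\to\infty$ gives $\|\mathbf G_t\|_{\mathcal H_2}\to\infty$. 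The main obstacle I anticipate is the careful bookkeeping in the diagonal argument — making sure that the sampled frequencies $\omega_k$ (and the interval $I$ in the $\mathcal{H}_2$ case) are chosen to avoid the eigenvalues of $\hat A$ and of $A_t$ for large $t$ so that all the resolvents are well-defined, and correctly handling the $\omega_0=\infty$ case, which is excluded precisely because $\hat D$ is finite (equivalently, in the $\mathcal{H}_2$ case, because $D_t\equiv 0$ forces strict properness).
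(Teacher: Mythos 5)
Your proposal is correct, and the $\mathcal{H}_\infty$ half is essentially the paper's own argument: the paper also reduces to a scalar entry $\hat{\mathbf{G}}_{ik}$ with an imaginary-axis pole, finds for each $M$ a frequency $\omega_M$ (not an eigenvalue of $\hat{A}$) where $|\hat{\mathbf{G}}_{ik}(j\omega_M)|\geq 2M$, and invokes pointwise convergence $\mathbf{G}_{t,ik}(j\omega_M)\to\hat{\mathbf{G}}_{ik}(j\omega_M)$ together with $\|\mathbf{G}_t\|_{\mathcal{H}_\infty}\geq|\mathbf{G}_{t,ik}(j\omega_M)|$; your diagonal formulation with $\omega_k\to\omega_0$ and increasing $T_k$ is the same argument packaged differently. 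The only real divergence is in the $\mathcal{H}_2$ part: the paper applies Fatou's lemma directly to $\int_{-\infty}^{\infty}|\mathbf{G}_{t,ik}(j\omega)|^2\,d\omega$ using the pointwise limit on all of $\mathbb{R}$ and the non-integrability of $|\hat{\mathbf{G}}_{ik}(j\omega)|^2$, whereas you exhaust the infinite local energy by compact intervals bounded away from the pole and use uniform convergence of the resolvent on compacts. Both routes are sound; Fatou avoids the bookkeeping you flag (choosing $I$ to stay clear of all imaginary-axis eigenvalues of $\hat{A}$ so that the uniform-convergence step is legitimate), while your version is more elementary in that it needs no measure-theoretic limit theorem. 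Your handling of the edge cases (excluding a pole at infinity because $\hat{D}$ is finite, and ensuring sampled frequencies avoid $\sigma(\hat{A})$ while $A_t$ stable guarantees $\mathbf{G}_t(j\omega)$ is always defined) is also consistent with what the paper implicitly assumes.
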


\begin{proof}
Since one of the poles of the transfer matrix
$
\hat{\mathbf{G}}(s) = \hat{C}(sI-\hat{A})^{-1}\hat{B} + \hat{D}
$
lies on the imaginary axis, by using the fact that each pole of $\hat{\mathbf{G}}$ must also be a pole of some entry of $\hat{\mathbf{G}}$, we see that one of the entries of $\hat{\mathbf{G}}$, say $\hat{\mathbf{G}}_{ik}$, must have a pole on the imaginary axis. We can therefore write $\hat{\mathbf{G}}_{ik}$ in the following form:
\[
\hat{\mathbf{G}}_{ik}(s) = h(s)\prod_{\kappa=1}^K \frac{1}{(s-j\omega_\kappa)^{m_\kappa}},
\]
where $\omega_\kappa\in \mathbb{R}$ for $\kappa=1,\ldots,K$ are distinct, each $m_\kappa\geq 1$ is the multiplicity of $j\omega_\kappa$, and $h(s)$ is a rational function with no poles on the imaginary axis. Consequently,
\begin{equation}\label{eq:transfer_func_marginally_stable}
|\hat{\mathbf{G}}_{ik}(j\omega)|
=|h(j\omega)|\prod_{\kappa=1}^K \frac{1}{(\omega-\omega_\kappa)^{m_\kappa}}.
\end{equation}

Now let us consider the transfer function
\[
\mathbf{G}_{t,ik}(s)
=\left[C_t (sI-A_t)^{-1}B_t + D_{t} \right]_{ik},
\]
i.e., $\mathbf{G}_{t,ik}$ is the $(i,k)$'th entry of the transfer matrix $C_t (sI-A_t)^{-1}B_t + D_{t}$. Then it's not hard to verify by continuity that
\[
\lim_{t\rightarrow\infty} \mathbf{G}_{t,ik}(s) = \hat{\mathbf{G}}_{ik}(s)
\]
for every $s\in j\mathbb{R}$ that is not an eigenvalue of $A$. In other words, $\mathbf{G}_{t,ik}(j\omega)$ converges to $\hat{\mathbf{G}}_{ik}(j\omega)$ for almost every $\omega\in\mathbb{R}$.

We first show that $\lim_{t\rightarrow\infty} \|\mathbf{G}_t\|_{\mathcal{H}_\infty}=+\infty$. From the expression~\cref{eq:transfer_func_marginally_stable}, we can see that for any $M>0$, there exists $\omega_M\in\mathbb{R}$ such that $j\omega_M$ is not an eigenvalue of $A$ and $|\hat{\mathbf{G}}_{ik}(j\omega_M)|\geq 2M$. Then since $\lim_{t\rightarrow} \mathbf{G}_{t,ik}(j\omega_M)=\hat{\mathbf{G}}_{ik}(j\omega_M)$, we can find $T\in\mathbb{N}$ such that for all $t\geq T$, we have
\[
|\mathbf{G}_{t,ik}(j\omega_M)|\geq M.
\]
We also have
\[
\|\mathbf{G}_t\|_{\mathcal{H}_\infty}
\geq \sigma_{\max}(\mathbf{G}_{t}(j\omega_M))
\geq |\mathbf{G}_{t,ik}(j\omega_M)|.
\]
Summarizing the previous results, we see that for any $M>0$, there exists $T\in\mathbb{N}$ such that for all $t\geq T$, we have $\|\mathbf{G}_t\|_{\mathcal{H}_\infty}\geq M$. Therefore $\lim_{t\rightarrow\infty}\|\mathbf{G}_t\|_{\mathcal{H}_\infty}=+\infty$.

Now suppose $D_t=0$ for each $t$. To show that $\|\mathbf{G}_t\|_{\mathcal{H}_2}\rightarrow+\infty$, we note that
\begin{align*}
\|\mathbf{G}_t\|_{\mathcal{H}_2}^2
={} & \frac{1}{2\pi}\int_{-\infty}^{+\infty}
\operatorname{trace}(\mathbf{G}_t(j\omega)^\her\mathbf{G}_t(j\omega))\,d\omega
\geq{}
\frac{1}{2\pi}\int_{-\infty}^{+\infty}
|\mathbf{G}_{t,ik}(j\omega)|^2\,d\omega.
\end{align*}
Therefore
\begin{align*}
\liminf_{t\rightarrow\infty} \|\mathbf{G}_t\|_{\mathcal{H}_2}^2
\geq{} &
\frac{1}{2\pi}\liminf_{t\rightarrow\infty}
\int_{-\infty}^{+\infty} |\mathbf{G}_{t,ik}(j\omega)|^2\,d\omega \\
\geq{} &  \frac{1}{2\pi}\int_{-\infty}^{+\infty}
\liminf_{t\rightarrow\infty} |\mathbf{G}_{t,ik}(j\omega)|^2\,d\omega \\
={} & \frac{1}{2\pi}\int_{-\infty}^{+\infty}
|\hat{\mathbf{G}}_{ik}(j\omega)|^2\,d\omega \\
={} &
\frac{1}{2\pi}\int_{-\infty}^{+\infty}
|h(j\omega)|^2\prod_{\kappa=1}^K
\frac{1}{(\omega-\omega_\kappa)^{2m_\kappa}}\,d\omega
= +\infty,
\end{align*}
where in the second step we used Fatou's lemma~\cite[Theorem 1.17]{evans2015measure}. We can then conclude $\|\mathbf{G}_t\|_{\mathcal{H}_2}\rightarrow +\infty$.
\end{proof}
}

Combining \Cref{lemma:Boundary-minimal-LTI-systems} with \Cref{lemma:minimal-closed-loop-systems} and using the fact that the poles of the transfer matrix of a controllable and observable system $(\hat{A},\hat{B},\hat{C},\hat{D})$ are exactly the same as the eigenvalues of $\hat{A}$ (see e.g., \cite[Section 3.11]{zhou1996robust}), we get the following corollary:

\begin{corollary}\label{corollary:Boundary-minimal-LTI-systems}
Consider a sequence $(\mK_t)_{t=1}^\infty$ in $\mathcal{C}_n$. Suppose $\lim_{t \to \infty}\mK_t = \mK_\infty$ for some $\mK_\infty\in \partial\mathcal{C}_n$ that is controllable and observable, then
    \[
    \lim_{t\rightarrow\infty} J_{\infty,n}(\mK_t) = +\infty.
    \]
In addition, if $D_{\mK_t}=0$ for all $t=1,2,\ldots$, then
    \[
    \lim_{t\rightarrow\infty}
J_{\mathtt{LQG},n}(\mK_t)=+\infty.
    \]
\end{corollary}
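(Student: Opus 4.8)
The plan is to recognize \Cref{corollary:Boundary-minimal-LTI-systems} as the specialization of \Cref{lemma:Boundary-minimal-LTI-systems} to the closed-loop realizations associated with the policies $\mK_t$. Concretely, for each $t$ set $A_t = A_{\mathrm{cl}}(\mK_t)$, $B_t = B_{\mathrm{cl}}(\mK_t)$, $C_t = C_{\mathrm{cl}}(\mK_t)$, $D_t = D_{\mathrm{cl}}(\mK_t)$, all of size $2n$, and define $\hat A = A_{\mathrm{cl}}(\mK_\infty)$, $\hat B = B_{\mathrm{cl}}(\mK_\infty)$, $\hat C = C_{\mathrm{cl}}(\mK_\infty)$, $\hat D = D_{\mathrm{cl}}(\mK_\infty)$ analogously. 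Since the closed-loop matrices in \eqref{eq:closed-loop-matrices} are affine (hence continuous) functions of the entries of $\mK$, the convergence $\mK_t \to \mK_\infty$ yields $(A_t,B_t,C_t,D_t) \to (\hat A,\hat B,\hat C,\hat D)$. Moreover, each $A_t$ is Hurwitz because $\mK_t \in \mathcal{C}_n$, while $\hat A$ is a limit of Hurwitz matrices that is itself not Hurwitz (as $\mK_\infty \in \partial\mathcal{C}_n$ and thus $\mK_\infty \notin \mathcal{C}_n$); therefore every eigenvalue of $\hat A$ has non-positive real part and at least one lies on the imaginary axis, i.e.\ $\hat A \in \partial\mathcal{S}_{2n}$.

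Next I would show that $(\hat A,\hat B,\hat C,\hat D)$ is a minimal realization. Although \Cref{lemma:minimal-closed-loop-systems} is stated for internally stabilizing policies, its proof is purely algebraic: it invokes only the pole-placement characterization of controllability (\Cref{lemma:controllability}) together with the controllability of $(A,W^{1/2})$ and $(A_{\mK_\infty},B_{\mK_\infty})$ and, dually, the observability of $(Q^{1/2},A)$ and $(C_{\mK_\infty},A_{\mK_\infty})$, and it never uses stability of $A_{\mathrm{cl}}$. Hence it applies verbatim to $\mK_\infty$. Under \Cref{assumption:dynamics-weights} and the hypothesis that $\mK_\infty$ is controllable and observable, we conclude that $(\hat A,\hat B)$ is controllable and $(\hat C,\hat A)$ is observable. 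Since the poles of the transfer matrix of a controllable and observable realization coincide exactly with the eigenvalues of the state matrix \cite{zhou1996robust}, and $\hat A$ has a purely imaginary eigenvalue, the transfer matrix $\hat{\mathbf G}(s) = \hat C(sI-\hat A)^{-1}\hat B + \hat D$ has at least one pole on the imaginary axis.

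All hypotheses of \Cref{lemma:Boundary-minimal-LTI-systems} (with ambient state dimension $2n$) are now in place, so $\lim_{t\to\infty}\|\mathbf G_t\|_{\mathcal H_\infty} = +\infty$, which is precisely $\lim_{t\to\infty}J_{\infty,n}(\mK_t) = +\infty$. For the LQG statement, note from \eqref{eq:closed-loop-matrices} that the only possibly nonzero block of $D_{\mathrm{cl}}(\mK)$ is $R^{1/2}D_\mK V^{1/2}$, so the assumption $D_{\mK_t} = 0$ forces $D_t = 0$ for every $t$; the second part of \Cref{lemma:Boundary-minimal-LTI-systems} then gives $\lim_{t\to\infty}\|\mathbf G_t\|_{\mathcal H_2} = +\infty$, i.e.\ $\lim_{t\to\infty}J_{\mathtt{LQG},n}(\mK_t) = +\infty$.

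The only step that needs care is the observation that \Cref{lemma:minimal-closed-loop-systems}, though phrased for $\mK \in \mathcal{C}_n$, continues to hold for the boundary policy $\mK_\infty$; this is immediate upon inspecting its (algebraic) proof but must be stated explicitly. A secondary point is the identification $\hat A \in \partial\mathcal{S}_{2n}$, which follows from $\mK_\infty \in \partial\mathcal{C}_n$ together with the characterization of $\partial\mathcal{S}_{2n}$ recorded just before \Cref{lemma:Boundary-minimal-LTI-systems}. Everything else is routine bookkeeping, so I do not expect a genuine obstacle here.
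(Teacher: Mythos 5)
Your proposal is correct and follows exactly the paper's route: the corollary is obtained by combining \Cref{lemma:Boundary-minimal-LTI-systems} with \Cref{lemma:minimal-closed-loop-systems} and the fact that the poles of a controllable and observable realization coincide with the eigenvalues of the state matrix. Your extra care in noting that \Cref{lemma:minimal-closed-loop-systems} is purely algebraic and so applies to the boundary policy $\mK_\infty$, and that $A_{\mathrm{cl}}(\mK_\infty)$ must have a purely imaginary eigenvalue, simply makes explicit what the paper leaves implicit.
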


\subsection{Proof of \Cref{proposition:informativity-non-degenerate}} \label{appendix:informative}

\paragraph{Part I. Sufficiency.}
We start our proof of sufficiency from the following fact about positive semidefinite matrices.
\begin{fact} \label{fact:PD-block-matrix}
    Given a positive semidefinite block matrix as
    $$
    \begin{bmatrix}
        A & B \\ B^\tr & C
    \end{bmatrix} \in \mathbb{S}^{n + n}_+,
    $$
    if $B \in \mathbb{R}^{n \times n}$ is full rank, then both $A$ and $C$ are strictly positive definite. 
\end{fact}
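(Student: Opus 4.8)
\textbf{Proof plan for \Cref{fact:PD-block-matrix}.}

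The plan is to extract the two diagonal blocks from the positive semidefiniteness of the full matrix and then upgrade ``positive semidefinite'' to ``positive definite'' using the full-rank hypothesis on $B$. First I would recall that if $M = \begin{bmatrix} A & B \\ B^\tr & C \end{bmatrix} \succeq 0$, then every principal submatrix is positive semidefinite; in particular $A \succeq 0$ and $C \succeq 0$. So the only thing left is to rule out a nontrivial kernel for each of $A$ and $C$.

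Next I would argue by contradiction. Suppose $A$ is not positive definite. Since $A \succeq 0$, there exists $u \neq 0$ with $Au = 0$, hence $u^\tr A u = 0$. Consider the vector $w = \begin{bmatrix} u \\ 0 \end{bmatrix}$. Because $M \succeq 0$ and $w^\tr M w = u^\tr A u = 0$, the vector $w$ lies in the kernel of $M$ (a standard fact: for $M \succeq 0$, $w^\tr M w = 0 \iff Mw = 0$). Writing out $Mw = 0$ block-wise gives $Au = 0$ (already known) and $B^\tr u = 0$. But $B$ has full rank $n$, so $B^\tr$ has trivial kernel, forcing $u = 0$, a contradiction. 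Therefore $A \succ 0$. The argument for $C$ is symmetric: if $Cv = 0$ for some $v \neq 0$, then $\begin{bmatrix} 0 \\ v \end{bmatrix}$ is in the kernel of $M$, which yields $Bv = 0$; full rank of $B$ gives $v = 0$, a contradiction, so $C \succ 0$.

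There is no real obstacle here — this is a routine linear-algebra fact — but the one point worth stating carefully is the lemma that for a positive semidefinite matrix $M$, $w^\tr M w = 0$ implies $Mw = 0$ (which follows, e.g., by writing $M = L^\tr L$ and noting $\|Lw\|^2 = 0$). Everything else is immediate block-matrix bookkeeping. I would keep the write-up to a few lines, invoking this lemma without proof since it is entirely standard.
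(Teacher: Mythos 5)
Your proof is correct. It shares the overall structure of the paper's argument (assume $A\succeq 0$ has a nontrivial kernel vector $u$, then use the full rank of $B$ to derive a contradiction), but the mechanism differs. The paper works directly with the quadratic form: it picks $x_2$ with $x_1^\tr B x_2<0$ (possible since $B$ is invertible) and then scales $x_1$ so that $2x_1^\tr B x_2+x_2^\tr C x_2$ becomes negative, contradicting positive semidefiniteness of the block matrix. You instead invoke the standard lemma that for $M\succeq 0$, $w^\tr M w=0$ implies $Mw=0$, apply it to $w=\begin{bmatrix}u\\0\end{bmatrix}$, and read off $B^\tr u=0$ from the block structure. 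Both are valid and equally elementary; yours avoids the explicit construction and scaling step at the cost of citing the kernel characterization of PSD matrices (which you correctly justify via $M=L^\tr L$), and it also handles the $C$ block by exact symmetry without redoing the computation. No gaps.
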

\begin{proof}[Proof of \Cref{fact:PD-block-matrix}]
Suppose $A$ is only positive semidefinite with a zero eigenvalue. Let $x_1 \neq 0$ be the eigenvector, i.e., $Ax_1 = 0$. Then for all $x_2\in\mathbb{R}^n$, we have
\begin{equation} \label{eq:PD-block-matrix}
\begin{bmatrix}
    x_1 \\ x_2
\end{bmatrix}^\tr \begin{bmatrix}
        A & B \\ B^\tr & C
    \end{bmatrix}\begin{bmatrix}
    x_1 \\ x_2
\end{bmatrix} = 2 x_1^\tr B x_2 + x_2^\tr Cx_2 \geq 0.
\end{equation}
Since $B$ is full rank, we can find a vector $x_2$ such that $x_1^\tr B x_2 < 0$ (e.g., $x_2=-B^{-1}x_1/\|x_1\|$). The first term in \Cref{eq:PD-block-matrix} can be made arbitrarily below zero when we fix $x_2$ and scale $x_1$, making \Cref{eq:PD-block-matrix} invalid. Thus $A$ must be positive definite. Similarly, we can show $C$ is strictly positive definite. Note that, we cannot guarantee the whole matrix is positive definite, for example
$
\begin{bmatrix}
        1 & 1\\ 1& 1
    \end{bmatrix}.
$
\end{proof}

We are now ready to prove the sufficiency part of \Cref{proposition:informativity-non-degenerate}. As alluded to in the main text, the proof follows three steps:
\begin{enumerate}
    \item The full rank of $X_{\mK,12}$ implies that $(A_\mK,B_{\mK})$ is controllable, and thus $X_{\mK}$ from~\cref{eq:LyapunovX} is positive definite.
    \item $X_{\mK}$ is thus invertible, and we construct $P = \gamma X_{\mK}^{-1}$ with $\gamma = J_{\texttt{LQG},n}(\mK)$. We show this construction satisfies \Cref{eq:LQG-Bilinear-a,eq:LQG-Bilinear-b}.  
\item Further $P_{12} =\gamma(X_{\mK}^{-1})_{12}$ is invertible since $X_{\mK,12}$ is.
\end{enumerate}
The three points above confirm that $\mK \in {\mathcal{C}}_{\mathrm{nd},0}$ by the definition \Cref{def:LQG-Cnd}. We here provide the details.

\begin{itemize}
    \item For point 1), let $\mK \in {\mathcal{C}}_{n,0}$ be an informative policy, and $X_\mK$ be the corresponding solution to \Cref{eq:LyapunovX}. Then, $X_\mK$ is positive semidefinite by \Cref{lemma:Lyapunov}. Upon partitioning $X_\mK$ as \cref{eq:co-variance}, {since $X_{\mK,12}$ is full rank and $X_{\mK} \succeq 0$, we must have $X_{\mK,22} \succ 0$\footnote{Note that we still cannot guarantee the whole matrix $X_{\mK} \succ 0$ at this stage. 
    } (cf. \Cref{fact:PD-block-matrix}).}

    Meanwhile, the bottom-right block of the equation \Cref{eq:LyapunovX} reads as
    $$
    B_\mK C X_{\mK,12} + A_{\mK} X_{\mK,22} +(B_\mK C X_{\mK,12} + A_{\mK} X_{\mK,22})^\tr + B_\mK V B_\mK^\tr = 0,
    $$
    which is the same as
    $$
    (A_{\mK} + B_\mK C X_{\mK,12}X_{\mK,22}^{-1})X_{\mK,22} + X_{\mK,22}(A_{\mK} + B_\mK C X_{\mK,12}X_{\mK,22}^{-1})^\tr + B_\mK V B_\mK^\tr = 0.
    $$
    By \Cref{lemma:converse-Lyapunov}, we know $A_{\mK} + B_\mK C X_{\mK,12}X_{\mK,22}^{-1}$ is stable. Since $ X_{\mK,22} \succ 0$,  \Cref{lemma:Lyapunov} further ensures that
    $$
    (A_{\mK} + B_\mK C X_{\mK,12}X_{\mK,22}^{-1}, B_\mK V^{1/2})
    $$
    is controllable, which is the same as $(A_\mK, B_\mK)$ being controllable (since controllability is invariant with state feedback; cf. \Cref{lemma:controllability}). Therefore, the solution $X_{\mK}$ to \Cref{eq:LyapunovX} is strictly positive definite, as confirmed in \cite[Lemma 4.5]{zheng2021analysis}.
    \item For point 2), since $X_\mK \succ 0$, the construction $P = \gamma X_{\mK}^{-1}$ with $\gamma = J_{\texttt{LQG},n}(\mK)$ is well-defined. Following the exactly same argument in \Cref{appendix:proof-H2-lemma} (especially the logic between \cref{eq:nonstrict-LMI-h2-norm} and \cref{eq:construction-X-L0}), this construction satisfies \Cref{eq:LQG-Bilinear-a,eq:LQG-Bilinear-b}.
    \item For point 3), we have the following observation
    \[PX_\mK=\begin{bmatrix}
    P_{11} & P_{12} \\ P_{12}^\tr & P_{22}
\end{bmatrix}\begin{bmatrix}
    X_{\mK,11} & X_{\mK,12} \\ X_{\mK,12}^\tr & X_{\mK,22}
\end{bmatrix}=\begin{bmatrix}
    \gamma I & 0 \\ 0 & \gamma I
\end{bmatrix} \quad \Rightarrow \quad P_{11}X_{\mK,12}+P_{12}X_{\mK,22}=0.\]
We know $P_{11}\succ0,X_{\mK,22}\succ0$ since $P\succ0,X_\mK\succ0$. Then $P_{12}$ being invertible follows from the fact that $X_{\mK,12}$ is invertible.
\end{itemize}
This completes our proof of the sufficiency part.

\paragraph{Part II. Necessity.} Suppose $\mK\in{\mathcal{C}}_{\mathrm{nd},0}$, and let $\gamma=J_{\LQG,n}(\mK)>0$. Then there exist $P\succ 0$ with $P_{12}\in \mathrm{GL}_n$ and $\Gamma\succeq 0$ such that
\[
\begin{bmatrix}
A_{\mathrm{cl}}(\mK)^\tr P + PA_{\mathrm{cl}}(\mK) & PB_{\mathrm{cl}}(\mK) \\
B_{\mathrm{cl}}(\mK)^\tr P & -\gamma I
\end{bmatrix}\preceq 0,
\quad
\begin{bmatrix}
P & C_{\mathrm{cl}}(\mK)^\tr \\
C_{\mathrm{cl}}(\mK) & \Gamma
\end{bmatrix}\succeq 0,
\qquad \operatorname{tr}(\Gamma)\leq \gamma.
\]
Let $\hat{X} = \gamma P^{-1}$. It's not hard to see that $P_{12}\in \mathrm{GL}_n$ implies $\hat{X}_{12}\in \mathrm{GL}_n$ (see point 3) in the proof of the sufficiency part). By \Cref{lemma:H2-norm-non-strict}, we also have
\[
A_{\mathrm{cl}}(\mK) \hat{X}
+ \hat{X} A_{\mathrm{cl}}(\mK)^\tr
+ B_{\mathrm{cl}}(\mK)B_{\mathrm{cl}}(\mK)^\tr \preceq 0
\]
and
\[
\operatorname{tr}(C_{\mathrm{cl}}(\mK) \hat{X} C_{\mathrm{cl}}(\mK)^\tr) \leq \gamma^2.
\]
As a result, there exists some $\tilde{B} \in \mathbb{R}^{{2n}\times k}$ such that
\[
A_{\mathrm{cl}}(\mK) \hat{X}
+ \hat{X} A_{\mathrm{cl}}(\mK)^\tr
+ B_{\mathrm{cl}}(\mK)B_{\mathrm{cl}}(\mK)^\tr
+ \tilde{B}\tilde{B}^\tr = 0.
\]
On the other hand, note that $X_\mK$ is the solution to the Lyapunov equation
\[
A_{\mathrm{cl}}(\mK) X_\mK
+ X_\mK A_{\mathrm{cl}}(\mK)^\tr
+ B_{\mathrm{cl}}(\mK)B_{\mathrm{cl}}(\mK) = 0,
\]
and satisfies $\operatorname{tr}(C_{\mathrm{cl}}(\mK)X_\mK C_{\mathrm{cl}}(\mK)^\tr ) = \gamma^2$. Now let $\tilde{X} = \hat{X}-X_\mK$, and we have
\[
A_{\mathrm{cl}}(\mK) \tilde{X}
+\tilde{X}A_{\mathrm{cl}}(\mK)
+\tilde{B}\tilde{B}^\tr=0,
\]
showing that $\tilde{X}\succ 0$ or $\hat{X}\succeq X_\mK$, while
\[
\gamma^2
=\operatorname{tr}(C_{\mathrm{cl}}(\mK)X_\mK C_{\mathrm{cl}}(\mK)^\tr )
\leq
\operatorname{tr}(C_{\mathrm{cl}}(\mK)\hat{X} C_{\mathrm{cl}}(\mK)^\tr )
\leq \gamma^2,
\]
showing that
$\operatorname{tr}(C_{\mathrm{cl}}(\mK)X_\mK C_{\mathrm{cl}}(\mK)^\tr )=\operatorname{tr}(C_{\mathrm{cl}}(\mK)\hat{X} C_{\mathrm{cl}}(\mK)^\tr )=\gamma^2$. Consequently,
\[
\operatorname{tr}(C_{\mathrm{cl}}(\mK)\tilde{X}C_{\mathrm{cl}}(\mK)^\tr) = 0.
\]
By plugging $\tilde{X} = \int_0^{+\infty} e^{A_{\mathrm{cl}}(\mK)\tau} \tilde{B}\tilde{B}^\tr e^{A_{\mathrm{cl}}(\mK)^\tr\tau}\,d\tau$ into the above equality, we get
\begin{align*}
0={} & \int_0^{+\infty}
\operatorname{tr}
\left(C_{\mathrm{cl}}(\mK)e^{A_{\mathrm{cl}}(\mK)\tau}\tilde{B}\tilde{B}^\tr
e^{A_{\mathrm{cl}}(\mK)^\tr\tau}C_{\mathrm{cl}}(\mK)^\tr\right)\,d\tau \\
={} &
\int_0^{+\infty}
\left\|C_{\mathrm{cl}}(\mK)e^{A_{\mathrm{cl}}(\mK)\tau}\tilde{B}\right\|_F^2\,d\tau
\end{align*}
which further gives
\[
C_{\mathrm{cl}}(\mK)e^{A_{\mathrm{cl}}(\mK)\tau}\tilde{B} = 0
\]
for all $\tau\in[0,+\infty)$.

The following lemma is essential for our subsequent derivation.
\begin{lemma}\label{lemma:informativity_intermediate}
Suppose $(Q^{1/2},A)$ is observable, and suppose $\tilde{B}\in\mathbb{R}^{{2n}\times k}$ satisfies
\[
C_{\mathrm{cl}}(\mK) e^{A_{\mathrm{cl}}(\mK)\tau}\tilde{B}=0
\]
for all $\tau\in[0,+\infty)$. Let $\tilde{X}\in\mathbb{R}^{{2n}\times {2n}}$ be the solution to the Lyapunov equation
\[
A_{\mathrm{cl}}(\mK)X+XA_{\mathrm{cl}}(\mK)^\tr + \tilde{B}\tilde{B}^\tr = 0.
\]
Then $\tilde{X}$ has the form
\[
\tilde{X} = \begin{bmatrix}
0_{n\times n} & 0_{n\times n} \\
0_{n\times n} & \tilde{X}_{22}
\end{bmatrix}
\]
for some $\tilde{X}_{22}\in\mathbb{R}^{n\times n}$.
\end{lemma}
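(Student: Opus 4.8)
Since $A_{\mathrm{cl}}(\mK)$ is stable, \Cref{lemma:Lyapunov} lets us write the solution explicitly as $\tilde{X}=\int_0^{+\infty}e^{A_{\mathrm{cl}}(\mK)\tau}\tilde{B}\tilde{B}^\tr e^{A_{\mathrm{cl}}(\mK)^\tr\tau}\,d\tau\succeq 0$. The plan is to show that the range of $\tilde{X}$ is contained in the unobservable subspace
\[
\mathcal{N}:=\bigcap_{j\geq 0}\ker\!\left(C_{\mathrm{cl}}(\mK)A_{\mathrm{cl}}(\mK)^{j}\right)
\]
of the pair $(C_{\mathrm{cl}}(\mK),A_{\mathrm{cl}}(\mK))$, and then that $\mathcal{N}\subseteq\{0_{n}\}\times\mathbb{R}^{n}$; combining the two inclusions yields the claimed block structure.

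For the first inclusion I would exploit the hypothesis $C_{\mathrm{cl}}(\mK)e^{A_{\mathrm{cl}}(\mK)\tau}\tilde{B}=0$ for all $\tau\geq 0$. By the semigroup identity $e^{A_{\mathrm{cl}}(\mK)(\tau+s)}=e^{A_{\mathrm{cl}}(\mK)s}e^{A_{\mathrm{cl}}(\mK)\tau}$ we get $C_{\mathrm{cl}}(\mK)e^{A_{\mathrm{cl}}(\mK)s}\big(e^{A_{\mathrm{cl}}(\mK)\tau}\tilde{B}\big)=0$ for all $s\geq 0$; differentiating in $s$ at $s=0$ repeatedly shows $C_{\mathrm{cl}}(\mK)A_{\mathrm{cl}}(\mK)^{j}\big(e^{A_{\mathrm{cl}}(\mK)\tau}\tilde{B}\big)=0$ for every $j\geq 0$, i.e. each column of $e^{A_{\mathrm{cl}}(\mK)\tau}\tilde{B}$ lies in $\mathcal{N}$, for each fixed $\tau$. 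Since $\mathcal{N}$ is a (closed, finite-dimensional) linear subspace and the integrand $e^{A_{\mathrm{cl}}(\mK)\tau}\tilde{B}\tilde{B}^\tr e^{A_{\mathrm{cl}}(\mK)^\tr\tau}w$ is a linear combination of the columns of $e^{A_{\mathrm{cl}}(\mK)\tau}\tilde{B}$, the integral $\tilde{X}w$ lies in $\mathcal{N}$ for every $w$, so $\operatorname{range}(\tilde{X})\subseteq\mathcal{N}$.

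The second inclusion is the main computational step. Writing $A_{\mathrm{cl}}(\mK)=\begin{bmatrix}A & BC_{\mK}\\ B_{\mK}C & A_{\mK}\end{bmatrix}$ and $C_{\mathrm{cl}}(\mK)=\begin{bmatrix}Q^{1/2} & 0\\ 0 & R^{1/2}C_{\mK}\end{bmatrix}$ (recall $D_{\mK}=0$ here), take $v=(v_{1},v_{2})\in\mathcal{N}$ and let $\pi_{1},\pi_{2}$ denote the projections onto the first and last $n$ coordinates. From $C_{\mathrm{cl}}(\mK)A_{\mathrm{cl}}(\mK)^{j}v=0$ and $R\succ 0$, the lower block gives $C_{\mK}\,\pi_{2}\!\left(A_{\mathrm{cl}}(\mK)^{j}v\right)=0$ for every $j\geq 0$. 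Feeding this into the recursion $A_{\mathrm{cl}}(\mK)^{j+1}v=A_{\mathrm{cl}}(\mK)\big(A_{\mathrm{cl}}(\mK)^{j}v\big)$ annihilates the $BC_{\mK}\pi_{2}(\cdot)$ term in the top block, so an induction on $j$ shows $\pi_{1}\!\left(A_{\mathrm{cl}}(\mK)^{j}v\right)=A^{j}v_{1}$. The upper block of $C_{\mathrm{cl}}(\mK)A_{\mathrm{cl}}(\mK)^{j}v=0$ then reads $Q^{1/2}A^{j}v_{1}=0$ for all $j\geq 0$, and observability of $(Q^{1/2},A)$ forces $v_{1}=0$. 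Hence $\mathcal{N}\subseteq\{0_{n}\}\times\mathbb{R}^{n}$.

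Finally, $\operatorname{range}(\tilde{X})\subseteq\mathcal{N}\subseteq\{0_{n}\}\times\mathbb{R}^{n}$ means that $\tilde{X}w$ has vanishing first $n$ coordinates for every $w$; partitioning $\tilde{X}$ conformally as $\begin{bmatrix}\tilde{X}_{11} & \tilde{X}_{12}\\ \tilde{X}_{12}^\tr & \tilde{X}_{22}\end{bmatrix}$ forces $\tilde{X}_{11}=0$ and $\tilde{X}_{12}=0$, hence also $\tilde{X}_{21}=\tilde{X}_{12}^\tr=0$, giving $\tilde{X}=\begin{bmatrix}0 & 0\\ 0 & \tilde{X}_{22}\end{bmatrix}$. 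I expect the only delicate point to be the bookkeeping in the induction—propagating $C_{\mK}\pi_{2}(A_{\mathrm{cl}}(\mK)^{j}v)=0$ through the block recursion to conclude $\pi_{1}(A_{\mathrm{cl}}(\mK)^{j}v)=A^{j}v_{1}$; the rest is a standard unobservable-subspace argument together with the stability of $A_{\mathrm{cl}}(\mK)$.
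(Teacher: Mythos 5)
Your proof is correct, and it takes a genuinely different route from the paper's. The paper establishes the key fact that the top $n\times k$ block of $e^{A_{\mathrm{cl}}(\mK)\tau}\tilde{B}$ vanishes by passing to the frequency domain: it Laplace-transforms the hypothesis into $C_{\mathrm{cl}}(\mK)(sI-A_{\mathrm{cl}}(\mK))^{-1}\tilde{B}=0$, inverts $sI-A_{\mathrm{cl}}(\mK)$ blockwise via a Schur-complement factorization, reads off from the bottom block (using $R\succ 0$) that a certain transfer function vanishes, substitutes back to get $Q^{1/2}(sI-A)^{-1}\tilde{B}_1=0$, and invokes observability of $(Q^{1/2},A)$ to conclude $\tilde{B}_1=0$; it then inverse-transforms to get the block structure of $e^{A_{\mathrm{cl}}(\mK)\tau}\tilde{B}$ and hence of the Gramian integral. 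You reach the same conclusion entirely in the time domain: the columns of $e^{A_{\mathrm{cl}}(\mK)\tau}\tilde{B}$ lie in the unobservable subspace $\mathcal{N}$ of $(C_{\mathrm{cl}}(\mK),A_{\mathrm{cl}}(\mK))$, and your induction — using $C_\mK\pi_2(A_{\mathrm{cl}}(\mK)^j v)=0$ to kill the $BC_\mK$ coupling term so that $\pi_1(A_{\mathrm{cl}}(\mK)^j v)=A^j v_1$ — shows $\mathcal{N}\subseteq\{0_n\}\times\mathbb{R}^n$ directly from observability of $(Q^{1/2},A)$. Your version is more elementary (no Laplace transform, no block matrix inversion, no care needed about where in $\mathbb{C}$ the resolvent identities hold) and isolates the structural reason the lemma is true, namely that the unobservable subspace of the closed loop sits inside the controller coordinates; the paper's version has the mild advantage of producing the explicit formula for $e^{A_{\mathrm{cl}}(\mK)\tau}\tilde{B}$ in terms of $\tilde{B}_2$, which is not needed elsewhere. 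All hypotheses are used correctly in your argument ($D_\mK=0$ for the form of $C_{\mathrm{cl}}(\mK)$, $R\succ 0$ to extract $C_\mK\pi_2(\cdot)=0$, stability of $A_{\mathrm{cl}}(\mK)$ for the integral representation of $\tilde{X}$), and the final step from $\operatorname{range}(\tilde{X})\subseteq\{0_n\}\times\mathbb{R}^n$ plus symmetry of $\tilde{X}$ to the stated block form is sound.
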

\begin{proof}
By applying Laplace transform to $C_{\mathrm{cl}}(\mK) e^{A_{\mathrm{cl}}(\mK)\tau} \tilde{B} = 0$, we obtain
\begin{equation}
C_{\mathrm{cl}}(\mK) (sI-A_{\mathrm{cl}}(\mK))^{-1}\tilde{B} = 0
\end{equation}
for all $s\in\mathbb{C}$ with $\operatorname{Re}[s]\geq 0$. To further derive implications from this equality, we decompose $\tilde{B}$ as
\[
\tilde{B} = \begin{bmatrix}
\tilde{B}_1 \\ \tilde{B}_2
\end{bmatrix},
\qquad \text{where }\tilde{B}_1, \tilde{B}_2\in\mathbb{R}^{n\times k}.
\]
Note that $A_{\mathrm{cl}}(\mK) = \begin{bmatrix}
A & BC_\mK \\ B_\mK C & A_\mK
\end{bmatrix}$, which gives
\begin{align}
& (sI-A_{\mathrm{cl}}(\mK))^{-1}\tilde{B}
=
\begin{bmatrix}
sI-A & -BC_\mK \\ -B_\mK C & sI-A_\mK
\end{bmatrix}^{-1}\begin{bmatrix}
\tilde{B}_1 \\ \tilde{B}_2
\end{bmatrix} \nonumber \\
={} &
\left(
\begin{bmatrix}
I & 0 \\
-B_\mK C(sI\!-\!A)^{-1} & I
\end{bmatrix}
\!
\begin{bmatrix}
sI\!-\!A & 0 \\
0 & sI\!-\!A_\mK\!-\!B_\mK \mathbf{P}(s) C_\mK
\end{bmatrix}
\!
\begin{bmatrix}
I & -(sI\!-\!A)^{-1}BC_\mK \\ 0 & I
\end{bmatrix}
\right)^{\!\!-1}
\!\!\begin{bmatrix}
\tilde{B}_1 \\ \tilde{B}_2
\end{bmatrix} \nonumber \\
={} &
\begin{bmatrix}
I & (sI-A)^{-1}BC_\mK \\
0 & I
\end{bmatrix}
\begin{bmatrix}
sI\!-\!A & 0 \\
0 & sI\!-\!A_\mK\!-\!B_\mK \mathbf{P}(s) C_\mK
\end{bmatrix}^{-1}
\begin{bmatrix}
I & 0 \\
B_\mK C(sI\!-\!A)^{-1} & I
\end{bmatrix}
\begin{bmatrix}
\tilde{B}_1 \\ \tilde{B}_2
\end{bmatrix} \nonumber \\
={} &
\begin{bmatrix}
I & (sI-A)^{-1}BC_\mK \\
0 & I
\end{bmatrix}
\begin{bmatrix}
(sI-A)^{-1}\tilde{B}_1 \\
(sI-A_\mK-B_\mK \mathbf{P}(s) C_\mK)^{-1}
(B_\mK C(sI-A)^{-1}\tilde{B}_1+\tilde{B}_2)
\end{bmatrix}
\label{eq:proof_informative_temp2}
\end{align}
where we denote $\mathbf{P}(s)=C(sI-A)^{-1}B$. Consequently, by using $C_{\mathrm{cl}}(\mK) = \begin{bmatrix}
Q^{1/2} & 0 \\ 0 & R^{1/2}C_\mK
\end{bmatrix}$, we get
\begin{align}
0 ={} & C_{\mathrm{cl}}(\mK)(sI-A_{\mathrm{cl}}(\mK))^{-1}\tilde{B} \nonumber \\
={} &
\begin{bmatrix}
Q^{1/2} & 0 \\ 0 & R^{1/2}C_\mK
\end{bmatrix}
\begin{bmatrix}
I & (sI\!-\!A)^{-1}BC_\mK \\
0 & I
\end{bmatrix}
\begin{bmatrix}
(sI-A)^{-1}\tilde{B}_1 \\
(sI\!-\!A_\mK\!-\!B_\mK \mathbf{P}(s) C_\mK)^{-1}
(B_\mK C(sI\!-\!A)^{-1}\tilde{B}_1+\tilde{B}_2)
\end{bmatrix} \nonumber \\
={} &
\begin{bmatrix}
Q^{1/2} & Q^{1/2}(sI\!-\!A)^{-1}BC_\mK \\
0 & R^{1/2}C_\mK
\end{bmatrix}
\begin{bmatrix}
(sI-A)^{-1}\tilde{B}_1 \\
(sI\!-\!A_\mK\!-\!B_\mK \mathbf{P}(s) C_\mK)^{-1}
(B_\mK C(sI\!-\!A)^{-1}\tilde{B}_1+\tilde{B}_2)
\end{bmatrix} \nonumber \\
={} &
\begin{bmatrix}
Q^{1/2} & Q^{1/2}(sI\!-\!A)^{-1}B \\
0 & R^{1/2}
\end{bmatrix}
\begin{bmatrix}
(sI-A)^{-1}\tilde{B}_1 \\
C_\mK(sI\!-\!A_\mK\!-\!B_\mK \mathbf{P}(s) C_\mK)^{-1}
(B_\mK C(sI\!-\!A)^{-1}\tilde{B}_1+\tilde{B}_2)
\end{bmatrix}
\label{eq:proof_informative_temp1}
\end{align}
By inspecting the bottom $m\times k$ block of the right-hand side of~\Cref{eq:proof_informative_temp1} and using $R\succ 0$, we get
\[
0 = C_\mK(sI\!-\!A_\mK\!-\!B_\mK \mathbf{P}(s) C_\mK)^{-1}
(B_\mK C(sI\!-\!A)^{-1}\tilde{B}_1+\tilde{B}_2),
\]
and by plugging it back into~\Cref{eq:proof_informative_temp1} and calculating its top $n\times k$ block, we get
\[
0 = Q^{1/2}(sI-A)^{-1}\tilde{B}_1.
\]
We now use the assumption that $(Q^{1/2},A)$ is observable, which implies $\tilde{B}_1=0$. Furthermore,
\[
0 = C_\mK(sI\!-\!A_\mK\!-\!B_\mK \mathbf{P}(s) C_\mK)^{-1} \tilde{B}_2.
\]
We can now simplify~\Cref{eq:proof_informative_temp2} as
\begin{align*}
(sI-A_{\mathrm{cl}}(\mK))^{-1}\tilde{B}
={} & \begin{bmatrix}
I & (sI-A)^{-1}BC_\mK \\ 0 & I
\end{bmatrix}
\begin{bmatrix}
0 \\
(sI\!-\!A_\mK\!-\!B_\mK \mathbf{P}(s) C_\mK)^{-1} \tilde{B}_2
\end{bmatrix} \\
={} &
\begin{bmatrix}
(sI-A)^{-1}B\cdot C_\mK(sI\!-\!A_\mK\!-\!B_\mK \mathbf{P}(s) C_\mK)^{-1} \tilde{B}_2 \\
(sI\!-\!A_\mK\!-\!B_\mK \mathbf{P}(s) C_\mK)^{-1} \tilde{B}_2
\end{bmatrix} \\
={} &
\begin{bmatrix}
0 \\
(sI\!-\!A_\mK\!-\!B_\mK \mathbf{P}(s) C_\mK)^{-1} \tilde{B}_2
\end{bmatrix}.
\end{align*}
By applying the inverse Laplace transform, we see that
\[
e^{A_{\mathrm{cl}}(\mK)\tau}\tilde{B}
=\begin{bmatrix}
0_{n\times k} \\ *
\end{bmatrix},
\qquad\forall \tau\in[0,+\infty).
\]
Consequently, the solution to the Lyapunov equation $A_{\mathrm{cl}}(\mK)X+XA_{\mathrm{cl}}(\mK)^\tr + \tilde{B}\tilde{B}^\tr = 0$ is given by
\[
\int_0^{+\infty} e^{A_{\mathrm{cl}}(\mK)\tau}\tilde{B}\tilde{B}^\tr
e^{A_{\mathrm{cl}}(\mK)^\tr\tau}\,d\tau
=
\begin{bmatrix}
0_{n\times n} & 0_{n\times n} \\
0_{n\times n} & *
\end{bmatrix},
\]
which completes the proof of \Cref{lemma:informativity_intermediate}.
\end{proof}

We can now apply \Cref{lemma:informativity_intermediate} and see that
\[
X_\mK = \hat{X} - \tilde{X}
=\begin{bmatrix}
\hat{X}_{11} & \hat{X}_{12} \\
\hat{X}_{12}^\tr & \hat{X}_{22}-\tilde{X}_{22}
\end{bmatrix}.
\]
The invertibility of $\hat{X}_{12}$ then implies the invertibility of $X_{\mK,12}$, which completes the proof.

\subsection{Proof of \Cref{proposition:measure-zero-LQG}} \label{appendix:measure-zero-LQG}



We first present the following lemma on the zeros of real analytic functions.

\begin{lemma}[\cite{mityagin2015zero}] \label{lemma:measure-zero-analytical}
Let $U\subseteq\mathbb{R}^d$ be an open connected set, and let $f:U\rightarrow\mathbb{R}$ be a real analytic function. If $f$ is not identically zero on $U$, then the zero set
\[
\{x\in U \mid f(x)=0\}
\]
has measure zero.
\end{lemma}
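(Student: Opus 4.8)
The plan is to prove the statement by induction on the dimension $d$, after first reducing to the case of a convex box. For the reduction I would invoke the \emph{identity theorem} for real analytic functions: on a connected open set the subset where all partial derivatives of $f$ vanish is both open (by local convergence of the Taylor series) and closed (as an intersection of zero sets of continuous functions), hence is either empty or all of $U$. Thus, if $f\not\equiv 0$ on the connected set $U$, then $f$ cannot vanish identically on any nonempty open subset. I would then cover $U$ by countably many open boxes $B=B'\times I$ with $B'\subseteq\mathbb{R}^{d-1}$ and $I\subseteq\mathbb{R}$ an interval, and, using countable subadditivity of Lebesgue measure, reduce the claim to showing $\lvert Z_f\cap B\rvert=0$ for each such box, where $Z_f=\{x\in U: f(x)=0\}$; note that $f|_B\not\equiv 0$ by the identity theorem, and $Z_f$ is closed, hence measurable, because $f$ is continuous.

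For the base case $d=1$ I would argue that a nonzero real analytic function on an interval has only isolated zeros: if zeros accumulated at some $x_0\in I$, then $f(x_0)=0$, and by repeated application of Rolle's theorem every derivative $f^{(k)}(x_0)$ would be forced to vanish, so the Taylor series at $x_0$ would be zero and $f$ would vanish near $x_0$, contradicting $f\not\equiv 0$. Isolated zeros form a countable set, which has measure zero. For the inductive step I would assume the claim in dimension $d-1$ and work on a box $B=B'\times I$. For each $y\in B'$ the slice $t\mapsto f(y,t)$ is real analytic on $I$, so either it vanishes identically or, by the base case, its zero set in $I$ has one-dimensional measure zero. Writing $N=\{y\in B': f(y,\cdot)\equiv 0 \text{ on } I\}$ for the set of \emph{degenerate slices}, the decisive point is to show $\lvert N\rvert=0$. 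I would fix any $t_0\in I$ and consider the real analytic functions $h_k(y)=\partial_t^k f(y,t_0)$ on $B'$: if every $h_k$ were identically zero on $B'$, then each slice would have vanishing Taylor series at $t_0$, forcing $f\equiv 0$ on $B$ and contradicting the reduction; hence some $h_{k}\not\equiv 0$, and since $h_{k}$ vanishes on all of $N$, the inductive hypothesis gives $\lvert N\rvert\leq\lvert\{h_{k}=0\}\rvert=0$. Fubini's theorem would then yield
\[
\lvert Z_f\cap B\rvert=\int_{B'}\bigl\lvert\{t\in I: f(y,t)=0\}\bigr\rvert\,dy
=\int_{N}\lvert I\rvert\,dy+\int_{B'\setminus N}0\,dy=0,
\]
completing the induction.

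The hard part will be precisely the treatment of the degenerate slices collected in $N$: a globally nonzero analytic function can restrict to the identically-zero function on a nontrivial set of slices (for instance $f(y,t)=y$ viewed in suitable coordinates), so one cannot simply apply the one-dimensional base case slice-by-slice and integrate. The device that resolves this is the passage to the $t$-derivatives $h_k(y)=\partial_t^k f(y,t_0)$ at a fixed $t_0$, together with the observation that not all of them can vanish identically on $B'$, which is exactly what lets the $(d-1)$-dimensional inductive hypothesis control the size of $N$. The remaining ingredients — measurability of $Z_f$ and of its slices, and the legitimacy of the Fubini split — are routine consequences of the continuity of $f$, and I would state them only briefly.
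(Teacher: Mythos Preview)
The paper does not supply its own proof of this lemma; it simply cites \cite{mityagin2015zero} and uses the result as a black box. Your argument is correct and is essentially the standard proof (and the one in the cited reference): reduce via the identity theorem and a countable cover to boxes, induct on the dimension using Fubini, and control the set $N$ of degenerate slices by observing that $N\subseteq\{h_k=0\}$ for some nonvanishing $h_k=\partial_t^k f(\cdot,t_0)$, so the $(d-1)$-dimensional hypothesis applies. One small remark: in the base case your appeal to ``repeated application of Rolle's theorem'' is really the mean value theorem applied iteratively (from $f(x_n)=f(x_0)=0$ you get $f'(c_n)=0$ with $c_n\to x_0$, then iterate on $f'$, etc.), but the conclusion that all derivatives vanish at an accumulation point of zeros is of course correct.
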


Next, for each $\mathcal{C}_{n,0}$, let $X_\mK$ be the solution to~\eqref{eq:LyapunovX}, i.e.,
\[
A_{\mathrm{cl}}(\mK) X_\mK
+ X_\mK A_{\mathrm{cl}}(\mK)^\tr
+ B_{\mathrm{cl}}(\mK)B_{\mathrm{cl}}(\mK)^\tr = 0.
\]
By the results presented in \Cref{appendix:preliminaries}, we have $X_\mK\succeq 0$ and each entry of $X_\mK$ is a real analytic function of $\mK$. Now, we consider the set
\[
\mathcal{N} = \{\mK\in \mathcal{C}_{n,0} \mid \operatorname{det}(X_{\mK,12})=0\},
\]
where $X_{\mK,12}$ denotes the submatrix of $X_\mK\in \mathbb{R}^{2n\times 2n}$ corresponding to the first $n$ rows and the last $n$ columns. It is not hard to see that $\mK\mapsto \operatorname{det}(X_{\mK,12})$ is a real analytic function on $\mathcal{C}_{n,0}$ which is not identically zero. Considering that the set $\mathcal{C}_{n,0}$ has at most two connected components, we can apply \Cref{lemma:measure-zero-analytical} to each connected component separately and then use the fact that a finite union of measure-zero sets still has measure zero. Consequently, the set $\mathcal{N}$ has measure zero.

On the other hand, by \Cref{proposition:informativity-non-degenerate}, we see that the complement of $\mathcal{N}$ in $\mathcal{C}_{n,0}$ is just the set $\mathcal{C}_{\mathrm{nd},0}$. The proof is now complete.

\subsection{Proof of \Cref{lemma:LQG-lower-order-stationary-point-to-high-order}} \label{appendix:degenerate-controllers}


The set of non-degenerate policies is invariant with respect to similarity transformations. For any fixed $T\in\mathrm{GL}_n$, recall that $\mathscr{T}_T:\mathcal{C}_n\rightarrow\mathcal{C}_n$ denotes the mapping given by
\begin{equation}
\mathscr{T}_T(\mK)
\coloneqq
 \begin{bmatrix}
I_m & 0 \\
0 & T
\end{bmatrix}\begin{bmatrix}
D_{\mK} & C_{\mK} \\
B_{\mK} & A_{\mK}
\end{bmatrix}\begin{bmatrix}
I_p & 0 \\
0 & T
\end{bmatrix}^{-1}
=\begin{bmatrix}
D_{\mK} & C_{\mK}T^{-1} \\
TB_{\mK} & TA_{\mK}T^{-1}
\end{bmatrix}.
\tag{\ref{eq:similarity-transformation}}
\end{equation}

\begin{lemma} \label{lemma:invariance-non-degenerate-controllers}
    Fix $T \in \mathrm{GL}_n$. For all $\mK \in {\mathcal{C}}_{\mathrm{nd},0}$, we have $\mathscr{T}_T(\mK) \in {\mathcal{C}}_{\mathrm{nd},0}$.
\end{lemma}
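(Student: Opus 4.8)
The plan is to transport the Lyapunov certificate defining non-degeneracy along $\mathscr{T}_T$ by a congruence. Write $\mathcal{T} = \begin{bmatrix} I_n & 0 \\ 0 & T \end{bmatrix} \in \mathrm{GL}_{2n}$. First I would multiply out the $2\times 2$ block formulas \eqref{eq:closed-loop-matrices} for $\mathscr{T}_T(\mK)$ and record the identities
\[
A_{\mathrm{cl}}(\mathscr{T}_T(\mK)) = \mathcal{T} A_{\mathrm{cl}}(\mK)\mathcal{T}^{-1},\qquad
B_{\mathrm{cl}}(\mathscr{T}_T(\mK)) = \mathcal{T} B_{\mathrm{cl}}(\mK),\qquad
C_{\mathrm{cl}}(\mathscr{T}_T(\mK)) = C_{\mathrm{cl}}(\mK)\mathcal{T}^{-1},
\]
with $D_{\mathrm{cl}}$ unchanged. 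Since $D_{\mathscr{T}_T(\mK)} = D_\mK = 0$, the policy $\mathscr{T}_T(\mK)$ lies in $\mathcal{C}_{n,0}$ (this is also contained in the diffeomorphism property of $\mathscr{T}_T$ recalled earlier), and because similarity transformations preserve the transfer function they preserve the LQG cost, so $\gamma := J_{\LQG,n}(\mathscr{T}_T(\mK)) = J_{\LQG,n}(\mK)$.

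Next, using $\mK \in \mathcal{C}_{\mathrm{nd},0}$, I would fix a certificate $(P,\Gamma)$ with $P \in \mathbb{S}^{2n}_{++}$, $P_{12} \in \mathrm{GL}_n$, and $\Gamma$ satisfying \eqref{eq:LQG-Bilinear-a}--\eqref{eq:LQG-Bilinear-b} with this $\gamma$, and set $\tilde{P} := \mathcal{T}^{-\tr} P \mathcal{T}^{-1}$ and $\tilde\Gamma := \Gamma$. Then $\tilde P \succ 0$, and partitioning into $n\times n$ blocks gives $\tilde P_{12} = P_{12} T^{-1} \in \mathrm{GL}_n$. Substituting the three identities above into \eqref{eq:LQG-Bilinear-a}--\eqref{eq:LQG-Bilinear-b} written for the triple $(\mathscr{T}_T(\mK),\tilde P,\tilde\Gamma)$, each left-hand side factors as $\operatorname{diag}(\mathcal{T}^{-\tr},I)$ times the corresponding left-hand side for $(\mK,P,\Gamma)$ times $\operatorname{diag}(\mathcal{T}^{-1},I)$, where the trailing identity blocks carry the sizes of the untouched $-\gamma I$ and $\Gamma$ blocks. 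Congruence preserves (semi)definiteness and $\operatorname{tr}(\tilde\Gamma) = \operatorname{tr}(\Gamma) \le \gamma$, so all three relations hold with $\gamma = J_{\LQG,n}(\mathscr{T}_T(\mK))$; by \Cref{definition:non-degenerate-controller-LQG} this shows $\mathscr{T}_T(\mK) \in \mathcal{C}_{\mathrm{nd},0}$, as desired.

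I do not expect a genuine obstacle here: the only care needed is the routine block algebra for the three transformation identities and the bookkeeping of which blocks of the $2\times 2$ LMIs are conjugated by $\mathcal{T}$ rather than by the identity. As an alternative that bypasses the LMI manipulation, one could invoke \Cref{proposition:informativity-non-degenerate}: uniqueness of the solution to the Lyapunov equation \eqref{eq:LyapunovX} gives $X_{\mathscr{T}_T(\mK)} = \mathcal{T} X_\mK \mathcal{T}^\tr$, hence $X_{\mathscr{T}_T(\mK),12} = X_{\mK,12}T^\tr$, which has full rank exactly when $X_{\mK,12}$ does. Finally, the very same congruence template, applied to the bounded-real inequality \eqref{eq:Hinf-Bilinear-a} in place of \eqref{eq:LQG-Bilinear-a}--\eqref{eq:LQG-Bilinear-b}, will establish the $\mathcal{H}_\infty$ counterpart \Cref{lemma:Hinf_invariance-non-degenerate-controllers}.
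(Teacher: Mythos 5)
Your proof is correct and is essentially identical to the paper's: both transport the certificate via $\tilde P=\mathcal{T}^{-\tr}P\mathcal{T}^{-1}$ with $\mathcal{T}=\operatorname{diag}(I_n,T)$, use the same three closed-loop transformation identities, and conclude by congruence that the LMIs persist with $\tilde P_{12}=P_{12}T^{-1}\in\mathrm{GL}_n$. The alternative route you sketch through \Cref{proposition:informativity-non-degenerate} and the transformed Lyapunov solution $X_{\mathscr{T}_T(\mK)}=\mathcal{T}X_\mK\mathcal{T}^\tr$ is also valid but is not needed.
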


\begin{proof}
Let $\mK \in {\mathcal{C}}_{\mathrm{nd},0}$ and $\tilde{\mK} = \mathscr{T}_T(\mK)$. Clearly, $J_{\texttt{LQG},n}(\tilde{\mK}) = J_{\texttt{LQG},n}(\mK)$. By definition of ${\mathcal{C}}_{\mathrm{nd},0}$, there exist $P \in \mathbb{S}^{2n}_{++}\; \mathrm{with} \; P_{12} \in \mathrm{GL}_{n}$ and $\Gamma$ such that  \Cref{eq:LQG-Bilinear-a} and \Cref{eq:LQG-Bilinear-b} hold with $\gamma = J_{\texttt{LQG},n}(\mK)$.

It is straightforward to verify that
\begin{align} \label{eq:invar_cl_matrices}
    A_{\mathrm{cl}}(\tilde{\mK}) = \begin{bmatrix}
    I_n & 0 \\
    0 & T
    \end{bmatrix}A_{\mathrm{cl}}({\mK})\begin{bmatrix}
    I_n & 0 \\
    0 & T
    \end{bmatrix}^{-1},\quad
    B_{\mathrm{cl}}(\tilde{\mK}) = \begin{bmatrix}
    I_n & 0 \\
    0 & T
    \end{bmatrix} B_{\mathrm{cl}}(\mK),\quad
    C_{\mathrm{cl}}(\tilde{\mK}) = C_{\mathrm{cl}}(\mK)\begin{bmatrix}
    I_n & 0 \\
    0 & T
    \end{bmatrix}^{-1}.
\end{align}
Then, we can check that the following matrices
\begin{equation} \label{eq:invar_tilde_P}
    \tilde{P} = \begin{bmatrix}
    I_n & 0 \\
    0 & T
    \end{bmatrix}^{-\tr} P \begin{bmatrix}
    I_n & 0 \\
    0 & T
    \end{bmatrix}^{-1} \in \mathbb{S}^{2n}_{++}, \qquad \tilde{\Gamma} = \Gamma
\end{equation}
with $\tilde{P}_{12} \in \mathrm{GL}_n$ satisfy  \Cref{eq:LQG-Bilinear-a} and \Cref{eq:LQG-Bilinear-b} with $\gamma = J_{\texttt{LQG},n}(\tilde{\mK})$. Thus, $\tilde{\mK} \in {\mathcal{C}}_{\mathrm{nd},0}$.
%
%
\end{proof}

\begin{lemma} \label{lemma:Hinf_invariance-non-degenerate-controllers}
    Fix $T \in \mathrm{GL}_n$. For all $\mK \in {\mathcal{C}}_{\mathrm{nd}}$, we have $\mathscr{T}_T(\mK) \in {\mathcal{C}}_{\mathrm{nd}}$.
\end{lemma}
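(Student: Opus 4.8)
The plan is to mirror the proof of \Cref{lemma:invariance-non-degenerate-controllers} almost verbatim, replacing the $\mathcal{H}_2$ certificate conditions \Cref{eq:LQG-Bilinear-a,eq:LQG-Bilinear-b} with the $\mathcal{H}_\infty$ bounded-real certificate \Cref{eq:Hinf-Bilinear-a,eq:Hinf-Bilinear-b}. Let $\mK\in\mathcal{C}_{\mathrm{nd}}$ and set $\tilde{\mK}=\mathscr{T}_T(\mK)$. First I would recall that similarity transformations do not change the closed-loop transfer function, so $J_{\infty,n}(\tilde{\mK})=J_{\infty,n}(\mK)=:\gamma$, and that $\tilde{\mK}\in\mathcal{C}_n$ since $\mathscr{T}_T$ maps $\mathcal{C}_n$ to itself. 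By $\mK\in\mathcal{C}_{\mathrm{nd}}$ (see \cref{def:Hinf-Cnd}), there exists $P\in\mathbb{S}^{2n}_{++}$ with $P_{12}\in\mathrm{GL}_n$ satisfying \Cref{eq:Hinf-Bilinear-a} with this $\gamma$.

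The key computational step is the observation \Cref{eq:invar_cl_matrices}, which also holds here — the closed-loop matrices $A_{\mathrm{cl}},B_{\mathrm{cl}},C_{\mathrm{cl}}$ transform by conjugation/multiplication with $\operatorname{diag}(I_n,T)$ — together with the fact that the direct-feedthrough term is invariant, $D_{\mathrm{cl}}(\tilde{\mK})=D_{\mathrm{cl}}(\mK)$, since $D_{\tilde{\mK}}=D_{\mK}$. Then I would define $\tilde{P}$ exactly as in \Cref{eq:invar_tilde_P}, namely $\tilde{P}=\operatorname{diag}(I_n,T)^{-\tr}P\operatorname{diag}(I_n,T)^{-1}$, which lies in $\mathbb{S}^{2n}_{++}$ and has invertible off-diagonal block $\tilde{P}_{12}$ (because congruence by a block matrix of the stated form preserves invertibility of the $(1,2)$ block, as already argued in the LQG proof). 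Conjugating the $3\times 3$ block matrix in \Cref{eq:Hinf-Bilinear-a} by $\operatorname{diag}(\operatorname{diag}(I_n,T)^{-1},I,I)$ — a congruence transformation, hence preserving the sign condition $\preceq 0$ — and substituting \Cref{eq:invar_cl_matrices} shows that $\tilde{P}$ satisfies \Cref{eq:Hinf-Bilinear-a} with $\gamma=J_{\infty,n}(\tilde{\mK})$. By \cref{definition:Hinf-Cnd}, this gives $\tilde{\mK}\in\mathcal{C}_{\mathrm{nd}}$.

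I do not anticipate a genuine obstacle: the argument is purely algebraic and is a direct transcription of the LQG case, with the only difference being that the $\mathcal{H}_\infty$ certificate involves a single matrix inequality (no auxiliary $\Gamma$ and no trace constraint), which actually makes the bookkeeping slightly shorter. The one point requiring a line of care is verifying that the $3\times 3$ block LMI in \Cref{eq:Hinf-Bilinear-a} transforms correctly — specifically that the same block congruence simultaneously produces $A_{\mathrm{cl}}(\tilde{\mK})^\tr\tilde{P}+\tilde{P}A_{\mathrm{cl}}(\tilde{\mK})$ in the $(1,1)$ block, $\tilde{P}B_{\mathrm{cl}}(\tilde{\mK})$ in the $(1,2)$ block, $C_{\mathrm{cl}}(\tilde{\mK})^\tr$ in the $(1,3)$ block, and leaves the $(2,2)$, $(2,3)$, $(3,3)$ blocks ($-\gamma I$, $D_{\mathrm{cl}}(\mK)^\tr$, $-\gamma I$) untouched. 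This follows by plugging \Cref{eq:invar_cl_matrices} and $D_{\mathrm{cl}}(\tilde{\mK})=D_{\mathrm{cl}}(\mK)$ into each block; it is routine and can be stated in one or two sentences rather than displayed in full. Finally, one could remark that as a consequence the similarity-transformation symmetry quoted after \Cref{eq:similarity-transformation} holds, i.e.\ $\mathscr{T}_T(\mK)\in\mathcal{C}_{\mathrm{nd}}$ for all $\mK\in\mathcal{C}_{\mathrm{nd}}$ and all $T\in\mathrm{GL}_n$.
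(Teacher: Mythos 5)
Your proposal is correct and follows essentially the same route as the paper's own proof, which likewise reuses the closed-loop matrix transformations from the LQG case together with $D_{\mathrm{cl}}(\tilde{\mK})=D_{\mathrm{cl}}(\mK)$ and the same $\tilde{P}=\operatorname{diag}(I_n,T)^{-\tr}P\operatorname{diag}(I_n,T)^{-1}$. In fact your write-up is slightly more explicit than the paper's (which leaves the block-congruence verification to the reader), and the congruence by $\operatorname{diag}\!\left(\operatorname{diag}(I_n,T)^{-1},I,I\right)$ you describe is exactly the right check.
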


\begin{proof}
The proof is similar to that of \Cref{lemma:invariance-non-degenerate-controllers}. Let $\mK \in {\mathcal{C}}_{\mathrm{nd}}$ and $\tilde{\mK} = \mathscr{T}_T(\mK)$. Clearly, $J_{\infty,n}(\tilde{\mK}) = J_{\infty,n}(\mK)$. By definition of ${\mathcal{C}}_{\mathrm{nd}}$, there exists $P \in \mathbb{S}^{2n}_{++}\; \mathrm{with} \; P_{12} \in \mathrm{GL}_{n}$ such that \Cref{eq:Hinf-Bilinear-a} holds with $\gamma = J_{\infty,n}(\mK)$. For the closed-loop matrices, we have $A_{\mathrm{cl}}(\tilde{\mK}), B_{\mathrm{cl}}(\tilde{\mK}), C_{\mathrm{cl}}(\tilde{\mK})$ as in \Cref{eq:invar_cl_matrices} and $D_{\mathrm{cl}}(\tilde{\mK}) = D_{\mathrm{cl}}(\mK)$. One can check that $\tilde{P}$ as in \Cref{eq:invar_tilde_P} satisfies \Cref{eq:Hinf-Bilinear-a} with $\gamma = J_{\infty,n}(\tilde{\mK})$ and hence $\tilde{\mK} \in {\mathcal{C}}_{\mathrm{nd}}$.
\end{proof}

We next identify a class of degenerate policies, which also finishes the proof of \Cref{lemma:LQG-lower-order-stationary-point-to-high-order}.
\begin{theorem} \label{theorem:degenerate-controllers-LQG}
    Consider a reduced-order policy $\hat{\mK} = \begin{bmatrix}
0 & \hat{C}_{\mK} \\ \hat{B}_{\mK} & \hat{A}_{\mK}
\end{bmatrix} \in {\mathcal{C}}_{q,0}$ where $1 \leq q < n$. Suppose $\hat{\mK}\in {\mathcal{C}}_{q,0}$ is minimal. 
Then the following policy with any stable matrix $\Lambda \in \mathbb{R}^{(n-q) \times (n-q)}$
\begin{equation} \label{eq:stationary_nonglobally_K-augmented}
        \tilde{\mK}=\left[\begin{array}{c:cc}
    0 & \hat{C}_{\mK} &  0 \\[2pt]
    \hdashline
    \hat{B}_{\mK} & \hat{A}_{\mK} & 0 \\[-2pt]
    0 & 0 & \Lambda
    \end{array}\right] \in \hat{\mathcal{C}}_{n}
\end{equation}
is internally stabilizing but degenerate, i.e., $\tilde{\mK} \in {\mathcal{C}}_{n,0}\backslash{\mathcal{C}}_{\mathrm{nd},0}$.
\end{theorem}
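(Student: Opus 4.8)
\textbf{Proof proposal for \Cref{theorem:degenerate-controllers-LQG}.}

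The plan is to show that any Lyapunov certificate $P \in \mathbb{S}^{2n}_{++}$ satisfying \Cref{eq:LQG-Bilinear-a} and \Cref{eq:LQG-Bilinear-b} with $\gamma = J_{\LQG,n}(\tilde{\mK})$ must have an off-diagonal block $P_{12}$ whose rank is bounded by $q < n$, hence $P_{12} \notin \mathrm{GL}_n$. First I would record that $\tilde{\mK}$ is indeed internally stabilizing: the closed-loop matrix $A_{\mathrm{cl}}(\tilde{\mK})$ is block upper/lower triangular with diagonal blocks $A_{\mathrm{cl}}(\hat{\mK})$ (stable since $\hat{\mK} \in \mathcal{C}_{q,0}$) and $\Lambda$ (stable by assumption), so $\tilde{\mK} \in \mathcal{C}_{n,0}$, and moreover $J_{\LQG,n}(\tilde{\mK}) = J_{\LQG,q}(\hat{\mK})$ since the extra block $\Lambda$ is both uncontrollable from and unobservable to the closed loop (the augmented transfer function coincides with that of $\hat{\mK}$, cf. the discussion around \cref{eq:lifted_controller} and statement 1 of \Cref{lemma:LQG-lower-order-stationary-point-to-high-order}). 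By \Cref{lemma:invariance-non-degenerate-controllers}, non-degeneracy is invariant under similarity transformations, so I may also reorder coordinates freely; I would arrange the closed-loop state as $(x, \hat{\xi}, \eta)$ where $x \in \mathbb{R}^n$ is the plant state, $\hat{\xi} \in \mathbb{R}^q$ is the reduced policy state, and $\eta \in \mathbb{R}^{n-q}$ is the $\Lambda$-block state.

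The core of the argument is a structural analysis of the feasible set of \Cref{eq:LQG-Bilinear-a}. The key step is to use \Cref{lemma:H2-norm-non-strict} together with the logic in the proof of \Cref{lemma:controllability-A-B-H2-norm}: from $P \succ 0$ satisfying the bilinear inequalities with $\gamma = J_{\LQG,n}(\tilde{\mK})$, set $\hat{X} = \gamma P^{-1}$; then $\hat{X}$ satisfies the nonstrict Lyapunov inequality $A_{\mathrm{cl}}(\tilde{\mK}) \hat{X} + \hat{X} A_{\mathrm{cl}}(\tilde{\mK})^\tr + B_{\mathrm{cl}}(\tilde{\mK})B_{\mathrm{cl}}(\tilde{\mK})^\tr \preceq 0$ with $\operatorname{tr}(C_{\mathrm{cl}}(\tilde{\mK})\hat{X}C_{\mathrm{cl}}(\tilde{\mK})^\tr) \leq \gamma^2$. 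Arguing exactly as in the necessity part of the proof of \Cref{proposition:informativity-non-degenerate} (Part II in \Cref{appendix:informative}), there is $\tilde{B}$ with $A_{\mathrm{cl}}(\tilde{\mK})\hat{X} + \hat{X}A_{\mathrm{cl}}(\tilde{\mK})^\tr + B_{\mathrm{cl}}(\tilde{\mK})B_{\mathrm{cl}}(\tilde{\mK})^\tr + \tilde{B}\tilde{B}^\tr = 0$, so $\hat{X} \succeq X_{\tilde{\mK}}$ where $X_{\tilde{\mK}}$ solves the Lyapunov equation \cref{eq:LyapunovX}, and the trace equality forces $C_{\mathrm{cl}}(\tilde{\mK})e^{A_{\mathrm{cl}}(\tilde{\mK})\tau}\tilde{B} = 0$ for all $\tau \geq 0$. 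Now I exploit the specific block structure of $\tilde{\mK}$: because $\hat{\mK}$ is minimal, the closed loop of $\hat{\mK}$ with the plant is minimal (\Cref{lemma:minimal-closed-loop-systems}), so an analogue of \Cref{lemma:informativity_intermediate} applied to the $(x,\hat{\xi})$-subsystem forces the first $n+q$ rows of $\tilde{B}$ to vanish; equivalently, $e^{A_{\mathrm{cl}}(\tilde{\mK})\tau}\tilde{B}$ is supported only in the $\eta$-coordinates, so $\tilde{X} := \hat{X} - X_{\tilde{\mK}}$ has nonzero entries only in the bottom-right $(n-q)\times(n-q)$ block. Meanwhile $X_{\tilde{\mK}}$ itself is block-diagonal in $\{(x,\hat{\xi}),\eta\}$ — again because $\eta$ is decoupled — with the $\eta$-block possibly zero (if $\Lambda$ is driven by no noise) and $X_{\hat{\mK}}$ in the $(x,\hat{\xi})$-block. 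Therefore $\hat{X}$, hence $\hat{X}$ restricted to its first $n$ rows and last $n$ columns, has its columns corresponding to the last $n-q$ indices entirely inside the $\eta$-block, so the relevant off-diagonal block $\hat{X}_{12} \in \mathbb{R}^{n \times n}$ (plant rows vs. the $q + (n-q)$ policy columns) has its last $n - q$ columns lying in a space that does not meet the plant rows — more precisely $\hat{X}_{12}$ has a zero $n \times (n-q)$ sub-block, so $\operatorname{rank}(\hat{X}_{12}) \leq q < n$. Transferring back via $P_{11}X_{\mK,12} + P_{12}X_{\mK,22} = 0$ (the identity used in Part I of the proof of \Cref{proposition:informativity-non-degenerate}) and $P_{11} \succ 0$, $X_{\mK,22} \succ 0$ would then give $\operatorname{rank}(P_{12}) = \operatorname{rank}(\hat{X}_{12}) \leq q < n$, so $P_{12} \notin \mathrm{GL}_n$, establishing $\tilde{\mK} \notin \mathcal{C}_{\mathrm{nd},0}$.

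The main obstacle I anticipate is making the block-structure bookkeeping fully rigorous — in particular verifying that $X_{\tilde{\mK}}$ is genuinely block-diagonal with respect to the $\eta$-coordinates and that the $\eta$-block of $\hat{X}$ cannot "leak" rank into the plant rows. This requires carefully tracking how $B_{\mathrm{cl}}(\tilde{\mK})$ and $C_{\mathrm{cl}}(\tilde{\mK})$ embed (the $\eta$-rows of $B_{\mathrm{cl}}(\tilde{\mK})$ are zero and the $\eta$-columns of $C_{\mathrm{cl}}(\tilde{\mK})$ are zero, since $\tilde B = 0$ and $\tilde C = 0$ in \cref{eq:stationary_nonglobally_K-augmented}), and then invoking the structural lemma (\Cref{lemma:informativity_intermediate}, suitably adapted to the minimal sub-closed-loop) to pin down the support of $\tilde{B}$. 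A secondary subtlety is that $X_{\mK,22}$ need only be shown positive definite on the relevant block; but since $\hat{X} \succ 0$ and $X_{\tilde{\mK}} \succeq 0$ with the stated block forms, positive definiteness of the needed diagonal sub-blocks follows, and \Cref{fact:PD-block-matrix} can be used if a cleaner statement is wanted. I would defer all of these routine but tedious verifications to the appendix and present only the skeleton above in the main text.
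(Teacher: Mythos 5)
Your proposal is correct, but it reaches the conclusion through a different lemma chain than the paper. The paper partitions $X=\gamma P^{-1}$ according to the $(n+q,\,n-q)$ split, observes that the $(1,1)$ block of the resulting nonstrict Lyapunov inequality together with the trace condition is exactly the reduced-order certificate for $\hat{\mK}$, invokes \Cref{lemma:unique-solution-H2} (minimality of the reduced closed loop via \Cref{lemma:minimal-closed-loop-systems}) to conclude that this block is the unique solution and that the inequality there holds with equality, and then kills the off-diagonal block by the fact that a negative semidefinite matrix with a zero diagonal block has zero off-diagonal blocks, followed by the Sylvester equation $A_{\mathrm{cl}}(\hat{\mK})X_{12}+X_{12}\Lambda^\tr=0$ with both matrices stable. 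You instead complete the inequality to a Lyapunov equation with a residual channel $\tilde{B}$, use the trace saturation $\operatorname{tr}(C_{\mathrm{cl}}\hat{X}C_{\mathrm{cl}}^\tr)=\gamma^2$ to force $C_{\mathrm{cl}}(\tilde{\mK})e^{A_{\mathrm{cl}}(\tilde{\mK})\tau}\tilde{B}=0$, and then localize $\tilde{B}$ in the $\eta$-coordinates — in effect you are inlining the argument that the paper has already packaged into \Cref{lemma:unique-solution-H2}. Both routes rest on the same two pillars (minimality of $\hat{\mK}$'s closed loop and the decoupled, noise-free, unobserved $\Lambda$-block), and your final rank transfer $P_{12}=-P_{11}\hat{X}_{12}\hat{X}_{22}^{-1}$ is exactly the right way to pass from the block structure of $X$ to $\operatorname{rank}(P_{12})\leq q$; the paper is terser on this last point, so your bookkeeping is actually a useful addition. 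One simplification: you do not need an "analogue of \Cref{lemma:informativity_intermediate}" to pin down the support of $\tilde{B}$ — since $A_{\mathrm{cl}}(\tilde{\mK})$ is block diagonal and $C_{\mathrm{cl}}(\tilde{\mK})=\begin{bmatrix}C_{\mathrm{cl}}(\hat{\mK}) & 0\end{bmatrix}$, the identity $C_{\mathrm{cl}}(\tilde{\mK})e^{A_{\mathrm{cl}}(\tilde{\mK})\tau}\tilde{B}=C_{\mathrm{cl}}(\hat{\mK})e^{A_{\mathrm{cl}}(\hat{\mK})\tau}\tilde{B}_{(1)}$ together with plain observability of $(C_{\mathrm{cl}}(\hat{\mK}),A_{\mathrm{cl}}(\hat{\mK}))$ gives $\tilde{B}_{(1)}=0$ directly, so the block-structure worries you flag in your last paragraph dissolve once this is written out.
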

\begin{proof}
    It is easy to see that $\tilde{\mK}$ is an internally stabilizing policy. It remains to check that all feasible solutions $P$ to the LMI \Cref{eq:LQG-Bilinear-a}  and \Cref{eq:LQG-Bilinear-b}  with $\gamma = J_{\texttt{LQG},n}(\tilde{\mK})$ have low-rank off-diagonal block $P_{12}$.

The proof is divided into three steps.

\begin{itemize}
    \item \textbf{Step 1} (another equivalent LMI): We know that \Cref{eq:LQG-Bilinear-a}  and \Cref{eq:LQG-Bilinear-b}  are feasible with $P\succ 0$, $\Gamma$, $\gamma = J_{\texttt{LQG},n}(\tilde{\mK})$ and $P_{12} \in \mathrm{GL}_n$ if and only if there exists an $X \in \mathbb{S}^{2n}_{++}$ with $X_{12} \in \mathrm{GL}_n$ such that
    \begin{equation} \label{eq:degenerate-Lyapunov-1}
    A_{\mathrm{cl}}(\tilde{\mK}) X+XA_{\mathrm{cl}}(\tilde{\mK})^\tr+B_{\mathrm{cl}}(\tilde{\mK}) B_{\mathrm{cl}}(\tilde{\mK})^\tr \! \preceq 0,\ \mathrm{tr}(C_{\mathrm{cl}}(\tilde{\mK}) XC_{\mathrm{cl}}(\tilde{\mK})^\tr)\leq \gamma^2.
    \end{equation}
    \item \textbf{Step 2} (unique solution $\hat{X}$ for the reduced-order policy $\hat{\mK}$): It is clear that $J_{\texttt{LQG},q}(\hat{\mK}) = J_{\texttt{LQG},n}(\tilde{\mK})$. By \Cref{lemma:minimal-closed-loop-systems}, the closed-loop system when applying $\hat{\mK}$ is minimal. Thus, \Cref{lemma:unique-solution-H2} ensures that the following LMI
    \begin{equation} \label{eq:degenerate-Lyapunov-2}
    A_{\mathrm{cl}}(\hat{\mK}) \hat{X}+\hat{X}A_{\mathrm{cl}}(\hat{\mK})^\tr+B_{\mathrm{cl}}(\hat{\mK}) B_{\mathrm{cl}}(\hat{\mK})^\tr \! \preceq 0,\ \mathrm{tr}(C_{\mathrm{cl}}(\hat{\mK}) \hat{X}C_{\mathrm{cl}}(\hat{\mK})^\tr)\leq \gamma^2,
    \end{equation}
    has a unique solution $\hat{X} \in \mathbb{S}^{n+q}_{++}$ and holds with equalities, i.e.,  \Cref{eq:degenerate-Lyapunov-2} becomes a Lyapunov equation.
    \item \textbf{Step 3} (the off-diagonal block $X_{12}$ is low rank): Next, let us expand the closed-loop matrices \cref{eq:closed-loop-matrices} for the full-order policy $\tilde{\mK}$, which reads as
    \begin{equation} \label{eq:closed-loop-matrix-big-K}
\begin{aligned}
A_{\mathrm{cl}}(\tilde{\mK})
\coloneqq\  &
\begin{bmatrix}
A & B\hat{C}_\mK & 0 \\ \hat{B}_\mK C & \hat{A}_\mK & 0 \\ 0 & 0 & \Lambda
\end{bmatrix} = \begin{bmatrix}
A_{\mathrm{cl}}(\hat{\mK}) & 0 \\  0 & \Lambda
\end{bmatrix}  , \\
B_{\mathrm{cl}}(\tilde{\mK})
\coloneqq\  &\begin{bmatrix} I & 0 \\ 0 & \hat{B}_{\mK} \\ 0 & 0  \end{bmatrix} = \begin{bmatrix} B_{\mathrm{cl}}(\hat{\mK}) \\ 0   \end{bmatrix}, \\
C_{\mathrm{cl}}(\tilde{\mK})
\coloneqq\  &
 \begin{bmatrix}
        Q^{1/2} & 0 &0\\
        & R^{1/2}\hat{C}_{\mK} & 0
    \end{bmatrix} = \begin{bmatrix}
       C_{\mathrm{cl}}(\hat{\mK})   & 0
    \end{bmatrix}.
\end{aligned}
    \end{equation}
We partition the matrix $X$ in \cref{eq:degenerate-Lyapunov-1} as
$$
X = \begin{bmatrix}
    \hat{X} & X_{12} \\ X_{12}^\tr & X_{22}
\end{bmatrix}, \quad \text{with} \;\hat{X} \in \mathbb{S}^{n+q}, \; X_{22} \in \mathbb{S}^{n-q}, \; X_{12} \in \mathbb{R}^{(n+q) \times (n-q)}.
$$
Some simple algebra finds that \cref{eq:degenerate-Lyapunov-1} is the same as
$$
\begin{aligned}
\begin{bmatrix}
    A_{\mathrm{cl}}(\hat{\mK}) \hat{X}+\hat{X}A_{\mathrm{cl}}(\hat{\mK})^\tr+B_{\mathrm{cl}}(\hat{\mK}) B_{\mathrm{cl}}(\hat{\mK})^\tr & A_{\mathrm{cl}}(\hat{\mK})X_{12} + X_{12}\Lambda^\tr \\ \Lambda X_{12}^\tr + X_{12}^\tr A_{\mathrm{cl}}(\hat{\mK})^\tr & \Lambda X_{22} + X_{22} \Lambda^\tr
\end{bmatrix} \preceq 0, \quad
\mathrm{tr}(C_{\mathrm{cl}}(\hat{\mK}) \hat{X}C_{\mathrm{cl}}(\hat{\mK})^\tr)\leq \gamma^2.
\end{aligned}
$$
The previous \textbf{Step 2} has confirmed that \cref{eq:degenerate-Lyapunov-2} has a unique solution $\hat{X}$ and holds with equalities. Thus, we must have
$$A_{\mathrm{cl}}(\hat{\mK})X_{12} + X_{12}\Lambda^\tr = 0_{(n+q)\times(n-q)}, 
$$
which ensures $X_{12} = 0$ since $A_{\mathrm{cl}}(\hat{\mK})$ and $\Lambda$ are stable \cite[Lemma 2.7]{zhou1996robust}. 
Therefore, any solution $X$ to \Cref{eq:degenerate-Lyapunov-1} has low-rank off-diagonal block $X_{12}$.
\end{itemize}

\noindent Therefore, we have established that $\tilde{\mK} \in {\mathcal{C}}_{n,0}\backslash{\mathcal{C}}_{\mathrm{nd},0}$. This completes the proof.
\end{proof}

\begin{remark}
    As discussed in \Cref{remark:degenerate-LQG-controller}, the question of whether or not a policy is non-degenerate is a property associated with its state-space realization. Although the policy $\tilde{\mK}$ in \cref{eq:stationary_nonglobally_K-augmented} is guaranteed to be degenerate, it is possible that another state-space realization as
    $$
    \tilde{\mK}_1=\left[\begin{array}{c:cc}
    0 & \hat{C}_{\mK} &  0 \\[2pt]
    \hdashline
    \hat{B}_{\mK} & \hat{A}_{\mK} & 0 \\[-2pt]
    B_1 & A_{21} & \Lambda
    \end{array}\right]
    $$
    with suitable $B_1 \in \mathbb{R}^{(n-q)\times m}$ and $A_{11} \in \mathbb{R}^{(n-q)\times q}$ is non-degenerate. We note that $\tilde{\mK}_1$, $\tilde{\mK}$ in \cref{eq:stationary_nonglobally_K-augmented} and $\hat{\mK}$ all correspond to the same transfer function in the frequency domain. It is interesting to investigate conditions under which such a state-space realization is possible. \hfill \qed
\end{remark}



\subsection{Proof of \Cref{lemma:H_inf_some_local_Lipschitz}} \label{appendix:Hinf-cost-function}

Our proof will follow the idea sketched in~\cite[Proposition 3.1]{apkarian2006nonsmooth2}, i.e., the subdifferential regularity of $\|\mathbf{T}_{zd}\|_\infty$ follows from the convexity of $\|\cdot\|_\infty$ and the continuous differentiability of the mapping from $\mathcal{C}$ to $\mathcal{RH}_\infty$ given by $\mK\mapsto \mathbf{T}_{zd}$ in \cref{eq:transfer-function-zd}. But we will fill in the missing details of why the mapping $\mK\mapsto \mathbf{T}_{zd}$ is continuously differentiable. We will temporarily use $n$ to denote the dimension of $A_{\mathrm{cl}}(\mK)$, and denote $\mathcal{A} = \{A\in\mathbb{R}^{n\times n}:A\text{ is stable}\}$.

We first define the mapping $\mathscr{T}:\mathcal{A}\rightarrow\mathcal{RH}_\infty$ by
\[
(\mathscr{T}(A))(s) = (sI-A)^{-1}.
\]
Note that $A_{\mathrm{cl}}(\mK)$, $B_{\mathrm{cl}}(\mK)$, $C_{\mathrm{cl}}(\mK)$ and $D_{\mathrm{cl}}(\mK)$ are all affine functions of $\mK$ and thus are continuously differentiable. As a result, the continuous differentiability of $\mK\mapsto \mathbf{T}_{zd}$ will follow if we can show that $\mathscr{T}$ is continuously differentiable.

Let $A\in\mathcal{A}$ be an arbitrary stable matrix, and define the linear mapping $\xi_A:\mathbb{R}^{n\times n}\rightarrow \mathcal{RH}_\infty$ by
\[
(\xi_A(\Delta))(s) = (sI-A)^{-1}\Delta(sI-A)^{-1}.
\]
Let $\Delta\in\mathbb{R}^{n\times n}$ be an arbitrary matrix satisfying
$
 \|(sI-A)^{-1}\|_\infty\cdot\|\Delta\|_2<1
$, and we consider bounding the quantity
\begin{align*}
r_A(\Delta)\coloneqq\ &
\big\|\mathscr{T}(A+\Delta)-\mathscr{T}(A)-\xi_A(\Delta)
\big\|_\infty.
\end{align*}
Then, as long as $\frac{r_A(\Delta)}{\|\Delta\|_2}\rightarrow 0$ as $\|\Delta\|\rightarrow 0$, we can conclude that $\xi_A$ is the Fr\'{e}chet derivative of $\mathscr{T}$ at $A$. Indeed, we have, for any $s=\mathrm{j}\omega$ with $\omega\in\mathbb{R}$,
\begin{align*}
&(\mathscr{T}(A+\Delta)-\mathscr{T}(A)-\xi_A(\Delta))(s) \\
= &
\left((I-(sI\!-\!A)^{-1}\Delta)^{-1}-I-(sI\!-\!A)^{-1}\Delta\right)(sI\!-\!A)^{-1}.
\end{align*}
Since $\|(sI\!-\!A)^{-1}\Delta\|_\infty\leq \|(sI\!-\!A)^{-1}\|_\infty\|\Delta\|_2< 1$,
\begin{align*}
(I-(sI-A)^{-1}\Delta)^{-1}
=
\sum\nolimits_{k=0}^\infty \left((sI-A)^{-1}\Delta\right)^k,
\end{align*}
where the right-hand side converges absolutely. Therefore
\begin{align*}
r_A(\Delta) =\
&
\left\|\sum\nolimits_{k=2}^\infty \left((sI-A)^{-1}\Delta\right)^k (sI-A)^{-1}
\right\|_\infty \\
\leq\ &
\sum\nolimits_{k=2}^\infty (\left\|(sI \!-\!A)^{-1}\right\|_\infty \|\Delta\|_2)^k \|(sI \!-\!A)^{-1}\|_\infty \\
=\ &
\frac{\|(sI-A)^{-1}\|_\infty^3 }{1-\|(sI-A)^{-1}\|_\infty \|\Delta\|_2} \|\Delta\|^2_2,
\end{align*}
from which we can easily check that $r_A(\Delta)/\|\Delta\|_2$ converges to $0$ as $\|\Delta\|_2\rightarrow 0 
$. Therefore $\mathscr{T}$ is differentiable at $A$, and its Fr\'{e}chet derivative is given by the linear mapping $\xi_A$.

Finally, we show that $\xi_A$ is continuous in $A$. By definition, we have $\xi_A(\Delta) =
\mathscr{T}(A)\Delta\mathscr{T}(A)$, which leads to
\begin{align*}
& \sup_{\|\Delta\|_2=1}
\left\|\xi_{A+H}(\Delta)-\xi_{A}(\Delta)\right\|_\infty \\
=\ &
\sup_{\|\Delta\|_2=1}
\left\|\mathscr{T}(A+H)\Delta\mathscr{T}(A+H)
-\mathscr{T}(A)\Delta\mathscr{T}(A)\right\|_\infty \\
\leq\ &
\sup_{\|\Delta\|_2=1}
\big(\| (\mathscr{T}(A+H)-\mathscr{T}(A))\Delta \mathscr{T}(A+H)\|_\infty \\
&\qquad\qquad + \| \mathscr{T}(A)\Delta (\mathscr{T}(A+H)-\mathscr{T}(A))\|_\infty\big) \\
\leq\ &
\| (\mathscr{T}(A\!+\!H) \!-\! \mathscr{T}(A))\|_\infty
\left(\|\mathscr{T}(A\!+\!H)\|_\infty \!+\! \|\mathscr{T}(A)\|_\infty\right).
\end{align*}
As $\|H\|\rightarrow 0$, the quantity on the left-hand side will then converge to $0$, implying that the mapping $\xi_A$ is continuous in $A$. Our proof is now complete.

\subsection{Proof of \Cref{theorem:hinf_non_globally_optimal_stationary_point}} \label{appendix:lower-order-stationary-point-to-high-order}

Our proof is based on the Clarke subdifferential formula in \Cref{lemma:subdifferential-Hinf}. Suppose $\mK=\begin{bmatrix}
D_{\mK} & C_{\mK} \\ B_{\mK} & A_{\mK}
\end{bmatrix}\in \mathcal{C}_q$ such that $0\in \partial J_{\infty,q}(\mK)$, and let
\[
\tilde{\mK}
    =\left[\begin{array}{c:cc}
    D_\mK & C_{\mK} &  0 \\[2pt]
    \hdashline
    B_{\mK} & A_{\mK} & 0 \\[-2pt]
    0 & 0 & \Lambda
    \end{array}\right]\]
for some $\Lambda\in\mathbb{R}^{q'\times q'}$ that is stable. We note that the closed-loop transfer matrices corresponding to $\mK$ and $\tilde{\mK}$ are the same, and so are their $\mathcal{H}_\infty$ norms:
\[
\mathbf{T}_{zd}(\mK,s) = \mathbf{T}_{zd}(\tilde{\mK},s),
\qquad J_{\infty,q}(\mK)=J_{\infty,q+q'}(\tilde{\mK}).
\]
Let
\[
\mathcal{Z} = \{s\in j\mathbb{R}\cup\{\infty\}\mid \sigma_{\max}(\mathbf{T}_{zd}(\mK,s)) = J_{\infty,q}(\mK)\}
\]
By \Cref{lemma:subdifferential-Hinf}, we see that there exist finitely many $s_1,\ldots,s_K\in\mathcal{K}$ and associated positive semidefinite matrices $Y_1,\ldots,Y_K$ with $\sum_\kappa Y_\kappa=1$ such that
\begin{align*}
0 = \frac{1}{J_{\infty,q}(\mK)}
\sum_{\kappa=1}^K
\operatorname{Re}\bigg\{\!
& \left(
\begin{bmatrix}
0_{p\times n} & V^{1/2} \\ 0_{q\times n} & 0_{q\times p}
\end{bmatrix}
+
\begin{bmatrix}
C & 0 \\
0 & I_q
\end{bmatrix}
(s_\kappa I-A_{\mathrm{cl}}(\mK))^{-1}B_{\mathrm{cl}}(\mK)
\right)
\mathbf{T}_{zd}(\mK,s_\kappa)^\her Q_{s_\kappa} Y_\kappa Q_{s_\kappa}^\her \\
&\quad \cdot
\left(
\begin{bmatrix}
0_{n\times m} & 0_{n\times q} \\ R^{1/2} & 0_{m\times q}
\end{bmatrix}
+C_{\mathrm{cl}}(\mK)(s_\kappa I-A_{\mathrm{cl}}(\mK))^{-1}
\begin{bmatrix}
B & 0 \\ 0 & I_q
\end{bmatrix}
\right)
\!\bigg\}^{\!\tr},
\end{align*}
where the dimensions of some matrices are explicitly given for clarity. We then note that
\[
A_{\mathrm{cl}}(\tilde{\mK})
=\begin{bmatrix}
A_{\mathrm{cl}}(\mK) & 0_{(n+q)\times q'} \\
0_{q'\times(n+q)} & \Lambda
\end{bmatrix},
\quad
B_{\mathrm{cl}}(\tilde{\mK}) = \begin{bmatrix}
B_{\mathrm{cl}}({\mK}) \\
0_{q'\times(n+p)}
\end{bmatrix},
\quad
C_{\mathrm{cl}}(\tilde{\mK}) =
\begin{bmatrix}
C_{\mathrm{cl}}({\mK}) & 0_{(n+m)\times q'}
\end{bmatrix}.
\]
As a result, we have
\begin{align*}
& \begin{bmatrix}
0_{p\times n} & V^{1/2} \\
0_{(q+q')\times n} & 0_{(q+q')\times p}
\end{bmatrix}
+
\begin{bmatrix}
C & 0 \\ 0 & I_{q+q'}
\end{bmatrix}
(s_\kappa I - A_{\mathrm{cl}}(\tilde{\mK}))^{-1} B_{\mathrm{cl}}(\tilde{\mK}) \\
={} &
\begin{bmatrix}
0_{p\times n} & V^{1/2} \\
0_{q\times n} & 0_{q\times p} \\
0_{q'\times n} & 0_{q'\times p}
\end{bmatrix}
+
\begin{bmatrix}
C & 0 & 0 \\
0 & I_q & 0 \\
0 & 0 & I_{q'}
\end{bmatrix}
\begin{bmatrix}
(s_\kappa I - A_{\mathrm{cl}}({\mK}))^{-1} & 0 \\
0 & (s_\kappa I  - \Lambda)^{-1}
\end{bmatrix}
\begin{bmatrix}
B_{\mathrm{cl}}(\mK) \\ 0_{q'\times(n+p)}
\end{bmatrix} \\
={} &
\begin{bmatrix}
\begin{bmatrix}
0_{p\times n} & V^{1/2} \\
0_{q\times n} & 0_{q\times p}
\end{bmatrix}
+ \begin{bmatrix}
C & 0 \\
0 & I_q
\end{bmatrix}
(s_\kappa I-A_{\mathrm{cl}}({\mK}))^{-1} B_{\mathrm{cl}}(\mK) \\
0_{q'\times (n+p)}
\end{bmatrix}.
\end{align*}
Similarly,
\begin{align*}
& \begin{bmatrix}
0_{n\times m} & 0_{n\times (q+q')} \\
R^{1/2} & 0_{m\times (q+q')}
\end{bmatrix}
+C_{\mathrm{cl}}(\tilde{\mK})(s_\kappa I-A_{\mathrm{cl}}(\tilde{\mK})
\begin{bmatrix}
B & 0 \\ 0 & I_{q+q'}
\end{bmatrix} \\
={} &
\begin{bmatrix}
\begin{bmatrix}
0_{n\times m} & 0_{n\times q} \\
R^{1/2} & 0_{m\times q}
\end{bmatrix}
+ C_{\mathrm{cl}}(\mK)(s_\kappa I - A_{\mathrm{cl}}(\mK))^{-1}\begin{bmatrix}
B & 0 \\ 0 & I_q
\end{bmatrix}
&
0_{(n+m)\times q'}
\end{bmatrix}.
\end{align*}
Straightforward calculation then leads to
\begin{align*}
& \frac{1}{J_{\infty,q+q'}(\tilde{\mK})}
\sum_{\kappa=1}^K
\operatorname{Re}\bigg\{\!
\left(
\begin{bmatrix}
0_{p\times n} & V^{1/2} \\ 0_{(q+q')\times n} & 0_{(q+q')\times p}
\end{bmatrix}
+
\begin{bmatrix}
C & 0 \\
0 & I_{q+q'}
\end{bmatrix}
(s_\kappa I \!-\! A_{\mathrm{cl}}(\tilde{\mK}))^{-1}B_{\mathrm{cl}}(\tilde{\mK})
\right) \\
& \qquad\qquad \cdot
\mathbf{T}_{zd}(\tilde{\mK},s_\kappa)^\her Q_{s_\kappa} Y_\kappa Q_{s_\kappa}^\her
\left(
\begin{bmatrix}
0_{n\times m} & 0_{n\times {q+q'}} \\ R^{1/2} & 0_{m\times {q+q'}}
\end{bmatrix}
+C_{\mathrm{cl}}(\mK)(s_\kappa I \!-\! A_{\mathrm{cl}}(\mK))^{-1}
\begin{bmatrix}
B & 0 \\ 0 & I_{q+q'}
\end{bmatrix}
\right)
\!\bigg\}^{\!\tr}
=0,
\end{align*}
and by applying \Cref{lemma:subdifferential-Hinf} again, we see that $0\in\partial J_{\infty,q+q'}(\tilde{\mK})$.

\end{document}